\newtcolorbox{mybox}{colback=blue!5!white,colframe=blue!75!black}
 \newtheorem{theorem}{Theorem}[section]
 \newtheorem*{theo*}{Theorem}
 \newtheorem{corollary}[theorem]{Corollary}
 \newtheorem{lemma}[theorem]{Lemma}
 \newtheorem{proposition}[theorem]{Proposition}
 \newtheorem{claim}{Claim}
 \newtheorem{ex*}{Example}[section]
 \theoremstyle{definition}
 \newtheorem{definition}[theorem]{Definition}
 \theoremstyle{remark}
 \newtheorem{remark}[theorem]{Remark}
 \theoremstyle{definition}
 \newtheorem{example}[theorem]{Example}
 \newtheorem{question}[theorem]{Question}
  \newtheorem{opquestion}[theorem]{Open-ended Question}
\newcommand{\rk}{\mathrm{rk}}
\newcommand{\crk}{\mathrm{crk}}
\newcommand{\brk}{{\underline{{\bf rk}}}}
\newcommand{\bb}[1]{\mathbf{#1}}
\newcommand{\RR}{\mathbb{R}}
\newcommand{\CC}{\mathbb{C}}
\newcommand{\PP}{\mathbb{P}}
\newcommand{\NN}{\mathbb{N}}
\newcommand{\ZZ}{\mathbb{Z}}
\newcommand{\VSP}{\mathrm{VSP}}
\newcommand{\VPS}{\mathrm{VPS}}
\newcommand{\VSPb}{\underline{\mathrm{VSP}}}
\newcommand{\Ann}{\mathrm{Ann}}
\newcommand{\HF}{\mathrm{HF}}
\title{Border apolarity and varieties of sums of powers}
\author{Tomasz Ma\'ndziuk and Emanuele Ventura}
\date{}
\begin{document}

\maketitle{}

\begin{abstract}
\noindent We study border varieties of sums of powers ($\underline{\mathrm{VSP}}$'s for short), recently introduced by Buczy\'nska and Buczy\'nski, parameterizing border rank decompositions of a point (e.g. of a tensor or a homogeneous polynomial) with respect to a smooth projective toric variety
and living in the Haiman-Sturmfels multigraded Hilbert scheme. Their importance stems from the role of border tensor rank in theoretical computer science, especially in the estimation of the exponent of matrix multiplication, a fundamental and still unknown quantity in the theory of computation. We compare $\underline{\mathrm{VSP}}$'s to other well-known loci in the Hilbert scheme, parameterizing 
scheme-theoretic versions of decompositions. The latter ones are crucial in that they naturally explain the existing severe {\it barriers} to giving good lower bounds on ranks. 
We introduce the notion of border identifiability and provide sufficient criteria for its appearance, which rely on the multigraded regularity of Maclagan and Smith. We link border identifiability to wildness of points. Finally, we determine $\underline{\mathrm{VSP}}$'s in several instances and regimes, in the contexts of tensors and homogeneous polynomials. These include 
concise $3$-tensors of minimal border rank and in particular of border rank three.
\end{abstract}

\medskip
{\footnotesize
\noindent\textbf{Addresses:}\\
Tomasz Ma\'ndziuk, \href{mailto:tomasz.mandziuk@unitn.it}{tomasz.mandziuk@unitn.it}, Universit\`{a} di Trento, Dipartimento di Matematica, Via Sommarive 14,  38123 Povo (Trento), Italy; Faculty of Mathematics, Informatics and Mechanics, University of Warsaw, Banacha 2, 02-097 Warsaw, Poland \\
Emanuele Ventura, \href{mailto:emanuele.ventura@polito.it}{emanuele.ventura@polito.it}, Politecnico di Torino, Dipartimento di Scienze Matematiche ``G. L. Lagrange'', Corso Duca degli Abruzzi 24, 10129 Torino, Italy\\
\vspace{1mm}

\noindent\textbf{Keywords:} 
Toric variety,
Cox ring, Tensors, Forms, 
Border rank, 
Border variety of sums of powers,
Apolarity, Multigraded Hilbert scheme, Multigraded Castelnuovo-Mumford regularity. 

\noindent\textbf{AMS Mathematical Subject Classification 2020:}
Primary 14C05; Secondary 14M25, 15A69, 68Q17.
}

\tableofcontents

\section{Introduction}

The last two decades have witnessed steady progress on the theory and applications of tensor and Waring ranks. Perhaps the strongest driving forces behind these fast developments were, on one hand, uncovering the rich geometry of special projective varieties and, on the other, exploiting the fundamental connection between tensors and theoretical computer science. This second link is very deep and goes back to the works of Strassen \cite{Str87,Str69} and Bini \cite{Bin80}: tensor and border tensor rank of the matrix multiplication tensor are, up to a constant, equal to the asymptotic number of arithmetic operations (digits multiplications) needed to optimally compute the product of two matrices; see \cite[\S 12.3.2]{W19} or the extensive discussions in \cite{Landsberg17} and \cite{BCS97}. 

On the geometric and algebraic side, there have been intense research efforts to understand the subtleties of tensor and Waring ranks, entailing secant varieties \cite[Chapter 1]{russo} and Macaulay's theory of apolarity and inverse systems \cite[\S 1.1]{ik}. 

Strikingly, scheme-theoretic versions of ranks have been an important tool to understand the previous ranks. These schematic ranks take into account more general zero-dimensional schemes, besides the reduced ones featured in the tensor and Waring ranks. The latter more general framework naturally leads to new notions: the {\it smoothable rank} and the {\it cactus rank}, originally called {\it scheme length} \cite[Definition 5.1]{ik}. We recall their definitions in \S\ref{sec:prelim}.

In a $2019$ groundbreaking work, 
Buczy\'nska and Buczy\'nski \cite{bb19} introduced a new method for border estimation, called {\it border apolarity}, see Theorem \ref{mainbb} below. This result opened up a way to potentially overcome the well-known {\it barriers} affecting vector bundle methods for lower bounds on border ranks \cite{Ga17, EGOW18}. These barriers are naturally explained in the context of schematic ranks, 
as cactus rank tends to be much lower than border rank (but not always!), whereas vector bundle 
methods, such as flattening constructions, give lower bounds on the former.  

The strong effectiveness of the new method was demonstrated by Conner, Harper and Landsberg \cite{chl19} who 
proved the following lower bounds on the border rank of rectangular matrix multiplication (of indicated sizes): $\brk(M_{\langle 2,n,n\rangle})\geq n^2 + 1.32n$ and $\brk(M_{\langle 3,n,n\rangle})\geq n^2 + 1.6n$. This is remarkable as the previously known bounds had no linear terms in $n$. 

As fundamental aspect of their border apolarity theorem,  Buczy\'nska and Buczy\'nski discovered an {\it algebraic} way to describe and parameterize border rank decompositions with respect to a smooth toric projective variety. This gives rise to the central geometric objects lurking in the theory, that are the main characters of our article: the {\it border varieties of sums of powers} (called $\VSPb$'s for short); see Definition \ref{vspbar}. As yet, to the best of our knowledge, only few examples and results about these varieties are known; see e.g. \cite{CHL22} and \cite{HMV}. More recently, Jelisiejew, Ranestad and Schreyer \cite{JRS} studied
loci inside multigraded and usual Hilbert schemes that are linked to $\VSPb$'s of quadratic forms; see Remark \ref{remark on JRS}. Our main motivation is then to start a systematic study of these interesting objects that could shed light on the nature of border rank. We believe this has the potential to reverberate in explicit and powerful results in the theory of computation, as border varieties of sums of powers are arguably the ultimate 
geometric objects governing border rank phenomena, that are of great relevance in algebraic complexity. \\

\noindent {\bf Main results.}\\
\noindent The original set-up of border apolarity is for smooth projective toric varieties and their (finitely generated and multigraded) Cox rings. 
In \S\ref{sec:Hilbert schemes}, we make use of the multigraded regularity of Maclagan and Smith, which extends the classical Castelnuovo-Mumford regularity to Cox rings, {\it both} as a framework and as a tool for border apolarity. In \cite{MS05}, the authors employ this machinery to sketch the construction of Hilbert schemes of toric varieties, each parameterized by the datum of a multigraded polynomial, mimicking Grothendieck-Hilbert schemes which depend on a univariate polynomial instead. This is an important result for toric varieties, so we embed this natural construction into the border apolarity theory of Buczy\'nska and Buczy\'nski. In doing so, we give detailed proofs of the ingredients along the way. We believe this is useful for both the toric and the border apolarity literatures. 

Let $S = S[X]$ be the Cox ring of a smooth projective toric variety $X$ which is multigraded by $\ZZ^s$ for some $s$. One defines a similarly graded dual ring $T$; those are equipped with a pairing given by differentiation of $S$ on $T$. 
Inside the Haiman-Sturmfels multigraded Hilbert scheme $\mathrm{Hilb}^{h_{r,X}}_{S}$, Buczy\'nska and Buczy\'nski look at the irreducible component defined by all the limits of all saturated ideals of $r$ points with generic Hilbert function $h_{r,X}$. This projective variety is called $\mathrm{Slip}_{r,X}$. For $F\in T$, all the ideals $J$ in the following closed locus
\[
\VSPb(F,r) = \left\lbrace J\in \mathrm{Slip}_{r,X} \mbox{ such that } J\subset \mathrm{Ann}(F)\right\rbrace 
\]
govern the border decompositions of a homogeneous element $F\in T$ (see Theorem \ref{mainbb}). One natural approach is contrasting the latter with well-known loci attached to $F$: $\mathrm{VSP}(F,r)$ and $\VPS(F,r)$; see Definition~\ref{vspclassic} and Definition~\ref{vps}. In the description of the relationships amongst $\VSPb, \VPS$ and $\VSP$ there are several subtleties lurking. First of all, 
$\VPS$ could be non closed, a fact leading to perhaps unintuitive phenomena at the boundary, informally known as {\it bad limits} in \cite{Ranestad22}; see Remark~\ref{nonclosed VPS}. We give a sufficient condition for the closedness of $\VPS$ in Proposition \ref{prop:VPS_closed}.  

Although there is a proper surjective morphism $\phi_{r,X}: \mathrm{Slip}_{r,X}\rightarrow \mathrm{Hilb}_{sm}^r(X)$, this {\it does not} usually descent to a map 
from $\VSPb(F,r)$ to $\VPS(F,r)$. However, one has that $\phi_{r,X}^{-1}(\VPS(F,r)\cap \mathrm{Hilb}_{sm}^r(X))\subset \VSPb(F,r)$ (Lemma \ref{lem:trivial_containment}). When equality holds, we say that $\VSPb(F,r)$ is {\it of fiber type}.  
A special role in the theory is played by homogeneous elements $F\in T$ such that $\mathrm{srk}(F) > \brk(F)$; these are called {\it wild}. 
Proposition \ref{lem:fiber_type_is_non_wild} proves that whenever $\VSPb(F, \brk(F))$ is of fiber type, then $F$ cannot be wild. 
Wild elements have the remarkable property that their border varieties $\VSPb(F, \brk(F))$ do {\it not} have points corresponding to saturated ideals. 

Given $X$, we fix an embedding of $X\subset \PP(T_{\bb{v}})$ for some multidegree $\bb{v}\in \ZZ^s$. One may wonder how $\VSPb(F,r)$ behaves as we move inside $\sigma_r(X)$. Proposition~\ref{prop:vspbar_of_general}  
states there exists a dense open set $W\subset \sigma_r(X)$ such that $\VSPb(F,r)$ contains a saturated ideal (a similar conclusion holds when we search 
for a radical ideal). A corollary to this result (Corollary~\ref{twocaseswherenonsatareproper}) is that whenever $\sigma_r(X)$ is nondefective or fills up without excess the ambient space, then for a general $F\in \sigma_r(X)$, $\VSPb(F,r)$ contains {\it only} saturated ideals. However without these assumptions, this might fail as observed in Remark \ref{rmk:mayfailfordefective}.

The openness of the locus of saturated ideals in $\mathrm{Hilb}^h_S(X)$ \cite{JM,bb21}, \cite[Proposition 3.9]{JM} and its slight generalization in our Theorem \ref{thm:restriction on saturable locus is closed immersion} all suggest that we should compare  $\VSPb, \VPS$ and $\VSP$
from a birational perspective. Relying on these results we prove the following. 

\begin{theo*}[Theorem \ref{birmap1}]
Let $F\in T_\bb{v}$ be a homogeneous polynomial of degree $\bb{v}$ and $r$ be a positive integer.
Then $\phi_{r,X}$ induces a bijection between the set of those irreducible components of the closure of
$\VPS(F,r)\cap \mathrm{Hilb}_{sm}^r(X)$ that contain a scheme with the generic Hilbert function $h_{r,X}$ and the set of those irreducible components of $\VSPb(F,r)$ that contain a saturated ideal. Under this bijection, the irreducible components in correspondence are birational. 
\end{theo*}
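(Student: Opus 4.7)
The plan is to identify a common open subscheme inside both $\VSPb(F,r)$ and $\overline{\VPS(F,r)\cap \mathrm{Hilb}_{sm}^r(X)}$ via $\phi_{r,X}$, and then deduce the bijection and birational statement on irreducible components purely formally. Let $U\subset \VSPb(F,r)$ denote the locus of saturated ideals, which is open by \cite[Proposition~3.9]{JM} and Theorem~\ref{thm:restriction on saturable locus is closed immersion}, and let $W\subset \overline{\VPS(F,r)\cap \mathrm{Hilb}_{sm}^r(X)}$ denote the locus of subschemes with generic Hilbert function $h_{r,X}$; the latter is open since $\VPS(F,r)\cap \mathrm{Hilb}_{sm}^r(X)$ is locally closed in the Hilbert scheme and the condition on the Hilbert function is upper semicontinuous in flat families, with $h_{r,X}$ being the minimum.

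The next step is to invoke Theorem~\ref{thm:restriction on saturable locus is closed immersion} to conclude that $\phi_{r,X}|_U\colon U\to W$ is an isomorphism of schemes. The cited theorem identifies the saturable locus of $\mathrm{Slip}_{r,X}$ with the open subscheme of $\mathrm{Hilb}_{sm}^r(X)$ parameterizing schemes with Hilbert function $h_{r,X}$; one then checks that intersecting with $\VSPb(F,r)$ on one side corresponds to intersecting with $\VPS(F,r)$ on the other, because for a saturated ideal $J$ of a scheme $Z$ one has $J=I(Z)$ and so the apolarity condition $J\subset\Ann(F)$ is the same as the apolarity condition cutting out $\VPS(F,r)$.

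With $U\cong W$ in hand, the bijection on components is topological. An irreducible component of $\VSPb(F,r)$ contains a saturated ideal precisely when it meets $U$, equivalently when it is the closure in $\VSPb(F,r)$ of an irreducible component of $U$; similarly, an irreducible component of $\overline{\VPS(F,r)\cap \mathrm{Hilb}_{sm}^r(X)}$ contains a scheme with Hilbert function $h_{r,X}$ precisely when it is the closure of an irreducible component of $W$. The isomorphism $U\cong W$ then induces the claimed bijection, and because corresponding components share the common dense open subscheme $U\cong W$, they are birational.

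The main obstacle I expect is the second step, namely matching the two apolarity conditions at the level of scheme structures rather than merely on points, so that $U$ and $W$ are genuinely isomorphic as locally closed subschemes and not only in bijection. This is precisely where the strengthened statement of Theorem~\ref{thm:restriction on saturable locus is closed immersion} will do the work, and it is the technical heart of the argument; a secondary subtlety is verifying that $W$ is dense in exactly those components of $\overline{\VPS(F,r)\cap \mathrm{Hilb}_{sm}^r(X)}$ singled out in the statement, so that taking closures gives the stated correspondence.
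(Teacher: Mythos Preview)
Your approach is essentially the paper's, packaged more cleanly: the paper works directly with the components $\mathcal{X}_i$ of $\VSPb(F,r)$ and $\mathcal{Y}_j$ of $\overline{\VPS(F,r)\cap\mathrm{Hilb}_{sm}^r(X)}$ and matches them by hand, whereas you isolate the common open $U\cong W$ and deduce the component bijection formally. Both rest on Theorem~\ref{thm:restriction on saturable locus is closed immersion} in the same way.

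Two minor points. First, $h_{r,X}$ is the \emph{maximum} possible Hilbert function of a length-$r$ subscheme, not the minimum; your conclusion that the generic-Hilbert-function locus is open is nonetheless correct (the paper obtains it as the complement of the image of a closed set under the proper map $\phi_{r,X}$). Second, and more substantively: your matching of apolarity conditions gives $U\cong \VPS(F,r)\cap\mathrm{Hilb}_{sm}^r(X)\cap\{h_Z=h_{r,X}\}$, but your $W$ is the generic-Hilbert-function locus in the \emph{closure} of $\VPS(F,r)\cap\mathrm{Hilb}_{sm}^r(X)$. For $\phi_{r,X}|_U\colon U\to W$ to be onto you need these to coincide, i.e.\ a scheme $Z$ in the closure with $h_Z=h_{r,X}$ must actually lie in $\VPS(F,r)$. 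This does hold: under the isomorphism of the saturated locus of $\mathrm{Slip}_{r,X}$ with the generic-Hilbert-function locus in $\mathrm{Hilb}_{sm}^r(X)$, the closed subset $U$ maps to a closed subset, so $\VPS(F,r)\cap\mathrm{Hilb}_{sm}^r(X)\cap\{h_Z=h_{r,X}\}$ is closed in $\{h_Z=h_{r,X}\}$; since intersecting with an open set commutes with closure, the two descriptions of $W$ agree. The paper's proof passes over the analogous step (``the dense open subset $\mathcal{Y}_j\cap U$ \ldots\ is contained in $\phi_{r,X}(\mathcal{X})$'') just as quickly.
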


When $X=\PP^n$, this provides a way to transfer birational results of Ranestad-Schreyer \cite{rs13} and Massarenti-Mella \cite{mm} on irreducible components of $\VSP(F,r)$, for some ranges of $r$ and general $F$, to some irreducible components of $\VSPb(F,r)$; see Corollary \ref{cor: bir quadrics} and Corollary \ref{cor: bir mm}. 

Let $X=\PP^n$ and $S$ be its Cox ring, i.e. $S = \CC[y_0,\ldots, y_n]$ with the standard grading. Let $J\subset S$ be a complete intersection
ideal of codimension $n$ and degree $r=\prod_{i=1}^n a_i$. Let $d=\sum_{i=1}^n a_i - (n+1)$. In this set-up we show a partial converse to \cite[Theorem~3.4~and~Corollary~3.3]{JM},
as a consequence of their Ext criterion for membership of an ideal in $\mathrm{Slip}_{r,\PP^n}$. 

\begin{theo*}[Theorem \ref{prop:complete_intersection}]
Let $W$ be a codimension one subspace of $J_d$. There exists $I\in \mathrm{Slip}_{r, \PP^n}$ with $I_{\geq d} = (W) + J_{\geq d+1}$ if and only if $(J^2)_d\subseteq W$.
\end{theo*}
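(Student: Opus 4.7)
The plan is to apply the Ext-type criterion \cite[Theorem 3.4 and Corollary 3.3]{JM} for membership in $\mathrm{Slip}_{r, \PP^n}$, and then to unpack this criterion explicitly in the complete intersection setting. The "partial converse" nature of the statement corresponds to the fact that the JM criterion already provides one implication in general, while the complete intersection structure here lets us extract the other implication through an explicit cohomological calculation.

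First, we collect the input data. Since $J$ is a complete intersection of codimension $n$ with $\deg f_i = a_i$, the Hilbert series of $S/J$ equals $\prod_i (1 - t^{a_i})/(1-t)^{n+1}$, and a direct computation gives $\dim (S/J)_d = r - 1$ at $d = \sum a_i - (n+1)$. Hence $\dim(S/I)_d = r$, matching $h_{r, \PP^n}(d)$ in the nontrivial range (outside of which $J_d = 0$, so $W = 0$ and the statement is immediate); in all higher degrees $I$ and $J$ coincide, so $I^{\mathrm{sat}} = J$. The central object of the argument is the short exact sequence
\[
0 \to J/I \to S/I \to S/J \to 0,
\]
where $J/I \cong \CC(-d)$ is one-dimensional, concentrated in degree $d$, and identifies with $J_d/W$.

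The JM criterion controls whether $I \in \mathrm{Slip}_{r, \PP^n}$ through the vanishing of a specific obstruction class, computed as an Ext group attached to the above extension (recall that $\mathrm{Proj}(S/I) = \mathrm{Proj}(S/J)$ is a smooth $r$-point subscheme, so all obstruction is cone-theoretic). To evaluate this class we exploit the Koszul resolution of $S/J$, which is available precisely because $J$ is a complete intersection. Unwinding the computation, the obstruction class is represented by the natural multiplication-by-$J$ map $J \otimes_S (J/I) \to S/I$; because $J/I$ lives only in degree $d$, the image of this map lies in degree $d$ and equals the class of $(J^2)_d$ modulo $W$ in $J_d/W$. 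Vanishing of this class is therefore equivalent to $(J^2)_d \subseteq W$, which yields both implications at once.

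The main obstacle will be the Ext computation itself: carefully isolating the correct component of the Ext group through the Koszul resolution of the complete intersection, and identifying that component with the multiplication map $(J/I) \otimes_S J \to (J^2 + I)/I$ restricted to degree $d$. The ancillary verifications (the Hilbert function of $I$ matches $h_{r, \PP^n}$, and $I^{\mathrm{sat}} = J$) are direct bookkeeping once the Koszul Hilbert series of $S/J$ is in hand.
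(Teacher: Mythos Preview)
Your proposal has a genuine gap in the ``if'' direction. The criterion from \cite[Theorem~3.4]{JM} is a \emph{necessary} condition only: it says that if $I \in \mathrm{Slip}_{r,\PP^n}$ with $I \neq \overline{I}$, then $\mathrm{Ext}^1_S(\overline{I}/I, S/\overline{I})_0 \neq 0$. Combined with \cite[Corollary~3.3]{JM}, this identifies the Ext group with $J_d/(I_d + (J^2)_d)$, and since $I_d$ has codimension one in $J_d$, nonvanishing is equivalent to $(J^2)_d \subseteq I_d$. That is exactly the paper's Proposition~\ref{lem:exts}, and it handles only the ``only if'' implication. You assert that this ``yields both implications at once,'' but there is no obstruction theory in the JM criterion that certifies membership in $\mathrm{Slip}_{r,\PP^n}$ when the Ext group is nonzero. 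The whole point of the theorem, and the reason the paper calls it a ``partial converse,'' is that sufficiency requires a separate argument.

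The paper's proof of the ``if'' direction is not cohomological at all: it is a dimension count. One considers the locus $Z$ of truncated ideals $K'$ with $\overline{K'}$ a complete intersection of the given type, the sublocus $Z_0 \subseteq Z$ where $(\overline{K'}^2)_d \subseteq K'_d$, and the image $Y$ of $\mathrm{Slip}_{r,\PP^n}$ under truncation. One already knows $\overline{Z}\cap Y \subseteq \overline{Z_0}$ from the necessary condition; the converse inclusion is obtained via \cite[Lemma~2.6]{Man} by checking two numerical identities: that $\dim T_{K'}\mathrm{Hilb}_S^{h_{\geq d}} = \dim\overline{Z}+1$ and that $\dim\overline{Z_0} = nr-1$. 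The second of these uses the symmetry of Lemma~\ref{lem:ci_symmetry_of_HF} together with the Guardo--Van~Tuyl formula for $\HF(S/J^2,d)$. None of this is captured by a single Ext computation.

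Two smaller issues: your claim that $\mathrm{Proj}(S/J)$ is a ``smooth $r$-point subscheme'' is false in general (e.g.\ $J=(y_0^2,\ldots,y_{n-1}^2)$ defines a single fat point), so the parenthetical about ``all obstruction is cone-theoretic'' does not hold. Also, the theorem concerns the existence of \emph{some} $I \in \mathrm{Slip}_{r,\PP^n}$ with prescribed $I_{\geq d}$, whereas you analyze the specific ideal $(W)+J_{\geq d+1}$; these coincide only when $h_{r,\PP^n}(k) = \dim_\CC S_k$ for all $k<d$, which the paper does not assume.
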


This has applications to the varieties $\VSPb(F,r)$ for elements $F$ such that the apolar ideal $\Ann(F)$ contains a complete intersection ideal $J\subset \Ann(F)$ of degree
$r$. The main point is that we are able to show that $\VSPb(F,r)$ is a projective space or that it has some rational irreducible components; see Proposition \ref{prop:vsp_ci_easy}, Remark \ref{rmk: complete intersections inside ann} and Proposition \ref{prop:application to vspbar of mons}.

We say that $F\in T_{\bb{v}}$ is {\it border identifiable} if $\VSPb(F,\brk(F))$ is a single point. We establish a criterion for border identifiability employing the machinery of multigraded regularity of Maclagan and Smith. Let $\mathcal K = \NN \bb{c}_1+\cdots + \NN\bb{c}_{l}$ be the integral nef cone of $X$. Our main theorem here reads as follows: 

\begin{theo*}[Theorem \ref{thm:idenfiability via toric regularity}] 
Let $X\subset \PP(T_{\bb v})$ and $r=\brk_X(F)$ be the border rank of $F\in T_{\bb{v}}$. Suppose that there exists $\bb{u} \in \mathbf{\mathcal  K}$ such that
\[
\HF(S/\Ann(F), \bb{u}) = \HF(S/\Ann(F), \bb{u}+\bb{c}_1+\cdots + \bb{c}_l) = r.
\]
If there exists a $B$-saturated ideal $I\in \VSPb(F, r)$, then $\VSPb(F, r) = \{I\}$.
\end{theo*}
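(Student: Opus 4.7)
The plan is to show that every $J \in \VSPb(F, r)$ coincides with the given $B$-saturated ideal $I$. The strategy has three movements: first, compare $J$ and $\Ann(F)$ at the two key multidegrees; second, propagate this agreement throughout the shifted nef cone $\bb{u}+\mathcal{K}$ by means of the multigraded Castelnuovo--Mumford regularity of Maclagan--Smith; third, invoke the $B$-saturation of $I$ to upgrade a cofinal equality to an equality of ideals.

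Let $J \in \VSPb(F,r)$ be arbitrary. Since $J \in \mathrm{Slip}_{r,X}$, the Hilbert function of $S/J$ equals $h_{r,X}$, while the inclusion $J \subseteq \Ann(F)$ produces a surjection $(S/J)_\bb{u} \twoheadrightarrow (S/\Ann(F))_\bb{u}$, giving $h_{r,X}(\bb{u}) \geq r$. The reverse inequality $h_{r,X}(\bb{u}) \leq r$ is automatic because the generic Hilbert function of $r$ points on $X$ is bounded above by $r$ in every multidegree. A dimension count then forces $J_\bb{u} = \Ann(F)_\bb{u}$, and the same argument at $\bb{u}+\bb{c}_1+\cdots+\bb{c}_l$ yields $J_{\bb{u}+\bb{c}_1+\cdots+\bb{c}_l} = \Ann(F)_{\bb{u}+\bb{c}_1+\cdots+\bb{c}_l}$. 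In particular, $I$ satisfies both equalities as well.

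Next I would invoke the Maclagan--Smith machinery recalled in \S\ref{sec:Hilbert schemes}. The hypothesis that $\HF(S/\Ann(F), \cdot)$ takes the value $r$ at both $\bb{u}$ and $\bb{u}+\bb{c}_1+\cdots+\bb{c}_l$ should, via the appropriate regularity criterion, imply that $S/\Ann(F)$ is $\bb{u}$-regular and that $\HF(S/\Ann(F), \bb{d}) = h_{r,X}(\bb{d}) = r$ for every $\bb{d} \in \bb{u} + \mathcal{K}$. For such $\bb{d}$, the chain $(S/J)_\bb{d} \twoheadrightarrow (S/\Ann(F))_\bb{d} \twoheadleftarrow (S/I)_\bb{d}$ consists of surjections between $r$-dimensional spaces, hence of isomorphisms, so $J_\bb{d} = \Ann(F)_\bb{d} = I_\bb{d}$ for every $\bb{d} \in \bb{u} + \mathcal{K}$. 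Since $I$ is $B$-saturated and $J$ agrees with $I$ on the cofinal set $\bb{u}+\mathcal{K}$, the $B$-saturation $J^{\mathrm{sat}}$ coincides with $I$, whence $J \subseteq I$; the equality $\dim J_\bb{d} = \dim I_\bb{d}$ coming from the common Hilbert function then upgrades this inclusion to $J = I$.

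The hard part will be the regularity propagation: cleanly deducing the $r$-valuedness of $\HF(S/\Ann(F), \cdot)$ (and of $h_{r,X}$) throughout $\bb{u}+\mathcal{K}$ from its two specified values, and transferring the resulting $\bb{u}$-regularity in a way that makes the dimension-counting argument rigorous for $S/J$ and $S/I$ simultaneously. This is precisely where the Maclagan--Smith framework of \S\ref{sec:Hilbert schemes}---vanishing of local/sheaf cohomology and surjectivity of multiplication in the nef directions---must be invoked with care.
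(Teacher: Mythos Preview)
Your opening and closing paragraphs are fine, but the middle step has a genuine gap that cannot be repaired along the lines you sketch. The claim that $\HF(S/\Ann(F), \bb{d}) = r$ for every $\bb{d} \in \bb{u}+\mathcal{K}$ is simply false: already for $X=\PP^n$ the algebra $S/\Ann(F)$ is Artinian, so its Hilbert function eventually vanishes. Correspondingly, $S/\Ann(F)$ is not $\bb{u}$-regular in the Maclagan--Smith sense, and Proposition~\ref{prop: MS Prop. 6.7} does not apply to it (that proposition concerns $B$-saturated ideals of zero-dimensional subschemes, which $\Ann(F)$ is not). Hence the chain $J_\bb{d}=\Ann(F)_\bb{d}=I_\bb{d}$ on $\bb{u}+\mathcal{K}$ cannot be obtained this way; the middle term is strictly larger than the outer two for $\bb{d}$ far enough out, and knowing only $J_\bb{d},I_\bb{d}\subseteq \Ann(F)_\bb{d}$ with $\dim J_\bb{d}=\dim I_\bb{d}$ gives no containment between $J_\bb{d}$ and $I_\bb{d}$.

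The paper's proof fixes this by applying the regularity machinery to $I$ rather than to $\Ann(F)$. Writing $\bb{e}=\bb{c}_1+\cdots+\bb{c}_l$, one first checks (via nonzerodivisors on $S/I$) that $\HF(S/I,\cdot)=r$ at all corners $\bb{u}+\sum_j\varepsilon_j\bb{c}_j$, so Proposition~\ref{prop: MS Prop. 6.7} gives $\bb{u}+\bb{e}-\bb{c}_j\in\mathrm{reg}(S/I)$ for each $j$. A long exact sequence in local cohomology then upgrades this to $\bb{u}+\bb{e}\in\mathrm{reg}(I)$, regularity of the \emph{ideal}. This is the key point: by \cite[Theorem~5.4]{MS} it implies $(I_{\bb{u}+\bb{e}})=(I_{|\bb{u}+\bb{e}+\mathcal{K}})$, i.e.\ the single graded piece $I_{\bb{u}+\bb{e}}$ generates $I$ throughout the shifted cone. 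Now $\Ann(F)$ is used at just one multidegree: from $J_{\bb{u}+\bb{e}}=\Ann(F)_{\bb{u}+\bb{e}}=I_{\bb{u}+\bb{e}}$ one gets $(I_{\bb{u}+\bb{e}})\subseteq J$, hence $I_{|\bb{u}+\bb{e}+\mathcal{K}}\subseteq J_{|\bb{u}+\bb{e}+\mathcal{K}}$, and equality of Hilbert functions forces $J_{|\bb{u}+\bb{e}+\mathcal{K}}=I_{|\bb{u}+\bb{e}+\mathcal{K}}$. Then Lemma~\ref{lem:equality_of_sat} and $B$-saturation finish as in your final paragraph. The mechanism that propagates agreement across the cone is the generation of $I$ by $I_{\bb{u}+\bb{e}}$, not any good behaviour of $\Ann(F)$.
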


\noindent For $X = \PP^n$, $d=2s+1$ and $r = \binom{n+s}{s}$, this implies that 
a general $F\in \sigma_{r}(\nu_d(\PP^n))$ is border identifiable (Corollary \ref{cor: p^n and deg 2s+1}).  

A fundamental playground for border apolarity is the challenging world of tensors. These constitute one of the first motivations behind the very conception of border apolarity. Matrix multiplication is a $3$-tensor, so this class of tensors is particularly important -- besides just being the next case after matrices -- and already encapsulate richness of structure in sharp contrast with matrices. Let $X = \PP^{m-1}\times \PP^{m-1}\times \PP^{m-1}$, $S$ be its Cox ring and $T$ be its dual. Then $T_{\bb{1}}\cong \CC^m\otimes \CC^m\otimes \CC^m$, where $\bb{1}=(1,1,1)$. We focus on concise minimal border rank tensors, i.e. $F\in T_{\bb{1}}$ with $\brk(F) = m$ and that are 
not annihilated by any multigraded linear polynomial. Our result in this direction is very much related to work of Jelisiejew, Landsberg and Pal \cite{JLP} on
this class of tensors.  

\begin{theo*}[Theorem \ref{thm:minimalBRTensors}]
Let $F\in T_{\bb{1}}$ be concise and of minimal border rank, i.e. $\brk(F)=m$. Let $I = (\Ann(F)_{(1,1,0)})+(\Ann(F)_{(1,0,1)}) + (\Ann(F)_{(0,1,1)})\subset S$ and $K=(I\colon B^\infty)$. Then the following statements hold:
\begin{enumerate}
\item[(i)] If $\HF(S/I, \bb{1}) \neq m$, then $F$ is wild. 

\item[(ii)] If $\HF(S/I, \bb{1}) =m$, then $F$ is not wild if and only if $I_{(a,b,c)} = K_{(a,b,c)}$ for every $(a,b,c) \in \mathcal{S}$, where $\mathcal{S} = \{(1,0,0), (0,1,0), (0,0,1), (1,1,0), (1,0,1), (0,1,1),(1,1,1)\}.$
\end{enumerate}
\end{theo*}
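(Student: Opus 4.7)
The plan is to exploit the equivalence that $F$ is not wild if and only if some saturated ideal $J$ lies in $\VSPb(F,m)$; equivalently, $F$ lies in the linear span of a smoothable length-$m$ subscheme of $X$. Any such $J$ has the generic Hilbert function $h_{m,X}$, so in particular $\HF(S/J, \bb{u}) = m$ for every $\bb{u}\in \mathcal{S}$.

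I begin with a uniform observation used in both parts. If $J$ is a saturated element of $\VSPb(F,m)$, then by conciseness each flattening $\Ann(F)_{\bb{e}_i+\bb{e}_j}$ has codimension $m$ in $S_{\bb{e}_i+\bb{e}_j}$. Combined with $J\subseteq \Ann(F)$ and $\HF(S/J,\bb{e}_i+\bb{e}_j) = m$, a dimension count forces $J_{\bb{e}_i+\bb{e}_j} = \Ann(F)_{\bb{e}_i+\bb{e}_j}$ for every $i\neq j$. Hence $I \subseteq J$, and since $J$ is saturated, also $K \subseteq J$.

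For (i), I prove the contrapositive. Assume $F$ is not wild and pick a saturated $J$ as above. Multigraded Castelnuovo--Mumford regularity in the sense of Maclagan--Smith applied to saturated ideals with Hilbert function $h_{m,X}$ yields that $J$ is generated in multidegrees $\bb{e}_i + \bb{e}_j$ ($i\neq j$). Therefore
\[
J_{\bb{1}} \;=\; \sum_{\{i,j,k\}=\{1,2,3\}} S_{\bb{e}_k}\cdot J_{\bb{e}_i+\bb{e}_j} \;=\; \sum_{\{i,j,k\}=\{1,2,3\}} S_{\bb{e}_k}\cdot \Ann(F)_{\bb{e}_i+\bb{e}_j} \;=\; I_{\bb{1}},
\]
whence $\HF(S/I,\bb{1}) = \HF(S/J,\bb{1}) = m$.

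For the forward direction of (ii), I use the same $J$: from $I\subseteq K\subseteq J$ I check $I_{\bb{u}} = K_{\bb{u}}$ for each $\bb{u}\in \mathcal{S}$---at $\bb{u} = \bb{e}_i$ both sides vanish by conciseness; at $\bb{u} = \bb{e}_i+\bb{e}_j$ both equal $\Ann(F)_{\bb{e}_i+\bb{e}_j}$ by the preliminary observation; at $\bb{u}=\bb{1}$ the equality $\dim I_{\bb{1}} = m^3-m = \dim J_{\bb{1}}$ forces $I_{\bb{1}} = K_{\bb{1}}$. For the reverse direction, I claim $K$ itself is a saturated element of $\VSPb(F,m)$: the hypotheses directly give $\HF(S/K,\bb{u}) = m$ for $\bb{u}\in \mathcal{S}$; the inclusion $K\subseteq \Ann(F)$ follows from the saturatedness of $\Ann(F)$; and the Hilbert function equality extends to all remaining positive multidegrees by a regularity argument. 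The crucial and hardest step is establishing $K\in \mathrm{Slip}_{m,X}$, namely the smoothability of the length-$m$ subscheme of $X$ cut out by $K$. Here I would invoke the commuting-triples framework of Jelisiejew--Landsberg--Pal: multiplication by $S_{\bb{e}_i}$ on $(S/K)_{\bb{1}}\cong \CC^m$ produces three pairwise commuting $m\times m$ matrices, and approximating them by a generic simultaneously diagonalizable triple yields the required flat family of ideals of $m$ reduced points specializing to $K$.
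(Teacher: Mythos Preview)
Your reverse direction of (ii) contains the most serious gap: you attempt to show $K\in\mathrm{Slip}_{m,X}$ directly by producing a commuting triple of endomorphisms of $(S/K)_{\bb{1}}$ and approximating it by a simultaneously diagonalizable triple. This fails on two counts. First, multiplication by $S_{\bb{e}_i}$ sends $(S/K)_{\bb{1}}$ to $(S/K)_{\bb{1}+\bb{e}_i}$, not to itself, so producing genuine endomorphisms already requires additional choices and justification. Second, and more fundamentally, it is \emph{not} true that every commuting triple of $m\times m$ matrices lies in the closure of the diagonalizable triples; the variety of commuting triples is reducible for $m$ large, so no such approximation argument can work in general. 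The paper sidesteps this entirely: since $\brk(F)=m$, border apolarity guarantees that $\VSPb(F,m)\neq\emptyset$. Taking any $J\in\VSPb(F,m)$ (saturated or not), one has $I\subseteq J$, hence $K\subseteq\overline{J}$; comparing Hilbert polynomials forces $\overline{J}=K$, then comparing Hilbert functions (using Lemma~\ref{lem:generic_function} to see $S/K$ has function $h_{m,X}$) forces $J=K$. Thus $K\in\mathrm{Slip}_{m,X}$ for free, and $\mathrm{Proj}(S/K)$ is smoothable. This is the missing idea. (Incidentally, your claim that $\Ann(F)$ is $B$-saturated is false: for concise $F$ one has $S_{(2,0,0)}\subseteq\Ann(F)$ but $\Ann(F)_{(1,0,0)}=0$. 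The inclusion $K\subseteq\Ann(F)$ does hold, but it follows from $K_{\bb{1}}=I_{\bb{1}}\subseteq\Ann(F)_{\bb{1}}$ and Proposition~\ref{inclusion in onedeg}.)

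Your argument for (i) also has a gap. The assertion that Maclagan--Smith regularity forces a saturated $J$ with Hilbert function $h_{m,X}$ to be generated in multidegrees $\bb{e}_i+\bb{e}_j$ is not substantiated: regularity of $J$ at a degree $\bb{m}$ controls generation of $J_{|\bb{m}+\mathcal{K}}$ from $J_{\bb{m}}$, but here you would need $(1,1,0)\in\mathrm{reg}(J)$ (or an analogue), and already for $m=2$ one checks $S_{\bb{e}_3}\cdot J_{(1,1,0)}\subsetneq J_{\bb{1}}$, so no single-degree regularity statement suffices. The statement you want --- that non-wild implies $111$-sharp --- is exactly \cite[Theorem~9.2]{JLP}, whose proof is not a routine regularity computation; the paper simply cites it. Your forward direction of (ii) is essentially the paper's argument.
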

\noindent Note that statement (i) is \cite[Theorem~9.2]{JLP}. The corollary to this is that: if $F$ is a nonwild concise minimal border rank tensor in $T_{\bb{1}}$, then $\VSPb(F, m) = \{K\}$ where $K$ is the saturation of $I = (\Ann(F)_{(1,1,0)}) + (\Ann(F)_{1,0,1}) + (\Ann(F)_{(0,1,1)})$ (Corollary \ref{cor:minimalBRWild}). In other words, much of the complexity of concise minimal border rank $3$-tensors is due to wild tensors, at least from the perspective of $\VSPb$'s. 

Minimal border rank tensors $F\in T_{\bb{1}}$ with $m=3$ are classified \cite[Theorem~1.2]{BL14}. Therefore, in this case, 
we can actually improve the previous result to an explicit description of all $\VSPb(F, 3)$ for all such $F$'s. We prove the ensuing. 

\begin{theo*}[Theorem \ref{theo:min border rank three}]
Let $X = \PP^2\times \PP^2\times \PP^2$ and let $F$ be a border rank three concise tensor in $\CC^3\otimes \CC^3\otimes \CC^3\cong T_{\bb{1}}$.
The variety $\VSPb(F, 3)$ is either a single point, or $\VSPb(F, 3)\cong \PP^3$ when $F$ is wild. 
\end{theo*}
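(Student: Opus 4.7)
The proof splits along wildness of $F$. First, if $F$ is not wild, then since $F\in T_\bb{1} \cong \CC^3\otimes \CC^3\otimes \CC^3$ is concise with $\brk(F) = m = 3$, Corollary~\ref{cor:minimalBRWild} applies and immediately yields $\VSPb(F, 3) = \{K\}$, where $K = (I \colon B^\infty)$ is the saturation of $I = (\Ann(F)_{(1,1,0)}) + (\Ann(F)_{(1,0,1)}) + (\Ann(F)_{(0,1,1)})$. This settles the single-point alternative.

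Now suppose $F$ is wild. Since $\VSPb(-, 3)$ is equivariant under the action of $(\mathrm{GL}_3)^{\times 3}$ on tensors, it suffices to compute it on orbit representatives. The plan is to invoke the classification of concise minimal border rank tensors in $\CC^3\otimes\CC^3\otimes\CC^3$ from \cite[Theorem~1.2]{BL14}, and to restrict attention to those orbits that violate the non-wildness criterion in Theorem~\ref{thm:minimalBRTensors} (either $\HF(S/I, \bb{1}) \neq 3$, in which case part~(i) already forces wildness, or $I \neq K$ in some multidegree of $\mathcal{S}$). This leaves a short finite list of explicit normal forms to treat.

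For each wild representative $F_0$, I would compute $\VSPb(F_0, 3)$ by hand using Theorem~\ref{mainbb}: a point of $\VSPb(F_0, 3)$ is a $B$-saturable homogeneous ideal $J \subseteq \Ann(F_0)$ with multigraded Hilbert function $h_{3,X}$. After tabulating $\Ann(F_0)_{\bb{u}}$ for all $\bb{u}\leq \bb{1}$, the Hilbert-function and containment constraints essentially pin down $J_{\bb{u}}$ in those low multidegrees, and the residual freedom should cut out a $4$-dimensional linear space whose projectivization is the $\PP^3$ appearing in the statement. Because $F_0$ is wild, none of the resulting ideals is $B$-saturated; rather, they all share the same $B$-saturation, which defines a common length-$\mathrm{srk}(F_0) > 3$ smoothable scheme, while the $\PP^3$-parameter records the allowed ``non-saturated'' top piece of $J$.

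The main obstacle is checking membership in $\mathrm{Slip}_{3, X}$ uniformly across the conjectural $\PP^3$-family. Since no $J$ in the family is saturated, one cannot simply deform from three distinct points: one would instead construct, for a single base point of $\PP^3$, an explicit one-parameter degeneration from an ideal of three distinct points and then propagate across the whole $\PP^3$ using the stabilizer of $F_0$ inside $(\mathrm{GL}_3)^{\times 3}$, possibly supplemented by an Ext-based membership criterion in the spirit of the one used in Theorem~\ref{prop:complete_intersection}. Exhaustiveness -- that no further components of $\VSPb(F_0, 3)$ are missed -- then follows by matching the dimension of the parameter space with the constraints that $J \subseteq \Ann(F_0)$ imposes inside the multigraded Hilbert scheme $\mathrm{Hilb}^{h_{3,X}}_S$.
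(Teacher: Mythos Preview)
Your overall architecture matches the paper: split on wildness, dispatch the non-wild case via Corollary~\ref{cor:minimalBRWild}, and for the wild case reduce via \cite[Theorem~1.2]{BL14} to a single explicit normal form (case~(iv) there). So the skeleton is right.

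However, two points in your wild-case sketch are off. First, the claim that the ideals in $\VSPb(F_0,3)$ ``all share the same $B$-saturation, which defines a common length-$\mathrm{srk}(F_0)>3$ smoothable scheme'' is incorrect: each $J\in\VSPb(F_0,3)$ has Hilbert function $h_{3,X}$, so its $B$-saturation $\overline{J}$ has Hilbert polynomial $3$ and defines a length-$3$ scheme, not a length-$\mathrm{srk}(F_0)$ one. Wildness means precisely that $\overline{J}\not\subset\Ann(F_0)$, so these length-$3$ schemes are not apolar to $F_0$; moreover, different $J$'s in the family yield genuinely different saturations. Second, the residual freedom does \emph{not} live in multidegrees $\leq\bb{1}$. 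In the paper's computation, $J$ is forced to contain a specific ideal $I'$ (built from $I$ together with determined pieces in degrees $(2,0,0),(0,2,0),(0,0,2)$, found via colon ideals), and the only undetermined datum is a single cubic $C\in\CC[\alpha_1,\alpha_2]_3$ in degree $(3,0,0)$. The $\PP^3$ is $\PP(\CC[\alpha_1,\alpha_2]_3)$.

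For membership in $\mathrm{Slip}_{3,X}$, the paper does not use the stabilizer or an Ext criterion. Instead it shows that $\mathcal{X}=\{I'+(C)\}$ is closed and irreducible (via a flag-Hilbert-scheme/Grassmannian argument), then computes that the tangent space to $\mathrm{Hilb}_S^{h_{3,X}}$ at each of the four monomial points $I'+(\alpha_1^i\alpha_2^{3-i})$ is $18$-dimensional, equal to $\dim\mathrm{Slip}_{3,X}$. Border apolarity guarantees $\VSPb(F_0,3)\neq\emptyset$, and a weight-vector Gr\"obner degeneration sends any element of it to one of these four monomial points, which is therefore a smooth point of $\mathrm{Slip}_{3,X}$; irreducibility of $\mathcal{X}$ then gives $\mathcal{X}\subset\mathrm{Slip}_{3,X}$. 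Your stabilizer-propagation idea would require knowing that the stabilizer acts with dense orbit on the $\PP^3$, which is not established and would be extra work; the tangent-space argument sidesteps this entirely.
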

\noindent This answers a question of Buczy\'nska and Buczy\'nski \cite[\S 5.2]{bb19} about the geometry of $\VSPb(F, 3)$. 

As mentioned, whenever $F$ is wild, $\VSPb(F, \brk(F))$ consists only of nonsaturated ideals. Does the converse hold? We give a negative answer to this question, providing a monomial counterexample; see Example \ref{nonwild_no_saturated_ideals}.  Elaborating more on wildness, we prove a result that is similar in the spirit to that for tensors explained above. When $X = \PP^n$ and $d=3$ or  $d\geq n+2$, we prove that for a minimal border rank $F\in T_d$ one has $\mathrm{Hess}(F)\neq 0$ if and only if $\VSPb(F,n+1)$ consists of a unique saturated ideal. When this holds, the unique saturated ideal is $(\Ann(F)_2)$ (Corollary \ref{vspwithonlysat}). This follows from our Theorem \ref{thm:idenfiability via toric regularity} on multigraded regularity and \cite[Theorem 4.9]{HMV}, which characterizes wildness for minimal border rank forms.

We investigate binary forms, proving that $\VSPb(F,\brk(F))$ are either one point or $\PP^1$ (Proposition~\ref{prop:binary forms}). Leveraging the classifications of ternary cubic forms and reducible cubic forms, we describe $\VSPb(F,\brk(F))$ for each of these classes in Theorem \ref{thm:ternary_cubic} and in \S\ref{ssec: reducible cubics}. Note that reducible cubics are particularly interesting in algebraic complexity theory, as they correspond to symmetrizations of the big and small Coppersmith-Winograd tensors. 

The next results that we showcase are for  $X = \PP^2$. For ternary monomials $F$, we show that $\VSPb(F,\brk(F))$ are of fiber type and we explicitly describe the corresponding $\VPS(F, \brk(F))$; these are Proposition \ref{prop:case c>= a+b}, Theorem \ref{thm:case c=a+b-1} and Theorem~\ref{prop:vps_vspb_>=-2}. Taking momentarily a break from $X=\PP^2$ and setting $X=\PP^n$ for any $n\geq 1$, Theorem \ref{borderrankmonimprove} shows that if $F = x_0^{a_0}\cdots x_n^{a_n}$ with $0<a_0\leq a_1\leq \cdots \leq a_n$ and $a_n\geq \left(\sum_{i=0}^{n-1} a_i\right)-2$, then 
$\brk(F) = \prod_{i=0}^{n-1}(a_i+1)$. This improves the lower bound on $a_n$ given in \cite[Example 6.8]{bb19} by one.

Following the terminology of \cite[\S 1]{rs00}, a {\it nondegenerate} ternary form of degree $d=2p-2$ is a form $F\in T_d$ such that $\Ann(F)_{\leq p-1}=0$. This is a generality-type assumption.  We show that, for $2 \leq p \leq 5$, we have an isomorphism between $\VPS(F, r_p)=\VSP(F,r_p)$ and $\VSPb(F,r_p)$ given by $\phi_{r_p,\PP^2}$ with $r_p =\binom{p+1}{2} = \brk(F)$ (Theorem \ref{plane low even degree}). 

In \S\ref{ssec: reducible vspbar}, we exhibit an example of a reducible $\VSPb(F,\brk(F))$. This addresses and solves the question whether $\VSPb(F,\brk(F))$ can be reducible even in $\PP^2$. Note that this is not the case when $X=\PP^1$. Finally, in \S\ref{sec:Schubert variety}, Proposition \ref{prop: schubert} describes an example of $\VSPb(F, \brk(F))$ that is isomorphic to the Schubert variety $\Sigma_1 \subset \mathbb{G}(3, 5)$.   \\

\noindent{\bf Organisation of this paper.} \\
In \S \ref{sec:prelim}, we discuss preliminaries. We recall toric varieties and their Cox rings where apolarity will take place, and the notions of ranks we shall be concerned with. In \S \ref{sec:Hilbert schemes}, we describe the framework of multigraded regularity, we recall multigraded Hilbert schemes of 
Haiman-Sturmfels and Grotendieck-Hilbert schemes, and finally construct the morphim $\phi_{r,X}$. In \S \ref{sec:VSPb versus VSP vers VPS}, we recall the definitions of the loci we are interested in and show all the results based on comparing $\VSPb$, $\VPS$ and $\VSP$. In \S \ref{sec:Membership in Slip}, we discuss the Ext criterion in the case of complete intersections. 
In \S \ref{sec:Multigraded reg}, we prove Theorem \ref{thm:idenfiability via toric regularity} and corollaries thereof. 
In \S \ref{sec:minimal border rank}, we show Theorem \ref{thm:minimalBRTensors} and its Corollary \ref{cor:minimalBRWild}. The title of \S\ref{sec:botany of forms} is {\it botany of forms}. The name refers to the great variety of plants in Nature (in this terminology we were influenced 
once again by Buczy\'nska and Buczy\'nski). Of course, we are able to just see some glimpses of it: we describe the $\VSPb$'s for binary 
forms (\S\ref{ssec:binary}), cubic ternary forms and reducible cubics (\S\ref{ssec:cubics}). In \S\ref{ssec:monomials}, we discuss ternary monomials; in Theorem \ref{borderrankmonimprove} we provide the improvement by one on the border rank of monomials. 
In \S\ref{ssec: reducible vspbar} and \S\ref{sec:Schubert variety}, we describe the instances  when $\VSPb$ is reducible and when $\VSPb$ is a Schubert variety, respectively. Finally, in \S\ref{sec:Outlook}, we record natural follow-up questions to this work. \\

\noindent {\bf Acknowledgements.}\\
We thank the organizers of the semester  ``Tensors: geometry, complexity and quantum entanglement'' at the University of Warsaw and at the Institute of Mathematics of the Polish Academy of Sciences, held in the Fall of 2022, for their warm hospitality and support. 
We thank J. Buczy\'nski, A. Conca, H. Huang  (especially for conversations on \S\ref{ssec: ternary of low even deg}), J. Jelisiejew, A. Massarenti, G. Ottaviani, and K. Ranestad for very useful discussions. The first author is a member of TensorDec laboratory of the Mathematical Department of Trento. The second author is a member of the GNSAGA group of INdAM and was partially supported by the INdAM-GNSAGA project ``Classification Problems in Algebraic Geometry: Lefschetz Properties and Moduli Spaces'' (CUP\textunderscore E55F22000270001).   
In this paper, we often relied (for experiments or parts of proofs) on the invaluable help of the algebra software \texttt{Macaulay2} \cite{M2}.   

\section{Cox rings, ranks and border apolarity}\label{sec:prelim}

\subsection{Cox rings and Picard groups}
For details on toric varieties, their combinatorial construction and their properties we refer to the textbooks by Cox, Little and Schenck \cite{CLS11} and by Fulton \cite{Fulton}. 
Let $X = X_{\Sigma}$ be a $d$-dimensional complex smooth projective toric variety corresponding to the fan $\Sigma\subset N\cong \ZZ^d$. Let 
$n+1$ be the number of one-dimensional cones (the {\it rays}) and let ${\bf b}_0,\ldots, {\bf b}_{n}$ be the unique minimal lattice vectors generating 
the rays such that their real span is $N_{\RR} = N\otimes_{\ZZ} \ZZ^d \cong \RR^d$. Let $s = n+1-d$ and let $A = [{\bf a}_0 \cdots {\bf a}_{n}]$ be an $s\times (n+1)$ integral 
matrix such that there is a short exact sequence 
\[
0\longrightarrow \ZZ^d \xrightarrow{{\bf b}^T=[{\bf b}_0 \cdots {\bf b}_{n}]^T} \ZZ^{n+1} \xrightarrow{[{\bf a}_0 \cdots {\bf a}_{n}]} \ZZ^s \longrightarrow 0.
\]
Such an integral matrix $A$ (called the {\it Gale dual of ${\bf b}$}) exists and it is uniquely determined up to $\mathrm{GL}(s,\ZZ)$. The group of Cartier divisors or Picard group $\mathrm{Pic}(X)$ is isomorphic to $\ZZ^s$ because $X$ is smooth \cite[\S 3.4]{Fulton}. For simplicity of exposition, we identify a cone $\sigma\in \Sigma$ with 
a subset of $[n]=\lbrace 0,\ldots, n\rbrace$. 

The {\it Cox ring} of $X$ is the polynomial ring $S[X] = \CC[y_0,\ldots,y_n]$ with a $\mathrm{Pic}(X)\cong \ZZ^s$-grading defined by $\deg(x_i) = {\bf a}_i\in \ZZ^s$ for $0\leq i\leq n$ \cite[\S5.2]{CLS11}. Alternatively, one may write $S[X] = \bigoplus_{D\in \mathrm{Pic}(X)} H^0(\mathcal O_X(D))$ \cite[Proposition 1.1]{Cox95}, where each summand is the $\CC$-vector space of global sections of the corresponding Cartier divisor. In the following, we shall identify Cartier divisors with integral vectors. Every Cox ring $S[X]$ has a {\it positive grading}, i.e. the only monomial of degree ${\bf 0}\in \ZZ^s$ is $1$. The combinatorics of the fan $\Sigma$  encodes the {\it irrelevant ideal} $B = \left(\prod_{i\notin \sigma} x_i \ | \sigma\in \Sigma\right)$.
\begin{example}
The first example is when $X = \PP^d$ and $d=n$. In this case, $S[X] = \CC[y_0,\ldots, y_n]$ with the standard $\ZZ$-grading given by $\deg(x_i) = 1$. 
The ideal $B$ is $(y_0,\ldots,y_n)\subset S[X]$. 
\end{example}

The irrelevant ideal $B$ allows one to obtain a quotient construction of $X$ as a geometric quotient $X= \left(\mathbb A^{n+1}\setminus \mathrm{Spec}(S[X]/B)\right)/G$, where 
$G = \mathrm{Hom}_{\ZZ}(\mathrm{Pic}(X), \CC^{*})$, in the sense of geometric invariant theory \cite[Theorem 5.1.11]{CLS11}. This quotient construction generalizes to provide a toric ideals-subscheme correspondence between $S[X]$ and $X$. More generally, Cox shows that there is a categorical equivalence between coherent 
$\mathcal O_X$-modules and finitely generated $\ZZ^s$-graded $S$-modules {\it modulo $B$-torsion} \cite[Proposition 3.3]{Cox95}. For a homogeneous ideal $I\subset S[X]$,
being $B$-torsion free means that $I = (I:B^{\infty}) = \lbrace g\in S[X] \ | \  \exists N\in \mathbb N : gB^N\subset I\rbrace$. 
For ideal sheaves on $X$, the Cox correspondence is as follows: there is a bijection between $\ZZ^s$-homogeneous ideal $I\subset S[X]$ that are $B$-saturated and closed subschemes $Z\subset X$. From the fan $\Sigma\subset \ZZ^s$, one can construct another semigroup. 

\begin{definition}[{\bf Nef Cone}]\label{nefcone}
For each cone $\sigma\in \Sigma$, let $\mathbb N A_{\widehat{\sigma}} = \lbrace \sum_{i\notin \sigma} \lambda_i {\bf a}_i, \lambda_i \in \mathbb N\rbrace$. The semigroup $\mathbf{\mathcal  K}$ is defined to be 
\[
\mathbf{\mathcal  K} = \bigcap_{\sigma \in \Sigma} \NN A_{\widehat{\sigma}}\subset \ZZ^s.
\]
The points of the cone $\mathbf{\mathcal K}\otimes_{\ZZ} \RR$ correspond to numerically effective (nef)  $\RR$-divisors on $X$, and it is called the {\it nef cone} of $X$. When $X$ is projective, this is a pointed $s$-dimensional cone \cite[Theorem 6.3.22]{CLS11}, where $s$ as above is the Picard rank of $X$. The nef cone is the closure of the so-called {\it K\"{a}hler} or {\it ample cone} of $X$ by the well-known Nakai-Moishezon-Kleiman criterion and it is the dual cone to the (closed) {\it Mori cone of curves}. Note that a Cartier divisor 
on $X$ is nef if and only if it is globally generated \cite[Theorem 6.3.12]{CLS11}. So the points in $\mathcal K$ correspond to divisors with nonzero global sections.
\end{definition}

\subsection{Toric apolarity and ranks}

For this part of the exposition, we follow \cite[\S 3.1]{bb19}. 
Given the $\ZZ^s$-graded Cox ring $S[X] = \CC[y_0,\ldots,y_n]$, define
the dual graded polynomial ring $T[X]$ defined as
\[
T[X] = \bigoplus_{D\in \ZZ^s} H^0(\mathcal{O}_X(D))^{*}\cong \CC[x_0,\ldots,x_n].
\]
The ring $T[X]$ is a graded $S[X]$ module with the differential graded action induced by the formula 
$y_i \circ x_j = \delta_{i,j}$. As mentioned before, we identify Cartier divisors with integral vectors $\bb{v}$, writing 
$S_{\bb{v}}$ and $T_{\bb{v}}$ for the degree $\bb{v}$ summands of these rings. 

\begin{example}\label{ex: P^n}
When $X = \PP^n$,  
$S[X]=\CC[y_0,\ldots,y_n]$ and its dual graded ring $T[X] = \CC[x_0,\ldots,x_n]$ are standard $\ZZ$-graded rings and the action described above is the classical {\it Macaulay apolarity action} \cite[\S 1.1]{ik}. 
\end{example}

\begin{example}\label{ex:tensors}
When $X = \PP^{n_1}\times \cdots \times \PP^{n_s}$, $S[X] = \CC[y_{1,0},\ldots, y_{1,n_1}]\otimes \cdots \otimes \CC[y_{s,0},\ldots, y_{s,n_s}]$; its dual graded ring $T[X]$ has a similar description. The grading is given by $\deg(y_{i,j}) = \bb{e}_i\in \ZZ^s$, the $i$th standard basis vector. 
When $s=2$ or $3$, we shall use $\alpha_i, \beta_j$ and $\gamma_k$ to denote 
the generators of the corresponding Cox ring. 
\end{example}

\noindent The elements of $T_d$ from Example \ref{ex: P^n} are homogeneous polynomials of degree $d$ and are classically called {\it forms} (or {\it symmetric tensors}). When $n=2$, they are {\it binary} forms; when $n=3$, they are {\it ternary} forms. The elements of $T_{\bb{1}}\cong \CC^{n_1+1}\otimes \cdots \otimes \CC^{n_s+1}$ from Example \ref{ex:tensors}, where $\bb{1}=(1,\ldots, 1)\in \ZZ^s$, are called {\it tensors}. 

In the rest of the article, once the toric variety $X$ is fixed, we drop this symbol from the notations $S[X]$ and $T[X]$. For a homogeneous ideal $J\subset S$, let $J_{\bb{v}}$ denote the complex finitely generated vector space consisting of its degree $\bb{v}$ homogeneous elements. For $J\subset T$, let $\overline{J}$ denote its saturation with respect to $B$, i.e. 
\[
\overline{J} = (J:B^{\infty})\subset S. 
\]
The {\it multigraded Hilbert function} 
of a homogeneous $J\subset S$ is the numerical function $\mathrm{HF}(S/J,\cdot): \ZZ^s\to \NN$ defined by 
\[
\mathrm{HF}(S/J,\bb{v}) = \dim_{\CC} S_{\bb{v}} - \dim_{\CC} J_{\bb{v}}. 
\]
A numerical function
$h: \ZZ^s\to \NN$ is said to be an {\it admissible multigraded Hilbert function} if there exists an ideal $J\subset S$ such that $\mathrm{HF}(S/J,\cdot) = h$. Given an element $0\neq F\in T_{\bb{v}}$, denote $[F]\in \PP(T_{\bb{v}})$ to be the corresponding point in projective space. 

\begin{definition}[{\bf Apolar ideals}]
Let $F\in T_{\bb{v}}$. Then its {\it apolar} or {\it annihilator ideal} is 
\[
\Ann(F) = \lbrace \psi\in S \ | \ \psi\circ F = 0\rbrace \subset S. 
\]
This is a $\ZZ^s$-homogeneous ideal. 
\end{definition}
Given $X$, we fix an ample line bundle $\mathcal L=\mathcal O_X(D)$ on $X$, whose divisor corresponds to a vector $\bb{v}\in\ZZ^s$, embedding 
$X$ in the projective space $\PP(H^0(\mathcal L)^{*})=\PP(T_{\bb{v}})$. 
Let $Z\subset X\subset \PP(T_{\bb{v}})$ be a closed subscheme. Then its {\it projective span}  $\langle Z\rangle$ the smallest linear space containing $Z$. Equivalently, $\langle Z\rangle = \PP((S_\mathbf{v}/I_{Z,\bb{v}})^*)\subset \PP(T_{\bb{v}})$, where $I_{Z,\bb{v}}$ is the degree $\bb{v}$ homogeneous summand of the $B$-saturated ideal $I_Z$ of $Z$. 

\begin{definition}[{\bf Rank}]
The $X$-rank of $[F]\in \PP(T_{\bb{v}})$ is the minimal integer $r\geq 1$ such that there exist $r$ points $p_1,\ldots, p_r\in X\subset \PP(T_{\bb{v}})$
such that $[F]\in \langle p_1,\ldots, p_r\rangle$. Equivalently, let $Z$ be the smooth scheme consisting of the union of the points $p_i$. Then $[F]\in \langle p_1,\ldots, p_r\rangle$ is equivalent to the condition $I_Z\subset \Ann(F)$. This is the same as the classical {\it apolarity lemma} \cite[Lemma 1.15]{ik}, but here $X$ is any smooth projective toric variety. This equivalence is then called {\it multigraded apolarity}. 
\end{definition}

Given a homogeneous ideal $I\subset S$ and $F\in T_{\bb{v}}$, the following equivalence is well-known and very useful 
to put apolarity to work. 

\begin{proposition}[{\cite[Proposition 3.5]{bb19}}]\label{inclusion in onedeg}
$I\subset \Ann(F) \Longleftrightarrow I_{\bb{v}}\subset \Ann(F)_{\bb{v}}$. 
\end{proposition}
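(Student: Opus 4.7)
The forward direction is immediate: restricting the inclusion $I \subset \Ann(F)$ to the multigraded piece $S_{\bb{v}}$ gives $I_{\bb{v}} \subset \Ann(F)_{\bb{v}}$. All the content is in the converse, and my plan is to extract it from a single structural fact about Cox duality. By the very definition recalled in the excerpt, $T_{\bb{u}} = H^0(\mathcal{O}_X(D_{\bb{u}}))^{*} = S_{\bb{u}}^{*}$, so the apolarity pairing $S_{\bb{u}} \times T_{\bb{u}} \to T_{\bb{0}} = \CC$ is perfect in every multidegree $\bb{u} \in \ZZ^s$. In particular, an element $G \in T_{\bb{u}}$ vanishes if and only if $\varphi \circ G = 0$ for every $\varphi \in S_{\bb{u}}$.

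With this in hand, the converse is a one-line argument. Fix a homogeneous $\psi \in I_{\bb{w}}$ for some $\bb{w}\in\ZZ^s$, and set $G := \psi \circ F \in T_{\bb{v}-\bb{w}}$; the goal is $G = 0$. For any $\varphi \in S_{\bb{v}-\bb{w}}$, associativity of the contraction action rewrites
\[
\varphi \circ G \;=\; \varphi \circ (\psi \circ F) \;=\; (\varphi \cdot \psi) \circ F.
\]
Since $I$ is an ideal, $\varphi \cdot \psi \in I_{\bb{v}}$, and the hypothesis $I_{\bb{v}} \subset \Ann(F)_{\bb{v}}$ forces $(\varphi \cdot \psi) \circ F = 0$. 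Thus $\varphi \circ G = 0$ for every $\varphi \in S_{\bb{v}-\bb{w}}$, and the perfect pairing gives $G = 0$. The degenerate case $S_{\bb{v}-\bb{w}} = 0$ (which can occur when $\bb{v}-\bb{w}$ lies outside the effective part of $\ZZ^s$) is automatic since then $T_{\bb{v}-\bb{w}} = 0$ as well. Summing over the homogeneous components $I_{\bb{w}}$, we conclude $I \subset \Ann(F)$.

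Because the argument is essentially a duality computation, there is no serious obstacle. The two points that must be handled with care are: the nondegeneracy of the pairing $S_{\bb{u}} \times T_{\bb{u}} \to \CC$ in every single multidegree, and the associativity $(\varphi \cdot \psi) \circ F = \varphi \circ (\psi \circ F)$ of the multigraded contraction. Both are purely formal once $T$ is identified with the graded $\CC$-dual of $S$ equipped with the iterated-derivative action induced by $y_i \circ x_j = \delta_{i,j}$, so in the write-up I would state the perfect pairing as a lemma and then deploy the displayed identity.
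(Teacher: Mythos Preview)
Your proof is correct and is the standard duality argument for this fact. The paper does not supply its own proof of this proposition; it simply cites \cite[Proposition~3.5]{bb19}, so there is nothing further to compare.
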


\begin{definition}[{\bf Border rank}]
For a point $[F]\in \PP(T_{\bb{v}})$, the $X$-{\it border rank} of $F$ is the minimal integer $r\geq 1$ such that $[F]\in \sigma_r(X)$, the $r$-th secant variety of $X\subset \PP(T_{\bb{v}})$. When $X$ and its embedding are fixed, the border rank of $F$ is denoted $\underline{{\bf rk}}_X(F)$ (or simply $\underline{{\bf rk}}(F)$, whenever dropping $X$ should not cause confusion). 
\end{definition}

\begin{definition}[{\bf Smoothable rank}]
The $X$-{\it smoothable rank} of $[F]\in \PP(T_{\bb{v}})$ is the minimal integer $r\geq 1$ such that there exists a finite scheme $Z\subset X$ of length $r$ which is {\it smoothable}
(in $X$) and $[F]\in \langle R\rangle$. Equivalently, there exists a finite smoothable scheme $Z\subset X$ of length $r$ whose $B$-saturated ideal satisfies $I_Z\subset \mathrm{Ann}(F)\subset S$. This is in analogy with the classical Apolarity lemma cited above, when $X=\PP^n$ and $Z$ is a smooth finite scheme. When $X$ and its embedding are fixed, the smoothable rank of $F$ is denoted $\mathrm{srk}_X(F)$ (or simply $\mathrm{srk}(F)$, whenever dropping $X$ should not cause confusion).
\end{definition}

\begin{remark}
Smoothable and border ranks satisfy $\underline{{\bf rk}}(F)\leq \mathrm{srk}(F)$; see \cite[\S 2.1]{bb15}. Equality holds
for general points of any secant variety. The difference arises in general from the {\it failure} of the equality $\langle \lim_{t\rightarrow 0} Z(t)\rangle = \lim_{t\rightarrow 0} \langle Z(t) \rangle$, where $Z(t)$ is a family of finite schemes say over the base $\mathrm{Spec}(\CC[t^{\pm 1}])$  and $\lim_{t\rightarrow 0} R(t)$ denotes its flat limit.
\end{remark}

Motivated by this exceptional behaviour, Buczy\'nska and Buczy\'nski introduced {\it wildness}:

\begin{definition}[{\bf Wildness}]\label{def:wild}
An element $F\in T_{\bb{v}}$ is {\it wild} if $\mathrm{srk}(F) > \brk(F)$. 
\end{definition}

\begin{definition}[{\bf Cactus rank}]\label{def:cactus}
The $X$-{\it cactus rank} of $[F]\in \PP(T_{\bb{v}})$ is the minimal integer $r\geq 1$ such that there exists a finite scheme $Z\subset X$ of length $r$  such that $[F]\in \langle Z\rangle$. 
Equivalently, there exists a finite scheme $Z\subset X$ of length $r$ whose $B$-saturated ideal satisfies $I_Z\subset \mathrm{Ann}(F)$. When $X$ and its embedding are fixed, the cactus rank of $F$ is denoted $\mathrm{crk}_X(F)$ (or simply $\mathrm{crk}(F)$, whenever dropping $X$ should not cause confusion).\end{definition}

We finish off this subsection recalling the notion of conciseness and minimal border rank elements. 

\begin{definition}[{\bf Conciseness}]
A homogeneous $F\in T_{\bb{v}}$ is said to be {\it concise} whenever $\Ann(F)_{{\bf a}_i} = 0$ for all $0\leq i\leq n$. 
\end{definition}

The following lemma is again well-known and motivates the next definition. 
\begin{lemma}
Let $F\in T_{\bb{v}}$ be concise. Then $\brk(F)\geq  \max_{i\in [n]}\lbrace \dim_{\CC} T_{{\bf a}_i}\rbrace$. 
\end{lemma}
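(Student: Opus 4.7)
The plan is to use the apolar multiplication maps combined with a semicontinuity argument in the spirit of flattenings/catalecticants. For each $i \in \{0,1,\ldots,n\}$, introduce the $\CC$-linear map
\[
\mu_i \colon S_{\bb{a}_i} \longrightarrow T_{\bb{v}-\bb{a}_i}, \qquad \psi \longmapsto \psi \circ F.
\]
Its kernel is precisely $\Ann(F)_{\bb{a}_i}$, which vanishes by conciseness of $F$. Hence $\rk(\mu_i) = \dim_{\CC} S_{\bb{a}_i} = \dim_{\CC} T_{\bb{a}_i}$, using that the differential pairing between $S_{\bb{a}_i}$ and $T_{\bb{a}_i}$ is perfect. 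It is therefore enough to show that $\rk(\mu_i) \leq \brk(F)$ for each $i$ and then take the maximum over $i$.

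For the upper bound, observe that $G \mapsto \mu_{i,G}$ is $\CC$-linear from $T_{\bb{v}}$ to $\mathrm{Hom}(S_{\bb{a}_i}, T_{\bb{v}-\bb{a}_i})$, and that the locus $\{G \in T_{\bb{v}} : \rk(\mu_{i,G}) \leq r\}$ is Zariski closed (cut out by all $(r+1) \times (r+1)$ minors) and cone-invariant. Since $\sigma_r(X)$ is by definition the closure of the set of elements of the form $[G_1 + \cdots + G_r]$ with $[G_j] \in X$, and since $\mu_{i,\cdot}$ is additive in $F$, it suffices to prove that $\rk(\mu_{i,F_p}) \leq 1$ for a rank-one element $F_p$ representing a point $p \in X$: then additivity gives the bound $\leq r$ on each $r$-secant sum, and passing to the closure gives the bound $\leq r = \brk(F)$ at $F$.

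For this final step I would appeal to multigraded apolarity. Since $[F_p] \in \langle \{p\}\rangle$ tautologically, the $B$-saturated ideal $I_p$ of the reduced point satisfies $I_{p,\bb{v}} \subseteq \Ann(F_p)_{\bb{v}}$, hence $I_p \subseteq \Ann(F_p)$ in all multidegrees by Proposition~\ref{inclusion in onedeg}. Therefore
\[
\rk(\mu_{i,F_p}) = \dim_{\CC}\bigl(S_{\bb{a}_i}/\Ann(F_p)_{\bb{a}_i}\bigr) \leq \dim_{\CC}\bigl(S_{\bb{a}_i}/I_{p,\bb{a}_i}\bigr) \leq 1,
\]
the last inequality because, via the Cox correspondence, evaluation at the reduced point $p$ identifies $(S/I_p)_{\bb{a}_i}$ with a subspace of the one-dimensional fiber of $\mathcal{O}_X(\bb{a}_i)$ at $p$. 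Combining yields $\dim_{\CC} T_{\bb{a}_i} = \rk(\mu_i) \leq \brk(F)$ for every $i$, proving the lemma. The only step that really requires care is this last rank-one bound in the full toric generality; the semicontinuity/additivity reduction is formal.
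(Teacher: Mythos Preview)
The paper does not actually give a proof of this lemma: it is stated as ``well-known'' and immediately followed by the definition of minimal border rank, with no argument supplied. So there is nothing to compare against on the paper's side.

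Your proof is correct and is exactly the standard flattening/catalecticant argument that makes the lemma well-known. The reduction via linearity of $G\mapsto\mu_{i,G}$, subadditivity of rank, and lower semicontinuity (closedness of determinantal loci) is formal, as you say. The rank-one step is also fine in the toric generality: for a reduced point $p\in X$ with $B$-saturated ideal $I_p$, Theorem~\ref{thm:Cox_bijection} gives $(I_p)_{\bb{a}_i}=H^0(X,\mathcal{I}_p(\bb{a}_i))$, and the long exact sequence from $0\to\mathcal{I}_p\to\mathcal{O}_X\to\mathcal{O}_{\{p\}}\to 0$ twisted by $\bb{a}_i$ yields an injection $(S/I_p)_{\bb{a}_i}\hookrightarrow H^0(\mathcal{O}_{\{p\}}(\bb{a}_i))\cong\CC$, since $\mathcal{O}_X(\bb{a}_i)$ is a line bundle on the smooth variety $X$. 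No nefness of $\bb{a}_i$ is needed for this, so your caution about the ``full toric generality'' is unwarranted: the argument goes through without extra hypotheses.
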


\begin{definition}
Let $F\in T_{\bb{v}}$ be concise. If $\brk(F) = \max_{i\in [n]}\lbrace \dim_{\CC} T_{{\bf a}_i}\rbrace$, then $F$ is said to be of {\it minimal border rank}. 
\end{definition}

\subsection{Multigraded Hilbert schemes and border apolarity}

Given an admissible numerical function $h: \mathrm{Pic}(X)=\ZZ^s\rightarrow \NN$, let $\mathrm{Hilb}^h_S$ be the scheme represent 
the functor whose points are the homogeneous ideals $I\subset S$ with $\mathrm{HF}(S/I,\cdot) = h$. 
The functor in question is indeed representable and the scheme $\mathrm{Hilb}^h_S$ is the {\it Haiman-Sturmfels multigraded Hilbert scheme}. This natural 
object was introduced by Haiman and Sturmfels \cite{hs}. The topological properties of this scheme are somewhat wilder than its classical
cousin, i.e. the {\it Grothendieck-Hilbert scheme}, parameterizing schemes with a prescribed Hilbert polynomial. 

Since $S$ is the Cox ring of $X$ and hence is positively graded, $\mathrm{Hilb}^h_S$ is a projective scheme for any Hilbert function $h$. 
We usually ignore the scheme structure of $\mathrm{Hilb}^h_S$ and look at the underlying reduced scheme $(\mathrm{Hilb}^h_S)_{\mathrm{red}}$. 
A closed point of $\mathrm{Hilb}^h_S$ corresponds to an ideal $I\subset S$ with Hilbert function $h$: we express this membership 
in a set-theoretic fashion as $I\in \mathrm{Hilb}^h_S$. 

For any integer $r\geq 0$, define the numerical function $h_{r,X}: \mathrm{Pic}(X)\rightarrow \NN$ to be 
\[
h_{r,X}(\bb{v}) = \min\lbrace r, \dim H^0(D)\rbrace = \min\lbrace r, \dim_{\CC} S_{\bb{v}}\rbrace, 
\]
where the Cartier divisor $D$ is identified with $\bb{v}$. 
The function $h_{r,X}$ is the {\it generic Hilbert function of $r$ points on $X$} \cite[\S 3.2]{bb19}. By \cite[Lemma 3.9]{bb19} the equality $\mathrm{HF}(S/I_Z,\cdot) = h_{r,X}$ holds for a very general collection of $r$ points $Z\subset X$. 

Buczy\'nska-Buczy\'nski prove that the scheme $\mathrm{Hilb}^{h_{r,X}}_S$ contains a unique irreducible component, called $\mathrm{Slip}_{r,X}$,
that is the closure of the locus of all $I\in \mathrm{Hilb}^{h_{r,X}}_S$ that are the $B$-saturated ideals of $r$ distinct points in $X$ \cite[Proposition 3.13]{bb19}.  

We are now ready to state the following groundbreaking result of Buczy\'nska-Buczy\'nski \cite[Theorem 3.15]{bb19}, which is a recent effective tool at our disposal to estimate border ranks of forms or tensors; as already mentioned, a major achievement of this method was demonstrated 
in \cite{chl19}.

\begin{theorem}[{\bf Border apolarity of  Buczy\'nska-Buczy\'nski}]\label{mainbb}
Keep the notation from above and let $F\in T_{\bb{v}}$. Then the following assertions are equivalent: 
\begin{enumerate}

\item[(i)] The $X$-border rank of $F$ satisfies $\brk_X(F)\leq r$;

\item[(ii)] there exists a homogeneous ideal $I\subset \Ann(F)\subset S$ which lies in the irreducible component $\mathrm{Slip}_{r,X}$. 
\end{enumerate}
\end{theorem}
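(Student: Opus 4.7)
The strategy is to pass everything to the finite-dimensional linear-algebraic setting in the embedding degree $\bb{v}$ using Proposition~\ref{inclusion in onedeg}, and to exploit the projectivity of $\mathrm{Hilb}^{h_{r,X}}_S$ to get closed images under projections. Define the incidence variety
\[
\mathcal I := \bigl\{(I,[F]) \in \mathrm{Hilb}^{h_{r,X}}_S \times \PP(T_\bb{v}) : I_\bb{v} \subset \Ann(F)_\bb{v}\bigr\}.
\]
This is closed: the universal family over the multigraded Hilbert scheme gives a morphism to the Grassmannian of $(\dim S_\bb{v} - h_{r,X}(\bb{v}))$-dimensional subspaces of $S_\bb{v}$, and the condition that $I_\bb{v}$ annihilates $F$ under the apolarity pairing cuts out a closed subset of the product. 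By Proposition~\ref{inclusion in onedeg}, membership in $\mathcal I$ is equivalent to the full ideal-theoretic containment $I \subset \Ann(F)$, so both (i) and (ii) become statements about $\mathcal I$ and its projections.

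For (i) $\Rightarrow$ (ii), let $\mathcal U \subset \mathcal I$ be the locus of pairs $(I_Z,[F])$ where $Z$ is a union of $r$ distinct points whose $B$-saturated ideal lies in $\mathrm{Hilb}^{h_{r,X}}_S$ and $[F]\in\langle Z\rangle$. By the classical Apolarity Lemma (the saturated-ideal case of Proposition~\ref{inclusion in onedeg}), the second projection satisfies $\pi_2(\mathcal U) = \{[F]: \rk_X(F)\leq r\}$, whose Zariski closure is $\sigma_r(X)$. Properness of $\pi_2: \overline{\mathcal U}\to \PP(T_\bb{v})$ gives $\sigma_r(X) \subset \pi_2(\overline{\mathcal U})$, so any $[F]\in\sigma_r(X)$ admits a preimage $(I_0,[F])\in\overline{\mathcal U}$. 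By construction $I_0$ is a limit of $B$-saturated ideals of $r$ distinct points with generic Hilbert function, hence $I_0 \in \mathrm{Slip}_{r,X}$, and $I_0 \subset \Ann(F)$ by the definition of $\mathcal I$.

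For (ii) $\Rightarrow$ (i), use that $I \in \mathrm{Slip}_{r,X}$ means $I = \lim_{t\to 0} I_{Z(t)}$ for a one-parameter family of $B$-saturated ideals of $r$ distinct-point schemes $Z(t)$. Continuity of the orthogonal complement in the Grassmannian of $h_{r,X}(\bb{v})$-dimensional subspaces of $T_\bb{v}$ implies $\langle Z(t)\rangle = (I_{Z(t)})_\bb{v}^{\perp} \to I_\bb{v}^{\perp}$ in $\PP(T_\bb{v})$, and by hypothesis $[F]\in I_\bb{v}^\perp$. Lift $[F]$ to a family $[F(t)]\in \langle Z(t)\rangle$ via a continuous section of the tautological subbundle over the Grassmannian. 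Each $F(t)$ has $X$-rank at most $r$, so passing to the limit gives $[F]\in\sigma_r(X)$, i.e.\ $\brk_X(F)\leq r$.

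The main obstacle I anticipate is the interplay between the scheme-theoretic Hilbert-scheme limit and the linear-algebraic perp-space limit. Concretely, one needs to know that a generic union of $r$ distinct points really has the Hilbert function $h_{r,X}$, so that flat limits can be taken inside $\mathrm{Hilb}^{h_{r,X}}_S$ rather than in a less tractable ambient space, which appeals to \cite[Lemma~3.9]{bb19}; and one must check that the apolarity pairing is genuinely compatible with flat limits, which is precisely what makes the closedness of $\mathcal I$ and the continuity of perpendicular subspaces interact as required in both directions.
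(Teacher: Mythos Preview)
The paper does not prove this theorem; it is quoted from Buczy\'nska--Buczy\'nski \cite[Theorem~3.15]{bb19}. Your argument follows essentially the same route as the original proof there: both pass to the incidence variety over $\mathrm{Hilb}^{h_{r,X}}_S$ via the morphism to the Grassmannian in degree $\bb{v}$ (this is \cite[Lemma~3.16]{bb19}, and the paper under review also uses it, e.g.\ in Proposition~\ref{prop:vspbar_of_general}), and both conclude by projectivity of the multigraded Hilbert scheme.

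One small slip: the equality $\pi_2(\mathcal U) = \{[F]: \rk_X(F)\leq r\}$ you assert is only a containment $\subseteq$, since a given $[F]$ of rank $\leq r$ might admit only decompositions by point sets whose Hilbert function is \emph{not} the generic one. This does not damage the argument: generic $r$-tuples of distinct points do have Hilbert function $h_{r,X}$ (the \cite[Lemma~3.9]{bb19} you invoke at the end), so the spans of such tuples already sweep out a dense subset of $\sigma_r(X)$, and properness still yields $\sigma_r(X) \subseteq \pi_2(\overline{\mathcal U})$.
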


One of the main contributions of 
Buczy\'nska-Buczy\'nski is the realization of a (projective and so topologically compact) parameter space of border decompositions of a given $F\in T_{\bb{v}}$. These are the {\it border varieties of sums of powers}, the main characters of this article. 

\begin{definition}\label{vspbar}
Let $h_{r,X}$ be the generic Hilbert function and $F\in T_{\bb{v}}$. The {\it border variety of sums of $r$ powers} $\VSPb(F, r)$ is the closed subvariety of $\mathrm{Slip}_{r,X}$ defined as follows: 
\[
\VSPb(F,r) = \left\lbrace J\in \mathrm{Slip}_{r,X} \mbox{ such that } J\subset \mathrm{Ann}(F)\right\rbrace. 
\]
Note that this is a closed subscheme of the projective scheme $\mathrm{Slip}_{r,X}$ and so projective. Although the definition is for an arbitrary $r\in \NN$, particularly interesting and meaningful is $\VSPb(F,\brk(F))$. 
\end{definition}

\begin{definition}[{\bf Border identifiability}]\label{def:border identifiable}
Let $F\in T_{\bb{v}}$. We say that $F$ is {\it border identifiable} if $\VSPb(F,\brk(F))$ is a single point.  
\end{definition}

\section{Multigraded regularity and Hilbert schemes of toric varieties}\label{sec:Hilbert schemes}
\subsection{Maclagan-Smith construction and the map to the Hilbert scheme} 

Recall that $X$ is a smooth projective complex toric variety with Cox ring $S$, a graded ring graded by $\mathrm{Pic}(X)\cong \ZZ^s$, with irrelevant ideal $B \subseteq S$. The semigroup $\mathbf{\mathcal  K}$ denotes the points in $\ZZ^s$ corresponding
to nef divisors on $X$; let us fix a minimal generating set $\mathcal C=\{\bb{c}_1,\ldots, \bb{c}_l\}$ of $\mathbf{\mathcal  K}$, i.e.  
$\mathbf{\mathcal  K} = \NN \bb{c}_1+ \cdots+\NN \bb{c}_l$.

Given a $\CC$-algebra $R$, let $S_R$ be the ring $R\otimes_\CC S$ and $B_R$ be the extension of the irrelevant ideal, i.e. $B_R = B\cdot S_R$; define $X_R$ to be the fiber product $X\times_{\mathrm{Spec}(\CC)} \mathrm{Spec}(R)$. Given a $\ZZ^s$-graded ideal $I_R$ of $S_R$, $\overline{I_R}$ denotes the saturation of $I_R$ with respect to $B_R$. Given any $\ZZ^s$-graded $S_R$-module $M$ and a subset $\mathcal{D}\subset \ZZ^s$, $M_{|\mathcal{D}}$ is the graded $R$-module $\bigoplus_{\mathbf{v}\in \mathcal D} M_\mathbf{v}$.

As in the case $R=\mathrm{Spec}(\CC)$ explained in \cite{Cox95}, there is an exact functor $M\mapsto \widetilde{M}$ from the category of
$\ZZ^s$-graded $S_R$-modules to the category of quasicoherent sheaves on $X_R$. Given a cone $\sigma$ in the fan of $X$, let 
$U_{R,\sigma}$ be the corresponding affine open subset of $X_R$. Its coordinate ring
is $(S_R)_{(\sigma)}$--the degree zero part of the localization $(S_R)_\sigma$ of $S_R$ at the product
of variables corresponding to rays outside $\sigma$. The restriction of $\widetilde{M}$ 
to $U_{R,\sigma}$ is given by the degree zero part of 
$M\otimes_{S_R} (S_R)_{\sigma}$.
In particular, every $\ZZ^s$-homogeneous ideal $I\subset S_R$ defines a quasicoherent sheaf of ideals on $X_R$ and thus we have a corresponding closed subscheme of $X_R$.

\begin{lemma}\label{lem:saturation_defines_the_same_subscheme}
    The ideals $I$ and $\overline{I}$ define the same subscheme of $X_R$.
\begin{proof}
    We have an exact sequence $0\to I\to \overline{I} \to \overline{I}/I \to 0$ so it is enough to show that $\widetilde{\overline{I}/I}$ is the zero sheaf. This can be checked on the affine open covering 
    $\{U_{R,\sigma}\}_{\sigma}$. By definition every element of $\overline{I}/I$
    is annihilated by a certain power of $B_R$. In particular, it is annihilated by a certain power
    of the product of variables of $S$ corresponding to rays outside of $\sigma$. This product is invertible
    in $(S_R)_{\sigma}$, so $(\widetilde{\overline{I}/I})_{|U_{R,\sigma}} = 0$.
\end{proof}
\end{lemma}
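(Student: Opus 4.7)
The plan is to translate the statement into one about quasi-coherent sheaves on $X_R$, via the exact sheafification functor $M\mapsto \widetilde{M}$ from $\ZZ^s$-graded $S_R$-modules recalled just before the lemma. Two $\ZZ^s$-graded ideals of $S_R$ cut out the same closed subscheme of $X_R$ exactly when the associated ideal sheaves coincide, so applying this functor to the short exact sequence $0 \to I \to \overline{I} \to \overline{I}/I \to 0$ reduces the problem to the single task of showing that the quasi-coherent sheaf $\widetilde{\overline{I}/I}$ is zero.

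To verify vanishing I would work locally on the standard affine cover $\{U_{R,\sigma}\}_{\sigma\in\Sigma}$ and show that each restriction is zero. By the recipe for $\widetilde{\,\cdot\,}$ on a toric variety, the restriction of $\widetilde{\overline{I}/I}$ to $U_{R,\sigma}$ is the degree-zero part of the localization of $\overline{I}/I$ at the monomial $y_\sigma := \prod_{i\notin \sigma} y_i$. Thus the local statement becomes: every element of $\overline{I}/I$ dies after inverting $y_\sigma$.

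The key -- and essentially only substantive -- observation is that $y_\sigma$ belongs to $B_R$ by the very definition of the irrelevant ideal from the fan $\Sigma$. Since the elements of $\overline{I}/I$ are, by the definition of saturation, annihilated by some power of $B_R$, they are in particular annihilated by a power of $y_\sigma$, and hence become zero once $y_\sigma$ is inverted. This yields the vanishing chart-by-chart and therefore globally. I do not anticipate any real obstacle: the whole argument is a translation between saturation with respect to $B_R$ and torsion for the localizations at the chart monomials $y_\sigma$. The one thing worth checking is that the sheafification functor for $X_R$ still enjoys the exactness and local description used above in the relative setting over $R$, but this is exactly what the setup immediately preceding the lemma is designed to record.
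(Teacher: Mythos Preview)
Your proposal is correct and follows essentially the same approach as the paper: both apply the exact sheafification functor to $0\to I\to \overline{I}\to \overline{I}/I\to 0$, reduce to showing $\widetilde{\overline{I}/I}=0$, and verify this on each chart $U_{R,\sigma}$ by observing that every element of $\overline{I}/I$ is annihilated by a power of the chart monomial $y_\sigma=\prod_{i\notin\sigma}y_i\in B_R$, which becomes a unit in $(S_R)_\sigma$.
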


Cox showed that the functor $M\mapsto \widetilde{M}$ induces a bijection between closed subschemes of $X$ and $\ZZ^s$-homogeneous $B$-saturated ideals of $S$.
The following relative version in the smooth case is well-known.

\begin{theorem}[{\cite[Corollary~3.8]{Cox95}}]\label{thm:Cox_bijection}
For any noetherian $\CC$-algebra $R$, there is a bijection between closed subschemes of $X_R$ and $\ZZ^s$-homogeneous $B_R$-saturated ideals of $S_R$. Furthermore, if $\mathcal{I}$ is the ideal sheaf of a subscheme $Z$, then the ideal corresponding to $Z$ is $\bigoplus_{\bb{v}\in \mathrm{Pic}(X)} H^0(X_R, \mathcal{I}(\bb{v}))$.
\end{theorem}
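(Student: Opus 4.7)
The plan is to mimic Cox's original argument in the absolute case and extend it to an arbitrary noetherian base $R$, using flat base change and the affine cover $\{U_{R,\sigma}\}_{\sigma \in \Sigma}$. I would first define maps in both directions and then verify they are mutually inverse.

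For the forward map, given a $\ZZ^s$-homogeneous $B_R$-saturated ideal $I \subseteq S_R$, the exact functor $M \mapsto \widetilde{M}$ sends $I$ to a quasicoherent sheaf of ideals on $X_R$, which cuts out a closed subscheme $\Phi(I) \subseteq X_R$. For the backward map, given a closed subscheme $Z \subseteq X_R$ with ideal sheaf $\mathcal{I}_Z$, I set
\[
\Psi(Z) \;=\; \bigoplus_{\bb{v} \in \mathrm{Pic}(X)} H^0(X_R, \mathcal{I}_Z(\bb{v})) \;\subseteq\; \bigoplus_{\bb{v}\in \mathrm{Pic}(X)} H^0(X_R, \mathcal{O}_{X_R}(\bb{v})) \;=\; S_R,
\]
where the last equality uses that $X$ is smooth projective (so Cox's identification $S = \bigoplus_\bb{v} H^0(X, \mathcal{O}_X(\bb{v}))$ holds) combined with flat base change along $\mathrm{Spec}(R) \to \mathrm{Spec}(\CC)$, valid since $X$ is flat over $\CC$. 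This makes $\Psi(Z)$ naturally a $\ZZ^s$-homogeneous ideal of $S_R$.

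Next I would check that $\Psi(Z)$ is $B_R$-saturated. If $f \in S_R$ is homogeneous and $f \cdot B_R^N \subseteq \Psi(Z)$ for some $N$, then on each affine chart $U_{R,\sigma}$ the image of $f$ in $(S_R)_{(\sigma)}$ lies in the ideal defining $Z \cap U_{R,\sigma}$, because the product of variables corresponding to rays outside $\sigma$ is invertible on $U_{R,\sigma}$ and a suitable power of this product lies in $B_R^N$. The sheaf condition then forces $f$ to be a global section of $\mathcal{I}_Z(\deg f)$, hence $f \in \Psi(Z)$.

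To check $\Psi \circ \Phi = \mathrm{id}$, given a $B_R$-saturated $I$, I need $I_\bb{v} = H^0(X_R, \widetilde{I}(\bb{v}))$ for every $\bb{v}$. Using the cover $\{U_{R,\sigma}\}_\sigma$, the restriction $\widetilde{I}(\bb{v})|_{U_{R,\sigma}}$ corresponds to the degree-$\bb{v}$ part of the localized ideal $I \cdot (S_R)_\sigma$, and taking global sections recovers $\overline{I}_\bb{v}$; saturation of $I$ yields $\overline{I} = I$. For $\Phi \circ \Psi = \mathrm{id}$, given $Z$, I set $I = \Psi(Z)$ and show $\widetilde{I} = \mathcal{I}_Z$: on each $U_{R,\sigma}$, both sheaves restrict to the degree-zero part of the same localized ideal, and gluing gives the equality globally, so they cut out the same subscheme (using Lemma \ref{lem:saturation_defines_the_same_subscheme} to identify $\Phi(I)$ with $Z$).

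The main obstacle I expect is the careful relative version of Serre's identity $\widetilde{\Gamma_*(\mathcal{F})} = \mathcal{F}$ on the multigraded toric side. In the absolute setting this is built into Cox's construction; in the relative setting one must check that formation of both $H^0(X_R, -)$ and the sheafification $\widetilde{(-)}$ commutes with base change $\CC \to R$. This reduces, via the affine cover and the exactness of $M \mapsto \widetilde{M}$, to the observation that $(S_R)_\sigma = (S_\sigma) \otimes_\CC R$ and that $H^0(X_R, \mathcal{O}_{X_R}(\bb{v}))$ is computed by the \v{C}ech complex on $\{U_{R,\sigma}\}$, which by flat base change equals $H^0(X, \mathcal{O}_X(\bb{v})) \otimes_\CC R = S_\bb{v} \otimes_\CC R$. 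Once this is in place, the absolute bijection of Cox upgrades termwise to the relative statement, and the formula for the corresponding ideal is exactly the definition of $\Psi$.
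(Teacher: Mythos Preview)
The paper does not actually prove this theorem: it is stated with a citation to \cite[Corollary~3.8]{Cox95} and the remark ``The following relative version in the smooth case is well-known,'' with no argument given. So there is no proof in the paper to compare against.

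That said, your sketch is the standard route and is essentially how one would justify the relative statement. The maps $\Phi$ and $\Psi$ you define are the right ones, and the reduction to Cox's absolute result via flat base change over the field $\CC$ (so every $R$ is flat) together with the affine cover $\{U_{R,\sigma}\}$ is exactly what is needed. One point to tighten: in the verification of $\Psi\circ\Phi=\mathrm{id}$ you assert that taking global sections of $\widetilde{I}(\bb v)$ recovers $\overline{I}_{\bb v}$; this is the multigraded analogue of the classical $\Gamma_*(\widetilde{I})=\overline{I}$ and deserves an explicit check on the \v{C}ech complex of $\{U_{R,\sigma}\}$ rather than being stated. Similarly, in $\Phi\circ\Psi=\mathrm{id}$ you should make precise why the degree-zero part of the localization of $\Psi(Z)$ at $\sigma$ agrees with $\Gamma(U_{R,\sigma},\mathcal{I}_Z)$; this is where smoothness of $X$ (hence freeness of $\mathrm{Pic}(X)$ and the absence of torsion phenomena) is used, and is the content behind the paper's warning that the statement fails in the simplicial non-smooth case.
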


The statement generally fails in the simplicial non smooth case \cite[\S 2]{Ga23}.

\begin{definition}
Given a field extension $\CC \subseteq K$ and a finitely generated $\ZZ^s$-graded $S_K$-module $M$, the multigraded Hilbert function $\mathrm{HF}(M,-)\colon \ZZ^s \to \NN$ is defined by $\mathrm{HF}(M, \bb{v}) = \dim_{K} M_{\bb{v}}$. 
\end{definition}

\begin{theorem}[{\cite[Proposition 2.10]{MS05}}]
There exists a unique polynomial $P_M({\bf t})\in \mathbb Q[t_1,\ldots, t_s]$ such that $P_M(\bb{v}) = \mathrm{HF}(M,\bb{v})$ for all $\bb{v}\in {\bf \mathcal{K}}$ sufficiently far from the boundary of ${\bf \mathcal{K}}$. 
\end{theorem}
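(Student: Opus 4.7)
The plan is to reduce to the case of twisted free modules via Hilbert's syzygy theorem, and then handle the free case via cohomology of line bundles on $X$. Uniqueness is essentially free: since $\mathbf{\mathcal K}$ is a full-dimensional rational polyhedral cone, any translate of $\mathbf{\mathcal K}$ sitting sufficiently deep inside $\mathbf{\mathcal K}$ contains lattice points that are Zariski dense in $\mathbb Q^s$, so any two polynomials of $\mathbb Q[t_1,\ldots,t_s]$ agreeing on such a set must coincide.

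For existence, since $S_K$ is a polynomial ring over a field, Hilbert's syzygy theorem furnishes a finite $\ZZ^s$-graded free resolution
\[
0 \to F_p \to \cdots \to F_0 \to M \to 0, \qquad F_i = \bigoplus_j S_K(-\bb{a}_{i,j}).
\]
Additivity of Hilbert functions across the resolution gives $\mathrm{HF}(M,\bb{v}) = \sum_i (-1)^i \sum_j \mathrm{HF}(S_K, \bb{v}-\bb{a}_{i,j})$, so it suffices to show that for each fixed $\bb{a}$ the function $\bb{v} \mapsto \dim_K (S_K)_{\bb{v}-\bb{a}}$ agrees with a polynomial in $\bb{v}$ whenever $\bb{v}-\bb{a}$ lies deep enough in $\mathbf{\mathcal K}$.

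For this free case, flat base change yields $\dim_K (S_K)_{\bb{v}-\bb{a}} = \dim_\CC S_{\bb{v}-\bb{a}} = h^0(X, \mathcal O_X(\bb{v}-\bb{a}))$, where the second equality uses the description of the Cox ring as $\bigoplus_D H^0(\mathcal O_X(D))$. When $\bb{v}-\bb{a}$ is nef, Demazure vanishing on the smooth projective toric variety $X$ forces $h^i(X, \mathcal O_X(\bb{v}-\bb{a})) = 0$ for all $i > 0$, so $h^0$ coincides with the Euler characteristic $\chi(\mathcal O_X(\bb{v}-\bb{a}))$. By Hirzebruch-Riemann-Roch (Snapper's theorem), $\chi$ is a polynomial of total degree $\dim X$ in the class of the line bundle, hence a polynomial in $\bb{v}$. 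Since the twists $\bb{a}_{i,j}$ appearing in the resolution are finite in number, one can carve out a sub-region of $\mathbf{\mathcal K}$ bounded away from the boundary by $\max_{i,j}\|\bb{a}_{i,j}\|$, on which every shift $\bb{v}-\bb{a}_{i,j}$ is simultaneously nef; summing the polynomial contributions across the resolution yields the desired $P_M$ on this region.

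The main obstacle is the cohomological step: controlling how far one must penetrate into $\mathbf{\mathcal K}$ so that Demazure vanishing applies uniformly to every twist appearing in the finite resolution. Once this is set up, the reduction via syzygies and the polynomiality of $\chi$ combine immediately to produce $P_M$, and uniqueness then extends the agreement to the canonical polynomial of the statement.
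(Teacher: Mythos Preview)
The paper does not prove this statement: it is quoted verbatim as \cite[Proposition~2.10]{MS05} and used as a black box. So there is no in-paper argument to compare against.

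Your proposal is correct and is essentially the standard route to this result. A few remarks on the details you left informal. For the free case you need $\dim_K (S_K)_{\bb{w}} = h^0(X,\mathcal O_X(\bb{w}))$, which holds for all $\bb{w}\in\ZZ^s$ by the Cox-ring description and flat base change, so nothing subtle there. The step that genuinely uses the nef hypothesis is Demazure vanishing, which on a smooth projective toric variety kills the higher cohomology of any nef line bundle and lets you replace $h^0$ by $\chi$; the polynomiality of $\chi$ then follows from Snapper/Hirzebruch--Riemann--Roch. Your ``carve out a sub-region'' step is also fine once made precise: since $\mathcal K\otimes\RR$ is full-dimensional, for any finite list of shifts $\bb{a}_{i,j}\in\ZZ^s$ the finite intersection $\bigcap_{i,j}(\bb{a}_{i,j}+\mathcal K)$ contains a translate $\bb{u}+\mathcal K$, and on that translate every $\bb{v}-\bb{a}_{i,j}$ is nef. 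Uniqueness is exactly as you say.

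For context, Maclagan and Smith's own argument in \cite{MS05} is organised around their multigraded regularity (the same machinery this paper later invokes): they show that once $M$ is $\bb m$-regular the local-cohomology corrections to $\mathrm{HF}$ vanish on $\bb m+\mathcal K$, and identify the Hilbert polynomial via sheaf Euler characteristics. Your syzygy/Demazure shortcut bypasses the regularity formalism and is arguably more elementary for this particular statement, at the cost of not giving the explicit regularity-based control on \emph{where} the polynomial starts to agree with $\mathrm{HF}$ that the paper exploits elsewhere (e.g.\ Theorem~\ref{thm:uniform_bound} and Lemma~\ref{lem:Hilbert_polynomials_of_the_family}).
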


\noindent This polynomial $P_M({\bf t})$ is the {\it multigraded Hilbert polynomial} of $M$. 

Let $\CC-${\bf Alg} and {\bf Set} be the category of noetherian $\CC$-algebras and the category of sets, respectively.
Given a polynomial $P\in \mathbb Q[t_1,\ldots, t_s]$, the {\it toric Hilbert functor} is the functor 
\[
\mathcal{H}\mathrm{ilb}^P(X): \CC{\bf -Alg}\longrightarrow  {\bf Set}
\]
defined by
\begin{align*}
R\mapsto \{ Y\subseteq X\times_{\CC} \mathrm{Spec}(R) \ &| \ Y \mbox{ is flat over } \mathrm{Spec}(R) \mbox{ and } \mbox{ for every }r\in \mathrm{Spec}(R) \\
& \mbox{ the multigraded Hilbert polynomial of } S_{\kappa(r)} / I_{Y_r} \mbox{ is } P\} 
\end{align*}
where $Y_r$ is the scheme-theoretic fiber of $Y$ over $r\in \mathrm{Spec}(R)$, $\kappa(r)$ is the residue field of $r$ and $I_{Y_r}$ is the unique $B_{\kappa(r)}$-saturated $\ZZ^s$ homogeneous ideal of $S_{\kappa(r)}$ defining $Y_r$ (see Theorem~\ref{thm:Cox_bijection}). Maclagan and Smith showed that the toric Hilbert functor is represented by a projective scheme over $\CC$ \cite[Theorem 6.2]{MS05}. We denote the scheme representing it $\mathrm{Hilb}^P(X)$. 

In the $\mathbb{Z}$-graded case, the saturation of an ideal is determined by the intersection of this ideal with any power of the irrelevant ideal. Similarly, we have the following fact.

\begin{lemma}[{\cite[Lemma~6.8]{MS}}]\label{lem:equality_of_sat}
    Let $R$ be a noetherian $\CC$-algebra. For every $\ZZ^s$-graded ideal $I \subseteq S_R$ and every $\bb{u}\in \mathbf{\mathcal  K}$, the ideals $\overline{I}$ and $\overline{(I_{|\bb{u}+\mathbf{\mathcal  K}})} $ coincide. 
\end{lemma}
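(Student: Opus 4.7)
The plan is to prove the equality of the two saturated ideals by showing both inclusions. The containment $\overline{(I_{|\bb{u}+\mathbf{\mathcal{K}}})} \subseteq \overline{I}$ is immediate from $I_{|\bb{u}+\mathbf{\mathcal{K}}} \subseteq I$ and the monotonicity of saturation. For the reverse inclusion, since $\overline{(I_{|\bb{u}+\mathbf{\mathcal{K}}})}$ is already $B_R$-saturated, it suffices to prove that $I \subseteq \overline{(I_{|\bb{u}+\mathbf{\mathcal{K}}})}$; then taking saturations of both sides yields $\overline{I} \subseteq \overline{(I_{|\bb{u}+\mathbf{\mathcal{K}}})}$.

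The heart of the argument would be to exploit an ample class $\bb{w}$ in the interior of the nef cone (which exists because $X$ is projective) to ``push'' every element of $I$ into the region $\bb{u}+\mathbf{\mathcal{K}}$ by multiplication. Concretely, fix $f\in I_{\bb{v}}$ for arbitrary $\bb{v}\in \ZZ^s$. For $N$ sufficiently large one has $\bb{v}+N\bb{w}\in \bb{u}+\mathbf{\mathcal{K}}$, since the fixed displacement $\bb{v}-\bb{u}$ becomes negligible compared with $N\bb{w}$ in the open ample cone around $\bb{w}$. Then
\[
f\cdot S_{R, N\bb{w}} \;\subseteq\; I_{\bb{v}+N\bb{w}} \;=\; (I_{|\bb{u}+\mathbf{\mathcal{K}}})_{\bb{v}+N\bb{w}} \;\subseteq\; (I_{|\bb{u}+\mathbf{\mathcal{K}}}),
\]
so the ideal $(S_{R, N\bb{w}})\cdot f$ is contained in $(I_{|\bb{u}+\mathbf{\mathcal{K}}})$.

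To conclude, I would invoke the auxiliary fact that the ideal $(S_{N\bb{w}})\subseteq S$ contains some power $B^M$, which extends by base change to $(S_{R, N\bb{w}}) \supseteq B_R^M$. Granted this, $B_R^M\cdot f \subseteq (I_{|\bb{u}+\mathbf{\mathcal{K}}})$, so $f\in \overline{(I_{|\bb{u}+\mathbf{\mathcal{K}}})}$, which completes the proof of the lemma.

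The main obstacle is establishing this auxiliary containment $B^M \subseteq (S_{N\bb{w}})$. I would argue it as follows: for $N$ sufficiently large, $N\bb{w}$ is very ample on $X$, so the global sections $H^0(X,\mathcal{O}_X(N\bb{w})) = S_{N\bb{w}}$ globally generate $\mathcal{O}_X(N\bb{w})$ and define a closed embedding of $X$. Equivalently, every point $x\in \mathrm{Spec}(S) = \mathbb{A}^{n+1}$ not in $V(B)$ admits some $s\in S_{N\bb{w}}$ with $s(x)\neq 0$, i.e.\ $V(S_{N\bb{w}}) \subseteq V(B)$. The Nullstellensatz then gives $\sqrt{(S_{N\bb{w}})} \supseteq \sqrt{B}$, and Noetherianity of $S$ upgrades this to $(S_{N\bb{w}}) \supseteq B^M$ for some $M$; tensoring with $R$ is harmless because both the containment and the generation statement are preserved.
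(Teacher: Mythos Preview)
Your argument is correct and follows a genuinely different path from the paper's. Both proofs dispatch the easy inclusion $\overline{(I_{|\bb{u}+\mathcal{K}})}\subseteq\overline{I}$ identically, but for the reverse inclusion the paper works directly with $f\in\overline{I}$ and produces a nonzerodivisor $h\in(S_R)_{\bb{v}}$ on $S_R/\overline{J}$ (where $J=(I_{|\bb{u}+\mathcal{K}})$) via prime avoidance together with \cite[Lemma~2.4]{MS} (any homogeneous prime containing all of $(S_R)_{\bb{v}}$ for $\bb{v}\in\mathcal{K}$ contains $B_R$); it then shows $fh\in\overline{J}$ and cancels $h$. Your route avoids the nonzerodivisor machinery entirely by using the more elementary fact that $(S_{\bb{w}'})\supseteq B^M$ for any $\bb{w}'\in\mathcal{K}$, which indeed follows straight from $\mathcal{K}=\bigcap_\sigma\NN A_{\widehat{\sigma}}$ (each generator $\prod_{i\notin\sigma}x_i$ of $B$ has a power divisible by a monomial of degree $\bb{w}'$ supported on $\{x_i:i\notin\sigma\}$), so your Nullstellensatz detour is not even needed. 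What the paper's approach buys is that the nonzerodivisor lemma it isolates here is reused verbatim several times later (e.g.\ in the proofs of Theorem~\ref{thm:restriction on saturable locus is closed immersion}, Theorem~\ref{thm:idenfiability via toric regularity}, and Lemma~\ref{lem:sufficient condition for all nonsat ideals}), so its appearance in this lemma doubles as preparation for those arguments.
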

\begin{proof}
    Let $J = (I_{|\bb{u}+\mathbf{\mathcal  K}})$. Since $J\subseteq I$ we have $\overline{J} \subseteq \overline{I}$. Assume that $f\in \overline{I}$ is a homogeneous element. There exists a positive integer $k$ such that $fg \in I$ for every $g\in B_R^k$. Let $\{g_1, \ldots, g_p\}$ be a set of homogeneous generators of $B_R^k$.
    Let $\bb{v}\in \mathbf{\mathcal  K}$ be such that $\deg (f) + \bb{v} + \deg(g_i) \in \bb{u}+\mathbf{\mathcal  K}$ for every $i$. 
    
    We claim that there is $h\in (S_R)_{\bb{v}}$ that is a nonzerodivisor on $S_R/\overline{J}$. Otherwise, the union of degree $\bb{v}$ parts of all the associated primes of $S_R/\overline{J}$ would be $(S_R)_{\bb{v}}$. Since there are only finitely many associated primes, by prime avoidance we conclude that there exists an associated prime $P$ such that $P_\bb{v} = (S_R)_\bb{v}$. From 
    \cite[Lemma~2.4]{MS} we conclude that $B_R \subseteq P$ which contradicts the assumption that $\overline{J}$ is $B_R$-saturated.  
    
    Let $g\in B_R^k$. We have $fgh \in (I_{|\bb{u}+\mathbf{\mathcal  K}})$ so $fh \in \overline{J}$. Since $h$ is a nonzerodivisor on $S_R/\overline{J}$ we conclude that $f\in \overline{J}$.
\end{proof}

The multigraded Castelnuovo-Mumford regularity of Maclagan and Smith \cite[Definition 1.1]{MS} has the following definition. 

\begin{definition}[{\bf Multigraded regularity}]\label{def:CM-regularity of MS}
Let $\CC\subseteq K$ be a field extension, $M$ be a graded $S_K$-module and fix $\mathcal C$ as above. Given $\bb{m}\in \mathrm{Pic}(X) \cong \ZZ^s$, one says that $M$ is $\bb{m}$-regular if its graded local cohomology module satisfies the vanishing $H^i_{B_K}(M)_\bb{p}=0$ 
for all of the following $i$ and $\bb{p}$: 
\begin{enumerate}
\item[(i)] $i\geq 1$ and 
$\bb{p}$ is of the form $\bb{p}=\bb{m}-\lambda_1\bb{c}_1-\cdots-\lambda_{l}\bb{c}_{l}+\bb{u}$,
where the coefficients $\lambda_j\in \NN$ satisfy $\lambda_1+\cdots 
+ \lambda_{l}=i-1$ and $\bb{u}\in \mathbf{\mathcal  K}$. 

\item[(ii)] $i=0$ and $\bb{p}$ is of the form $\bb{p} = \bb{m} + \bb{c}_j + \bb{u}$, where $1\leq j\leq l$ and $\bb{u}\in \mathbf{\mathcal  K}$. 

\end{enumerate}

Define the {\it multigraded regularity} of $M$, $\mathrm{reg}(M)\subset \mathrm{Pic}(X)\cong \ZZ^s$, to be the set 
\[
\mathrm{reg}(M) = \lbrace \bb{m}\in \ZZ^s \ | \ M \mbox{ is $\bb{m}$-regular} \rbrace.
\]

\end{definition}

\noindent The following uniform bound on multigraded regularity by Maclagan and Smith is fundamental in their construction of the Hilbert scheme $\mathrm{Hilb}^P(X)$ recalled below.

\begin{theorem}[{\cite[Theorem~4.11]{MS05}}]\label{thm:uniform_bound}
There exists $\bb{k}\in \mathbf{\mathcal{K}}$ such that for every field extension $\CC\subseteq K$ and every $\ZZ^s$-homogeneous  $B_K$-saturated ideal of $S_K$ defining a subscheme of $X_K$ with multigraded Hilbert polynomial $P$ is $\bb{k}$-regular.
\end{theorem}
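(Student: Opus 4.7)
The plan is to reduce the uniform regularity bound to a finiteness statement about the Hilbert functions of the $S_K/I$. The main observation is that, for a $B_K$-saturated ideal $I$, the local cohomology modules $H^i_{B_K}(S_K/I)$ appearing in Definition~\ref{def:CM-regularity of MS} translate, via the standard exact sequence relating local and sheaf cohomology, into sheaf cohomology on the smooth projective toric variety $X_K$: for $i \geq 2$ one has $H^i_{B_K}(S_K/I)_{\bb{v}} \cong H^{i-1}(X_K, \mathcal{O}_Z(\bb{v}))$, where $Z\subset X_K$ is the subscheme of Theorem~\ref{thm:Cox_bijection}, and in degrees $i=0,1$ saturation forces $H^0_{B_K}(S_K/I) = 0$ while $H^1_{B_K}(S_K/I)_{\bb{v}}$ is the cokernel of $(S_K/I)_{\bb{v}} \to H^0(X_K, \mathcal{O}_Z(\bb{v}))$. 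Thus $\bb{m}$-regularity amounts to the vanishing of certain $H^i(X_K, \mathcal{O}_Z(\bb{v}))$ in ranges of $\bb{v}$ determined by $\bb{m}$ and $\mathcal{C}$.

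The key step is a multigraded Gotzmann-type finiteness: the set of multigraded Hilbert functions $\HF(S_K/I,-)$ realized by a $B_K$-saturated ideal $I\subset S_K$ whose subscheme has multigraded Hilbert polynomial $P$ should be shown to be finite, uniformly in the field extension $K$. To establish this I would coarsen the $\ZZ^s$-grading to a standard $\ZZ$-grading by pairing with a fixed ample class in $\mathbf{\mathcal{K}}$; this embeds $X_K$ in a large projective space and allows one to invoke classical Macaulay bounds and Gotzmann persistence to bound the coarsened Hilbert function, and then to leverage positivity of the grading on $S$ to bootstrap back to a bound on the finer multigraded Hilbert function on any bounded region of $\ZZ^s$. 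Combining this with the Euler characteristic identity $\sum_i (-1)^i \dim_K H^i(X_K, \mathcal{O}_Z(\bb{v})) = P(\bb{v})$, valid for $\bb{v}$ deep in $\mathbf{\mathcal{K}}$, one uniformly bounds each individual cohomology $\dim_K H^i(X_K, \mathcal{O}_Z(\bb{v}))$.

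To conclude, I would invoke Serre vanishing on the smooth projective toric variety $X_K$: each of the finitely many cohomology functions $\bb{v}\mapsto \dim_K H^i(X_K, \mathcal{O}_Z(\bb{v}))$, $i\geq 1$, vanishes for $\bb{v}$ sufficiently deep in $\mathbf{\mathcal{K}}$, and by the finiteness secured above the threshold can be chosen uniformly in $I$ and $K$. Picking $\bb{k}\in \mathbf{\mathcal{K}}$ so that every shifted degree $\bb{k}-\sum_j \lambda_j \bb{c}_j+\bb{u}$ (for $i\geq 1$) and $\bb{k}+\bb{c}_j+\bb{u}$ (for $i=0$) appearing in Definition~\ref{def:CM-regularity of MS} lands inside this uniform Serre-vanishing region then yields the $\bb{k}$-regularity required. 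The main obstacle is the multigraded Gotzmann finiteness; the Cox ring of a general toric variety lacks the clean growth estimates available for the standard grading on $\CC[y_0,\ldots,y_n]$, so the coarsening-and-bootstrapping argument above is the technical heart of the proof and must be executed with care when $\mathbf{\mathcal{K}}$ is not simplicial or when its minimal generators $\bb{c}_1,\ldots,\bb{c}_l$ fail to be linearly independent.
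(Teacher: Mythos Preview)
The paper does not prove this theorem: it is quoted verbatim from Maclagan--Smith \cite[Theorem~4.11]{MS05} and used as a black box. So there is no ``paper's own proof'' to compare against; your proposal is an attempt to reconstruct the Maclagan--Smith argument.

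Your sketch is in the right spirit --- the heart of the Maclagan--Smith result is indeed a multigraded Gotzmann-type finiteness statement --- but there are two imprecisions worth flagging. First, the theorem asserts $\bb{k}$-regularity of the \emph{ideal} $I$, whereas you set up the local-cohomology translation for $S_K/I$; the two are linked by the long exact sequence and the known regularity of $S_K$ itself (\cite[Corollary~3.6]{MS}), but you should make that passage explicit rather than silently switching objects. Second, and more seriously, your final step is close to circular: you invoke Serre vanishing to get a threshold for each $Z$, then say ``by the finiteness secured above the threshold can be chosen uniformly.'' Finiteness of the set of Hilbert functions does not by itself bound the Serre threshold uniformly --- two subschemes with the same Hilbert function can have different cohomology. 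What you actually need is that $\bb{m}$-regularity of a saturated $I$ is determined by (or at least bounded in terms of) the Hilbert function of $S_K/I$ alone; this is the multigraded analogue of Gotzmann's regularity theorem, and it is the substantive content of \cite{MS05}. Your ``coarsen to a $\ZZ$-grading and bootstrap back'' plan gestures at this but does not supply it: classical Gotzmann on the coarsened ring controls regularity with respect to the ample class you chose, not with respect to the full $\ZZ^s$-grading, and recovering multigraded regularity from single-graded regularity is exactly the hard part.
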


\begin{lemma}\label{lem:Hilbert_polynomials_of_the_family}
  Let $\bb{k}\in \mathbf{\mathcal{K}}$ be as in Theorem~\ref{thm:uniform_bound}. Let $R$ be a noetherian $\CC$-algebra and let $Z\in \mathcal{H}ilb^P(X)(R)$. If $I_Z$ is the unique $B_R$-saturated ideal defining $Z$, then for every $\bb{u}\in \bb{\mathcal{K}}$ the $R$-module
  $(S_R/I_Z)_{\bb{u+k}}$ is locally free of rank $P(\bb{u}+\bb{k})$.
\end{lemma}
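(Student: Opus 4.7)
The plan is to identify the $R$-module $(S_R/I_Z)_{\bb{u}+\bb{k}}$ with the pushforward $H^0(X_R,\mathcal{O}_Z(\bb{u}+\bb{k}))$ and then to deduce local freeness of the latter from Grauert's theorem. First, I would record the fiberwise consequences of Theorem~\ref{thm:uniform_bound}: for every $r\in \mathrm{Spec}(R)$ the saturated defining ideal $I_{Z_r}$ is $\bb{k}$-regular, so by the multigraded Castelnuovo--Mumford regularity calculus of Maclagan and Smith one obtains, for every $\bb{v}\in \bb{k}+\mathbf{\mathcal K}$, the higher cohomology vanishings $H^i(X_{\kappa(r)},\mathcal{O}_{Z_r}(\bb{v}))=0$ for $i\geq 1$, the identification $(S_{\kappa(r)}/I_{Z_r})_{\bb{v}}=H^0(X_{\kappa(r)},\mathcal{O}_{Z_r}(\bb{v}))$, and the dimension equality $\dim_{\kappa(r)} H^0(X_{\kappa(r)},\mathcal{O}_{Z_r}(\bb{v}))=P(\bb{v})$ (the last being built into the definition of $\mathcal{H}\mathrm{ilb}^P(X)(R)$).

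Next I would apply cohomology and base change for the structural morphism $f\colon X_R\to \mathrm{Spec}(R)$, which is proper because $X$ is projective. Since $Z$ is flat over $\mathrm{Spec}(R)$, the sheaf $\mathcal{O}_Z$ is $R$-flat; combined with the uniform fiberwise vanishing above, Grauert's theorem yields that $f_*\mathcal{O}_Z(\bb{v})$ is a locally free $R$-module of rank $P(\bb{v})$ whose formation commutes with base change. The short exact sequence $0\to \mathcal{I}_Z \to \mathcal{O}_{X_R}\to \mathcal{O}_Z \to 0$ together with the identification $H^0(X_R,\mathcal{O}_{X_R}(\bb{v}))=(S_R)_{\bb{v}}$ produces a canonical $R$-linear map
\[
\alpha\colon (S_R/I_Z)_{\bb{v}} \longrightarrow f_*\mathcal{O}_Z(\bb{v}).
\]
For each $r$, base change identifies $\alpha\otimes_R \kappa(r)$ with the surjection $(S_{\kappa(r)}/J_r)_{\bb{v}}\twoheadrightarrow (S_{\kappa(r)}/I_{Z_r})_{\bb{v}}$ induced by the inclusion $J_r\subseteq I_{Z_r}$, where $J_r$ denotes the image of $I_Z$ under $S_R\twoheadrightarrow S_{\kappa(r)}$. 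Both sides have dimension $P(\bb{v})$ in the regularity range, so $\alpha\otimes \kappa(r)$ is an isomorphism for every $r$. Since the target of $\alpha$ is locally free, a Nakayama-type argument promotes the fiberwise isomorphism to an honest isomorphism of $R$-modules, whence $(S_R/I_Z)_{\bb{v}}\cong f_*\mathcal{O}_Z(\bb{v})$ is locally free of rank $P(\bb{v})$.

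The main obstacle is the verification that $(J_r)_{\bb{v}}=(I_{Z_r})_{\bb{v}}$ for $\bb{v}\in \bb{k}+\mathbf{\mathcal K}$; without this, the two fibers of $\alpha$ could have different dimensions and the comparison collapses. The crux is that $I_{Z_r}=\overline{J_r}$ is the $B_{\kappa(r)}$-saturation of $J_r$, and the $\bb{k}$-regularity of $I_{Z_r}$ combined with Lemma~\ref{lem:equality_of_sat} controls the $B_{\kappa(r)}$-torsion cokernel $\overline{J_r}/J_r$ in sufficiently nef degrees, forcing it to vanish on $\bb{k}+\mathbf{\mathcal K}$. Once this fiber analysis is secured, the interplay of Grauert's theorem and Nakayama does the rest.
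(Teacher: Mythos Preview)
Your strategy is sound and close in spirit to the paper's, but the step you yourself flag as ``the main obstacle'' is not resolved by the argument you sketch. The $\bb{k}$-regularity of $I_{Z_r}=\overline{J_r}$ controls the local cohomology of $I_{Z_r}$, not that of $J_r$; the quotient $\overline{J_r}/J_r$ equals $H^0_{B_{\kappa(r)}}(S_{\kappa(r)}/J_r)$, and this depends on $J_r$, over which you have no a priori control (saturation does not commute with base change). Lemma~\ref{lem:equality_of_sat} only compares the saturations of $J_r$ and $(J_r)_{|\bb{u}+\mathcal K}$; it says nothing about $(\overline{J_r}/J_r)_{\bb{v}}$ vanishing. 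So your claim that both fibers of $\alpha$ have dimension $P(\bb{v})$ is unjustified, and the Nakayama step as written does not close.

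There is a clean fix that bypasses fiberwise injectivity entirely: observe that $\alpha$ is injective already over $R$. Indeed, since $I_Z$ is $B_R$-saturated, Theorem~\ref{thm:Cox_bijection} gives $H^0(X_R,\mathcal{I}_Z(\bb{v}))=(I_Z)_{\bb{v}}$, so the kernel of $(S_R)_{\bb{v}}\to H^0(X_R,\mathcal{O}_Z(\bb{v}))$ is exactly $(I_Z)_{\bb{v}}$ and $\alpha$ is an inclusion. Then fiberwise \emph{surjectivity} of $\alpha$ (which is genuinely easy, coming from $J_r\subseteq \overline{J_r}=I_{Z_r}$) plus Nakayama yields that $\alpha$ is surjective, hence an isomorphism; Grauert on $\mathcal{O}_Z$ then finishes as you intended. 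The paper takes a slightly different route: it first proves $H^i(X_R,\mathcal{I}_Z(\bb{v}))=0$ for all $i>0$ at the level of $R$ (fiberwise vanishing plus cohomology and base change for $R^i\pi_*$), obtains the identification $(S_R/I_Z)_{\bb{v}}=H^0(X_R,\mathcal{O}_Z(\bb{v}))$ from the case $i=1$, and then deduces local freeness from the \v Cech complex rather than Grauert. Your Grauert shortcut is a legitimate alternative once the identification is secured.
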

\begin{proof}
  Let $\pi\colon X_R\to \mathrm{Spec}(R)$ denote the projection. We start with showing that $H^i(X_R, \mathcal{I}_Z(\bb{u}+\bb{k})) = 0$ for all $i>0$. Assume initially that $R=K$ for a field extension $\CC\subseteq K$. It follows from the definition of $\bb{k}$ and \cite[Proposition~6.4]{MS} that $\mathcal{I}_Z$ is $(\bb{u}+\bb{k})$-regular. In particular, $H^i(X_K, \mathcal{I}_Z(\bb{u}+\bb{k})) = 0$ for all $i>0$.

  Since $R^i\pi_*(\mathcal{I}_Z(\bb{u}+\bb{k})) \cong \widetilde{H^i(X_R, \mathcal{I}_Z(\bb{u}+\bb{k}))}$ (see \cite[Proposition~III.8.5]{h}), it follows from \cite[Theorem~III.12.11]{h} that 
  the fiber of $R^i\pi_*(\mathcal{I}_Z(\bb{u}+\bb{k}))$ over every point of $\mathrm{Spec}(R)$ is zero. By Nakayama's lemma we get that the localization of $H^i(X_R, \mathcal{I}_Z(\bb{u}+\bb{k}))$ at every prime ideal of $R$ is the zero $R$-module. Hence the module itself is zero. \\
\noindent In particular, using the above for $i=1$ we get $(S_R/I_Z)_{\bb{u}+\bb{k}}= H^0(X_R, \mathcal{O}_Z(\bb{u}+\bb{k}))$. Similarly, we show that $H^i(X_R, \mathcal{O}_{X_R}(\bb{u}+\bb{k})) = 0$ for all $i>0$. For $R=K$ we use \cite[Corollary~3.6]{MS} and then proceed as for $\mathcal{I}_Z(\bb{u}+\bb{k})$.

We conclude that $H^i(X_R, \mathcal{O}_{Z}(\bb{u}+\bb{k})) = 0$ for all $i>0$. Let $\mathfrak{U}$ be the affine cover of $X_R$ obtained by pulling back the standard affine cover of $X$ corresponding to full-dimensional cones in the fan of $X$.
By the above the \v{C}ech complex $\mathcal{C}^\bullet(\mathfrak{U}, \mathcal{O}_Z(\bb{u}+\bb{k}))$ is a free resolution of $(S_R/I_Z)_{\bb{u}+\bb{k}}$. Each $\mathcal{C}^i(\mathfrak{U}, \mathcal{O}_Z(\bb{u}+\bb{k}))$ is a flat $R$-module since $\mathcal{O}_Z(\bb{u}+\bb{k})$ is flat over $\mathrm{Spec}(R)$. Therefore, by splitting the resolution into short exact sequences and using \cite[Proposition~III.9.1A]{h} we conclude that $(S_R/I_Z)_{\bb{u}+\bb{k}}$ is a flat $R$-module. Since it is finitely generated it is locally free. 

We are left with showing that $(S_R/I_Z)_{\bb{u}+\bb{k}}$ is locally free of rank $P(\bb{u}+\bb{k})$. Since tensor product preserves the rank of a locally free module we may reduce to the case $R=K$ for a field extension $\CC\subseteq K$.
By \cite[Proposition~2.14]{MS05} it is sufficient to show that $H^i_{B_K}(S_K/I_Z)_{\bb{u}+\bb{k}} = 0$ for every $i$. Using \cite[Corollary~3.6]{MS} and the long exact sequence of local cohomology groups, we reduce the problem to showing that
$H^{i+1}_{B_K}(I_Z)_{\bb{u}+\bb{k}} = 0$ for every $i$. This follows from the definition of $\bb{k}$ given in Theorem~\ref{thm:uniform_bound} and the definition of $\bb{k}$-regularity. 
\end{proof}

\begin{lemma}\label{lem:flat_ring_implies_flat_scheme}
    If $I$ is a $\ZZ^s$-homogeneous ideal of $S_R$ such that $S_R/I$ is a flat $R$-module, then the subscheme of $X_R$ defined by $I$ is flat over $\mathrm{Spec}(R)$.
\begin{proof}
    Due to the exact sequence $0\to I\to S_R \to S_R/I \to 0$, it is enough to show that $\widetilde{S_R/I}$ is a flat sheaf over $\mathrm{Spec}(R)$. It is sufficient to check this after restricting to an affine open subset $U_{R,\sigma}$.
    The localization functor is exact and $S_R/I$ is flat over $R$. It follows that $(S_R/I) \otimes_{S_R} (S_R)_{\sigma}$ is flat over $R$ and therefore so is its degree zero part.
\end{proof}
\end{lemma}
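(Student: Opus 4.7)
My plan is to translate the question about flatness of the subscheme into a flatness statement for a graded $R$-module, by invoking the dictionary between $\ZZ^s$-graded $S_R$-modules and quasicoherent sheaves on $X_R$ recalled at the beginning of the section. The subscheme $Y\subset X_R$ defined by $I$ has structure sheaf $\widetilde{S_R/I}$, so what must be shown is that $\widetilde{S_R/I}$ is flat as a sheaf of $\mathcal O_{\mathrm{Spec}(R)}$-modules along the projection $\pi\colon X_R\to \mathrm{Spec}(R)$.

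Since flatness of a quasicoherent sheaf can be tested locally on the source, I would pass to the standard affine cover $\{U_{R,\sigma}\}_{\sigma}$, where $\sigma$ ranges over the cones of the fan of $X$. By the description of $\widetilde{(-)}$ recalled before Lemma~\ref{lem:saturation_defines_the_same_subscheme}, the restriction of $\widetilde{S_R/I}$ to $U_{R,\sigma}$ is computed by the degree zero part of the localization $(S_R/I)_{\sigma}= (S_R/I)\otimes_{S_R}(S_R)_{\sigma}$, where $(S_R)_{\sigma}$ inverts the product of variables corresponding to rays outside $\sigma$. It therefore suffices to verify that this degree zero part is $R$-flat.

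From here the argument is short. By hypothesis $S_R/I$ is $R$-flat, and since localization is exact (and in particular preserves flatness over a base ring), the module $(S_R/I)_{\sigma}$ remains $R$-flat. Its degree zero piece is a direct summand as an $R$-module of $(S_R/I)_{\sigma}$, because the $\ZZ^s$-grading decomposes the latter as a direct sum of $R$-modules indexed by multidegrees. Direct summands of flat modules being flat, the degree zero component is $R$-flat, proving what we wanted.

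This argument is really just a careful unwinding of definitions, and I expect no serious obstacle. The only point requiring a moment of care is the last observation, that a graded component of a $\ZZ^s$-graded $R$-flat module is itself $R$-flat, which uses nothing beyond stability of flatness under direct summands and the fact that the grading refines the $R$-module decomposition.
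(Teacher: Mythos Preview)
Your proof is correct and follows essentially the same route as the paper's: reduce to showing $\widetilde{S_R/I}$ is flat, check this on the affine opens $U_{R,\sigma}$, and use that localization preserves $R$-flatness together with the fact that a graded piece is a direct $R$-summand. The paper's version is terser (it simply asserts the degree zero part is flat without spelling out the direct-summand reasoning), but the argument is the same.
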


Given a subset $\mathcal D\subset \ZZ^s$ and $S=S[X]$ the Cox ring of $X$, let $F_{\mathcal D} = \bigcup_{\bb{u},\bb{v}\in \mathcal D} F_{\bb{u},\bb{v}}$, where $F_{\bb{u},\bb{v}}$ is the set of multiplication maps $S_{\bb{u}}\rightarrow S_{\bb{v}}$ by a monomial in $S_{\bb{v}-\bb{u}}$. Given a $\CC$-algebra $R$, let $R\otimes S_{\mathcal D}$ be the 
graded $R$-module $\bigoplus_{\bb{v}\in \mathcal D} R\otimes_{\CC} S_\bb{v}$ equipped with maps $F^R_{\bb{u},\bb{v}}=1_R\otimes F_{\bb{u},\bb{v}}$. A homogeneous submodule $L = \bigoplus_{\bb{v}\in \mathcal D} L_\bb{v}\subset R\otimes S_{\mathcal D}$ is an $F$-submodule if it satisfies $F^R_{\bb{u},\bb{v}}(L_\bb{u})\subset L_{\bb{v}}$ for 
all $\bb{u},\bb{v}\in \mathcal D$.

Let $h: \mathcal D\rightarrow \NN$ be a function. The {\it multigraded Hilbert functor} of Haiman and Sturmfels is the functor
\[
\mathcal{H}\mathrm{ilb}^h_{S_{\mathcal D}}: \CC{\bf -Alg}\longrightarrow  {\bf Set}
\]
defined by 
\begin{align*}
R\mapsto  \{ J\subset R\otimes S_{\mathcal D} \ &| \ 
J \mbox{ is an } F \mbox{-submodule}, R\otimes S_\bb{v}/J_\bb{v} \mbox{ is a locally free } 
 \\ & \mbox{ } R \mbox{-module of rank } h(\bb{v}), \forall \bb{v}\in \mathcal D\}.
\end{align*}
This functor is represented by a projective scheme over $\CC$ \cite[\S6.1]{hs}. The scheme representing it when $\mathcal D = \ZZ^s$ and $h=h_{r,X}$ is the multigraded Hilbert scheme $\mathrm{Hilb}^h_S$ introduced before. For any $\mathcal D\subset \ZZ^s$, when the function $h: \mathcal D\rightarrow \NN$ is evaluating a polynomial $P$, then the representing scheme above is denoted $\mathrm{Hilb}^{P}_{S_{\mathcal D}}$. 

We prove the following theorem, whose proof was only sketched in \cite{MS05}, as it is of central 
importance in our approach. 

\begin{theorem}[{\bf Maclagan-Smith}]\label{sat vs. truncation}
There exists $\bb{k}\in {\bf {\mathcal K}}$ such that the functors 
$\mathcal{H}\mathrm{ilb}^P(X)$ and $\mathcal{H}\mathrm{ilb}^P_{S_{\bb{k}+\bb{{\mathcal K}}}}$ are naturally isomorphic. In particular, the schemes $\mathrm{Hilb}^{P}_{S_{\bb{k}+\bb{\mathcal K}}}$ and $\mathrm{Hilb}^P(X)$ are isomorphic. 
\begin{proof} 
Let $\bb{k}$ be as in Theorem~\ref{thm:uniform_bound} and $R$ be a noetherian $\CC$-algebra. By Lemma~\ref{lem:flat_ring_implies_flat_scheme} there is a map of sets 
\[
 \mathcal{H}\mathrm{ilb}^P_{S_{\bb{k}+\bb{\mathcal K}}}(R)\stackrel{\tau_R}{\longrightarrow}\mathcal{H}\mathrm{ilb}^P(X)(R), 
\]
defined by $\tau_R(I) = Z$, where $Z$ is the subscheme defined by $I$.
By Lemma~\ref{lem:Hilbert_polynomials_of_the_family} we have a map of sets
\[
\mathcal{H}\mathrm{ilb}^P(X)(R)\stackrel{\rho_R}{\longrightarrow}\mathcal{H}\mathrm{ilb}^P_{S_{\bb{k}+{\mathcal K}}}(R)
\]
defined by $\rho_R(Z) = (I_Z)_{|\bb{k}+\bb{\mathcal K}}$ where $I_Z$ is the unique $\ZZ^s$-homogeneous $B_R$-saturated ideal of $S_R$ defining $Z$.
It follows from Lemma~\ref{lem:saturation_defines_the_same_subscheme} and Lemma~\ref{lem:equality_of_sat} that $\tau_R\circ \rho_R(Z) = Z$ for every $Z$. Let $I \in \mathcal{H}\mathrm{ilb}^P_{S_{\bb{k}+\bb{\mathcal K}}}(R)$ and denote $\tau_R(I)$ by $Z$.
By Lemma~\ref{lem:saturation_defines_the_same_subscheme} and Theorem~\ref{thm:Cox_bijection} we have $\overline{I}  = I_Z$. 
Hence $I \subseteq \rho_R\circ \tau_R(I)$. Since they have the same Hilbert functions they are equal.
These maps are inverses of each other for every noetherian $\CC$-algebra $R$. Furthermore, they are natural in $\mathrm{Spec}(R)$ by \cite[Proposition~III.9.3]{h}. Hence the two functors are isomorphic. Therefore so are the schemes representing them.
\end{proof}
\end{theorem}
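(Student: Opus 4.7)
My plan is to take $\bb{k}\in \mathbf{\mathcal{K}}$ as provided by Theorem \ref{thm:uniform_bound} and then, for any noetherian $\CC$-algebra $R$, build two natural maps of sets
\[
\tau_R\colon \mathcal{H}\mathrm{ilb}^P_{S_{\bb{k}+\bb{\mathcal K}}}(R)\longrightarrow \mathcal{H}\mathrm{ilb}^P(X)(R),\qquad \rho_R\colon \mathcal{H}\mathrm{ilb}^P(X)(R)\longrightarrow \mathcal{H}\mathrm{ilb}^P_{S_{\bb{k}+\bb{\mathcal K}}}(R),
\]
and verify they are mutually inverse and natural in $R$. Yoneda's lemma will then upgrade this natural isomorphism of functors to the claimed isomorphism of representing schemes $\mathrm{Hilb}^{P}_{S_{\bb{k}+\bb{\mathcal K}}}\cong \mathrm{Hilb}^P(X)$.

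To build $\tau_R$, I start with an $F$-submodule $I\subseteq R\otimes S_{\bb{k}+\bb{\mathcal K}}$ satisfying the local-freeness/rank conditions, extend it trivially to a $\ZZ^s$-homogeneous ideal of $S_R$ by taking everything in degrees outside $\bb{k}+\bb{\mathcal K}$, and let $Z$ be the subscheme of $X_R$ it cuts out via the functor $M\mapsto \widetilde{M}$. Flatness of $Z\to \mathrm{Spec}(R)$ is given by Lemma \ref{lem:flat_ring_implies_flat_scheme}, and the fiberwise Hilbert polynomial being $P$ is immediate from the prescribed Hilbert function on $\bb{k}+\bb{\mathcal K}$ together with \cite[Proposition~2.10]{MS05}, since far inside $\bb{\mathcal K}$ the Hilbert function already agrees with $P$. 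To build $\rho_R$, I start with a family $Z\in \mathcal{H}\mathrm{ilb}^P(X)(R)$, take its unique $B_R$-saturated defining ideal $I_Z\subseteq S_R$ guaranteed by Theorem \ref{thm:Cox_bijection}, and restrict to the shifted cone by setting $\rho_R(Z)=(I_Z)_{|\bb{k}+\bb{\mathcal K}}$. Lemma \ref{lem:Hilbert_polynomials_of_the_family} certifies that each graded piece $(S_R/I_Z)_{\bb{u}+\bb{k}}$ is $R$-locally free of rank $P(\bb{u}+\bb{k})$, which is exactly what the functor $\mathcal{H}\mathrm{ilb}^P_{S_{\bb{k}+\bb{\mathcal K}}}$ demands.

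The main technical content is showing that $\tau_R$ and $\rho_R$ are inverses. For one direction, given a subscheme $Z$, Lemma \ref{lem:saturation_defines_the_same_subscheme} tells us that the ideal $(\rho_R(Z))$ and its saturation $I_Z$ define the same subscheme, so $\tau_R\circ \rho_R(Z)=Z$. For the opposite direction, starting with an $F$-submodule $I$, let $Z=\tau_R(I)$. By construction the ideal $\widetilde{I}$ generated in $S_R$ satisfies $\widetilde{I}\subseteq I_Z$, and Lemma \ref{lem:equality_of_sat} combined with Lemma \ref{lem:saturation_defines_the_same_subscheme} shows that saturating either one yields $I_Z$; therefore the truncations to $\bb{k}+\bb{\mathcal K}$ agree up to the inclusion $I\subseteq \rho_R\circ\tau_R(I)$. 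Since both are $R$-submodules of $R\otimes S_{\bb{k}+\bb{\mathcal K}}$ whose quotients have the same locally free rank $P(\bb{u}+\bb{k})$ in every degree, equality follows (e.g., by checking at closed points via Nakayama). This is the step I expect to require the most care, because one has to be careful that the inclusion of $F$-submodules with equal ranks forces equality in the locally free setting, not merely generically.

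Finally, naturality in $R$ of both $\tau$ and $\rho$ is straightforward: the association $Z\mapsto I_Z$ commutes with flat base change thanks to \cite[Proposition~III.9.3]{h} together with the vanishing of higher direct images established in Lemma \ref{lem:Hilbert_polynomials_of_the_family}, and the truncation and extension operations are visibly compatible with tensoring along $R\to R'$. Combining all of the above yields the desired natural isomorphism of functors, hence the isomorphism of representing schemes.
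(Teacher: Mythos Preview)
Your proposal is correct and mirrors the paper's proof almost exactly: both choose $\bb{k}$ via Theorem~\ref{thm:uniform_bound}, define the same pair of maps $\tau_R$ and $\rho_R$, verify they are mutual inverses using Lemmas~\ref{lem:saturation_defines_the_same_subscheme}, \ref{lem:equality_of_sat}, \ref{lem:Hilbert_polynomials_of_the_family}, \ref{lem:flat_ring_implies_flat_scheme} and Theorem~\ref{thm:Cox_bijection}, and conclude naturality via \cite[Proposition~III.9.3]{h}. One wording issue: ``extend it trivially \ldots\ by taking everything in degrees outside $\bb{k}+\bb{\mathcal K}$'' does not describe an ideal---what you want (and what the paper uses) is simply the ideal of $S_R$ generated by the $F$-submodule $I$; with that fix your argument goes through exactly as the paper's does.
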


\begin{theorem}\label{there exists a morphism}
Let $S = S[X]$ be the Cox ring of a smooth projective toric variety with $\mathrm{Pic}(X)\cong \ZZ^s$. Let $h: \ZZ^s\rightarrow \NN$ be an admissible Hilbert function that coincides for all $\bb{v}\in \ZZ^s$ sufficiently far from the boundary of $\mathcal K$ with a polynomial $P({\bf t})\in \mathbb{Q}[t_1,\ldots, t_s]$. 
Then there exists a morphism 
\[
\psi_{\bb{k}+\bb{\mathcal K}}: \mathrm{Hilb}^h_{S}\longrightarrow \mathrm{Hilb}^P(X). 
\]
\begin{proof}
Let $\tau_{\bb{k}+\bb{\mathcal K}}$ be the isomorphism of schemes $\mathrm{Hilb}^{P}_{S_{\bb{k}+\bb{\mathcal K}}}\cong \mathrm{Hilb}^P(X)$ established in Theorem \ref{sat vs. truncation}. Then $\psi_{\bb{k}+\bb{\mathcal K}} =  \tau_{\bb{k}+\bb{\mathcal K}} \circ \mathrm{Pr}_{|\bb{k}+\bb{\mathcal K}}$, where $\mathrm{Pr}_{|\bb{k}+\bb{\mathcal K}}$ is the projection
of an ideal $I\in \mathrm{Hilb}^h_{S}$ to the graded vector space spanned by the degree $\bb{k}+\bb{\mathcal K}$ pieces of $I$. This gives the desired morphism. 
\end{proof}
\end{theorem}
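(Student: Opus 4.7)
The plan is to construct the morphism by composing the scheme isomorphism of Theorem~\ref{sat vs. truncation} with a natural restriction/projection map from $\mathrm{Hilb}^h_S$ to $\mathrm{Hilb}^P_{S_{\bb{k}+\mathbf{\mathcal{K}}}}$. Since $\bb{k}\in\mathbf{\mathcal{K}}$ in Theorem~\ref{sat vs. truncation} is only required to exist, I will first enlarge it if necessary, replacing $\bb{k}$ by $\bb{k}+\bb{k}'$ for some $\bb{k}'\in\mathbf{\mathcal{K}}$, so that both of the following hold: (a) the uniform regularity bound of Theorem~\ref{thm:uniform_bound} is satisfied, and (b) the equality $h(\bb{v}) = P(\bb{v})$ holds for every $\bb{v}\in \bb{k}+\mathbf{\mathcal{K}}$. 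The latter is possible because, by hypothesis, $h$ agrees with $P$ sufficiently far from the boundary of $\mathbf{\mathcal{K}}$, and shifting $\bb{k}$ deep into the interior of $\mathbf{\mathcal{K}}$ forces the entire translate $\bb{k}+\mathbf{\mathcal{K}}$ into the coincidence region.

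Next I would define a natural transformation
$\mathrm{Pr}_{|\bb{k}+\mathbf{\mathcal{K}}}\colon \mathcal{H}\mathrm{ilb}^h_S \longrightarrow \mathcal{H}\mathrm{ilb}^P_{S_{\bb{k}+\mathbf{\mathcal{K}}}}$
by sending an $F$-submodule $I\subseteq R\otimes S$ to its restriction $I_{|\bb{k}+\mathbf{\mathcal{K}}}=\bigoplus_{\bb{v}\in\bb{k}+\mathbf{\mathcal{K}}} I_{\bb{v}}$. One needs to check three things: first, that $I_{|\bb{k}+\mathbf{\mathcal{K}}}$ is an $F$-submodule of $R\otimes S_{\bb{k}+\mathbf{\mathcal{K}}}$, which is automatic since $\bb{k}+\mathbf{\mathcal{K}}$ is closed under the semigroup action by $\mathbf{\mathcal{K}}$ and hence under multiplication by monomials of degrees in $\mathbf{\mathcal{K}}$ (which comprise all degrees appearing in $F_{\bb{u},\bb{v}}$ for $\bb{u},\bb{v}\in\bb{k}+\mathbf{\mathcal{K}}$); second, that each piece $(R\otimes S_{\bb{v}})/I_{\bb{v}}$ for $\bb{v}\in \bb{k}+\mathbf{\mathcal{K}}$ is locally free of rank $P(\bb{v})$, which follows from the defining property of $\mathcal{H}\mathrm{ilb}^h_S$ together with step (b) above that ensures $h(\bb{v})=P(\bb{v})$ on this region; and third, naturality in $R$, which is immediate because restriction to a graded piece commutes with base change.

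By the Yoneda lemma, the natural transformation $\mathrm{Pr}_{|\bb{k}+\mathbf{\mathcal{K}}}$ is induced by a morphism of the representing schemes, which I again denote
\[
\mathrm{Pr}_{|\bb{k}+\mathbf{\mathcal{K}}}\colon \mathrm{Hilb}^h_S \longrightarrow \mathrm{Hilb}^P_{S_{\bb{k}+\mathbf{\mathcal{K}}}}.
\]
Then I define
\[
\psi_{\bb{k}+\mathbf{\mathcal{K}}} = \tau_{\bb{k}+\mathbf{\mathcal{K}}}\circ \mathrm{Pr}_{|\bb{k}+\mathbf{\mathcal{K}}},
\]
where $\tau_{\bb{k}+\mathbf{\mathcal{K}}}\colon \mathrm{Hilb}^P_{S_{\bb{k}+\mathbf{\mathcal{K}}}}\xrightarrow{\sim} \mathrm{Hilb}^P(X)$ is the isomorphism produced in Theorem~\ref{sat vs. truncation}. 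This is a composition of morphisms of schemes, hence a morphism of schemes, which completes the proof.

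The main point requiring care is the choice of $\bb{k}$: we must simultaneously enforce regularity (for Theorem~\ref{sat vs. truncation} to apply) and the polynomial agreement $h=P$ on $\bb{k}+\mathbf{\mathcal{K}}$ (so that the restriction lands in a Hilbert scheme with the correct numerical data). Once $\bb{k}$ is fixed correctly, every other step is formal: it is a verification that graded truncation defines a natural transformation of representable functors, plus an appeal to the already established isomorphism relating polynomial multigraded Hilbert schemes to toric Hilbert schemes.
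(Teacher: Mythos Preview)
Your proof is correct and follows essentially the same approach as the paper: define $\psi_{\bb{k}+\mathbf{\mathcal{K}}} = \tau_{\bb{k}+\mathbf{\mathcal{K}}}\circ \mathrm{Pr}_{|\bb{k}+\mathbf{\mathcal{K}}}$ as a composition of the projection with the Maclagan--Smith isomorphism. You are more explicit than the paper about why the projection lands in $\mathrm{Hilb}^P_{S_{\bb{k}+\mathbf{\mathcal{K}}}}$ (namely, that $\bb{k}$ must be chosen deep enough so that $h=P$ on the translated cone), a point the paper leaves implicit.
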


\begin{proposition}\label{Tangent space to Hilb_k+K}
The tangent space at a graded vector space $I_{|\bb{k}+\bb{\mathcal{K}}}\in \mathrm{Hilb}^{P}_{S_{\bb{k}+\bb{\mathcal K}}}$ to the scheme  $\mathrm{Hilb}^{P}_{S_{\bb{k}+\bb{\mathcal K}}}$
is isomorphic to the space of degree zero $F$-module homomorphisms
\[
\mathrm{Hom}_F(I_{|\bb{k}+\bb{\mathcal{K}}}, S_{|\bb{k}+\bb{\mathcal{K}}}/I_{|\bb{k}+\bb{\mathcal{K}}})_0,
\]
i.e. $\CC$-linear maps $\phi$ preserving the degree such that for every $\bb{v}, \bb{u}\in \bb{k}+\bb{\mathcal{K}}$, every monomial $G$ in $S_{\bb{v}-\bb{u}}$ and every element $x\in I_{\bb{u}}$ we have $\phi(Gx) = G\phi(x)$.
\begin{proof}
Let $R=\CC[\epsilon]/(\epsilon^2)$. 
The tangent space to $\mathrm{Hilb}^{P}_{S_{\bb{k}+\bb{\mathcal K}}}$ is by definition given by evaluating 
its functor of points on the local $\CC$-algebra $R$. Let $\phi\in \mathrm{Hom}_F(I_{|\bb{k}+\bb{\mathcal K}}, S_{|\bb{k}+\bb{\mathcal K}}/I_{|\bb{k}+\bb{\mathcal K}})_0$ be an $F$-module homomorphism. 
Define 
\[
I'_{|\bb{k}+\bb{\mathcal K}} = \left\lbrace x + \epsilon y \ | \ x\in I_{|\bb{k}+\bb{\mathcal K}}, y\in S_{|\bb{k}+\bb{\mathcal K}} \mbox{ and } \overline{y} = \phi(x)\in S_{|\bb{k}+\bb{\mathcal K}}/I_{|\bb{k}+\bb{\mathcal K}}\right\rbrace. 
\]
Note that $I'_{|k+\mathcal K}$ is an $F$-submodule of $R\otimes  S_{|\bb{k}+\bb{\mathcal K}}$. The definitions show that there is a short exact sequence of graded $R$-modules $0\rightarrow I_{|\bb{k}+\bb{\mathcal K}}\stackrel{\cdot \epsilon}{\rightarrow} I'_{|\bb{k}+\bb{\mathcal K}}\rightarrow I_{|\bb{k}+\bb{\mathcal K}}\rightarrow 0$. 
This exactness implies that the following sequence of graded $R$-modules is exact: 
\[
0\longrightarrow S_{|\bb{k}+\bb{\mathcal K}}/I_{|\bb{k}+\bb{\mathcal K}}\stackrel{\cdot \epsilon}{\longrightarrow} (R\otimes S_{|\bb{k}+\bb{\mathcal K}})/I'_{|\bb{k}+\bb{\mathcal K}}\longrightarrow S_{|\bb{k}+\bb{\mathcal K}}/I_{|\bb{k}+\bb{\mathcal K}}\longrightarrow 0. 
\]
It follows from \cite[Proposition~2.2]{h10} that $(R\otimes S_{|\bb{k}+\bb{\mathcal K}})/I'_{|\bb{k}+\bb{\mathcal K}}$ is a flat $R$-module. Therefore, each graded summand $((R\otimes S_{|\bb{k}+\bb{\mathcal K}})/I'_{|\bb{k}+\bb{\mathcal K}})_\bb{v}$ is a locally free $R$-module of rank equal to the dimension of the corresponding graded summand of $S_{|\bb{k}+\bb{\mathcal K}}/I_{|\bb{k}+\bb{\mathcal K}}$, i.e. $P(\bb{v})$.

Conversely, suppose we are given an $F$-submodule $I'_{|\bb{k}+\bb{\mathcal K}}\subset R\otimes S_{|\bb{k}+\bb{\mathcal K}}$ such that each graded summand of $(R\otimes S_{|\bb{k}+\bb{\mathcal K}})/I'_{|\bb{k}+\bb{\mathcal K}}$ is a locally free $R$-module of the required rank and such that 
its image in $S_{|\bb{k}+\bb{\mathcal K}}$ (under the quotient map by $(\epsilon)$) is $I_{|\bb{k}+\bb{\mathcal K}}$. By \cite[Proposition~2.2]{h10}, this amounts to say that we have the short exact sequence of graded $R$-modules $0\rightarrow S_{|\bb{k}+\bb{\mathcal K}}/I_{|\bb{k}+\bb{\mathcal K}}\stackrel{\cdot \epsilon}{\rightarrow} (R\otimes S_{|\bb{k}+\bb{\mathcal K}})/I'_{|\bb{k}+\bb{\mathcal K}}\rightarrow S_{|\bb{k}+\bb{\mathcal K}}/I_{|\bb{k}+\bb{\mathcal K}}\rightarrow 0$. This exactness implies that the following sequence of graded $R$-modules is exact: 
\[
0\longrightarrow I_{|\bb{k}+\bb{\mathcal K}}\stackrel{\cdot \epsilon}{\longrightarrow} I'_{|\bb{k}+\bb{\mathcal K}}\longrightarrow I_{|\bb{k}+\bb{\mathcal K}}\longrightarrow 0.
\]

Hence two liftings $x+\varepsilon y,x+\varepsilon y'$ of degree $\bb{v}\in {\bb{k}+\bb{\mathcal K}}$ of a given element $x\in I_{\bb{v}}$ differ by an element in $\epsilon I_{\bb{v}}$. Then we may define $\phi(x) = \overline{y}=\overline{y'}\in S_{|\bb{k}+\bb{\mathcal K}}/I_{|\bb{k}+\bb{\mathcal K}}$, which is a degree zero $F$-module homomorphism. 

The two constructions above are inverses of each other, showing a bijection between the sets $\mathcal{H}\mathrm{ilb}^{P}_{S_{\bb{k}+\bb{\mathcal K}}}(R)$ and 
$\mathrm{Hom}_F(I_{|\bb{k}+\bb{\mathcal K}}, S_{|\bb{k}+\bb{\mathcal K}}/I_{|\bb{k}+\bb{\mathcal K}})_0$.
One can check that when $\mathcal{H}\mathrm{ilb}^{P}_{S_{\bb{k}+\bb{\mathcal K}}}(R)$ is given its natural $\CC$-vector space structure 
described in \cite[\S VI.1.3]{hs}, then this bijection is a $\CC$-vector space isomorphism.
\end{proof}
\end{proposition}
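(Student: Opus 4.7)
The strategy is the standard one for tangent spaces to Hilbert schemes, adapted to the multigraded Haiman-Sturmfels setting: evaluate the functor of points on the dual numbers $R = \CC[\epsilon]/(\epsilon^2)$ and exhibit a natural bijection with $F$-module homomorphisms, then check that this bijection is linear. Since $\mathrm{Hilb}^{P}_{S_{\bb{k}+\bb{\mathcal K}}}$ represents the functor $\mathcal{H}\mathrm{ilb}^{P}_{S_{\bb{k}+\bb{\mathcal K}}}$, the tangent space at $I_{|\bb{k}+\bb{\mathcal K}}$ is the preimage of $I_{|\bb{k}+\bb{\mathcal K}}$ under the map $\mathcal{H}\mathrm{ilb}^{P}_{S_{\bb{k}+\bb{\mathcal K}}}(R)\to \mathcal{H}\mathrm{ilb}^{P}_{S_{\bb{k}+\bb{\mathcal K}}}(\CC)$ induced by $\epsilon \mapsto 0$.

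The plan is, first, to construct a map from $\mathrm{Hom}_F(I_{|\bb{k}+\bb{\mathcal K}}, S_{|\bb{k}+\bb{\mathcal K}}/I_{|\bb{k}+\bb{\mathcal K}})_0$ to the tangent space. Given a degree-zero $F$-module homomorphism $\phi$, I would set
\[
I'_{|\bb{k}+\bb{\mathcal K}} = \{x + \epsilon y \in R\otimes S_{|\bb{k}+\bb{\mathcal K}} : x\in I_{|\bb{k}+\bb{\mathcal K}},\ \overline{y} = \phi(x)\}.
\]
Verifying that this is an $F$-submodule uses exactly the $F$-linearity of $\phi$, and the key technical point is showing the quotient $(R\otimes S_{|\bb{k}+\bb{\mathcal K}})/I'_{|\bb{k}+\bb{\mathcal K}}$ has each graded piece locally free over $R$ of rank $P(\bb{v})$. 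For this, I would produce the short exact sequence
\[
0\to I_{|\bb{k}+\bb{\mathcal K}}\xrightarrow{\cdot\epsilon} I'_{|\bb{k}+\bb{\mathcal K}}\to I_{|\bb{k}+\bb{\mathcal K}}\to 0,
\]
pass to the quotient to get the analogous sequence for $(R\otimes S_{|\bb{k}+\bb{\mathcal K}})/I'_{|\bb{k}+\bb{\mathcal K}}$, and apply the standard criterion (e.g.\ \cite[Proposition~2.2]{h10}) that a module over $\CC[\epsilon]/(\epsilon^2)$ whose $\epsilon$-torsion and quotient both equal a given locally free module is itself locally free.

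For the inverse, I would take a first-order deformation $I'_{|\bb{k}+\bb{\mathcal K}}$ reducing to $I_{|\bb{k}+\bb{\mathcal K}}$ modulo $\epsilon$ and define $\phi(x)$ for $x\in I_{|\bb{k}+\bb{\mathcal K}}$ by choosing any lift $x+\epsilon y\in I'_{|\bb{k}+\bb{\mathcal K}}$ and setting $\phi(x) = \overline{y}\in S_{|\bb{k}+\bb{\mathcal K}}/I_{|\bb{k}+\bb{\mathcal K}}$. The step I expect to be the main obstacle is well-definedness: I need to know that two lifts $x+\epsilon y$ and $x+\epsilon y'$ differ by an element of $\epsilon\cdot I_{|\bb{k}+\bb{\mathcal K}}$, so that $\overline{y}=\overline{y'}$. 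This will follow from local freeness of the graded pieces of the quotient, via the same type of short exact sequence as above but read in the other direction, which gives an exact sequence identifying the $\epsilon$-torsion of $I'_{|\bb{k}+\bb{\mathcal K}}$ with $\epsilon\cdot I_{|\bb{k}+\bb{\mathcal K}}$. Once well-definedness is settled, $F$-linearity of $\phi$ is automatic because $I'_{|\bb{k}+\bb{\mathcal K}}$ is itself an $F$-submodule.

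Finally, I would check that the two constructions are mutually inverse (straightforward from the definitions) and that the induced bijection respects the $\CC$-vector space structure on the tangent space described in \cite[\S VI.1.3]{hs}; this last point is a routine compatibility with the Haiman-Sturmfels description of addition and scalar multiplication on first-order deformations.
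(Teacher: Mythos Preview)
Your proposal is correct and follows essentially the same approach as the paper: both set $R=\CC[\epsilon]/(\epsilon^2)$, build the deformation $I'_{|\bb{k}+\bb{\mathcal K}}$ from $\phi$ via exactly the same formula, use the pair of short exact sequences together with \cite[Proposition~2.2]{h10} to check local freeness in one direction and well-definedness of $\phi$ in the other, and finish by appealing to \cite[\S VI.1.3]{hs} for the linear structure. There is no substantive difference in strategy or in the key technical steps.
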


By the description of the tangent space at $I_{|\bb{k}+\bb{\mathcal K}}$ to $\mathrm{Hilb}^{P}_{S_{k+\mathcal K}}$ given in Proposition \ref{Tangent space to Hilb_k+K} and by the well-known description of the tangent space at $I$ to $\mathrm{Hilb}^h_{S}$ (see \cite[Proposition~1.6]{hs}) we obtain the following.

\begin{lemma}\label{tangentmap}
The tangent map of the morphism  $\mathrm{Pr}_{|{\bb{k}+\bb{\mathcal K}}}: \mathrm{Hilb}^h_{S}\longrightarrow \mathrm{Hilb}^{P}_{S_{\bb{k}+\bb{\mathcal K}}}$ at a point $I$
is the map of vector spaces 
\[
(d \mathrm{Pr}_{|{\bb{k}+\bb{\mathcal K}}})_I: \mathrm{Hom}_S (I, S/I)_0 \longrightarrow \mathrm{Hom}_F(I_{|\bb{k}+\bb{\mathcal K}}, S_{|\bb{k}+\bb{\mathcal K}}/I_{|\bb{k}+\bb{\mathcal K}})_0
\]
given by restriction of the corresponding $S$-module map $\phi\in \mathrm{Hom}_S (I, S/I)_0$. 
\end{lemma}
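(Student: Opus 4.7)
The plan is to compute the tangent map directly from the functorial definitions used to identify the two tangent spaces, since both identifications are obtained via essentially the same procedure: evaluating the respective functors of points on $R = \CC[\epsilon]/(\epsilon^2)$.

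First I would recall that a tangent vector at $I \in \mathrm{Hilb}^h_S$ is, by definition, an $R$-point of $\mathcal{H}\mathrm{ilb}^h_S$ whose reduction modulo $\epsilon$ equals $I$, i.e.\ an $F$-submodule $I' \subset R \otimes S$ with $R\otimes S_{\bb{v}}/I'_{\bb{v}}$ locally free of rank $h(\bb{v})$ reducing to $I$ in every degree. As in the proof of Proposition~\ref{Tangent space to Hilb_k+K}, any such $I'$ sits in a short exact sequence $0 \to I \xrightarrow{\cdot \epsilon} I' \to I \to 0$, and the classical identification sends $I'$ to the $S$-module map $\phi_{I'}\colon I \to S/I$ defined by $\phi_{I'}(x) = \overline{y}$, where $x + \epsilon y$ is any lift of $x \in I_{\bb{v}}$ to $I'_{\bb{v}}$ (the class $\overline{y}$ being independent of the chosen lift because two lifts differ by an element of $\epsilon I_{\bb{v}}$).

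Next I would apply exactly the same recipe on the target side: the identification given in Proposition~\ref{Tangent space to Hilb_k+K} sends a tangent vector $J' \subset R \otimes S_{|\bb{k}+\bb{\mathcal K}}$ at $I_{|\bb{k}+\bb{\mathcal K}}$ to the $F$-module map $\psi_{J'}\colon I_{|\bb{k}+\bb{\mathcal K}} \to S_{|\bb{k}+\bb{\mathcal K}}/I_{|\bb{k}+\bb{\mathcal K}}$ defined in the same way, $\psi_{J'}(x) = \overline{y}$ for $x + \epsilon y$ a lift in $J'$.

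The tangent map $(d\,\mathrm{Pr}_{|\bb{k}+\bb{\mathcal K}})_I$ is, by functoriality, the map induced on $R$-points by $\mathrm{Pr}_{|\bb{k}+\bb{\mathcal K}}$, which sends the deformation $I'$ to $I'_{|\bb{k}+\bb{\mathcal K}}$. Comparing the two recipes, for any $x \in I_{\bb{v}}$ with $\bb{v} \in \bb{k}+\bb{\mathcal K}$, a lift $x + \epsilon y \in I'_{\bb{v}}$ is automatically a lift in $I'_{|\bb{k}+\bb{\mathcal K}}$, so $\psi_{I'_{|\bb{k}+\bb{\mathcal K}}}(x) = \overline{y} = \phi_{I'}(x)$ viewed in $S_{|\bb{k}+\bb{\mathcal K}}/I_{|\bb{k}+\bb{\mathcal K}}$. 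Hence $\psi_{I'_{|\bb{k}+\bb{\mathcal K}}}$ is exactly the restriction of $\phi_{I'}$ to $I_{|\bb{k}+\bb{\mathcal K}}$, proving the claim.

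There is no real obstacle here beyond bookkeeping: the key check is that both identifications of tangent spaces are compatible with the same lifting construction, and that passing from an $S$-submodule to its graded truncation preserves lifts degree by degree. I would also briefly verify that the restriction map indeed sends $S$-linear homomorphisms to $F$-linear homomorphisms in degree zero, which is immediate because $F$-linearity is a consequence of $S$-linearity once we remember that the $F$-module structure on $S_{|\bb{k}+\bb{\mathcal K}}$ is induced by the action of those monomials of $S$ that preserve the subset $\bb{k}+\bb{\mathcal K}$.
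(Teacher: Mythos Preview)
Your proposal is correct and follows precisely the approach the paper has in mind: the paper states this lemma as an immediate consequence of the tangent space description in Proposition~\ref{Tangent space to Hilb_k+K} together with the well-known one for $\mathrm{Hilb}^h_S$ from \cite[Proposition~1.6]{hs}, without writing out a proof. Your argument simply unpacks that claim by tracking the $\CC[\epsilon]/(\epsilon^2)$-point identifications on both sides and observing that the functorial map $I'\mapsto I'_{|\bb{k}+\bb{\mathcal K}}$ intertwines them via restriction, which is exactly what is implicit in the paper's one-line justification.
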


The next ingredient we are going to need is the locus of saturated ideals inside the multigraded Hilbert scheme $\mathrm{Hilb}^h_S$. 

\begin{definition}[{\bf Saturable locus}] 
The subset of closed points of $\mathrm{Hilb}^h_S$ 
corresponding to $B$-saturated ideals $I\subset S$ is denoted
$\mathrm{Hilb}^{h,\mathrm{sat}}_S$. Its closure is the {\it saturable locus}. 
\end{definition}

\begin{theorem}[{\bf Buczy\'nska-Buczy\'nski} \cite{bb21}, {\bf Jelisiejew-Ma\'ndziuk} \cite{JM}]\label{satlocus is open}
Let $h$ be an admissible Hilbert function for $S$. Then the locus $\mathrm{Hilb}^{h,\mathrm{sat}}_S$
is a Zariski open subset of the multigraded Hilbert scheme $\mathrm{Hilb}^h_S$. 
\end{theorem}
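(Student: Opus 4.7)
The plan is to show that the complement of $\mathrm{Hilb}^{h,\mathrm{sat}}_S$ in $\mathrm{Hilb}^h_S$ is Zariski closed. If the saturable locus is empty there is nothing to prove, so assume it contains some $I_0$. Then $h$ coincides with a Hilbert polynomial $P$ in the region $\bb{k}+\bb{\mathcal K}$, where $\bb{k}$ is the uniform multigraded regularity bound furnished by Theorem~\ref{thm:uniform_bound}.

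First I would realize the family of saturations on $\mathrm{Hilb}^h_S$. The morphism $\psi\colon \mathrm{Hilb}^h_S \to \mathrm{Hilb}^P(X)$ of Theorem~\ref{there exists a morphism} pulls back the universal flat subscheme $\mathcal Z \subseteq X \times \mathrm{Hilb}^P(X)$; its ideal sheaf $\mathcal J$ is flat over $\mathrm{Hilb}^P(X)$ since it fits into a short exact sequence with the flat sheaves $\mathcal O$ and $\mathcal O_{\mathcal Z}$. Taking graded global sections produces a graded family $\overline{\mathcal I}$ on $\mathrm{Hilb}^h_S$ with $\overline{\mathcal I}|_I = \overline{I}$ on each fiber, together with a natural inclusion $\mathcal I \hookrightarrow \overline{\mathcal I}$ of graded families.

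Next, for each degree $\bb{v}$, the function $I \mapsto \dim \overline{I}_\bb{v}$ is upper-semicontinuous by the semicontinuity theorem for cohomology in flat families (\cite[Theorem~III.12.8]{h}). Since $\dim I_\bb{v} = \dim S_\bb{v} - h(\bb{v})$ is constant and $I_\bb{v}\subseteq \overline{I}_\bb{v}$ always, the subset
\[
U_\bb{v} = \{I \in \mathrm{Hilb}^h_S : \overline{I}_\bb{v} = I_\bb{v}\} = \{I : \dim \overline{I}_\bb{v} \leq \dim S_\bb{v} - h(\bb{v})\}
\]
is open, and the saturable locus is the intersection $\bigcap_{\bb{v}} U_\bb{v}$. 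I then argue that this intersection can be taken over a fixed finite set $\Delta \subset \ZZ^s$: by the uniform $\bb{k}$-regularity of all the $\overline{I}$, combined with Maclagan-Smith's characterization of generators of a regular module, one produces a uniform finite generating-degree set $\Delta$ satisfying $\overline{I} = S \cdot \overline{I}_\Delta$ for every $I$. If $I_\bb{v} = \overline{I}_\bb{v}$ for all $\bb{v}\in\Delta$, then $\overline{I} = S \cdot \overline{I}_\Delta = S \cdot I_\Delta \subseteq I$, forcing $I = \overline{I}$. Therefore $\mathrm{Hilb}^{h,\mathrm{sat}}_S = \bigcap_{\bb{v}\in\Delta} U_\bb{v}$, a finite intersection of open sets, hence open.

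The main technical obstacle is the uniform finiteness of $\Delta$. Maclagan-Smith's regularity theorem directly yields generation by $\overline{I}_\bb{k}$ only over the cone $\bb{k}+\bb{\mathcal K}$, so one must also account for possible minimal generators of $\overline{I}$ at degrees outside this cone. Controlling these uniformly relies on combining Noetherianity of $\mathrm{Hilb}^h_S$ with the boundedness of the $B$-torsion cokernel $\overline{\mathcal I}/\mathcal I$ imposed by the prescribed Hilbert polynomial $P$ and the fixed Hilbert function $h$.
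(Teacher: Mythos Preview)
The paper does not give its own proof of this theorem: it is stated purely as a citation to \cite{bb21} and \cite{JM}, with no argument in the text. So there is no in-paper proof to compare against.

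On your proposal itself: the semicontinuity step is fine. The universal ideal sheaf $\mathcal J$ is flat over the base (your short exact sequence argument is correct), so $I\mapsto \dim_\CC H^0(X,\mathcal J_I(\bb{v}))=\dim_\CC \overline{I}_\bb{v}$ is upper semicontinuous and each $U_\bb{v}$ is open. The problem is the finiteness reduction, and you have not actually carried it out --- your final paragraph names the obstacle but does not overcome it. The difficulty is real: in the multigraded setting the set of effective degrees outside $\bb{k}+\mathcal K$ is typically infinite (already for $X=\PP^1\times\PP^1$, where $\NN A=\mathcal K=\NN^2$ and $\NN^2\setminus(\bb{k}+\NN^2)$ is unbounded). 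Invoking ``Noetherianity of $\mathrm{Hilb}^h_S$ plus boundedness of the $B$-torsion cokernel'' is not an argument: arbitrary intersections of opens need not be open in a Noetherian space, and you have not explained what bounds the degrees of minimal generators of $\overline{I}$ lying outside $\bb{k}+\mathcal K$ uniformly in $I$. Until a concrete finite set $\Delta$ is produced, the proof is incomplete.

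One way to close the gap, closer in spirit to the cited references, is to bypass the family $\overline{\mathcal I}$ and instead use that $I$ is $B$-saturated iff $(I:B)=I$, i.e.\ iff the multiplication map $S_\bb{v}/I_\bb{v}\to \bigoplus_j S_{\bb{v}+\deg g_j}/I_{\bb{v}+\deg g_j}$ is injective for each $\bb{v}$ (with $g_j$ generators of $B$). These are maps of locally free sheaves on $\mathrm{Hilb}^h_S$, so injectivity is open degree by degree; one then argues the needed finiteness directly from the fixed Hilbert function $h$ together with a uniform regularity bound. Either way, the substantive content you are missing is exactly this uniform degree bound, and that is precisely what the references you would be citing supply.
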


The following is a slight generalization of \cite[Proposition 3.9]{JM} to the case
of smooth projective toric varieties. 

\begin{theorem}\label{thm:restriction on saturable locus is closed immersion}
Let $h$ be the Hilbert function of the quotient algebra of a $B$-saturated ideal of $S$ and $P$ be the corresponding multigraded Hilbert polynomial. Then the projection morphism $\psi_{\bb{k}+\bb{\mathcal K}}: \mathrm{Hilb}^{h}_{S}\longrightarrow \mathrm{Hilb}^P(X)$ restricts to a locally closed immersion $\mathrm{Hilb}^{h,\mathrm{sat}}_S \to \mathrm{Hilb}^P(X)$.
\begin{proof}
Since $ \tau_{\bb{k}+\bb{\mathcal K}}$ is an isomorphism, it is enough to prove that the restriction of $\mathrm{Pr}_{|{\bb{k}+\bb{\mathcal K}}}$ to  $\mathrm{Hilb}^{h,\mathrm{sat}}_S$ is a locally closed immersion.
By Theorem \ref{satlocus is open} the subset $U = \mathrm{Hilb}^{h,\mathrm{sat}}_S$ is Zariski open. Let $Z$ be the complement 
of $U\subset  \mathrm{Hilb}^{h}_{S}$. Then the restriction of $\mathrm{Pr}_{|{\bb{k}+\bb{\mathcal K}}}$ to $U$ gives a map
\[
\mathrm{Pr}_{U |{\bb{k}+\bb{\mathcal K}}}: U\longrightarrow \mathrm{Hilb}^{P}_{S_{\bb{k}+\bb{\mathcal K}}}\setminus \mathrm{Pr}_{|\bb{k}+\bb{\mathcal K}}(Z). 
\]
We prove that $\mathrm{Pr}_{U |{\bb{k}+\bb{\mathcal K}}}$  is a closed immersion. By \cite[Proposition~12.94]{GW}, it is enough to show that
\begin{itemize}
    \item $\mathrm{Pr}_{U |{\bb{k}+\bb{\mathcal K}}}$ is injective on closed points;
    \item the tangent map to $\mathrm{Pr}_{U |{\bb{k}+\bb{\mathcal K}}}$ at every closed point is injective. 
\end{itemize}
Let $J\in U$ be in the fiber of $I_{|\bb{k}+\bb{\mathcal K}}$ for some $I\in U$. Then we have the inclusion
of ideals $(I_{|\bb{k}+\bb{\mathcal K}})\subset J=\overline{J}$. Hence $\overline{(I_{|\bb{k}+\bb{\mathcal K}})} = \overline{I} = I\subset \overline{J}=J$. Since $I$ and $J$ have the same Hilbert function $h$, we conclude that $I=J$.

Let $\phi\in \ker((d \mathrm{Pr}_{U |{\bb{k}+\bb{\mathcal K}}})_J)$ where $(d \mathrm{Pr}_{U |{\bb{k}+\bb{\mathcal K}}})_J$ is the tangent map at $J$, featured in Lemma~\ref{tangentmap}, and let $\bb{u}\in \ZZ^s$. Then there exists $\bb{v}\in \mathcal K$ such that $\bb{u}+\bb{v}\in {\bb{k}+\bb{\mathcal K}}$, because the cone $\mathcal K\otimes_{\mathbb Z} \mathbb{R}$ is full-dimensional in $\mathrm{Pic}(X)$ \cite[Proposition 6.3.24]{CLS11}. Let $h\in S_{\bb{v}}$ be a nonzerodivisor on $S/I$. This exists in every degree $\bb{v}\in \mathcal K$ because $I = \overline{I}$ (see the proof of Lemma~\ref{lem:equality_of_sat}). For every $g\in I_{\bb{u}}$, we have $\phi(hg) = 0$, because the restriction of $\phi$ is the zero map. Since $\phi$ is an $S$-module map, this implies $h\phi(g) = 0$. Hence $\phi(g) = 0$ for all $g\in I_{\bb{u}}$, for $h$ is nonzerodivisor. Thus $\phi$ is the zero map itself, establishing the desired injectivity.  
\end{proof}
\end{theorem}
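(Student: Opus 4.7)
The isomorphism $\tau_{\bb{k}+\bb{\mathcal K}}$ from Theorem \ref{sat vs. truncation} identifies $\mathrm{Hilb}^P(X)$ with $\mathrm{Hilb}^{P}_{S_{\bb{k}+\bb{\mathcal K}}}$, so it suffices to prove that the restriction of the truncation morphism $\mathrm{Pr}_{|{\bb{k}+\bb{\mathcal K}}}: \mathrm{Hilb}^{h}_{S}\rightarrow \mathrm{Hilb}^{P}_{S_{\bb{k}+\bb{\mathcal K}}}$ to the open subset $U:=\mathrm{Hilb}^{h,\mathrm{sat}}_S$ (open by Theorem \ref{satlocus is open}) is a locally closed immersion. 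The plan is to check the two local criteria for a closed immersion onto an open subset of the target: injectivity on closed points and injectivity on Zariski tangent spaces. Since $\mathrm{Hilb}^h_S$ is projective, the image of its closed complement $Z=\mathrm{Hilb}^h_S\setminus U$ under $\mathrm{Pr}_{|\bb{k}+\bb{\mathcal K}}$ is closed, so working on the open complement of $\mathrm{Pr}_{|\bb{k}+\bb{\mathcal K}}(Z)$ turns the restriction into a morphism that is genuinely a closed immersion onto its image once the two criteria above are verified (via \cite[Proposition~12.94]{GW}).

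\textbf{Injectivity on closed points.} Suppose $I,J\in U$ satisfy $I_{|\bb{k}+\bb{\mathcal K}}=J_{|\bb{k}+\bb{\mathcal K}}$. Both $I$ and $J$ are $B$-saturated. Lemma \ref{lem:equality_of_sat} shows that the $B$-saturation of an ideal depends only on its truncation to $\bb{k}+\bb{\mathcal K}$; hence $I=\overline{(I_{|\bb{k}+\bb{\mathcal K}})}=\overline{(J_{|\bb{k}+\bb{\mathcal K}})}=J$, and injectivity follows (a coincidence of Hilbert functions argument would also work once one has one inclusion).

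\textbf{Injectivity on tangent spaces.} By Lemma \ref{tangentmap}, the tangent map at $I\in U$ sends an $S$-module homomorphism $\phi\in\mathrm{Hom}_S(I,S/I)_0$ to its restriction to the truncation. Assume that this restriction vanishes; I must show $\phi\equiv 0$. Given a homogeneous element $g\in I_{\bb{u}}$ in an arbitrary degree $\bb{u}\in\ZZ^s$, the nef cone being full dimensional in $\mathrm{Pic}(X)$ allows me to choose $\bb{v}\in\mathcal K$ with $\bb{u}+\bb{v}\in\bb{k}+\mathcal K$. The decisive input, and the delicate step of the argument, is the existence of a homogeneous element $h\in S_{\bb{v}}$ that is a nonzerodivisor on $S/I$; this follows from the $B$-saturation of $I$: no associated prime of $S/I$ contains $B$, so by \cite[Lemma~2.4]{MS} none of them fill up $S_{\bb{v}}$ in any degree $\bb{v}\in\mathcal K$, and prime avoidance produces the desired $h$ (this is essentially the argument used in the proof of Lemma~\ref{lem:equality_of_sat}). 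Then $hg\in I_{\bb{u}+\bb{v}}$ lies in a degree where $\phi$ vanishes, so $0=\phi(hg)=h\phi(g)$, which forces $\phi(g)=0$ since $h$ is a nonzerodivisor on $S/I$. Thus $\phi=0$.

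\textbf{Main obstacle.} The only nonformal point is the production of a nonzerodivisor $h\in S_{\bb{v}}$ on $S/I$ for \emph{every} $\bb{v}\in\mathcal K$, which is exactly where $B$-saturation of $I$ is essential and which isolates why the statement fails on all of $\mathrm{Hilb}^h_S$ and must be restricted to $\mathrm{Hilb}^{h,\mathrm{sat}}_S$. Everything else reduces to bookkeeping with the bijection of Theorem \ref{sat vs. truncation}, Lemma \ref{lem:equality_of_sat}, and the tangent space descriptions in Lemma \ref{tangentmap} and Proposition \ref{Tangent space to Hilb_k+K}.
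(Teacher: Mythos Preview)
Your proposal is correct and follows essentially the same approach as the paper's proof: reduce via $\tau_{\bb{k}+\bb{\mathcal K}}$ to the truncation map, remove the proper image $\mathrm{Pr}_{|\bb{k}+\bb{\mathcal K}}(Z)$ from the target, and then verify the two hypotheses of \cite[Proposition~12.94]{GW}. Your tangent-space argument is identical to the paper's; for injectivity on closed points you invoke Lemma~\ref{lem:equality_of_sat} directly to get $I=J$ in one step, whereas the paper first obtains the inclusion $I\subset J$ and then concludes by equality of Hilbert functions---but these are equivalent and you even mention the latter as an alternative.
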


\subsection{The morphism from $\mathrm{Slip}_{r,X}$ to $\mathrm{Hilb}_{sm}^r(X)$}

\begin{proposition}\label{surjective map from Slip to Hilb^r}
Let $h=h_{r,X}$ be the generic Hilbert function and $P$ be the constant polynomial $P(\bb{t}) = r$. 
Then morphism $\psi_{\bb{k}+\bb{\mathcal K}}: \mathrm{Hilb}^h_{S}\longrightarrow \mathrm{Hilb}^r(X)$
descends to a surjective morphism 
\[
\phi_{r,X}: \mathrm{Slip}_{r,X}\longrightarrow \mathrm{Hilb}_{sm}^r(X), 
\]
where $\mathrm{Hilb}_{sm}^r(X)$ is the smoothable component. 
\begin{proof}
By \cite[Theorem 1.4]{bb21}, there exists a dense open subset $U$ of $r$-tuples 
$(x_1,\ldots, x_r)\in X^{\times r}$ (with distinct entries) such that the Hilbert function of
the ideal of $Z = \lbrace x_1,\ldots, x_r\rbrace$ is $\mathrm{HF}(S/I_Z) = h_{r,X}$. 
Let $X^{(r)}$ be the quotient $X^r/\mathfrak S_r$. This is a geometric quotient 
which is a projective variety since $X$ is itself projective \cite[Theorem 7.1.2]{Fan05}. We have a surjective projective morphism $\pi_{\mathfrak S_r}: X^{\times r}\rightarrow X^{(r)}$. 
Then $\pi_{\mathfrak S_r}(U)$ is constructible and so it contains an open dense subset $W$ of its closure 
$\overline{\pi_{\mathfrak S_r}(U)} = \overline{\pi_{\mathfrak S_r}(\overline{U})} =\pi_{\mathfrak S_r}(\overline{U}) = X^{(r)}$. 
Let $\pi_{\mathrm{HC}}: \mathrm{Hilb}^r(X)_{red}\rightarrow X^{(r)}$ be the Hilbert-Chow morphism 
\cite[\S 7.1]{Fan05}. Thus $V=\pi_{\mathrm{HC}}^{-1}(W)\subset \mathrm{Hilb}^r_{sm}(X)$ is a dense open subset.
The morphism given in Proposition \ref{there exists a morphism} induces a map $\phi_{r,X}: \mathrm{Slip}_{r,X}\rightarrow \mathrm{Hilb}^r_{sm}(X)$, by the construction of $\mathrm{Slip}_{r,X}$. 
Moreover, the image of $\phi_{r,X}$ contains $V$ and is therefore dense. Since $\phi_{r,X}$ is projective, it is surjective. 
\end{proof}
\end{proposition}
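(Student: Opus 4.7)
The plan is to verify two things in sequence: first, that $\psi_{\bb{k}+\bb{\mathcal K}}$ restricted to $\mathrm{Slip}_{r,X}$ takes values in the smoothable component $\mathrm{Hilb}_{sm}^r(X)$; second, that the resulting morphism is surjective. For the first, I would invoke the very definition of $\mathrm{Slip}_{r,X}$ as the closure inside $\mathrm{Hilb}^{h_{r,X}}_{S}$ of the locus of $B$-saturated ideals $I_Z$ where $Z=\{p_1,\ldots,p_r\}\subset X$ is a set of $r$ distinct points with $\HF(S/I_Z,\cdot)=h_{r,X}$. Combining Theorem~\ref{sat vs. truncation} with Theorem~\ref{thm:Cox_bijection}, such an $I_Z$ is sent by $\psi_{\bb{k}+\bb{\mathcal K}}$ to $Z$ itself, a reduced length-$r$ subscheme, which is trivially smoothable and hence lies in $\mathrm{Hilb}_{sm}^r(X)$. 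Continuity of $\psi_{\bb{k}+\bb{\mathcal K}}$ together with irreducibility of $\mathrm{Slip}_{r,X}$ then forces the image of the whole of $\mathrm{Slip}_{r,X}$ into the irreducible component $\mathrm{Hilb}_{sm}^r(X)$.

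For surjectivity, my strategy is to produce a Zariski dense open subset $V\subset \mathrm{Hilb}_{sm}^r(X)$ contained in $\phi_{r,X}(\mathrm{Slip}_{r,X})$ and then close the argument with properness. The symmetric product $X^{(r)}=X^{\times r}/\mathfrak S_r$ is projective and the quotient map $\pi_{\mathfrak S_r}\colon X^{\times r}\to X^{(r)}$ is surjective and projective. By \cite[Theorem~1.4]{bb21} there is a dense open $U\subset X^{\times r}$ parameterizing $r$-tuples of distinct points $(x_1,\ldots,x_r)$ whose associated reduced scheme has Hilbert function $h_{r,X}$. By Chevalley's theorem, $\pi_{\mathfrak S_r}(U)$ is constructible and dense in $X^{(r)}$, so it contains a dense open subset $W\subset X^{(r)}$. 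Using the Hilbert--Chow morphism $\pi_{\mathrm{HC}}\colon \mathrm{Hilb}^r(X)_{\mathrm{red}}\to X^{(r)}$ I would set $V:=\pi_{\mathrm{HC}}^{-1}(W)$; since a length-$r$ subscheme of $X$ whose cycle consists of $r$ distinct points with multiplicity one must itself be reduced, $V$ consists entirely of reduced $r$-point subschemes, which automatically lie in $\mathrm{Hilb}_{sm}^r(X)$ and form a dense open subset there.

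Every $[Z]\in V$ is the image under $\phi_{r,X}$ of its $B$-saturated ideal $I_Z\in \mathrm{Slip}_{r,X}$, so $V\subseteq \phi_{r,X}(\mathrm{Slip}_{r,X})$. Since $\mathrm{Slip}_{r,X}$ is projective, $\phi_{r,X}$ is proper and its image is closed in $\mathrm{Hilb}_{sm}^r(X)$; a closed subset of the irreducible variety $\mathrm{Hilb}_{sm}^r(X)$ containing the dense open $V$ is all of it. The main conceptual subtlety I anticipate is verifying that $V$ is contained in (and dense in) the smoothable component rather than scattered across other components of $\mathrm{Hilb}^r(X)$; this is handled by the cycle-level observation that the Hilbert--Chow fiber over a reduced $0$-cycle contains only the corresponding reduced subscheme, together with the tautology that the smoothable component is by definition the closure of the locus of reduced $r$-point schemes.
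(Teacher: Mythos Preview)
Your proof is correct and follows essentially the same route as the paper's: both pass through the symmetric product $X^{(r)}$, invoke \cite[Theorem~1.4]{bb21} to obtain a dense open $U\subset X^{\times r}$, push forward to a dense open $W\subset X^{(r)}$ via Chevalley, pull back along the Hilbert--Chow morphism to a dense open $V\subset \mathrm{Hilb}_{sm}^r(X)$, and close with properness. You are somewhat more explicit than the paper in two places---arguing why the image lands in $\mathrm{Hilb}_{sm}^r(X)$ via continuity and irreducibility, and justifying $V\subset \mathrm{Hilb}_{sm}^r(X)$ via the cycle argument---whereas the paper dispatches both points tersely (``by the construction of $\mathrm{Slip}_{r,X}$'').
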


\begin{lemma}\label{irreducibility of saturablelocus}
If $\mathrm{Hilb}^r(X)$ is irreducible, any $B$-saturated ideal $I\in \mathrm{Hilb}^{h}_S$ is in $\mathrm{Slip}_{r,X}$. Consequently, if $\mathrm{Hilb}^r(X)$ is irreducible then $\mathrm{Slip}_{r,X}$ is the saturable locus. 
\end{lemma}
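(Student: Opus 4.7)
The plan is to prove the first statement by producing, for a given $B$-saturated $I\in\mathrm{Hilb}^h_S$, a preimage of its associated subscheme under $\phi_{r,X}$ and then identifying that preimage with $I$ itself. First, I would let $Z\subset X$ be the length-$r$ subscheme defined by $I$; via the isomorphism $\tau_{\bb{k}+\bb{\mathcal K}}$ from Theorem~\ref{sat vs. truncation}, this is the image $\psi_{\bb{k}+\bb{\mathcal K}}(I)\in\mathrm{Hilb}^r(X)$. The hypothesis that $\mathrm{Hilb}^r(X)$ is irreducible forces $\mathrm{Hilb}^r(X)=\mathrm{Hilb}^r_{sm}(X)$, so $Z$ is smoothable. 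I would then apply Proposition~\ref{surjective map from Slip to Hilb^r} to produce some $J\in\mathrm{Slip}_{r,X}$ with $\phi_{r,X}(J)=Z$, i.e. $\psi_{\bb{k}+\bb{\mathcal K}}(J)=Z$.

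The next step is to verify $J=I$. Since $J\in\mathrm{Slip}_{r,X}\subset\mathrm{Hilb}^h_S$, the ideals $I$ and $J$ share the multigraded Hilbert function $h$. Combining Lemma~\ref{lem:saturation_defines_the_same_subscheme} with Theorem~\ref{thm:Cox_bijection}, $\overline{J}$ is the unique $B$-saturated $\ZZ^s$-homogeneous ideal defining $Z$, hence $\overline{J}=I$. The inclusion $J\subseteq\overline{J}=I$ therefore holds degree by degree, and because $\dim_\CC J_{\bb{v}}=\dim_\CC I_{\bb{v}}$ for every multidegree $\bb{v}$ (both equal $\dim_\CC S_{\bb{v}}-h(\bb{v})$), each graded containment is an equality, so $J=I$. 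This places $I$ in $\mathrm{Slip}_{r,X}$ as desired.

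For the consequence, I would argue by two inclusions. Since $\mathrm{Slip}_{r,X}$ is defined as the closure of the locus of ideals of reduced $r$-tuples, which are automatically $B$-saturated, we get $\mathrm{Slip}_{r,X}\subseteq\overline{\mathrm{Hilb}^{h,\mathrm{sat}}_S}$. Conversely, the previous paragraph shows $\mathrm{Hilb}^{h,\mathrm{sat}}_S\subseteq\mathrm{Slip}_{r,X}$; since $\mathrm{Slip}_{r,X}$ is closed in $\mathrm{Hilb}^h_S$ (being an irreducible component), passing to closures yields $\overline{\mathrm{Hilb}^{h,\mathrm{sat}}_S}\subseteq\mathrm{Slip}_{r,X}$, and equality follows.

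The main obstacle I anticipate is the identification $J=I$ rather than merely $\overline{J}=I$. A general element of $\mathrm{Slip}_{r,X}$ can fail to be $B$-saturated, so without an extra constraint the preimage $J$ would only recover $I$ after saturation and we could not conclude $I\in\mathrm{Slip}_{r,X}$ on the nose. What unlocks the argument is membership in $\mathrm{Hilb}^h_S$: the prescribed Hilbert function $h$ pins down dimensions in every multidegree, so the graded inclusion $J\subseteq I$ of ideals with matching finite-dimensional pieces is forced to be an equality.
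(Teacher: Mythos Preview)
Your proof is correct and follows essentially the same route as the paper's: map $I$ to $\mathrm{Hilb}^r(X)$, use irreducibility to land in the smoothable component, invoke the surjectivity of $\phi_{r,X}$ from Proposition~\ref{surjective map from Slip to Hilb^r} to find $J\in\mathrm{Slip}_{r,X}$ over the same point, deduce $\overline{J}=I$, and then use the common Hilbert function to force $J=I$. Your justification of the consequence via the two closures is likewise the same as the paper's, just spelled out in slightly more detail.
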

\begin{proof}
By definition, $\mathrm{Slip}_{r,X}$ is an irreducible component of the saturable locus. To obtain the other inclusion it is sufficient to show the first sentence. Consider the point $\psi_{\bb{k}+\bb{\mathcal K}}(I) \in \mathrm{Hilb}^r(X)$. By assumption, $\psi_{\bb{k}+\bb{\mathcal K}}(I) \in \mathrm{Hilb}_{sm}^r(X)$. By the surjectivity proven in Proposition \ref{surjective map from Slip to Hilb^r}, there exists $J\in \mathrm{Slip}_{r,X}$ with $\psi_{\bb{k}+\bb{\mathcal K}}(J) = \psi_{\bb{k}+\bb{\mathcal K}}(I)$. This implies that $\overline{J} = \overline{I} = I$. Since $I$ and $J$ have the same Hilbert function, it follows that $I=J$ and therefore $I\in \mathrm{Slip}_{r,X}$.
\end{proof}

\subsection{Limiting schemes}
The next theorem is a consequence of Theorem \ref{mainbb}; before we state it, we have to recall the notion of {\it limiting scheme}.

\begin{definition}[{\bf Limiting scheme}]\label{borderdec}\cite[Definition 5.1]{HMV}
Let $F\in T_{\bb{v}}$ and $r=\brk_X(F)$. Suppose we are given a border rank decomposition for $F$, i.e. 
\begin{equation}\label{brdec}
F = \lim_{t\rightarrow 0}\frac{1}{t^s}\left(p_1(t) + \cdots  +p_{r}(t) \right), \ s\geq 0, 
\end{equation}
where $p_j(t)$ are points of $X\subset \PP(T_{\bb{v}})$ \cite[\S 5.2.4]{Lands11}. The reduced zero-dimensional scheme whose (closed) points are the $p_j(t)$ is denoted $Z(t)$. 
 For each $t\neq 0$, the $B$-saturated radical ideal defining $Z(t)$ is denoted $I_{Z(t)}$. The flat limit $Z = \lim_{t\rightarrow 0} Z(t)$ is called the {\it limiting scheme} of \eqref{brdec}.
 \end{definition}

\begin{theorem} [{\cite[Theorems 5.3 and 5.4]{HMV}}]\label{borderdecgivespointinVSPb}
Let $F\in T_{\bb{v}}$ and $r=\brk_X(F)$.
The saturation of any ideal in $\VSPb(F,r)$ coincides with the ideal of a limiting scheme of a border rank decomposition. Vice versa, if $F\in T_\bb{v}$ is concise and of minimal border rank, then every border rank decomposition of $F$ determines 
an ideal in $\VSPb(F,r)$. 
\begin{proof}
Any ideal $J\in \VSPb(F,r)$ comes from some border rank decomposition \eqref{brdec}; see the proof of \cite[Theorem 3.15]{bb19}. Such a border rank decomposition \eqref{brdec} determines a family of zero-dimensional schemes $Z(t)$, each of length $r$, such that their ideals $I_{Z(t)}$ have 
the generic Hilbert function $h_{r,X}$ and $J =  \lim_{t \rightarrow 0} I_{Z(t)}$. 
Using Theorem~\ref{sat vs. truncation} to identify the Hilbert scheme $\mathrm{Hilb}^r(X)$ with a multigraded Hilbert scheme, we claim that 
\[
J_{|\bb{k}+\mathcal{K}} = \lim_{t\to 0} ({I_{Z(t)}}_{|\bb{k}+\mathcal{K}}) = {(I_{\lim_{t\to 0 }Z(t)})}_{|\bb{k}+\mathcal{K}} = {I_Z}_{|\bb{k}+\mathcal{K}}.
\]
The first equality follows from the fact that limits of ideals are computed degree by degree and the last follows from the definition of $Z$. We explain the middle one. The identification of the multigraded Hilbert scheme with the Hilbert scheme takes an ideal $I$ to the subscheme of $X$ defined by $I$ and takes a subscheme $Z$ of $X$ to $I_{|\bb{k}+\mathcal{K}}$ where $I$ is the unique $B$-saturated ideal defining $Z$. Therefore, the limit of ${I_{Z(t)}}_{|\bb{k}+\mathcal{K}}
$ is the degree $(\bb{k}+\mathcal{K})$-part of the unique $B$-saturated ideal defining the limit of $Z(t)$.
From the displayed equalities and Lemma~\ref{lem:equality_of_sat} we get $\overline{J} = I_Z$.

For the second statement, given any border rank decomposition $F = \lim_{t\rightarrow 0} H(t) $, where $H(t) = \frac{1}{t^s}\left(p_1(t) + \cdots  + p_r(t)\right)$ has $X$-rank $r$,
one has that $I_{Z(t)}\subset \mathrm{Ann}(H(t))\subset S$ ($t\neq 0$) by multigraded apolarity. Since $F$ is concise, we can find a neighborhood $U$ of $0$ such that $H(t)$ is concise for all $t\in U$. For such $t$, the ideal $I_{Z(t)}$ has the generic Hilbert function $h_{r,X}$. The limit ideal $J:= \lim_{t\rightarrow 0} I_{Z(t)}$ is in $\Ann(F)$ by construction because 
$\lim_{t\rightarrow 0} \Ann(H(t))\subset \Ann(F)$. To see the last inclusion, let $p = \lim_{t\rightarrow 0} p_t \in \lim_{t\rightarrow 0} \Ann(H(t))$. Then $p\circ F = \lim_{t\rightarrow 0} p_t \circ \lim_{t\rightarrow 0} H(t) = \lim_{t\rightarrow 0} p_t\circ H(t) = 0$. Moreover, $J$ is a limit of a radical ideal of points with Hilbert function $h_{r,X}$ and therefore $J\in \mathrm{Slip}_{r,X}$. Hence $J\in \VSPb(F,r)$. 
\end{proof}
\end{theorem}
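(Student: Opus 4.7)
The plan is to handle the two directions separately, both pivoting on the identification between the multigraded Hilbert scheme and the toric Hilbert scheme established in Theorem~\ref{sat vs. truncation}, which lets us compute flat limits of subschemes of $X$ as graded limits of the saturated ideals, restricted to degrees in $\bb{k}+\mathcal{K}$.

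\textbf{Forward direction.} Given $J\in \VSPb(F,r)$, I would unpack the definition of $\mathrm{Slip}_{r,X}$: there exists a one-parameter family of saturated radical ideals $I_{Z(t)}$ with Hilbert function $h_{r,X}$ such that $J=\lim_{t\to 0}I_{Z(t)}$; each $I_{Z(t)}$ corresponds to $r$ distinct points, yielding a border rank decomposition of $F$ (as extracted in the proof of Theorem~\ref{mainbb}). Now let $Z=\lim_{t\to 0}Z(t)$ be the associated limiting scheme. Since limits inside the multigraded Hilbert scheme are computed degree-wise, one obtains $J_{|\bb{k}+\mathcal{K}}=\lim_{t\to 0}(I_{Z(t)})_{|\bb{k}+\mathcal{K}}$. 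Under the isomorphism of Theorem~\ref{sat vs. truncation}, the right-hand side coincides with the restriction to $\bb{k}+\mathcal{K}$ of the unique $B$-saturated ideal of $\lim_{t\to 0}Z(t)=Z$, i.e.\ $(I_Z)_{|\bb{k}+\mathcal{K}}$. Saturating both sides and invoking Lemma~\ref{lem:equality_of_sat} (which asserts that saturation is determined by any tail of the form $\bb{u}+\mathcal{K}$) yields $\overline{J}=I_Z$, as desired.

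\textbf{Reverse direction.} Starting from a border rank decomposition $F=\lim_{t\to 0}H(t)$ with $H(t)=t^{-s}\bigl(p_1(t)+\cdots+p_r(t)\bigr)$ and $Z(t)$ the associated reduced scheme, multigraded apolarity gives $I_{Z(t)}\subset \Ann(H(t))$ for $t\neq 0$. The first task is to verify that $I_{Z(t)}$ has the generic Hilbert function $h_{r,X}$ near $t=0$, so that the family lives inside $\mathrm{Hilb}_S^{h_{r,X}}$ and its limit lies in $\mathrm{Slip}_{r,X}$ by construction. Here is where conciseness and minimality enter: $F$ concise implies that $H(t)$ is concise for $t$ in a punctured neighbourhood of $0$ (since conciseness is the open condition $\Ann(-)_{\bb{a}_i}=0$), and for concise points of minimal border rank $r$ any decomposition into $r$ points yields an ideal with the generic Hilbert function — this is the step I expect to be the main technical obstacle, as it forces one to use minimality to rule out dimension drops of $(S/I_{Z(t)})_{\bb{v}}$ below $h_{r,X}(\bb{v})$.

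\textbf{Conclusion of the reverse direction.} Once the family $I_{Z(t)}$ is known to live in $\mathrm{Hilb}_S^{h_{r,X}}$, set $J=\lim_{t\to 0}I_{Z(t)}\in \mathrm{Slip}_{r,X}$. To see $J\subset \Ann(F)$, pick any homogeneous $g\in J$, write $g=\lim_{t\to 0}g_t$ with $g_t\in I_{Z(t)}\subset \Ann(H(t))$, and use the continuity of the apolarity action: $g\circ F=\lim_{t\to 0}g_t\circ H(t)=0$. Hence $J\in \VSPb(F,r)$, completing the proof.
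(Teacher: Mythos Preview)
Your proposal is correct and follows essentially the same route as the paper's proof: both directions hinge on Theorem~\ref{sat vs. truncation} and Lemma~\ref{lem:equality_of_sat} for the forward claim, and on conciseness propagating to $H(t)$ near $t=0$ to force the generic Hilbert function in the reverse claim. The step you flag as the main technical obstacle---that conciseness of $H(t)$ together with minimality of $r$ forces $I_{Z(t)}$ to have Hilbert function $h_{r,X}$---is exactly the step the paper also asserts without further elaboration, so your caution there is well placed but does not indicate a divergence in strategy.
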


 \section{$\VSPb$ versus $\mathrm{VSP}$ versus $\VPS$}\label{sec:VSPb versus VSP vers VPS}

As in \S \ref{sec:prelim}, let $X$ be a smooth projective toric variety over $\CC$. Let $S=S[X]$ be its
Cox ring and let $T$ be its graded dual ring. Both these rings are graded by $\mathrm{Pic}(X)\cong \ZZ^s$. In this section, we shall be concerned with some more notions of varieties of sums of powers
that will be contrasted to the Buczy\'nska-Buczy\'nski border varieties of sums of powers $\VSPb(F,r)$.

\subsection{Basic definitions and first properties}

\begin{definition}\label{vspclassic}
Let $F\in T_\bb{v}$. The {\it (open) variety of sums of $r$ powers} ($\mathrm{VSP}^{0}$) of $F$ in the Hilbert scheme $\mathrm{Hilb}^r(X)$ is defined set-theoretically as follows: 
\[
\mathrm{VSP}^{0}(F,r) = \left\lbrace Z \ | \  \mathrm{length}(Z)=r \mbox{ and } I_Z \subset \Ann(F) \mbox{ is radical}\right\rbrace. 
\]
The {\it (closed) variety of sums of $r$ powers} ($\mathrm{VSP}$) of $F$ is its Zariski closure in  $\mathrm{Hilb}^r(X)$. Note that $\mathrm{VSP}(F,r)$ is then a reduced closed subscheme of the irreducible smoothable component $\mathrm{Hilb}^r_{sm}(X)$. 
\end{definition}

The name is inherited from the classical case: $X = \PP^n$ and $F \in T_d$, a degree $d$ form. Then 
the smooth zero-dimensional scheme $Z$ spanning $F$ may be identified with a presentation of $F$ as a sum of powers of linear forms. Although for another toric variety this might be meaningless, we shall keep this suggestive terminology. 

\begin{definition}\label{vps}
Let $F\in T_\bb{v}$. The {\it variety of apolar schemes of length $r$ to $F$} is 
\[
\VPS(F,r) = \left\lbrace Z\in \mathrm{Hilb}^r(X) \ | \ I_Z\subset \mathrm{Ann}(F) \right\rbrace.
\]
\end{definition}
So $\VSP^0(F,r)\subset \VPS(F,r)\cap \mathrm{Hilb}^{r}_{sm}(X)\subset \VPS(F,r)$. It is important to stress that $\VPS(F,r)$
is {\it not} a closed subset of the Hilbert scheme in general; see Remark \ref{nonclosed VPS} and \cite[Example~1.1]{JRS}. 

\begin{remark}
Let $\mathrm{Gor}^r(X)\subset \mathrm{Hilb}^r(X)$ be the Gorenstein locus of the Hilbert scheme, whose closed points are the zero-dimensional Gorenstein schemes embedded in $X$. Note that, when $r=\mathrm{crk}(F)$ is the cactus rank of $F$,  \cite[Lemma 2.3]{bb13} implies that $\VPS(F,r)\subset \mathrm{Gor}^r(X)$. 
\end{remark}

\begin{lemma}\label{lem:trivial_containment}
We have $\phi_{r,X}^{-1}(\VPS(F,r)\cap\mathrm{Hilb}^r_{sm}(X)) \subseteq \VSPb(F,r)$.
\end{lemma}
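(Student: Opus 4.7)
The plan is to check the containment on closed points by unraveling the definitions. Given $J\in\phi_{r,X}^{-1}(\VPS(F,r)\cap\mathrm{Hilb}_{sm}^r(X))$, I know $J\in\mathrm{Slip}_{r,X}$ and it will suffice to show $J\subseteq\Ann(F)$, which is exactly the condition for membership in $\VSPb(F,r)$.

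The crucial step is identifying what the morphism $\phi_{r,X}$ does on a given ideal. By construction in Proposition~\ref{surjective map from Slip to Hilb^r}, $\phi_{r,X}$ is the restriction of $\psi_{\bb{k}+\bb{\mathcal K}} = \tau_{\bb{k}+\bb{\mathcal K}}\circ\mathrm{Pr}_{|\bb{k}+\bb{\mathcal K}}$, where $\tau_{\bb{k}+\bb{\mathcal K}}$ is the isomorphism established in Theorem~\ref{sat vs. truncation} between the truncated multigraded Hilbert scheme and $\mathrm{Hilb}^r(X)$. Tracing through the identifications, the image $Z:=\phi_{r,X}(J)\subseteq X$ is precisely the subscheme cut out by $J$. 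Then Lemma~\ref{lem:saturation_defines_the_same_subscheme} tells me that $J$ and $\overline{J}$ define the same subscheme $Z$, and Cox's bijection (Theorem~\ref{thm:Cox_bijection}) asserts that the unique $B$-saturated $\ZZ^s$-homogeneous ideal defining $Z$ is $\overline{J}$. In other words, $\overline{J}=I_Z$.

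With this identification in hand, the conclusion is immediate. The hypothesis $Z\in\VPS(F,r)$ gives $I_Z\subseteq\Ann(F)$ by Definition~\ref{vps}. Chaining,
\[
J\ \subseteq\ \overline{J}\ =\ I_Z\ \subseteq\ \Ann(F),
\]
so $J\in\VSPb(F,r)$ as required.

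There is essentially no hard step: the argument is entirely bookkeeping, and the only potential pitfall is confusing the several ideals in play (the multigraded ideal $J$, its saturation $\overline{J}$, and the saturated ideal $I_Z$ of the subscheme it defines). Once Theorem~\ref{thm:Cox_bijection} and Lemma~\ref{lem:saturation_defines_the_same_subscheme} are invoked to match $\overline{J}$ with $I_Z$, the containment $J\subseteq\Ann(F)$ drops out for free.
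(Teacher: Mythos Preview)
Your proof is correct and follows essentially the same approach as the paper's own proof: both argue that if $J\in\mathrm{Slip}_{r,X}$ maps to $Z\in\VPS(F,r)$ then $\overline{J}=I_Z\subseteq\Ann(F)$, whence $J\subseteq\overline{J}\subseteq\Ann(F)$. The paper's version is simply terser, asserting $\overline{I}\subseteq\Ann(F)$ without spelling out the identification $\overline{I}=I_Z$ via Lemma~\ref{lem:saturation_defines_the_same_subscheme} and Theorem~\ref{thm:Cox_bijection} that you make explicit.
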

\begin{proof}
Let $I\in \mathrm{Slip}_{r,X}$ be such that 
$\phi_{r,X}(I)\in \VPS(F,r)$. This means that $\overline{I} \subseteq \Ann(F)\subset S$. So $I\subseteq \Ann(F)$ as well and hence $I\in \VSPb(F, r)$.
\end{proof}

The lemma motivates the following definition. 

\begin{definition}[{\bf $\VSPb$ of fiber type}]
A border variety of sums of powers $\VSPb(F,r)$ is said to be {\it of fiber type} if equality
in Lemma \ref{lem:trivial_containment} holds true. 
\end{definition}

\begin{remark}\label{nonclosed VPS}
A $\VSPb(F,r)$ need not be of fiber type. If the equality in Lemma~\ref{lem:trivial_containment} holds we get $\VPS(F,r)\cap\mathrm{Hilb}^r_{sm}(X) = \phi_{r,X}(\VSPb(F,r))$. Since the right-hand side  is closed, in the circumstance that $\VPS(F,r) \cap\mathrm{Hilb}^r_{sm}(X)$ is not closed, equality cannot hold.
\cite[Example~1.1]{JRS} shows this failure for a quadric in four variables. 
\end{remark}

\begin{remark}\label{remark on JRS}
The loci studied extensively in \cite{JRS} for full-rank quadrics are related to ours, although they are  different. Let 
$X = \PP^n$, $T=\CC[x_0,\ldots,x_n]$ and $Q\in T_2$ be a full-rank quadric. Our 
$\VSPb(Q,n+1)$ embeds in the {\it Jelisiejew-Ranestad-Schreyer variety} $\VPS(Q,H)$ (using the notation of {\it loc. cit.}), which in turn sits inside the multigraded Hilbert scheme  $\mathrm{Hilb}_S^{h_{n+1, \PP^n}}$. Moreover, the image of their $\VPS^{good}(Q,H)$ under $\phi_{n+1,\PP^n}$ is our $\VPS(Q,n+1)$, sitting inside $\mathrm{Hilb}^{n+1}(\PP^n)$. When $n+1\leq 13$, their variety $\VSP^{sbl}(Q,H)$ coincides with $\VSPb(Q,n+1)$ \cite[Corollary 3.9]{JRS}. 
\end{remark}

Remark~\ref{nonclosed VPS} shows that generally there is no {\it natural} map between $\VSPb(F,r)$ and $\VPS(F,r)$, which is perhaps 
one of the sources of complication and interest of the theory. \\
A most favourable situation is when $F\in T_\bb{v}$ has the property that {\it any} ideal $I_Z\subset \Ann(F)$ of an apolar scheme $Z$ of $F$ has the generic Hilbert function. Then one has the following. 

\begin{lemma}\label{isobetweenVSP-VPSbar}
Suppose that every ideal $I_Z\subset \Ann(F)$ of a length $r$ subcheme $Z$ has generic Hilbert function $h_{r,X}$  and the corresponding subscheme is smoothable and that any ideal $J\subset \Ann(F)$ with generic Hilbert function is $B$-saturated  and belongs to $\mathrm{Slip}_{r,X}$. Then $\phi_{r,X}$ induces an isomorphism $\VSPb(F, r)\cong \VPS(F,r)$.
\end{lemma}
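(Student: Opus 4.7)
The plan is first to verify that $\phi_{r,X}$ restricts to a set-theoretic bijection $\VSPb(F,r)\leftrightarrow \VPS(F,r)$, and then to upgrade this to a scheme-theoretic isomorphism by invoking the locally closed immersion of Theorem \ref{thm:restriction on saturable locus is closed immersion}.

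For the bijection, let $J\in \VSPb(F,r)\subseteq \mathrm{Slip}_{r,X}$. Then $J$ has the generic Hilbert function $h_{r,X}$ and satisfies $J\subset \Ann(F)$, so the second hypothesis forces $J$ to be $B$-saturated. Hence $\VSPb(F,r)$ is contained in the saturable locus $\mathrm{Hilb}^{h_{r,X},\mathrm{sat}}_S$. By Cox's bijection (Theorem \ref{thm:Cox_bijection}), $J$ is the ideal of a uniquely determined length-$r$ subscheme $Z_J$ of $X$, and $\phi_{r,X}(J)=Z_J\in \VPS(F,r)$; distinct $B$-saturated ideals give distinct subschemes, yielding injectivity. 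Conversely, given $Z\in \VPS(F,r)$, the first hypothesis says $I_Z$ has Hilbert function $h_{r,X}$ and $Z$ is smoothable, so the second hypothesis guarantees $I_Z\in \mathrm{Slip}_{r,X}$; therefore $I_Z\in \VSPb(F,r)$ with $\phi_{r,X}(I_Z)=Z$. Together with Lemma \ref{lem:trivial_containment} (and the inclusion $\VPS(F,r)\subseteq \mathrm{Hilb}^r_{sm}(X)$ from the first hypothesis), this gives $\phi_{r,X}^{-1}(\VPS(F,r))=\VSPb(F,r)$.

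For the scheme-theoretic upgrade, I would apply Theorem \ref{thm:restriction on saturable locus is closed immersion} with $h=h_{r,X}$ and $P(\mathbf{t})\equiv r$: the projection $\psi_{\bb{k}+\bb{\mathcal K}}$ restricts to a locally closed immersion $\mathrm{Hilb}^{h_{r,X},\mathrm{sat}}_S\hookrightarrow \mathrm{Hilb}^r(X)$. Since $\phi_{r,X}$ is the restriction of $\psi_{\bb{k}+\bb{\mathcal K}}$ to $\mathrm{Slip}_{r,X}$ and $\VSPb(F,r)$ is a closed subscheme of $\mathrm{Slip}_{r,X}$ sitting inside $\mathrm{Hilb}^{h_{r,X},\mathrm{sat}}_S$ by the preceding step, the restriction $\phi_{r,X}|_{\VSPb(F,r)}$ is itself a locally closed immersion into $\mathrm{Hilb}^r(X)$. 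A locally closed immersion is an isomorphism onto its scheme-theoretic image; combined with the set-theoretic identification of this image with $\VPS(F,r)$, we obtain the desired isomorphism.

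The main subtle point is the scheme structure of $\VPS(F,r)$: as defined in Definition \ref{vps} it is only a set, and as emphasized in Remark \ref{nonclosed VPS} it need not even be locally closed in $\mathrm{Hilb}^r(X)$ in general. Under the hypotheses of the lemma, however, $\VPS(F,r)$ inherits the locally closed subscheme structure from the image of the immersion above, and this is the structure making the isomorphism statement meaningful. Verifying this compatibility is the only piece requiring care; all remaining content is a direct bookkeeping of the two hypotheses against the functorial description of $\phi_{r,X}$.
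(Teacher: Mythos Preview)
Your argument is essentially the paper's: both reduce to Theorem~\ref{thm:restriction on saturable locus is closed immersion} after checking the set-theoretic bijection. The only difference is in the closing step. You say the locally closed immersion is an isomorphism onto its scheme-theoretic image and then propose to \emph{define} the scheme structure on $\VPS(F,r)$ as that image. The paper instead takes $\VPS(F,r)$ with its reduced structure (which is the convention throughout, cf.\ the remark that $\VSPb$ is a \emph{variety}) and observes directly that a surjective closed immersion onto a reduced scheme is an isomorphism: locally one must show that if $A$ is reduced and $\mathrm{Spec}(A/I)\to\mathrm{Spec}(A)$ is bijective then $I=0$, which is immediate since such an $I$ lies in every prime. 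This is cleaner than your formulation because it avoids the appearance of circularity; your version is correct once one notes that $\VSPb(F,r)$ is reduced by convention, so its scheme-theoretic image is the reduced closed subscheme on the set $\VPS(F,r)$, but you do not say this explicitly.
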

\begin{proof}
    By Theorem~\ref{thm:restriction on saturable locus is closed immersion} the morphism $\phi_{r,X}$ restricts to a (bijective) locally closed immersion $\phi^{-1}_{r,X}(\VPS(F,r)) \to \VPS(F,r)$. Therefore, we have a surjective closed immersion $\VSPb(F,r)\to \VSP(F,r)$. Since the latter scheme is reduced, we claim that this is an isomorphism.
    Indeed, this can be checked locally on the target, so we need to prove that the only ideal $I\subset A$ such that the map of schemes $\mathrm{Spec} (A/I) \to \mathrm{Spec} (A)$ is bijective is $I=0$. Any such $I$ is contained in the intersection of all the prime ideals of $A$, hence it is the zero ideal as $A$ is reduced.
\end{proof}

This situation in Lemma \ref{isobetweenVSP-VPSbar} is what happens for $X=\PP^2$ and general forms $F$ of low even degrees; see Theorem \ref{plane low even degree}. However, this is too much to expect in general, but we shall prove some birationality results, see \S \ref{ssec:bir results}. 

It is now time for a simple example, which leads to some generalizations; see Theorem \ref{thm:idenfiability via toric regularity} and Corollary \ref{vspwithonlysat}. 

\begin{example}\label{minbr}
Let $X = \PP^n$ and let $d=3$. Let $F\in T_d$ be a concise nonwild form with $\mathrm{srk}(F) = \brk(F)=\dim_{\CC} T_1 = n+1$. Then $\phi^{-1}_{n+1,\PP^n}(\VPS(F,n+1)\cap\mathrm{Hilb}^r_{sm}(\PP^n)) = \VSPb(F,n+1)$ is a single point. Moreover, if $\mathrm{rk}(F)=\brk(F)=n+1$ then $\VSP(F,n+1)=\VPS(F,n+1)\cap\mathrm{Hilb}^r_{sm}(\PP^n)$. 
\begin{proof}
There exists a saturated ideal $I=I_Z\subset \Ann(F)$ of a smoothable scheme $Z$ of length $n+1$. This ideal has necessarily the generic Hilbert function $h_{n+1,\PP^n}$. Since $I$ is the saturated ideal of a smoothable scheme and has the generic Hilbert function, $I\in \VSPb(F,n+1)$. In fact, 
such an $I$ is unique and $I = (\Ann(F)_2)$. For the last sentence, 
the equality follows from noting that the scheme $Z$ above must be smooth. 
\end{proof}
\end{example}

Let $\mathbb{G}(\PP^{h(\bb{v})-1},  \PP(T_\bb{v}))$ be the Grassmannian of $(h(\bb{v})-1)$-dimensional linear spaces in $\PP(T_\bb{v})$. There exists a morphism $\rho\colon \mathrm{Slip}_{r,X} \rightarrow \mathbb{G}(\PP^{h(\bb{v})-1},  \PP(T_\bb{v}))$ \cite[Lemma~3.16]{bb19} defined by $\rho(I) = I^{\perp}_\bb{v}$, the degree $\bb{v}$ summand of the annihilator of $I$. 
Given an embedding of our toric variety $X\subset \PP(T_\bb{v})$, let $\sigma_r(X)$ be the $r$-th secant variety of $X$ inside $\PP(T_\bb{v})$. 

\begin{proposition}\label{prop:vspbar_of_general}
There exists a Zariski open dense subset $W_{sat}\subseteq \sigma_r(X)$ such that, if $[F]\in W_{sat}$, then $\VSPb(F,r)$ contains a $B$-saturated ideal.  There exists a Zariski open dense subset $W_{rad}$ such that if $[F]\in W_{rad}$, 
then $\VSPb(F,r)$ contains a radical ideal. 
\end{proposition}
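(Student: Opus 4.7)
The plan is to study the incidence variety
\[
\mathcal{I} = \{(I, [F]) \in \mathrm{Slip}_{r,X} \times \PP(T_\bb{v}) : I \subseteq \mathrm{Ann}(F)\},
\]
which by Proposition \ref{inclusion in onedeg} equals $\{(I, [F]) : F \in I_\bb{v}^{\perp}\}$. The first projection $\pi_1 \colon \mathcal{I} \to \mathrm{Slip}_{r,X}$ has fiber $\PP(I_\bb{v}^{\perp}) \cong \PP^{h_{r,X}(\bb{v}) - 1}$ of constant dimension (since every ideal in $\mathrm{Slip}_{r,X}$ has Hilbert function $h_{r,X}$), so $\mathcal{I}$ is irreducible. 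The second projection $\pi_2 \colon \mathcal{I} \to \PP(T_\bb{v})$ is proper, with $\pi_2^{-1}([F]) = \VSPb(F, r)$, and by Theorem \ref{mainbb} its image is precisely $\sigma_r(X)$.

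I would then single out two relevant open subsets of $\mathrm{Slip}_{r,X}$: the locus $U_{sat}$ of $B$-saturated ideals, open by Theorem \ref{satlocus is open}, and the locus $U_{rad} \subseteq U_{sat}$ of radical $B$-saturated ideals (equivalently, the saturated ideals of $r$ distinct points with Hilbert function $h_{r,X}$), which is open dense by the very definition of $\mathrm{Slip}_{r,X}$ as the closure of this locus. Hence $\mathcal{I}_{sat} := \pi_1^{-1}(U_{sat})$ and $\mathcal{I}_{rad} := \pi_1^{-1}(U_{rad})$ are open dense in $\mathcal{I}$. The main step is to show that $\pi_2(\mathcal{I}_{rad})$ is dense in $\sigma_r(X)$: by \cite[Lemma~3.9]{bb19}, a very general choice of $r$ points $\{p_1, \ldots, p_r\} \subset X$ has radical ideal in $U_{rad}$, and for $F = p_1 + \cdots + p_r$ multigraded apolarity gives $I_{\{p_1, \ldots, p_r\}} \in \VSPb(F, r)$, so $[F] \in \pi_2(\mathcal{I}_{rad})$; these $F$ form a dense subset of $\sigma_r(X)$ by the definition of the secant variety.

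Finally, by Chevalley's theorem, $\pi_2(\mathcal{I}_{rad})$ and $\pi_2(\mathcal{I}_{sat})$ are constructible subsets of $\sigma_r(X)$, and a dense constructible subset of an irreducible variety contains a nonempty Zariski open subset, yielding the desired $W_{rad} \subseteq W_{sat} \subseteq \sigma_r(X)$. No serious obstacle arises: the argument is essentially a standard incidence-correspondence together with Chevalley's theorem. The only step requiring some care is the verification that $\mathcal{I}$ is irreducible (standard, since $\pi_1$ has equidimensional irreducible fibers over an irreducible base) and that $U_{rad}$ is genuinely open dense, which is immediate from the definition of $\mathrm{Slip}_{r,X}$ combined with \cite[Lemma~3.9]{bb19}.
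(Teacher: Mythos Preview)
Your argument follows essentially the same route as the paper's: both set up the incidence correspondence $\{(I,[F]) : I \subseteq \Ann(F)\}$ over $\mathrm{Slip}_{r,X}$ (the paper realizes it as the pullback of the universal subbundle on a Grassmannian via the map $\rho\colon I \mapsto I_\bb{v}^\perp$), restrict to the saturated and radical loci, and extract an open dense set from the constructible image via Chevalley's theorem. One step you should tighten: the claim that $U_{rad}$ is \emph{open} does not follow from the definition of $\mathrm{Slip}_{r,X}$ alone (that only gives density); the paper handles this by working instead with the preimage $\phi_{r,X}^{-1}(V)$ of an explicit open set $V \subset \mathrm{Hilb}_{sm}^r(X)$ of reduced schemes built via the Hilbert--Chow morphism in the proof of Proposition~\ref{surjective map from Slip to Hilb^r}, so that openness is automatic by continuity of $\phi_{r,X}$.
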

\begin{proof}
Let 
\[
\mathcal{V}\subseteq \mathbb{G}(\PP^{h(\bb{v})-1}, \PP(T_\bb{v}))\times \PP(T_\bb{v}) 
\]
be the universal subbundle on the Grassmannian. Let $\mathcal{U} \subseteq \mathrm{Slip}_{r,X}\times \PP(T_\bb{v})$ be the pullback of $\mathcal{V}$ along the morphism $\rho$. More explicitly, 
\[
\mathcal{U} = \lbrace  (I, F) \ | \   I\in \mathrm{Slip}_{r,X}, [F]\in \PP(T_\bb{v}),  F\in \rho(I) \rbrace.
\]
Then $\sigma_r(X)$ is the image of the projection $\pi_2$ onto the second factor  of $\mathcal{U}$ \cite[Lemma~3.16]{bb19}.

Let $U$ be the locus in $\mathrm{Slip}_{r,X}$ consisting of $B$-saturated ideals.
This is the intersection of $\mathrm{Slip}_{r,X}$ with the open locus $\mathrm{Hilb}_S^{h_{r,X},sat}$ and 
therefore $U$ is open in $\mathrm{Slip}_{r,X}$. Consider the restriction $\mathcal{U}_{|U}$ and its image $\pi_2(\mathcal{U}_{|U}) \subseteq \sigma_r(X)$.
If $(I,F)\in \mathcal{U}_{|U}$, then $F\in (I^\perp)_\bb{v}$ and so $I_\bb{v}\subset \Ann(F)_\bb{v}$. From Proposition~\ref{inclusion in onedeg} we obtain $I\subset \Ann(F)$. Therefore $\VSPb(F, r)$ contains a $B$-saturated ideal. By Chevalley's theorem, $\pi_2(\mathcal{U}_{|U})$ is a constructible subset of $\sigma_r(X)$.
Since $\mathcal{U}_{|U}$ is dense in $\mathcal{U}$, $\pi_2\colon \mathcal{U}\to \sigma_r(X)$
is surjective and $\overline{\pi_2\left(\overline{\mathcal{U}_{|U}}\right)} = \overline{\pi_2(\mathcal{U}_{|U})}$, it follows that $\pi_2(\mathcal{U}_{|U})$ is a dense subset.  Since the latter set is constructible, it contains an open dense subset $W_{sat}$ with the claimed property. 

Let $U'$ be the preimage $\phi_{r,X}^{-1}(V)$, where $V$ is the open set that appeared in the proof of Proposition~\ref{surjective map from Slip to Hilb^r}. The set $U'$ is open and nonempty. The rest of the argument exhibiting the existence of such $W_{rad}$ is completely analogous as the previous paragraph and we omit it. 
 \end{proof}

\begin{remark}\label{rmk: defective case}
Let $X = \PP^n$ be equipped with the Veronese embedding $\nu_d(\PP^n) \subset \PP(T_d)$. If $n,d\geq 1$ and $r\leq 2$, then the claimed open subset $W_{sat}$ in Proposition \ref{prop:vspbar_of_general} coincides with $\sigma_r(\nu_d(\PP^n))$ and if $r=2$ and $d\geq 3$ it strictly contains $W_{rad}$. 
\begin{proof}
 The case $r=1$ or $d=1$ is clear, so let $r=2$ and $d\geq 2$. The locus of points of border Waring rank equal to $2$ is set-theoretically the union $\sigma_2(\nu_d(\PP^n))\setminus \nu_d(\PP^n) = \sigma^{0}_2(\nu_d(\PP^n))\cup \tau(\nu_d(\PP^n))$, where $\sigma^{0}_2(\nu_d(\PP^n))$ is the locus of points of Waring rank equal to $2$ and $\tau(\nu_d(\PP^n))$ is the tangential variety. Up to the action of $\mathrm{PGL}(n+1,\CC)$ on $\PP^n$, we may assume that: if $[F]\in \sigma^{0}_2(\nu_d(\PP^n))$, then $F=x_0^d + x_1^d$; if $[F]\in \tau(\nu_d(\PP^n))$, then $F=x_0x_1^{d-1}$. In the first case the ideal $(y_0y_1, y_2, \ldots,y_n)\subset \mathrm{Ann}(F)$ and in the second the ideal $(y_0^2, y_2, y_3, \ldots, y_n)\subset \mathrm{Ann}(F)$ are saturated ideals with the generic Hilbert function of two points in $\PP^n$. Since $\mathrm{Hilb}^2(\PP^n)$ is irreducible, by Lemma~\ref{irreducibility of saturablelocus}  both ideals are in $\mathrm{Slip}_{2,\PP^n}$ which shows that $W_{sat} = \sigma_2(\nu_d(\PP^n))$. If $d\geq 3$ these are the only ideals  in $\VSPb(F,2)$. The latter ideal is not radical, so $W_{rad} \neq \sigma_2(\nu_d(\PP^n))$.
\end{proof}
\end{remark}

\begin{corollary}\label{closednessofnonsat}
The set of all  $[F]\in\sigma_r(X)$ for which there exists a nonsaturated (respectively nonradical) ideal in $\VSPb(F,r)$ is closed. 
\begin{proof}
In the notation of Proposition~\ref{prop:vspbar_of_general}, let $\mathcal{Z}$ (respectively $\mathcal{Z}'$) be the complement of $\mathcal{U}_{|U}$ (respectively $\mathcal{U}_{|U'}$) in $\mathcal{U}$. It is closed and so is the locus of $[F]\in \sigma_r(X)$ in the image of $\mathcal Z$ (respectively $\mathcal{Z}'$) under $\pi_2$. 
\end{proof}
\end{corollary}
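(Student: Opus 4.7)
The plan is to reuse the incidence variety from the proof of Proposition~\ref{prop:vspbar_of_general}. Set
\[
\mathcal{U} = \lbrace (I,F)\in \mathrm{Slip}_{r,X}\times \PP(T_{\bb{v}}) \ | \ F\in \rho(I)\rbrace,
\]
and let $U\subset \mathrm{Slip}_{r,X}$ denote the open locus of $B$-saturated ideals, whose openness is guaranteed by Theorem~\ref{satlocus is open}. Likewise, let $U' = \phi_{r,X}^{-1}(V)$ be the open subset introduced in the proof of Proposition~\ref{prop:vspbar_of_general}, parameterizing those ideals whose associated subscheme of $X$ lies in $V$ (so in particular is a reduced scheme of $r$ distinct points with the generic Hilbert function).

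I would then define $\mathcal{Z} = \mathcal{U}\setminus \mathcal{U}_{|U}$ and $\mathcal{Z}' = \mathcal{U}\setminus \mathcal{U}_{|U'}$. Both are closed in $\mathcal{U}$ by the openness of $U$ and $U'$. The key observation is that the projection $\pi_2\colon \mathcal{U}\to \PP(T_{\bb{v}})$ is a projective morphism: $\mathcal{U}$ is cut out by closed conditions inside the product of projective varieties $\mathrm{Slip}_{r,X}\times \PP(T_{\bb{v}})$, hence it is itself projective. Consequently $\pi_2(\mathcal{Z})$ and $\pi_2(\mathcal{Z}')$ are closed in $\sigma_r(X)$.

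To finish, I would verify the set-theoretic identifications. Any $(I,F)\in \mathcal{U}$ satisfies $F\in \rho(I) = (I_{\bb{v}})^\perp$, which by Proposition~\ref{inclusion in onedeg} is equivalent to $I\subseteq \Ann(F)$; therefore $I\in \VSPb(F,r)$. Conversely, every $I\in \VSPb(F,r)$ produces a pair $(I,F)\in \mathcal{U}$. Hence $\pi_2(\mathcal{Z})$ is exactly the locus of $[F]\in \sigma_r(X)$ for which some ideal in $\VSPb(F,r)$ fails to be $B$-saturated, and $\pi_2(\mathcal{Z}')$ describes the analogous nonradical locus.

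I expect no real obstacle here; the only point that requires a little attention is the identification for $\mathcal{Z}'$, where one must check that an ideal in $\mathrm{Slip}_{r,X}\setminus U'$ corresponds exactly to a nonradical element of $\VSPb(F,r)$. This follows from the construction of $V$ in the proof of Proposition~\ref{prop:vspbar_of_general} together with the bijection between $B$-saturated ideals and closed subschemes recorded in Theorem~\ref{thm:Cox_bijection}. The whole argument is thus a short combination of openness of the saturable (respectively radical) locus and properness of the projection $\pi_2$.
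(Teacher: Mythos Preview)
Your argument is essentially identical to the paper's: you form the closed complements $\mathcal{Z}=\mathcal{U}\setminus\mathcal{U}_{|U}$ and $\mathcal{Z}'=\mathcal{U}\setminus\mathcal{U}_{|U'}$ and push them forward under the proper projection $\pi_2$, exactly as in the paper's proof. The extra details you supply (properness of $\pi_2$, the identification of $\pi_2^{-1}([F])\cap\mathcal{U}$ with $\VSPb(F,r)$ via Proposition~\ref{inclusion in onedeg}) are correct elaborations of steps the paper leaves implicit.
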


\begin{corollary}\label{twocaseswherenonsatareproper}
The following statements hold true: 
\begin{enumerate}
\item[(i)] If there exists an $[F]\in \sigma_r(X)$ such that $\VSPb(F,r)$ consists only of $B$-saturated (respectively radical) ideals, then the same is true for a general element of this secant variety.

\item[(ii)] If $\sigma_r(X)\subsetneq \PP(T_\bb{v})$ is nondefective or $\sigma_r(X) = \PP(T_\bb{v})$ and $r(\dim X+1) = \dim_\CC T_{\bb{v}}$, then for a general element $[F]\in \sigma_r(X)$, one has that $\VSPb(F,r)$ consists only of $B$-saturated and radical ideals.

\end{enumerate}

\begin{proof}
(i) Let $\mathcal{Y} = \pi_2(\mathcal Z)$ and $\mathcal{Y}' = \pi_2(\mathcal{Z}')$ be the closed subsets introduced in Corollary \ref{closednessofnonsat}. If $\sigma_r(X)\setminus \mathcal{Y}\neq \emptyset$ (respectively $\sigma_r(X)\setminus \mathcal{Y}'\neq \emptyset)$, then $\mathcal{Y}$ (respectively $\mathcal{Y}'$)  is a proper subset, so its complement is open and dense. 

\noindent (ii) Let $d=\dim X$. The dimension of $\mathcal{Z}$ and $\mathcal{Z}'$ is at most $(rd-1)+(r-1) = r(d+1)-2$. Therefore, the dimension of $\mathcal{Y}$ and $\mathcal{Y}'$ is at most $r(d+1)-2$. If $\sigma_r(X) \subsetneq \PP(T_\bb{v})$ and is nondefective then its dimension is the expected count of parameters $r(d+1)-1$. Hence, in both cases described in the statement, the closed subsets $\mathcal{Y}$ and $\mathcal{Y}'$ are proper subsets of the secant variety.
\end{proof}
\end{corollary}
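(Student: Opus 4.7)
My plan for part (i) is to invoke the closedness established in Corollary~\ref{closednessofnonsat}, which provides closed subsets $\mathcal{Y}, \mathcal{Y}'\subseteq \sigma_r(X)$ parameterizing those $[F]$ for which $\VSPb(F,r)$ contains, respectively, a nonsaturated or nonradical ideal. The hypothesis in (i) says that at least one $[F]\in \sigma_r(X)$ lies outside $\mathcal{Y}$ (respectively $\mathcal{Y}'$), so this closed subset is proper. Since $\sigma_r(X)$ is irreducible, the complement of a proper closed subset is automatically open and dense, which is precisely the desired conclusion.

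For part (ii), the strategy will be a dimension count on the incidence variety $\mathcal{U}\subseteq \mathrm{Slip}_{r,X}\times \PP(T_\bb{v})$ introduced in Proposition~\ref{prop:vspbar_of_general}. Setting $d=\dim X$, I would note that the fiber of the first projection over $I\in \mathrm{Slip}_{r,X}$ is the linear space $\rho(I) = I^\perp_\bb{v}$ of dimension $h_{r,X}(\bb{v})-1$. The numerical hypotheses in (ii) ensure $r\leq \dim_\CC T_\bb{v}$, hence $h_{r,X}(\bb{v})=r$ and the fibers all have constant dimension $r-1$. The base $\mathrm{Slip}_{r,X}$ has dimension $rd$, and its nonsaturated (respectively nonradical) complement is a proper closed subset (by Theorem~\ref{satlocus is open}, respectively by the openness of the locus $U'$ appearing in the proof of Proposition~\ref{prop:vspbar_of_general}), so of dimension at most $rd-1$. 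Letting $\mathcal{Z}, \mathcal{Z}'$ denote the corresponding preimages in $\mathcal{U}$, this yields the two-term bound
\[
\dim \mathcal{Y}\leq \dim \mathcal{Z}\leq (rd-1)+(r-1)=r(d+1)-2,
\]
and analogously for $\mathcal{Y}'$. Under either hypothesis of (ii), $\sigma_r(X)$ has the expected dimension $r(d+1)-1$---strictly larger than the above bound---so $\mathcal{Y},\mathcal{Y}'$ are proper closed subsets of the irreducible variety $\sigma_r(X)$, and the general $[F]$ therefore avoids both.

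The main subtlety I anticipate is verifying the uniformity of the fiber dimension of $\mathcal{U}\to \mathrm{Slip}_{r,X}$: the equality $h_{r,X}(\bb{v})=r$ must hold at every closed point, not merely generically. This is in fact automatic, since every $I\in \mathrm{Slip}_{r,X}\subseteq \mathrm{Hilb}^{h_{r,X}}_S$ has $\dim_\CC (S/I)_\bb{v}=h_{r,X}(\bb{v})$ by definition of the Haiman--Sturmfels multigraded Hilbert scheme, and the two numerical hypotheses in (ii) are designed precisely to guarantee $h_{r,X}(\bb{v})=r$ via the inequality $r(d+1)\leq \dim_\CC T_\bb{v}$. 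The remaining check that $\sigma_r(X)$ actually attains the expected dimension $r(d+1)-1$ is immediate in each case: nondefectiveness handles the first, and the numerical equality $r(d+1)=\dim_\CC T_\bb{v}$ handles the second.
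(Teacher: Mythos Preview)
Your proposal is correct and follows essentially the same approach as the paper's proof: both parts rely on the closed loci $\mathcal{Y}=\pi_2(\mathcal{Z})$ and $\mathcal{Y}'=\pi_2(\mathcal{Z}')$ from Corollary~\ref{closednessofnonsat}, with (i) reducing to properness of a closed subset in an irreducible variety and (ii) handled by the dimension bound $\dim\mathcal{Z},\dim\mathcal{Z}'\leq (rd-1)+(r-1)$ against $\dim\sigma_r(X)=r(d+1)-1$. Your version simply makes explicit the two ingredients the paper leaves implicit---that the fibers of $\mathcal{U}\to\mathrm{Slip}_{r,X}$ are $(r-1)$-dimensional because $h_{r,X}(\bb{v})=r$ under the numerical hypotheses, and that the nonsaturated and nonradical loci in $\mathrm{Slip}_{r,X}$ are proper closed subsets of dimension at most $rd-1$.
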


\begin{remark}
Let $X=\PP^n$. Then a radical ideal $I\neq B=(y_0,\ldots,y_n)\subset S$ is $B$-saturated. 
However, this generally fails for other toric varieties. For instance, let $X=\PP^1\times \PP^1$
and $B=(\alpha_0\beta_0,\alpha_0\beta_1,\alpha_1\beta_0,\alpha_1\beta_1)\subset S=\CC[\alpha_0,\alpha_1,\beta_0,\beta_1]$. 
The ideal $I = (\alpha_0\beta_0,\alpha_0\beta_1,\beta_0-\beta_1)\neq B$
is radical but not $B$-saturated. Its saturation $\overline{I} = (\alpha_0,\beta_0-\beta_1)$ is the radical $B$-saturated ideal corresponding to the point $([0:1],[1:1])\in X$. 
\end{remark}

\begin{remark}\label{rmk:mayfailfordefective}
Consider again the case $X=\PP^n$ equipped with the Veronese embedding $\nu_d$. If $\sigma_r(\nu_d(\PP^n))$ fills the ambient space and $r(n+1) > \binom{n+d}{d}$ or $\sigma_r(\nu_d(\PP^n))$ is defective, then it is possible that the $\VSPb$ of a general element of it consists only of saturated ideals as in Corollary~\ref{twocaseswherenonsatareproper}. However, it is not always the case. 

Let $n=2, d=2$ and $r=3$ and take a full-rank (equivalently, concise) quadric $Q$. We show that $\VSPb(Q, 3)$ does not contain a nonsaturated ideal. Assume that $I\subset \Ann(Q)\subset S$ is a nonsaturated ideal. Then there exists $\ell\in \overline{I}_1$ and hence $\ell\cdot S_1 \subset I \subset \Ann(Q)$. It follows that $\ell\in \Ann(Q)$ which contradicts the assumption that $Q$ is full-rank.

On the other hand, \cite[Corollary~3.10]{JRS} shows that for a general (and hence, {\it for every}, by Corollary \ref{closednessofnonsat}) quadric $Q$ in four variables there exists a nonsaturated ideal in $\VSPb(Q,4)$. 

Now we proceed to the defective cases of quadrics of rank $n$ on $\PP^n$. We may change coordinates so that $Q=x_0^2+\cdots + x_{n-1}^2$.
If $n=3$ and $I\in \VSPb(Q, 3)$, then  $I = (y_3)+K^e$ where the second summand is the extension of $K\subseteq \CC[y_0,y_1,y_2]$, which is an apolar ideal of $Q \in \CC[x_0,x_1,x_2]$ and it has Hilbert function $h_{3, \PP^2}$. By the second paragraph, $K$ is saturated and thus so is $I$.  

Let $n=4$ and let $K\in \VSPb(Q, 4) \subseteq \mathrm{Hilb}_S^{h_{4,\PP^3}}$ be a nonsaturated ideal. Then $I=(y_4)+K^e$ is a nonsaturated apolar ideal to $Q$, having the generic Hilbert function $h_{4,\PP^4}$. 
Since $K\in \mathrm{Slip}_{4,\PP^3}$, it follows from \cite[Proposition~3.1]{CEVV09} that $I\in \mathrm{Slip}_{4,\PP^4}$. We showed that for a general (and hence, for every) quadric $Q$ in $5$ variables with $\mathrm{rk}(Q)=4$ there exists a nonsaturated ideal in $\VSPb(Q,4)$. 
\end{remark}

For further questions in the direction of establishing existence or non-existence of nonsaturated ideals and irreducible components of such in a $\VSPb$, see \S \ref{sec:Outlook}. 

Using a similar construction as in \cite[Lemma~3.16]{bb19} and Proposition \ref{prop:vspbar_of_general}, we give a sufficient condition for the closedness 
of the loci $\VPS(F,r)$. Recall that the closedness of $\VPS(F,r)\cap\mathrm{Hilb}^r_{sm}(X)$ is a necessary condition in order to have a $\VSPb(F,r)$ of fiber type. 

\begin{proposition}\label{prop:VPS_closed}
Let $r\in \NN$. Then there exists $\bb{k}\in \mathrm{Pic}(X)=\ZZ^s$ such that for every $\bb{v}\in \bb{k}+\mathcal K$ and every $F\in T_{\bb{v}}$, 
one has that $\VPS(F,r)$ is closed (possibly empty). 
\begin{proof}
By Lemma \ref{lem:Hilbert_polynomials_of_the_family} applied to the constant Hilbert polynomial $P=r$ and $R=\CC$, there exists $\bb{k}\in \ZZ^s$
such that for any $\bb{v}\in \bb{k}+\mathcal K$ and any scheme $Z\in \mathrm{Hilb}^r(X)$, one has 
$\mathrm{HF}(S/I_Z, \bb{v}) = r$. There exists a natural morphism from $\mathrm{Hilb}^r_{S_{\bb{k}+\mathcal K}}(X)$ to the Grassmannian
$\mathbb G(\PP^{r-1}, \PP(T_{\bb{v}}))$. Composing this map with the isomorphism established in Theorem \ref{sat vs. truncation} gives the morphism $\delta: \mathrm{Hilb}^r(X)\rightarrow \mathbb G(\PP^{r-1}, \PP(T_{\bb{v}}))$ defined by $Z\mapsto (I_Z)^{\perp}_{\bb{v}}$. As in Proposition \ref{prop:vspbar_of_general}, let $\mathcal V$ be the tautological subbundle 
on the Grassmannian, $\mathcal V\subseteq \mathbb G(\PP^{r-1}, \PP(T_{\bb{v}}))\times \PP(T_{\bb{v}})$. Let $\mathcal W$ be the pullback of $\mathcal V$ along the morphism $\delta$. 
Then $\mathcal W =  \lbrace  (Z, F) \ | \   Z\in \mathrm{Hilb}^r(X), [F]\in \PP(T_\bb{v}),  (I_Z)_{\bb{v}}\subset \Ann(F)_{\bb{v}}\rbrace \subseteq \mathrm{Hilb}^r(X)\times \PP(T_{\bb{v}})$. 
Let $\pi_1$ and $\pi_2$ be the first and second projections of the latter product. Let $[F]\in \PP(T_{\bb{v}})$. Then $\VPS(F,r) = \pi_1(\pi_2^{-1}(F)\cap \mathcal W)$. Note that $\mathcal W$ is closed, because $\mathcal V$ is. The latter statement can be checked locally on the Grassmannian. Hence $\pi_2^{-1}(F)\cap \mathcal W$ is closed. Since $\pi_1$ is a proper map, $\VPS(F,r)$ is closed. 
\end{proof}
\end{proposition}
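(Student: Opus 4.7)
My approach is to realize $\VPS(F,r)$ as the image under a proper projection of a closed incidence subvariety inside $\mathrm{Hilb}^r(X)\times\PP(T_{\bb{v}})$. The fundamental translation is provided by multigraded apolarity (Proposition~\ref{inclusion in onedeg}): the condition $I_Z\subseteq\Ann(F)$ defining membership in $\VPS(F,r)$ is equivalent, for homogeneous $F\in T_{\bb{v}}$, to the single-degree inclusion $(I_Z)_{\bb{v}}\subseteq\Ann(F)_{\bb{v}}$. Dually, this becomes the incidence condition $F\in (I_Z)^\perp_{\bb{v}}\subseteq T_{\bb{v}}$, a condition tailor-made for a Grassmannian-based argument.

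The main obstacle in executing this idea is that, a priori, as $Z$ ranges over $\mathrm{Hilb}^r(X)$, the dimension $\dim_\CC (I_Z)^\perp_{\bb{v}}=\dim_\CC S_{\bb{v}} -\dim_\CC (I_Z)_{\bb{v}}$ can jump, so there is no obvious Grassmannian target. This is precisely what the uniform multigraded regularity bound of Maclagan and Smith is designed to fix. Applying Lemma~\ref{lem:Hilbert_polynomials_of_the_family} to the constant Hilbert polynomial $P\equiv r$ and $R=\CC$ yields $\bb{k}\in\mathcal{K}$ such that $\HF(S/I_Z,\bb{v})=r$ for every $\bb{v}\in\bb{k}+\mathcal{K}$ and every $Z\in\mathrm{Hilb}^r(X)$. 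Fix such a $\bb{k}$ and $\bb{v}$; the assignment $Z\mapsto (I_Z)^\perp_{\bb{v}}$ then gives a well-defined rank-$r$ subbundle of the trivial bundle $\mathrm{Hilb}^r(X)\times T_{\bb{v}}$, hence a morphism
\[
\delta\colon \mathrm{Hilb}^r(X)\longrightarrow \mathbb{G}(\PP^{r-1},\PP(T_{\bb{v}})),\qquad Z\longmapsto (I_Z)^\perp_{\bb{v}}.
\]
One can obtain $\delta$ cleanly by factoring through the multigraded Hilbert scheme via the isomorphism of Theorem~\ref{sat vs. truncation} and composing with the natural map to the Grassmannian.

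Once $\delta$ is in place, let $\mathcal{V}\subset\mathbb{G}(\PP^{r-1},\PP(T_{\bb{v}}))\times\PP(T_{\bb{v}})$ denote the tautological incidence variety, and define
\[
\mathcal{W}:=(\delta\times\mathrm{id}_{\PP(T_{\bb{v}})})^{-1}(\mathcal{V})\subset \mathrm{Hilb}^r(X)\times\PP(T_{\bb{v}}).
\]
Then $(Z,[F])\in\mathcal{W}$ iff $F\in (I_Z)^\perp_{\bb{v}}$ iff (by multigraded apolarity) $I_Z\subseteq\Ann(F)$. The subvariety $\mathcal{V}$ is closed in its ambient product, so $\mathcal{W}$ is closed in $\mathrm{Hilb}^r(X)\times\PP(T_{\bb{v}})$.

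To conclude, let $\pi_1$ and $\pi_2$ be the two projections and observe that $\VPS(F,r)=\pi_1(\pi_2^{-1}([F])\cap \mathcal{W})$. The intersection $\pi_2^{-1}([F])\cap\mathcal{W}$ is closed in $\mathrm{Hilb}^r(X)\times\PP(T_{\bb{v}})$, and $\pi_1$ is proper since $\PP(T_{\bb{v}})$ is projective, so the image is closed in $\mathrm{Hilb}^r(X)$. This gives the desired $\bb{k}$ uniformly for all $\bb{v}\in\bb{k}+\mathcal{K}$ and all $F\in T_{\bb{v}}$, with the possibility that the set is empty causing no issue.
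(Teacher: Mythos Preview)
Your argument is correct and follows essentially the same route as the paper's own proof: both choose $\bb{k}$ via Lemma~\ref{lem:Hilbert_polynomials_of_the_family}, build $\delta\colon \mathrm{Hilb}^r(X)\to \mathbb{G}(\PP^{r-1},\PP(T_{\bb{v}}))$ through Theorem~\ref{sat vs. truncation}, pull back the tautological incidence variety, and conclude by properness of the projection. The only (minor) difference is that you make the role of Proposition~\ref{inclusion in onedeg} explicit in translating $I_Z\subseteq\Ann(F)$ to the degree-$\bb{v}$ condition, which the paper leaves implicit in its description of $\mathcal W$.
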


The closedness of $\VPS(F,r)$ prevents the existence of {\it bad limits}, in the terminology of \cite{Ranestad22}.

\begin{corollary}
Let $X = \PP^n$ and let $F\in T_d$. Then, for $d\geq r-1$, $\VPS(F,r)$ is closed. 
\end{corollary}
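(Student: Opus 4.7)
The plan is to reduce the Corollary to Proposition \ref{prop:VPS_closed} with a carefully chosen $\bb{k}$. For $X = \PP^n$, one has $s=1$ and $\mathcal{K} = \NN$, so the proposition asserts the existence of an integer $k$ such that $\VPS(F,r)$ is closed for every $F \in T_d$ with $d \geq k$. Inspecting the proof of Proposition \ref{prop:VPS_closed}, the only condition needed on $k$ is that for every $Z \in \mathrm{Hilb}^r(\PP^n)$ and every $d \geq k$, one has $\mathrm{HF}(S/I_Z, d) = r$. Thus the task reduces to showing that $k = r-1$ has this property.

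To verify this, let $Z \subset \PP^n$ be any zero-dimensional scheme of length $r$ with saturated ideal $I_Z \subset S = \CC[y_0,\dots,y_n]$, and set $h(d) = \mathrm{HF}(S/I_Z, d)$. First, I would observe that $h$ is nondecreasing: since $I_Z$ is $B$-saturated, no associated prime of $S/I_Z$ is $B = (y_0,\ldots,y_n)$, so by prime avoidance there exists a linear form $\ell \in S_1$ that is a nonzerodivisor on $S/I_Z$, yielding an injection $(S/I_Z)_d \hookrightarrow (S/I_Z)_{d+1}$. Second, whenever $h(d) < r$, the inequality must be strict: $h(d+1) \geq h(d)+1$. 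Indeed, if $h(d) = h(d+1)$ then the injection above is an isomorphism, so $(S/I_Z)_{d+k} = \ell^k (S/I_Z)_d$ for all $k \geq 0$, forcing $h$ to be constant from degree $d$ onward; but $h$ stabilizes at the Hilbert polynomial $r$, giving $h(d) = r$, a contradiction.

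Combining these two facts with $h(0) = 1$, an immediate induction gives $h(d) \geq \min\{r,d+1\}$ for all $d \geq 0$. In particular $h(r-1) \geq r$; since obviously $h(d) \leq r$ for a zero-dimensional scheme of length $r$, we conclude $h(r-1) = r$, and monotonicity then gives $h(d) = r$ for all $d \geq r-1$. Applying Proposition \ref{prop:VPS_closed} with $\bb{k} = r-1$ completes the argument.

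Since the Hilbert function argument is essentially a classical Castelnuovo--Mumford type bound, I do not anticipate any real obstacle; the only thing to be careful about is invoking saturation of $I_Z$ to guarantee the existence of a nonzerodivisor linear form, which is the hinge of the monotonicity step.
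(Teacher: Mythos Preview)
Your proof is correct; the paper states the corollary without proof, and your argument is precisely the natural completion: verify the classical bound that every length-$r$ subscheme $Z\subset\PP^n$ satisfies $\mathrm{HF}(S/I_Z,d)=r$ for all $d\geq r-1$, then feed $\bb{k}=r-1$ into the mechanism of Proposition~\ref{prop:VPS_closed}. One small caveat on your ``inspection'' step: the proof of Proposition~\ref{prop:VPS_closed} as written invokes Theorem~\ref{sat vs. truncation}, whose $\bb{k}$ is the regularity bound from Theorem~\ref{thm:uniform_bound} (which for $\PP^n$ would be $r$, not $r-1$, since $r$ collinear points are not $(r-1)$-regular); but what the argument actually needs is only the existence of the morphism $\delta\colon\mathrm{Hilb}^r(\PP^n)\to\mathbb{G}(\PP^{r-1},\PP(T_d))$, and this follows directly from your Hilbert-function condition via the surjection $S_d\otimes\mathcal{O}\twoheadrightarrow(\pi_1)_*\mathcal{O}_{\mathcal{Z}}(d)$ on the universal family---a standard construction you are clearly aware of.
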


\begin{proposition}\label{lem:fiber_type_is_non_wild}
Assume that $F\in T_\bb{v}$ has border rank $r$. If $\VSPb(F,r)$ is of fiber type, then $F$ is not wild.
\end{proposition}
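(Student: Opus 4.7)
The plan is to unpack the fiber type hypothesis together with Theorem~\ref{mainbb} to directly exhibit a smoothable apolar scheme of length $r = \brk(F)$, which immediately forces $\mathrm{srk}(F) \le r = \brk(F)$ and hence prohibits wildness.

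First I would invoke Theorem~\ref{mainbb}: since $\brk_X(F) = r$, there exists at least one ideal $I \in \mathrm{Slip}_{r,X}$ with $I \subseteq \mathrm{Ann}(F)$, so in particular $\VSPb(F, r) \neq \emptyset$. Pick any such $I$. By the assumption that $\VSPb(F, r)$ is of fiber type, we have
\[
I \in \VSPb(F, r) = \phi_{r,X}^{-1}\bigl(\VPS(F, r) \cap \mathrm{Hilb}_{sm}^r(X)\bigr),
\]
so setting $Z := \phi_{r,X}(I)$ yields a closed point $Z \in \VPS(F, r) \cap \mathrm{Hilb}_{sm}^r(X)$.

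Next I would interpret what this membership gives us concretely. Because $Z$ lies in the smoothable component $\mathrm{Hilb}_{sm}^r(X)$, it is a smoothable zero-dimensional subscheme of $X$ of length $r$. Because $Z \in \VPS(F, r)$, its $B$-saturated ideal $I_Z$ satisfies $I_Z \subseteq \mathrm{Ann}(F)$. By the definition of smoothable rank (Definition~\ref{def:cactus} and its smoothable variant), this exactly witnesses the bound $\mathrm{srk}_X(F) \le r$.

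Finally, I would combine this with the always-valid inequality $\brk_X(F) \le \mathrm{srk}_X(F)$ recalled after Definition~\ref{def:wild}. Since $\brk_X(F) = r$, we obtain $\mathrm{srk}_X(F) = \brk_X(F)$, contradicting the strict inequality that defines wildness. Therefore $F$ is not wild. The argument is essentially a one-line consequence of the definitions once the fiber type condition is applied; there is no serious obstacle, the only subtlety being to notice that the existence of an element of $\VSPb(F, r)$ (guaranteed by Theorem~\ref{mainbb}) is needed to make the fiber type equality nontrivial.
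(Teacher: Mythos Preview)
Your proof is correct and follows essentially the same approach as the paper: use Theorem~\ref{mainbb} to get $\VSPb(F,r)\neq\emptyset$, apply the fiber type equality to produce a point of $\VPS(F,r)\cap\mathrm{Hilb}_{sm}^r(X)$, deduce $\mathrm{srk}(F)\le r$, and combine with $\brk(F)\le\mathrm{srk}(F)$ to conclude. The only cosmetic difference is that the paper phrases the middle step via the image $\phi_{r,X}(\VSPb(F,r))=\VPS(F,r)\cap\mathrm{Hilb}_{sm}^r(X)$ rather than picking a specific $I$, but the logic is identical.
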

\begin{proof}
    If $\VSPb(F,r)$ is of fiber type, then $\phi_{r,X}(\VSPb(F,r)) = \VPS(F,r)\cap \mathrm{Hilb}^{r}_{sm}(X)$ since $\phi_{r,X}$ is surjective. By the border apolarity Theorem \ref{mainbb}, $\VSPb(F,r)$ is nonempty. Hence $\VPS(F,r)\cap \mathrm{Hilb}^r_{sm}(X)$ is nonempty and thus $\mathrm{srk}(F) \leq r$. On the other hand, we have  $\mathrm{srk}(F) \geq \brk(F) = r$. Hence $\brk(F) = \mathrm{srk}(F)$.
\end{proof}

\begin{corollary}
    Let $\bb{k}$ be as in Lemma~\ref{lem:Hilbert_polynomials_of_the_family} applied to the constant Hilbert polynomial $P=r$ and $R=\CC$. If $F\in T_\bb{v}$ for some $\bb{v} \in \bb{k}+\mathcal{K}$, then $\VSPb(F,r)$ is of fiber type. In particular, if $\brk(F) = r$, then $F$ is not wild. 
\end{corollary}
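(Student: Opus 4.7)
The strategy is to verify the nontrivial inclusion in the definition of fiber type, namely $\VSPb(F,r) \subseteq \phi_{r,X}^{-1}(\VPS(F,r)\cap \mathrm{Hilb}^r_{sm}(X))$, since Lemma~\ref{lem:trivial_containment} already supplies the reverse containment. Once fiber type is established, the last assertion is immediate: when $r=\brk(F)$, Proposition~\ref{lem:fiber_type_is_non_wild} rules out wildness.

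For the nontrivial inclusion, I would pick an arbitrary $I\in \VSPb(F,r)$ and set $Z = \phi_{r,X}(I)\in \mathrm{Hilb}^r_{sm}(X)$, noting that $Z$ is defined by the saturation $\overline{I}$. The task reduces to proving $\overline{I}\subseteq \Ann(F)$. The crux is a Hilbert-function comparison in degree $\bb{v}$. From $I\subseteq \overline{I}$ we have $\HF(S/\overline{I},\bb{v})\leq \HF(S/I,\bb{v})$. The membership $I\in \mathrm{Slip}_{r,X}\subseteq \mathrm{Hilb}_S^{h_{r,X}}$ gives $\HF(S/I,\bb{v})=h_{r,X}(\bb{v})\leq r$, while Lemma~\ref{lem:Hilbert_polynomials_of_the_family} applied to $P=r$ and $R=\CC$ yields $\HF(S/\overline{I},\bb{v})=r$ precisely because $\bb{v}\in \bb{k}+\mathcal{K}$ and $\overline{I}$ is the $B$-saturated ideal of a length-$r$ subscheme. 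Chaining these three inequalities forces $\HF(S/I,\bb{v})=\HF(S/\overline{I},\bb{v})=r$, and together with $I_{\bb{v}}\subseteq \overline{I}_{\bb{v}}$ this gives $I_{\bb{v}}=\overline{I}_{\bb{v}}$.

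With this degree-$\bb{v}$ equality in hand, the rest is bookkeeping. The inclusion $I\subseteq \Ann(F)$ restricts to $I_{\bb{v}}\subseteq \Ann(F)_{\bb{v}}$, hence $\overline{I}_{\bb{v}}=I_{\bb{v}}\subseteq \Ann(F)_{\bb{v}}$, and Proposition~\ref{inclusion in onedeg} applied to $\overline{I}$ (which is legitimate since $F\in T_{\bb{v}}$) upgrades this to $\overline{I}\subseteq \Ann(F)$. Thus $Z\in \VPS(F,r)\cap \mathrm{Hilb}^r_{sm}(X)$, proving fiber type, and then nonwildness follows from Proposition~\ref{lem:fiber_type_is_non_wild} when $r=\brk(F)$.

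There is really only one substantive ingredient to watch: the Hilbert-function comparison. Its force comes from the uniform rank guarantee provided by Lemma~\ref{lem:Hilbert_polynomials_of_the_family}, which pins down $\HF(S/\overline{I},\bb{v})=r$ simultaneously for every $B$-saturated length-$r$ ideal once $\bb{v}\in \bb{k}+\mathcal{K}$. This prevents any border apolar ideal $I$ from being distinguished from its saturation in degree $\bb{v}$, and therefore no ``extra'' components of $\VSPb(F,r)$ can arise beyond those dictated by genuine apolar schemes. No further obstacle is anticipated.
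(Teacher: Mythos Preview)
Your proof is correct and follows essentially the same approach as the paper's: both reduce fiber type to showing $\overline{I}\subseteq \Ann(F)$ for $I\in \VSPb(F,r)$, invoke Proposition~\ref{inclusion in onedeg} to reduce to degree $\bb{v}$, and use the choice of $\bb{k}$ to force $\overline{I}_{\bb{v}}=I_{\bb{v}}$, then conclude nonwildness via Proposition~\ref{lem:fiber_type_is_non_wild}. The only difference is that you spell out the Hilbert-function comparison $r=\HF(S/\overline{I},\bb{v})\leq \HF(S/I,\bb{v})\leq r$ explicitly, whereas the paper compresses this into the phrase ``by the definition of $\bb{k}$''.
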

\begin{proof}
    By Proposition~\ref{lem:trivial_containment}, in order to prove that $\VSPb(F,r)$ is of fiber type, it is sufficient to show that 
    $\phi_{r,X}(\VSPb(F,r)) \subseteq \VPS(F,r)\cap \mathrm{Hilb}^r_{sm}(X)$. Let $I\in \VSPb(F,r)$. We need to show that $\overline{I}\subset \Ann(F)$. By Proposition~\ref{inclusion in onedeg} it is enough to establish that $\overline{I}_{\bb{v}} \subseteq \Ann(F)$. By the definition of $\bb{k}$ we have $\overline{I}_{\bb{v}} = I_\bb{v}$ and hence $\overline{I}_{\bb{v}} \subseteq \Ann(F)$ which shows that $\VSPb(F,r)$ is of fiber type. If $\brk(F) = r$, then $F$ is not wild by Proposition~\ref{lem:fiber_type_is_non_wild}.
\end{proof}

\subsection{Birational results}\label{ssec:bir results}

\begin{theorem}\label{birmap1}
Let $F\in T_\bb{v}$ and $r$ be a positive integer.
Then $\phi_{r,X}$ induces a bijection between the set of those irreducible components of the closure of
$\VPS(F,r)\cap \mathrm{Hilb}_{sm}^r(X)$ that contain a scheme with the generic Hilbert function $h_{r,X}$ and the set of those irreducible components of $\VSPb(F,r)$ that contain a $B$-saturated ideal. Under this bijection, the irreducible components in correspondence are birational. 

\begin{proof}
Let $\mathcal{X}_1, \ldots, \mathcal{X}_s$ be the irreducible components of $\VSPb(F,r)$ that contain a $B$-saturated ideal and $\mathcal{X} = \bigcup_{i=1}^s \mathcal{X}_i$. 
Since the set of $B$-saturated ideals is open in $\mathrm{Slip}_{r,X}$ there are ideals $J_1, \ldots, J_s$ such that $J_i \in \mathcal{X}_j$ if and only if $i=j$.
Let $\mathcal{Y}_1, \ldots, \mathcal{Y}_t$ be the irreducible components of the closure of $\VPS(F,r)\cap \mathrm{Hilb}_{sm}^r(X)$ that contain a scheme with the generic Hilbert function $h_{r,X}$ and $\mathcal{Y} = \bigcup_{i=1}^t \mathcal{Y}_i$.
Let  $U \subset \mathrm{Hilb}_{sm}^r(X)$ be the locus of schemes with the generic Hilbert function. It is the complement of the image of the closed subset $\mathrm{Slip}_{r,X}\setminus \mathrm{Hilb}^{h_{r,X},sat}_S$ under the proper map $\phi_{r,X}$. Hence $U$ is open.
Therefore, there are schemes $Z_1, \ldots, Z_t$ with generic Hilbert functions such that $Z_i \in \mathcal{Y}_j$ if and only if $i=j$. Let $i\in \{1,\ldots, s\}$. The image $\phi_{r,X}(\mathcal{X}_i\cap \mathrm{Hilb}_S^{h_{r,X},sat})$ is an irreducible subset of $\VPS(F,r)\cap \mathrm{Hilb}_{sm}^r(X)$, hence $\phi_{r,X} (\mathcal{X}_i) \subseteq \mathcal{Y}_j$ for some $j$. 

On the other hand, for each $j$, the dense open subset $\mathcal{Y}_j\cap U$ of $\mathcal{Y}_j$ is contained in $\phi_{r,X}(\mathcal{X})$, therefore, each $\mathcal{Y}_j$ is the union $\bigcup_{i=1}^s \phi_{r,X}(\mathcal{X}_i)\cap \mathcal{Y}_j$. In particular, there exists $i$ such that $\phi_{r,X}(\mathcal{X}_i)$ contains the generic point of $\mathcal{Y}_j$. Since $\phi_{r,X}(\mathcal{X}_i)$ is closed we get that $\mathcal{Y}_j \subset \phi_{r,X}(\mathcal{X}_i)$. By the above there exists $k$ such that $\phi_{r,X}(\mathcal{X}_i)\subseteq \mathcal{Y}_k$ and we obtain $Z_j \in \mathcal{Y}_k$. Hence $j=k$ and we conclude that $\phi_{r,X}(\mathcal{X}_i) = \mathcal{Y}_j$.

Next we claim that for every $i$ there exists $j$ such that $\phi_{r,X}(\mathcal{X}_i) = \mathcal{Y}_j$.
Let $\mathcal{Y}_j$ be such that $\phi_{r,X}(\mathcal{X}_i) \subset \mathcal{Y}_j$. By the above, there exists $k$ such that $\phi_{r,X}(\mathcal{X}_k)= \mathcal{Y}_j$. In particular, $\phi_{r,X}(J_i) = \phi_{r,X}(K)$ for some $B$-saturated ideal $K\in \mathcal{X}_k$. Since $\phi_{r,X}$ is injective when restricted to $B$-saturated ideals we obtain $J_i=K \in \mathcal{X}_k$ and hence $i=k$.

We showed that $\phi_{r,X}$ induces a bijection between irreducible components of $\mathcal{X}$ and $\mathcal{Y}$. Furthermore, if
$\phi_{r,X}(\mathcal{X}_i) = \mathcal{Y}_j$ then the induced bijective map $\phi_{r,X} \colon \mathcal{X}_i\cap \mathrm{Hilb}_S^{h_{r,X}, sat} \to \mathcal{Y}_j\cap U$ is a locally closed immersion by Theorem~\ref{thm:restriction on saturable locus is closed immersion}. Hence, it is a surjective closed immersion with reduced target and thus an isomorphism. It follows that the morphism $\mathcal{X}_{i} \to \mathcal{Y}_{j}$ is birational.
\end{proof}
\end{theorem}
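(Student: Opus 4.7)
The plan is to identify matching open dense subsets on both sides via $\phi_{r,X}$, using the locally closed immersion in Theorem~\ref{thm:restriction on saturable locus is closed immersion}. First I would set up witnesses: the saturable locus is open in $\mathrm{Hilb}^{h_{r,X}}_S$ by Theorem~\ref{satlocus is open}, which guarantees, for each component $\mathcal{X}_i$ of $\VSPb(F,r)$ containing a saturated ideal, a saturated ideal $J_i$ lying in $\mathcal{X}_i$ but in no other component of $\VSPb(F,r)$. Dually, I would let $U\subseteq \mathrm{Hilb}^r_{sm}(X)$ be the locus of schemes with generic Hilbert function $h_{r,X}$, and verify that $U$ is open as the complement of $\phi_{r,X}(\mathrm{Slip}_{r,X}\setminus \mathrm{Hilb}^{h_{r,X},\mathrm{sat}}_S)$, which is closed by properness of $\phi_{r,X}$. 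Each component $\mathcal{Y}_j$ of the closure of $\VPS(F,r)\cap\mathrm{Hilb}^r_{sm}(X)$ containing a scheme of generic Hilbert function then admits a distinguishing witness $Z_j \in \mathcal{Y}_j \cap U$ avoided by the other $\mathcal{Y}_k$.

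Next I would define the correspondence in both directions. Given $\mathcal{X}_i$, the image $\phi_{r,X}(\mathcal{X}_i)$ is irreducible closed, and since the saturated ideals are dense in $\mathcal{X}_i$ and map into $\VPS(F,r)\cap U$, it lies in a unique $\mathcal{Y}_{j(i)}$. Conversely, for $Z\in \mathcal{Y}_j\cap U$ the saturated ideal $I_Z$ has generic Hilbert function, satisfies $I_Z\subseteq \Ann(F)$, and belongs to $\mathrm{Slip}_{r,X}$ (since $Z$ is smoothable with the correct Hilbert function, $I_Z$ arises as a limit of ideals of $r$ distinct points with the generic Hilbert function); hence $I_Z\in \VSPb(F,r)$ and lies in a component $\mathcal{X}_{i(j)}$ containing a saturated ideal. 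To verify that these assignments are mutually inverse, I would combine the injectivity of $\phi_{r,X}$ on the saturable locus (Theorem~\ref{thm:restriction on saturable locus is closed immersion}) with a covering argument: the inclusion $\mathcal{Y}_j\cap U \subseteq \phi_{r,X}(\mathcal{X})$ (consequence of Lemma~\ref{lem:trivial_containment}) implies $\mathcal{Y}_j$ is covered by finitely many images $\phi_{r,X}(\mathcal{X}_i)$, and irreducibility forces one of them to contain the generic point of $\mathcal{Y}_j$ and therefore to equal $\mathcal{Y}_j$.

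For birationality of paired components, I would invoke Theorem~\ref{thm:restriction on saturable locus is closed immersion} directly: the restriction of $\phi_{r,X}$ to the open dense subset $\mathcal{X}_i \cap \mathrm{Hilb}^{h_{r,X},\mathrm{sat}}_S$ is a locally closed immersion, and by the preceding step its image is exactly $\mathcal{Y}_{j(i)}\cap U$, an open dense reduced subscheme of $\mathcal{Y}_{j(i)}$. A surjective closed immersion onto a reduced scheme is an isomorphism, so this restricted map is an isomorphism of dense open subsets, yielding the birationality claim. I expect the main obstacle to be pinning down that $\phi_{r,X}(\mathcal{X}_i)$ equals, rather than merely lies in, the component $\mathcal{Y}_{j(i)}$; this is the step where I must lean hardest on the density of $U$ in each $\mathcal{Y}_j$ together with Lemma~\ref{lem:trivial_containment} to force enough surjectivity.
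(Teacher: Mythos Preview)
Your proposal is correct and follows essentially the same route as the paper: set up distinguishing witnesses on each side via openness of the saturable locus and of $U$, establish the bijection by a covering/irreducibility argument combined with injectivity of $\phi_{r,X}$ on saturated ideals, and deduce birationality from Theorem~\ref{thm:restriction on saturable locus is closed immersion}. The only cosmetic differences are that you cite Lemma~\ref{lem:trivial_containment} explicitly for the inclusion $\mathcal{Y}_j\cap U\subseteq \phi_{r,X}(\mathcal{X})$ and justify $I_Z\in\mathrm{Slip}_{r,X}$ directly, whereas the paper leaves these implicit.
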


Note that the locus of elements $[F]\in \sigma_r(X)\subset \PP(T_\bb{v})$ such that there exists a smoothable scheme $Z$ with $I_Z\subseteq \Ann(F)$ and $\HF(S/I_Z) = h_{r,X}$ is dense by Proposition \ref{prop:vspbar_of_general}. Therefore, the sets of the irreducible components under bijection explained in Theorem~\ref{birmap1} are nonempty for a general $[F]\in \sigma_r(X)$.

\begin{proposition}\label{birfromVSPtoVPS} 
Let $F\in T_\bb{v}$ and $r$ be a positive integer. Then $\VSP(F,r)$ is the union of those irreducible components of the closure of $\VPS(F,r)$ that contain a smooth scheme.
\begin{proof}
We claim that the locus $W\subset \mathrm{Hilb}_{sm}^r(X)$ consisting of smooth schemes is a Zariski dense open subset.
It is enough to show that the locus of smooth schemes is open in $\mathrm{Hilb}^r(X)$. Let $\mathcal{U}\subset \mathrm{Hilb}^r(X)\times X$
be the universal family and $\pi\colon \mathcal{U}\to X$ be the projection. The locus $U \subset \mathcal{U}$ of those $x$ such that the fiber of $\pi$ over $\pi(x)$ is smooth is open by \cite[Theorem~12.1.6]{Gro}.
Therefore, its image $W$ under $\pi$ is open since $\pi$ is flat and locally of finite presentation and thus, open by \cite[Theorem~14.35]{GW}. One has $\VSP^0(F,r) = \VPS(F,r)\cap W$. Therefore, the irreducible components of $\VSP(F,r)$ are exactly those irreducible components of the closure of $\VPS(F,r)$ that have nonempty intersection with $W$.
\end{proof}
\end{proposition}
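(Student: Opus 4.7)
The plan is to identify the open variety $\VSP^0(F,r)$ as the intersection of $\VPS(F,r)$ with a suitable open locus $W \subseteq \mathrm{Hilb}^r(X)$ and then run a topological closure argument. Concretely, since a zero-dimensional scheme is smooth if and only if it is reduced, one has $\VSP^0(F,r) = \VPS(F,r) \cap W$, where $W$ is the locus of smooth (equivalently, reduced) length-$r$ schemes in $\mathrm{Hilb}^r(X)$. By the definition $\VSP(F,r) = \overline{\VSP^0(F,r)}$ so it is $\overline{\VPS(F,r)\cap W}$.

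The key technical step is to verify that $W$ is open in $\mathrm{Hilb}^r(X)$. Let $\pi\colon \mathcal{U} \to \mathrm{Hilb}^r(X)$ be the universal family, a finite flat morphism of degree $r$. By \cite[Theorem~12.1.6]{Gro}, the set $U\subseteq \mathcal{U}$ of points where the fiber of $\pi$ is smooth is open. Because $\pi$ is flat and locally of finite presentation, it is an open map by \cite[Theorem~14.35]{GW}, so $W = \pi(U)$ is open in $\mathrm{Hilb}^r(X)$.

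With openness of $W$ in hand, write the irreducible decomposition $\overline{\VPS(F,r)} = \bigcup_{i=1}^m C_i$. For each $i$, the open subset $C_i \setminus \bigcup_{j\neq i} C_j$ of $\overline{\VPS(F,r)}$ is nonempty, so it meets the dense subset $\VPS(F,r)$, and this forces $C_i \cap \VPS(F,r)$ to be dense in $C_i$. If $C_i \cap W = \emptyset$, then $C_i$ contributes nothing to $\overline{\VPS(F,r)\cap W}$. If instead $C_i$ contains a smooth scheme, then $C_i \cap W$ is a nonempty open subset of the irreducible set $C_i$, hence dense; intersecting with the dense subset $C_i \cap \VPS(F,r)$ gives a dense subset of $C_i$. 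Taking closures in the finite decomposition
\[
\VPS(F,r)\cap W = \bigcup_{i=1}^m \bigl( C_i \cap \VPS(F,r)\cap W\bigr)
\]
yields $\VSP(F,r) = \bigcup_{\{i\,:\,C_i\cap W\neq \emptyset\}} C_i$, exactly the union of those irreducible components of $\overline{\VPS(F,r)}$ that contain a smooth scheme.

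The main obstacle is the openness of $W$: one has to invoke the correct scheme-theoretic openness of the smooth locus in a flat family and descend it along the flat, locally finitely presented morphism $\pi$. The remaining content is a routine closure-and-irreducible-components manipulation, facilitated by the fact that every component of $\overline{\VPS(F,r)}$ automatically meets the dense subset $\VPS(F,r)$.
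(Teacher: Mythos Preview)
Your proof is correct and follows essentially the same approach as the paper: both identify $\VSP^0(F,r) = \VPS(F,r)\cap W$ with $W$ the locus of smooth length-$r$ schemes, establish openness of $W$ via the same two citations (EGA IV, Theorem~12.1.6 for the smooth locus in the universal family, and G\"ortz--Wedhorn, Theorem~14.35 for openness of flat, locally finitely presented morphisms), and then conclude by the standard closure argument. You spell out the component-by-component topology more carefully than the paper does, which is fine; the one step you pass over quickly---that $C_i\cap\VPS(F,r)$ is dense in $C_i$---follows because the $C_i$ are the irreducible components of $\overline{\VPS(F,r)}$, so $C_i$ must equal the closure of $C_i\cap\VPS(F,r)$ (otherwise $C_i$ would lie in some other $C_j$).
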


Using similar arguments as the ones in Theorem \ref{birmap1} yields the following. 

\begin{theorem}\label{birmap2}
Let $F\in T_\bb{v}$ and $r$ be a positive integer. 
 Then $\phi_{r,X}$ induces a bijection between the set of irreducible components of 
$\VSP(F,r)\subset\mathrm{Hilb}_{sm}^r(X) $ that contain a scheme with the generic Hilbert function $h_{r,X}$ and the set of irreducible components of $\VSPb(F,r)$ containing a $B$-saturated radical ideal. Under this bijection,
the irreducible components in correspondence are birational. 
\end{theorem}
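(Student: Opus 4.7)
The plan is to imitate the proof of Theorem \ref{birmap1} essentially verbatim, with ``$B$-saturated radical'' replacing ``$B$-saturated'' on the $\VSPb$ side and $\VSP(F,r)$ replacing $\overline{\VPS(F,r)\cap \mathrm{Hilb}^r_{sm}(X)}$ on the Hilbert scheme side. The only new technical ingredient is openness of the locus $V \subset \mathrm{Slip}_{r,X}$ of $B$-saturated radical ideals. To establish it, I combine three facts: the locus $\mathrm{Hilb}^{h_{r,X},\mathrm{sat}}_S$ of $B$-saturated ideals is open by Theorem \ref{satlocus is open}; the locus $W\subset \mathrm{Hilb}^r(X)$ of smooth schemes is open by the argument in the proof of Proposition \ref{birfromVSPtoVPS}; and under the Cox correspondence (Theorem \ref{thm:Cox_bijection}) a $B$-saturated ideal is radical precisely when the zero-dimensional subscheme of the smooth variety $X$ it defines is reduced, which is equivalent to being smooth. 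Hence $V = \phi_{r,X}^{-1}(W) \cap \mathrm{Hilb}^{h_{r,X},\mathrm{sat}}_S\cap \mathrm{Slip}_{r,X}$ is open.

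Next I set up the matching. Let $\mathcal{X}_1,\ldots,\mathcal{X}_s$ be the irreducible components of $\VSPb(F,r)$ containing a $B$-saturated radical ideal, and $\mathcal{Y}_1,\ldots,\mathcal{Y}_t$ the irreducible components of $\VSP(F,r)$ containing a scheme with generic Hilbert function $h_{r,X}$. Since $V$ is open, I can pick $J_i\in \mathcal{X}_i\cap V$ lying in no other $\mathcal{X}_k$. On the Hilbert scheme side, let $U\subset \mathrm{Hilb}^r_{sm}(X)$ be the open locus of schemes with the generic Hilbert function, as identified in the proof of Theorem \ref{birmap1}. For each $j$ the intersection $\mathcal{Y}_j\cap U$ is nonempty by hypothesis and $\mathcal{Y}_j\cap W$ is nonempty because every irreducible component of $\VSP(F,r)$ meets $\VSP^0(F,r)\subseteq W$; by irreducibility both are dense open in $\mathcal{Y}_j$, so $\mathcal{Y}_j\cap U\cap W$ is nonempty and I can pick $Z_j$ there distinguishing the $\mathcal{Y}_j$'s.

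With these choices the component-matching argument of Theorem \ref{birmap1} transfers without change. For each $i$, the image $\phi_{r,X}(\mathcal{X}_i\cap V)$ is irreducible and contained in $\VSP(F,r)\cap U\cap W$, so its closure sits in some $\mathcal{Y}_j$; conversely each $\mathcal{Y}_j$ equals $\bigcup_{i=1}^s \phi_{r,X}(\mathcal{X}_i)\cap \mathcal{Y}_j$ because the dense open $\mathcal{Y}_j\cap U\cap W$ is covered by images of saturated radical ideals from $\mathcal{X}$, and by irreducibility some $\phi_{r,X}(\mathcal{X}_i)$ contains the generic point of $\mathcal{Y}_j$. The distinguishing points $J_i$ and $Z_j$ then force the correspondence $\mathcal{X}_i \leftrightarrow \mathcal{Y}_j$ to be a bijection. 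Birationality comes from Theorem \ref{thm:restriction on saturable locus is closed immersion}: restricted to the saturable locus, $\phi_{r,X}$ is a locally closed immersion, so the induced map $\mathcal{X}_i\cap V \to \mathcal{Y}_j\cap U\cap W$ is a bijective locally closed immersion onto a reduced target, and hence an isomorphism between dense open subsets.

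I do not anticipate any genuine obstacle: the only slightly novel point is the openness of $V$, which reduces to the openness of the smooth locus in the Hilbert scheme together with the Cox dictionary between radicality of $B$-saturated ideals and reducedness of subschemes; the remainder is bookkeeping running exactly parallel to Theorem \ref{birmap1}.
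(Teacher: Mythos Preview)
Your proposal is correct and matches the paper's approach: the paper simply states that ``similar arguments as the ones in Theorem~\ref{birmap1}'' yield the result, and you have correctly spelled out those arguments. The one genuinely new ingredient you identify --- openness in $\mathrm{Slip}_{r,X}$ of the locus $V$ of $B$-saturated radical ideals --- is established cleanly via the Cox dictionary and the openness of the smooth locus in $\mathrm{Hilb}^r(X)$, and with that in hand the component-matching and birationality arguments transfer verbatim from Theorem~\ref{birmap1} (the covering step $\mathcal{Y}_j\cap U\cap W\subset \phi_{r,X}(\mathcal{X})$ works because $\VSP(F,r)\subset \overline{\VPS(F,r)\cap \mathrm{Hilb}^r_{sm}(X)}\subset \phi_{r,X}(\VSPb(F,r))$, exactly as in the earlier proof).
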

By Proposition \ref{prop:vspbar_of_general} the bijection from Theorem~\ref{birmap2} is nontrivial for a general $[F]\in \sigma_r(X)$.

A birational result of Massarenti-Mella \cite[Theorem~1]{mm} and 
Ranestad-Schreyer \cite[Theorem~1.1]{rs13} on $\VSP$'s of quadrics, in conjunction with our Theorem \ref{birmap2}, gives an application to $\VSPb$'s of quadrics. 

\begin{corollary}\label{cor: bir quadrics}
Let $X = \PP^n$ and let $F\in T_2$ be a full-rank quadric. Then the unique irreducible component of $\VSPb(F,n+1)$ containing a radical ideal is rational. 
\begin{proof}
Note that every smooth scheme $Z\in \VSP^0(F,n+1)$ has the generic Hilbert function $h_{n+1,\PP^n}$. 
Hence smooth schemes with Hilbert function $h_{n+1,\PP^n}$ are dense in $\VSP(F,n+1)$. In particular, 
any irreducible component of that contains such a $Z$. Under the birational correspondence established by Theorem \ref{birmap2}, every irreducible component $\mathcal X$ of $\VSPb(F,n+1)$ containing a radical ideal (and so saturated, as we work on $X=\PP^n$) is birational to a corresponding irreducible component of $\VSP(F,n+1)$. The latter is irreducible and rational by \cite[Theorem 1.1]{rs13} or by \cite[Theorem 1]{mm}, hence so is $\mathcal X$. 
\end{proof}
\end{corollary}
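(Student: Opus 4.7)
The plan is to combine Theorem~\ref{birmap2} with the classical rationality result for $\VSP(F,n+1)$ of a full-rank quadric. Specifically, Ranestad--Schreyer \cite{rs13} and Massarenti--Mella \cite{mm} establish that for a full-rank $F\in T_2$, the variety $\VSP(F,n+1)$ is irreducible and rational. Theorem~\ref{birmap2} then provides a birational bridge carrying this rationality to the component of $\VSPb(F,n+1)$ that contains a $B$-saturated radical ideal. Note that since $X=\PP^n$, any radical ideal different from the irrelevant ideal $B=(y_0,\ldots,y_n)$ is automatically $B$-saturated, which harmonizes the two notions appearing in the statement and in Theorem~\ref{birmap2}.

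The first step is to verify the hypothesis in Theorem~\ref{birmap2} that the relevant irreducible components of $\VSP(F,n+1)$ contain a scheme with generic Hilbert function $h_{n+1,\PP^n}$. In fact I would argue that \emph{every} smooth apolar scheme $Z\in \VSP^0(F,n+1)$ already has this generic Hilbert function: since $F$ is full-rank and $I_Z \subset \Ann(F)$, the $n+1$ distinct points of $Z$ must be linearly independent in $\PP^n$ (otherwise the linear form vanishing on them would annihilate $F$), so they impose independent conditions on forms of every degree, yielding $\HF(S/I_Z,\bb{v})=\min\{n+1,\dim_\CC S_{\bb{v}}\}=h_{n+1,\PP^n}(\bb{v})$. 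In particular, smooth schemes with generic Hilbert function are dense in $\VSP(F,n+1)$, so every irreducible component of $\VSP(F,n+1)$ contains such a scheme.

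The second step is the transfer: by Theorem~\ref{birmap2} there is a bijection between irreducible components of $\VSP(F,n+1)$ containing a scheme of generic Hilbert function and irreducible components of $\VSPb(F,n+1)$ containing a $B$-saturated radical ideal, and corresponding components are birational. Since by \cite[Theorem~1.1]{rs13} (or \cite[Theorem~1]{mm}) $\VSP(F,n+1)$ is irreducible, there is a unique such component on the $\VSPb$ side, and it is birational to $\VSP(F,n+1)$, hence rational.

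The main potential obstacle is really only the bookkeeping step: making sure that the dichotomy between ``radical ideal'' (as in the statement) and ``$B$-saturated radical ideal'' (as in Theorem~\ref{birmap2}) collapses for $X=\PP^n$, and confirming that a full-rank quadric forces smooth apolar schemes to have generic Hilbert function so that the bijection of Theorem~\ref{birmap2} is actually nonempty. Both are mild, so I expect a short, clean proof once these observations are in place.
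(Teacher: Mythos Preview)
Your proof is correct and follows essentially the same approach as the paper: verify that smooth apolar schemes have generic Hilbert function (you give a fuller justification via linear independence of the points), then invoke Theorem~\ref{birmap2} together with the irreducibility and rationality of $\VSP(F,n+1)$ from \cite{rs13} or \cite{mm}. Your explicit remark that radical ideals on $\PP^n$ are automatically $B$-saturated is exactly the parenthetical observation the paper makes as well.
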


We observe that another very remarkable birational result of Massarenti and Mella applies to 
$\VSPb$'s, but in a higher range than the generic border rank.  

\begin{corollary}\label{cor: bir mm}
Let $X = \PP^n$ and $d\geq 3$. Suppose that for some positive integer $k< n$, the rational number $c_0=\left(\binom{d+n}{d}-1\right)/(k+1)$ is an integer. Then there exists a dense subset of $[F]\in \PP(T_d)$ such that for any $c\geq c_0$ every irreducible component of $\VSPb(F, c)$ containing a radical ideal is rationally connected.
\begin{proof}
By Proposition \ref{prop:vspbar_of_general} there is a dense subset of $[F]\in \PP(T_d)$ such that there exists a radical $I_Z\subset \Ann(F)$ with generic Hilbert function $h_{c,\PP^n}$.
Under the birational correspondence established by Theorem \ref{birmap2}, every irreducible component of $\VSPb(F,c)$ containing a radical ideal is birational to a corresponding irreducible component of $\VSP(F,c)$. By \cite[Theorem 2]{mm}, every such a component is rationally connected, a property that is stable under birational
maps. 
\end{proof}
\end{corollary}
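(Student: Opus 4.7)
The plan is to combine three ingredients, mirroring the strategy of the preceding Corollary \ref{cor: bir quadrics}: Proposition \ref{prop:vspbar_of_general} to produce a radical apolar ideal in $\VSPb(F,c)$ for $F$ in a suitable dense subset, Theorem \ref{birmap2} for the birational identification of corresponding irreducible components, and the rational connectedness result \cite[Theorem 2]{mm} for $\VSP(F,c)$.

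First, I would apply Proposition \ref{prop:vspbar_of_general} to obtain a Zariski dense open subset $W_{rad}\subseteq\sigma_c(\nu_d(\PP^n))$ of forms $F$ for which $\VSPb(F,c)$ contains a radical ideal with generic Hilbert function $h_{c,\PP^n}$. The numerical constraint $c_0(k+1)=\binom{d+n}{d}-1$ with $k<n$ ensures, together with the Alexander--Hirschowitz theorem in the range $d\geq 3$, that $\sigma_c(\nu_d(\PP^n))=\PP(T_d)$ for every $c\geq c_0$, so $W_{rad}$ is in fact dense in all of $\PP(T_d)$. For any such $F$, Theorem \ref{birmap2} yields a bijection between the irreducible components of $\VSPb(F,c)$ containing a radical ideal (which on $\PP^n$ is automatically $B$-saturated away from the irrelevant ideal) and those of $\VSP(F,c)$ containing a scheme with generic Hilbert function, with corresponding components birational. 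Finally, \cite[Theorem 2]{mm} guarantees that every irreducible component of $\VSP(F,c)$ is rationally connected for $F$ in a Zariski dense open subset of $\PP(T_d)$ and every $c\geq c_0$; intersecting this with $W_{rad}$ produces the desired dense locus.

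The main obstacle I anticipate is the transfer of rational connectedness across the birational morphism between components of $\VSPb(F,c)$ and $\VSP(F,c)$, since these varieties may a priori be singular. This reduces to the standard fact, valid in characteristic zero, that rational connectedness is a birational invariant among projective varieties. A secondary check, which is routine from the shape of $c_0=(\binom{d+n}{d}-1)/(k+1)$ with $k<n$ (forcing $c_0\geq\lceil\binom{d+n}{d}/(n+1)\rceil$), is that the range $c\geq c_0$ lies beyond the finitely many defective Alexander--Hirschowitz cases, so that the secant variety indeed fills $\PP(T_d)$.
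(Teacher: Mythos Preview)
Your proposal is correct and follows essentially the same approach as the paper: invoke Proposition~\ref{prop:vspbar_of_general} to guarantee a radical apolar ideal with generic Hilbert function for $F$ in a dense locus, then use the birational correspondence of Theorem~\ref{birmap2} together with \cite[Theorem~2]{mm} and the birational invariance of rational connectedness. The paper's proof is terser and leaves implicit both the fact that $\sigma_c(\nu_d(\PP^n))=\PP(T_d)$ in this range and the birational stability of rational connectedness; your explicit treatment of these points (including the Alexander--Hirschowitz check) is a welcome expansion rather than a different argument.
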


\section{Membership in $\mathrm{Slip}_{r,\PP^n}$, complete intersections and monomials}\label{sec:Membership in Slip}

\subsection{Ext criterion for complete intersections}
Let $S=\CC[y_0,\ldots, y_n]$ be the Cox ring of $\PP^n$. We consider a homogeneous complete intersection ideal $J = (f_1, \ldots, f_n)$ of $S$ where $\deg(f_i) = a_i$. Let $d=\sum_{i=1}^n a_i - (n+1)$ and $r=\prod_{i=1}^n a_i$. In this section, for the ease of notation, $h$ denotes the Hilbert function $h_{r, \PP^n}$. We require that $\HF(S/J) \neq h$.

We have $\HF(S/J, s) = r$ for every $s\geq d+1$ and $\HF(S/J, d) = r-1$. Therefore, if $I\in \mathrm{Hilb}_S^{h}$ satisfies $\overline{I} = J$ and $I\in \mathrm{Slip}_{r,\PP^n}$, then by \cite[Theorem~3.4~and~Corollary~3.3]{JM} we have $(J^2)_d \subseteq I_d$. We show a partial converse--Theorem~\ref{prop:complete_intersection}. We start with a numerical result about the Hilbert function of $S/J$.

\begin{lemma}\label{lem:ci_symmetry_of_HF}
If $0\leq s\leq d$, then $\HF(S/J, s) + \HF(S/J, d-s) = r$.
\end{lemma}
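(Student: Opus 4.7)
My plan is to reduce the claim to the classical symmetry of the Hilbert function of an Artinian graded complete intersection. Since $J = (f_1,\ldots,f_n)$ is a complete intersection in the polynomial ring $S$ in $n+1$ variables, the quotient $S/J$ is Cohen-Macaulay of Krull dimension one, so a general linear form $\ell \in S_1$ is a nonzerodivisor on $S/J$. The first step is then to form the Artinian quotient $A := S/(J + (\ell))$, which is a graded complete intersection generated by $n$ forms of degrees $a_1, \ldots, a_n$ in $n$ variables. Its socle degree is therefore $\sigma = \sum_{i=1}^n a_i - n = d+1$, and its Hilbert function is symmetric, $\HF(A, i) = \HF(A, d+1-i)$, and vanishes outside $[0, d+1]$.

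The second step is to extract $\HF(S/J, \cdot)$ from $\HF(A, \cdot)$. From the short exact sequence
\[
0 \longrightarrow (S/J)(-1) \xrightarrow{\;\cdot\ell\;} S/J \longrightarrow A \longrightarrow 0,
\]
one obtains the telescope $\HF(S/J, s) = \sum_{i=0}^{s} \HF(A, i)$ for every $s \geq 0$. Combined with the already-recorded stabilization $\HF(S/J, s) = r$ for $s \geq d+1$, this gives the identity $\sum_{i=0}^{d+1} \HF(A, i) = r$. The desired equality for $0 \leq s \leq d$ will then follow by writing
\[
\HF(S/J, s) + \HF(S/J, d-s) = \sum_{i=0}^{s} \HF(A, i) + \sum_{i=0}^{d-s} \HF(A, i),
\]
applying the change of index $j = d+1-i$ together with the symmetry of $\HF(A, \cdot)$ to rewrite the second sum as $\sum_{j=s+1}^{d+1} \HF(A, j)$, and recognizing the total as $\sum_{i=0}^{d+1} \HF(A, i) = r$.

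I do not anticipate any substantive obstacle: both the Cohen-Macaulayness of a complete intersection and the Gorenstein symmetry of the Hilbert function of a graded Artinian complete intersection (visible, for instance, from the Koszul resolution or from the existence of a symmetric Macaulay dual generator) are classical. The only mild care required is verifying that a general linear form $\ell$ exists and yields an Artinian quotient, which follows from $\dim(S/J) = 1$ together with standard prime avoidance.
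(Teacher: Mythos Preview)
Your proof is correct and follows essentially the same approach as the paper's: both reduce to an Artinian complete intersection in $n$ variables, use the symmetry of its Hilbert function about the socle degree $d+1$, and telescope the sums. The only cosmetic difference is that the paper first observes that the Hilbert function of $S/J$ depends only on the degrees $a_i$, replaces $J$ by the monomial ideal $(y_1^{a_1},\ldots,y_n^{a_n})$, and then uses $y_0$ as the explicit nonzerodivisor, whereas you invoke Cohen--Macaulayness and prime avoidance to obtain a general nonzerodivisor $\ell$ directly.
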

\begin{proof}
 We may assume that $J=(y_1^{a_1}, \ldots, y_n^{a_n})$. Let $R=\CC[y_1,\ldots, y_n]$ and $I = J\cap R$. We have $\HF(S/J,s) = \sum_{i=0}^s \HF(R/I, i)$. Furthermore, $\HF(R/I, i) = \HF(R/I, d+1-1)$ since $I$ is the annihilator of the degree $d+1$ monomial
 $x_1^{a_1-1}\cdots x_n^{a_n-1}$.
Therefore,
\begin{align*}
\HF(S/J,s) + \HF(S/J, d-s) &= \sum_{i=0}^s \HF(R/I, i) + \sum_{i=0}^{d-s} \HF(R/I, i) \\
&= \sum_{i=0}^s \HF(R/I, i) + \sum_{i=s+1}^{d+1} \HF(R/I, i) = r.
\end{align*}
The last equality follows from writing a monomial basis for the quotient. 
\end{proof}

\begin{proposition}\label{lem:exts}
If $I\in \mathrm{Slip}_{r,\PP^n}$ is such that $\overline{I} = J$, then $\dim_\CC \mathrm{Ext}^1_S(J/I_{\geq d}, S/J)_0 = 1$. Equivalently, if $I\in \mathrm{Slip}_{r,\PP^n}$ is such that $\overline{I} = J$, then $(J^2)_d\subseteq I_d$. 
\end{proposition}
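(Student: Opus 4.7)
The plan is twofold. First, the implication that $I\in\mathrm{Slip}_{r,\PP^n}$ with $\overline{I}=J$ forces $(J^2)_d\subseteq I_d$ is essentially the content of the paragraph immediately preceding the proposition: the Hilbert function data $\HF(S/J,d)=r-1$ and $\HF(S/J,s)=r$ for $s\geq d+1$ place us in the hypotheses of \cite[Theorem~3.4 and Corollary~3.3]{JM}, whose conclusion is precisely this containment. The substance of the proof therefore reduces to the purely homological equivalence
\[
\dim_\CC \mathrm{Ext}^1_S(J/I_{\geq d},S/J)_0 = 1 \quad\Longleftrightarrow\quad (J^2)_d\subseteq I_d
\]
for every $I\in \mathrm{Hilb}_S^h$ with $\overline{I}=J$, under which the two formulations of the conclusion become interchangeable.

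To establish this equivalence, set $N:=J/I_{\geq d}$ and observe that, since $\HF(S/I,s)=\HF(S/J,s)=r$ and $I_s\subseteq J_s$ for every $s\geq d+1$, we have $I_s=J_s$ for such $s$; hence $N$ is concentrated in degrees $\leq d$ with top piece $N_d=J_d/I_d$ of dimension $h(d)-(r-1)=1$. Applying $\mathrm{Hom}_S(-,S/J)$ to $0\to I_{\geq d}\to J\to N\to 0$ and reading off the degree zero piece of the long exact sequence gives the four-term exact sequence
\[
\mathrm{Hom}_S(J,S/J)_0 \xrightarrow{\rho} \mathrm{Hom}_S(I_{\geq d},S/J)_0 \xrightarrow{\delta} \mathrm{Ext}^1_S(N,S/J)_0 \longrightarrow \mathrm{Ext}^1_S(J,S/J)_0.
\]
The structural input is that every $\phi\in \mathrm{Hom}_S(J,S/J)$ kills $J^2$ automatically, since $\phi(J^2)=J\cdot\phi(J)\subseteq J\cdot(S/J)=0$; hence $\phi$ factors through $J/J^2$, which is a free $S/J$-module of rank $n$ on $\overline{f_1},\ldots,\overline{f_n}$ because $J$ is a complete intersection. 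This yields $\mathrm{Hom}_S(J,S/J)_0\cong \bigoplus_{i=1}^n (S/J)_{a_i}$, while $\mathrm{Ext}^1_S(J,S/J)_0\cong \mathrm{Ext}^2_S(S/J,S/J)_0$ is computable from the Koszul complex of $(f_1,\ldots,f_n)$.

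With these descriptions in hand, the equivalence is identified at the level of the cokernel of $\rho$: the containment $(J^2)_d\subseteq I_d$ is exactly what allows the degree $d$ projection $J_d\twoheadrightarrow J_d/I_d\cong\CC$ to descend to a well-defined $S$-linear homomorphism $I_{\geq d}\to S/J$ lying outside $\mathrm{im}(\rho)$, thereby contributing the sole extra dimension to $\mathrm{coker}(\rho)_0$; conversely, if $(J^2)_d\not\subseteq I_d$, every homomorphism $I_{\geq d}\to S/J$ extends to $J\to S/J$ using the Koszul syzygies. The main obstacle is to control the connecting map $\delta$, showing that $\mathrm{coker}(\rho)_0\cong \mathrm{Ext}^1_S(N,S/J)_0$, equivalently that $\mathrm{Ext}^1_S(N,S/J)_0\to \mathrm{Ext}^1_S(J,S/J)_0$ vanishes. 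This is a bookkeeping calculation on the Koszul resolution of $S/J$ together with the Hilbert function symmetry of Lemma~\ref{lem:ci_symmetry_of_HF}, aligning the degree shifts $a_i+a_j$ appearing in $\mathrm{Ext}^2_S(S/J,S/J)_0$ against the relation $d=\sum_i a_i-(n+1)$.
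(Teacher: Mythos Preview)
Your overall strategy is sound at the start---the implication for $I\in\mathrm{Slip}_{r,\PP^n}$ does come from \cite[Theorem~3.4 and Corollary~3.3]{JM}---but you then misidentify where the remaining work lies. The ``purely homological equivalence'' you set out to prove is not something to be derived by hand: it is precisely the content of \cite[Corollary~3.3]{JM}, which gives the explicit formula
\[
\dim_\CC \mathrm{Ext}^1_S(J/I_{\geq d},S/J)_0 \;=\; \dim_\CC \frac{J_d}{I_d + (J^2)_d}.
\]
Once this is in hand, the equivalence is a one-line observation: since $I_d$ has codimension $1$ in $J_d$, the quotient $J_d/(I_d+(J^2)_d)$ has dimension $1$ if $(J^2)_d\subseteq I_d$ and $0$ otherwise. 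The paper then finishes by noting $\dim\leq 1$ from codimension, and $\dim>0$ from \cite[Theorem~3.4]{JM} applied to $I\neq\overline{I}=J$.

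Your attempt to rebuild this equivalence via the long exact sequence has two genuine gaps. First, the crucial vanishing of $\mathrm{Ext}^1_S(N,S/J)_0\to\mathrm{Ext}^1_S(J,S/J)_0$ is dismissed as ``bookkeeping'' together with an appeal to Lemma~\ref{lem:ci_symmetry_of_HF}; no actual mechanism is given, and this vanishing is not automatic---the module $N=J/I_{\geq d}$ is not concentrated in degree $d$ (it carries all of $J_{<d}$), so the Ext group is not obviously controlled by $N_d$ alone. Second, your description of the extra class in $\mathrm{coker}(\rho)$ is confused: the projection $J_d\twoheadrightarrow J_d/I_d$ restricts to zero on $(I_{\geq d})_d=I_d$, so it does not produce a nonzero element of $\mathrm{Hom}_S(I_{\geq d},S/J)_0$ in the way you suggest. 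In short, you are partially re-deriving \cite[Corollary~3.3]{JM} rather than invoking it, and the derivation is incomplete.
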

\begin{proof}
By \cite[Corollary~3.3]{JM} we get $\dim_\CC \mathrm{Ext}^1_S(J/I_{\geq d}, S/J)_0 =  \dim_\CC \frac{J_d}{I_d + J^2_d} =  \dim_\CC\mathrm{Ext}^1_S(J/I, S/J)_0.$
Since $I_d$ is of codimension $1$ in $J_d$ we get $\dim_\CC \frac{J_d}{I_d + J^2_d} \leq 1$. Since $I\in \mathrm{Slip}_{r,\PP^n}$ and $I\neq \overline{I} = J$, it follows from \cite[Theorem~3.4]{JM} that $\dim_\CC\mathrm{Ext}^1_S(J/I, S/J)_0 > 0$.
\end{proof}

\begin{remark}\label{rmk:com_int}
Let $\delta_1 < \delta_2 < \cdots <\delta_t$ be the distinct integers appearing among $a_j$'s and let $m_i$ be the number of $a_j$ equal to $\delta_i$. In \cite[\S 2.2.2]{Ben12} there is an inductive construction of an integral smooth scheme $H_t$ and a family $\mathcal{X}_t \subseteq H_t\times \PP^n$ over $H_t$ all of whose fibers are complete intersections generated by $m_i$ minimal generators of degree $\delta_i$ with $i\in \{1,2,\ldots, t\}$. Using the inductive definition of $H_t$ one verifies that
$\dim H_s = \sum_{i=1}^s m_i \cdot \HF(S/J, \delta_i)$ for all $1\leq s\leq t$. In particular, $\dim H_t = \sum_{i=1}^n \HF(S/J, a_i)$.
By the universal property of the Hilbert scheme there is a morphism $\psi \colon H_t \to \mathrm{Hilb}^r(\PP^n)$ corresponding to $\mathcal{X}_t$. Let $\mathcal{CI}$ be the (set-theoretic) image. It is irreducible and it is constructible by Chevalley's theorem. Furthermore, if $\Gamma\in \mathcal{CI}$, then $\Gamma$ is a complete intersection of degrees $(a_1,a_2,\ldots, a_n)$. 
Conversely, if $\Gamma$ is such a complete intersection, then $\Gamma\in \mathcal{CI}$ by \cite[Lemma~2.2.2]{Ben12}. The morphism $\psi$ is injective on closed points so $\dim \mathcal{CI} = \dim H_t$.

The Analogously defined constructible subset in $\mathrm{Hilb}^P(\PP^n)$, with $\deg(P)>0$ (i.e. for positive-dimensional subschemes), is open \cite[Lemme 2.2.3]{Ben12}. However this is false for zero-dimensional complete intersections: the source of failure is that the map on global sections $H^0(\mathcal O_{\PP^n}(k)) \rightarrow H^0(\mathcal O_Z(k))$ is not always surjective for $k> 0$ in this case. 

\end{remark}

We use the following observation.
\begin{lemma}\label{lem:ci_is_open}
    Let $g$ be the Hilbert function of $S/J$. There is a smooth irreducible open subset of $\mathrm{Hilb}_{S}^{g}$ of dimension 
    $\sum_{i=1}^n \HF(S/J, a_i)$ containing $J$ and consisting of ideals of complete intersections.
\end{lemma}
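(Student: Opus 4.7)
The plan is to exhibit an explicit smooth family of complete intersection ideals passing through $J$ and use it to control the local structure of $\mathrm{Hilb}_S^g$ at $J$. Let $W = \prod_{i=1}^n S_{a_i}$ and let $U \subseteq W$ be the Zariski open locus of tuples $(f_1', \ldots, f_n')$ forming a regular sequence. Each such tuple generates an ideal with Hilbert function $g$ (by the Koszul complex), so the universal property of the multigraded Hilbert scheme yields a morphism $\Phi \colon U \to \mathrm{Hilb}_S^g$, $(f_1', \ldots, f_n') \mapsto (f_1', \ldots, f_n')$. Its fiber over $J$ is the open subset of $\prod_i J_{a_i}$ consisting of regular sequences generating $J$ (a complete intersection sub-ideal of $J$ with matching Hilbert function must coincide with $J$); it is smooth of dimension $\sum_i \dim_\CC J_{a_i}$ and contains the original generators.

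The key tangent space computation at $J$ uses that, for a complete intersection, every $S$-module map $\phi\colon J\to S/J$ vanishes on $J^2$ (since $J\cdot (S/J)=0$), and that $J/J^2 \cong \bigoplus_{i=1}^n (S/J)(-a_i)$ is free on the images of $f_1,\ldots,f_n$. Therefore
\[
T_J \mathrm{Hilb}_S^g \;\cong\; \mathrm{Hom}_S(J, S/J)_0 \;\cong\; \bigoplus_{i=1}^n (S/J)_{a_i}\,,
\]
of dimension $\sum_i \HF(S/J, a_i)$. Moreover, the differential $d\Phi_{(f_1, \ldots, f_n)} \colon \bigoplus_i S_{a_i} \to \bigoplus_i (S/J)_{a_i}$ is the direct sum of the quotient maps $S_{a_i} \twoheadrightarrow (S/J)_{a_i}$, hence surjective. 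Combined with the fiber dimension, this shows that the (constructible) image of $\Phi$ has dimension $\dim U - \sum_i \dim_\CC J_{a_i} = \sum_i \HF(S/J, a_i)$, which coincides with $\dim T_J \mathrm{Hilb}_S^g$. Consequently $J$ is a smooth point of $\mathrm{Hilb}_S^g$ of local dimension $\sum_i \HF(S/J, a_i)$, lying on a unique irreducible component.

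It remains to show that near $J$ every ideal parameterized by $\mathrm{Hilb}_S^g$ is itself a complete intersection of degrees $(a_1, \ldots, a_n)$. On some open $V\ni J$, the degree $a_i$ piece $\mathcal{I}_{a_i}$ of the universal ideal is locally free, since its fiber dimension $\HF(S/J, a_i)$ is constant throughout the flat family. Choose local sections $\tilde f_1, \ldots, \tilde f_n$ of the sheaves $\mathcal{I}_{a_1}, \ldots, \mathcal{I}_{a_n}$ specializing at $J$ to the generators $f_1, \ldots, f_n$. The locus where $(\tilde f_1, \ldots, \tilde f_n)$ forms a regular sequence is open and contains $J$, so after shrinking $V$ we may assume this holds throughout. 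For each $I\in V$, the ideal $(\tilde f_1(I), \ldots, \tilde f_n(I))$ is a complete intersection with Hilbert function $g$ contained in the ideal $I$ having the same Hilbert function, hence equals $I$; so $I$ is itself a complete intersection of degrees $(a_1, \ldots, a_n)$. Intersecting $V$ with the (open) smooth locus of $\mathrm{Hilb}_S^g$ yields the desired open neighborhood, which is smooth, irreducible (as $J$ is a smooth point lying on a unique component), and of dimension $\sum_i \HF(S/J, a_i)$.

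The main obstacle is this final step: while the tangent space computation and the morphism $\Phi$ quickly identify $J$ as a smooth point of $\mathrm{Hilb}_S^g$, upgrading the constructible image of $\Phi$ to a genuine open neighborhood consisting of complete intersections relies on the local freeness of $\mathcal{I}_{a_i}$ in the family combined with the openness of the regular sequence condition.
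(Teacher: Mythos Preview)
Your proof is correct and takes a more self-contained route than the paper's. The paper obtains only the upper bound $\dim_\CC \mathrm{Hom}_S(J,S/J)_0 \leq \sum_i \HF(S/J,a_i)$ from the free presentation and then invokes \cite[Corollary~3.16]{JM} as a black box for smoothness of $J$ in $\mathrm{Hilb}_S^g$; you compute the tangent space exactly via the freeness of $J/J^2$ over $S/J$ and deduce smoothness directly from the matching image dimension of $\Phi$. For the parameter space of complete intersections, the paper pulls back Benoist's family $H_t$ through the map $\mathrm{Hilb}_S^g\to\mathrm{Hilb}^r(\PP^n)$, whereas your $\Phi\colon U\subseteq\prod_i S_{a_i}\to\mathrm{Hilb}_S^g$ works entirely inside the multigraded Hilbert scheme. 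Finally, to upgrade the constructible locus of complete intersections to an open one, the paper argues that a dense constructible subset of the unique component through $J$ contains an open set and then uses smoothness to separate it from other components; your argument---lifting the generators to sections of the locally free sheaves $\mathcal{I}_{a_i}$ and using openness of the regular-sequence condition---is more direct and avoids the detour through $\mathrm{Hilb}^r(\PP^n)$. Both approaches are standard; yours has the advantage of not needing the external citations, while the paper's approach slots naturally into the comparison framework (between multigraded and Grothendieck Hilbert schemes) already developed in their \S3.
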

\begin{proof}
Pick the minimal graded free resolution of $J$, $\cdots \to F_1 \to F_0 \to J\to 0$. We have 
\[
\dim_\CC \mathrm{Hom}_S(J,S/J)_0 \leq \dim_\CC \mathrm{Hom}_S(F_0, S/J)_0 = \sum_{i=1}^n \dim_\CC \mathrm{Hom}_S(S(-a_i), S/J))_0 = \sum_{i=1}^n \HF(S/J, a_i).
\]
Let $\mathcal{Z}$ be an irreducible component of the saturable locus in $\mathrm{Hilb}_S^g$ such that $J\in \mathcal{Z}$. By \cite[Corollary~3.16]{JM}, the point $J$ is a smooth point of $\mathrm{Hilb}_S^g$ so there is only one such $\mathcal{Z}$. By the above calculation $\dim \mathcal{Z} \leq \sum_{i=1}^n \HF(S/J, a_i)$.  

Let $\mathcal{CI}$ be as in Remark~\ref{rmk:com_int} and let $\mathcal{Z}_0$ be its inverse image under the natural map $\alpha\colon \mathrm{Hilb}_S^g \to \mathrm{Hilb}^r(\PP^n)$. It is an irreducible subset of the saturable locus of dimension $\sum_{i=1}^n \HF(S/J, a_i)$ that contains $J$. Hence its closure is  $\mathcal{Z}$. The set $\mathcal{Z}_0$ is a dense constructible subset of $\mathcal{Z}$ so it contains a nonempty open subset $U$ of $\mathcal{Z}$. Every point of this subset is a smooth point of $\mathrm{Hilb}_S^g$. Hence $U$ is disjoint from other irreducible components. Thus, it is open in $\mathrm{Hilb}_S^g$.

\end{proof}

\begin{theorem}\label{prop:complete_intersection}
Let $W$ be a codimension one subspace of $J_d$. There exists $I\in \mathrm{Slip}_{r, \PP^n}$ with $I_{\geq d} = (W) + J_{\geq d+1}$ if and only if $(J^2)_d\subseteq W$.
\end{theorem}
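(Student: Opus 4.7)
The \emph{only if} direction is Proposition~\ref{lem:exts}. For the converse, fix a codimension-one subspace $W\subsetneq J_d$ with $(J^2)_d\subseteq W$; we construct $I\in\mathrm{Slip}_{r,\PP^n}$ with $I_{\geq d}=(W)+J_{\geq d+1}$. A first observation is that any such $I$ automatically satisfies $\overline{I}=J$: the ideals $I$ and $J$ agree in all degrees $\geq d+1$, and since $J$ is $B$-saturated as a complete intersection, Lemma~\ref{lem:equality_of_sat} yields $\overline{I}=\overline{J}=J$. Hence the task reduces to producing a point of $\phi_{r,\PP^n}^{-1}(V(J))\cap\mathrm{Slip}_{r,\PP^n}$ whose degree-$d$ component is exactly $W$.

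The strategy is to realize $I$ as a flat limit in $\mathrm{Hilb}_S^{h_{r,\PP^n}}$ of saturated ideals of smooth $r$-tuples degenerating to $V(J)$. Since $V(J)$ is zero-dimensional and hence smoothable, one-parameter families $\{Z(t)\}$ of smooth $r$-tuples with $Z(t)\to V(J)$ exist; for $t\neq 0$ the ideal $I_{Z(t)}$ has Hilbert function $h_{r,\PP^n}$, and any limit $I_\infty=\lim_{t\to 0}I_{Z(t)}$ lies in $\mathrm{Slip}_{r,\PP^n}$ by closedness. A dimension count coming from $\mathrm{HF}(S/I_\infty,d)=h_{r,\PP^n}(d)=r$ and $\mathrm{HF}(S/J,d)=r-1$ forces $(I_\infty)_d$ to be a codimension-one subspace of $J_d$, which by Proposition~\ref{lem:exts} must contain $(J^2)_d$.

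The heart of the argument is showing that every admissible $W\supseteq (J^2)_d$ arises as $(I_\infty)_d$ for a suitable family. Two sources of flexibility combine: (a) Lemma~\ref{lem:ci_is_open} provides a smooth family of complete intersections through $J$ in $\mathrm{Hilb}_S^g$ of dimension $\sum_i\mathrm{HF}(S/J,a_i)$, with tangent space $\bigoplus_i (S/J)_{a_i}$; (b) deformations of the CI zero-schemes $V(J(t))$ into generic position inside $\mathrm{Hilb}^r_{sm}(\PP^n)$. A first-order analysis of the degree-$d$ component of the resulting two-parameter family, using the Koszul syzygies of $(f_1,\ldots,f_n)$ to show well-definedness of the induced map $J_d\to S_d/J_d$, identifies $(I_\infty)_d$ as the kernel of a linear functional on $J_d$ whose image in $J_d/(J^2)_d$ is controlled by the tangent data. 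The key computation is the surjectivity of the induced map from combined deformation data onto $\{W\subseteq J_d: \mathrm{codim}_{J_d}W=1,\ (J^2)_d\subseteq W\}$, which sweeps out every admissible $W$.

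The main obstacle is making the identification of $(I_\infty)_d$ with $W$ rigorous. In particular, the limit in $\mathrm{Hilb}_S^{h_{r,\PP^n}}$ must be distinguished from the limit in $\mathrm{Hilb}_S^g$ (the latter is simply $J$ itself), and one must ensure that the lower-degree components of $I$, for $s<d$ with $\binom{n+s}{s}>r$, can be coherently filled in so that $\mathrm{HF}(S/I,\cdot)=h_{r,\PP^n}$ and $S_1\cdot I_{s}\subseteq I_{s+1}$ for all $s$. This coherence follows by upper semicontinuity applied to the relevant graded pieces along the two-parameter family, guaranteeing that the limit indeed defines a valid point of $\mathrm{Hilb}_S^{h_{r,\PP^n}}$ with the prescribed truncation above degree $d$.
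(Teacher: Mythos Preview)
The proposal has a genuine gap: the surjectivity claim---that every admissible $W$ arises as $(I_\infty)_d$ for some degenerating family---is asserted but not proved, and this is precisely the content of the ``if'' direction. You identify two sources of deformations (nearby complete intersections via Lemma~\ref{lem:ci_is_open}, and smoothings of $V(J)$ to generic position) and claim their combination ``sweeps out every admissible $W$,'' but you neither specify the map from deformation data to codimension-one subspaces of $J_d/(J^2)_d$ nor verify that it is surjective. The phrase ``first-order analysis\ldots identifies $(I_\infty)_d$ as the kernel of a linear functional\ldots controlled by the tangent data'' is a description of what one would need to do, not a proof. Moreover, even granting surjectivity of a first-order (tangent-level) map, passing from infinitesimal surjectivity to honest surjectivity onto the projective space $\PP\big((J_d/(J^2)_d)^*\big)$ requires an additional argument (smoothness, properness of an actual morphism, or explicit construction) that is absent. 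Source~(a) alone produces ideals with Hilbert function $g\neq h_{r,\PP^n}$, and source~(b) alone yields a single limit; how the two combine into a family in $\mathrm{Hilb}_S^{h_{r,\PP^n}}$ hitting a prescribed $W$ is never made precise.

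The paper's proof avoids this difficulty by recasting the problem as an intersection in the truncated multigraded Hilbert scheme $\mathrm{Hilb}_S^{h_{\geq d}}$. One compares three loci there: the image $Y=\gamma(\mathrm{Slip}_{r,\PP^n})$, the locus $\overline{Z}$ of truncations whose saturation is a complete intersection of the given type, and the sublocus $\overline{Z_0}\subseteq\overline{Z}$ where $(\overline{K'})^2_d\subseteq K'_d$. The containment $\overline{Z}\cap Y\subseteq\overline{Z_0}$ is Proposition~\ref{lem:exts}; the reverse containment is obtained from \cite[Lemma~2.6]{Man} by verifying two numerical facts: a tangent-space bound $\dim_\CC T_{K'}\mathrm{Hilb}_S^{h_{\geq d}}=\dim\overline{Z}+1$, and the dimension identity $\dim\overline{Z_0}=nr-1$. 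The latter unwinds, via \cite[Corollary~2.3]{gvt} and Lemma~\ref{lem:ci_symmetry_of_HF}, to the equality $\sum_{i=1}^n\HF(S/J,a_i)+\HF(S/J^2,d)-r=nr-1$. This dimension count is exactly the substantive input your approach is missing: it certifies that the admissible $W$'s form a locus of the correct dimension to be cut out by $\mathrm{Slip}_{r,\PP^n}$, without ever having to exhibit an explicit family realizing a given $W$.
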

\begin{proof}
The only if part follows from Proposition \ref{lem:exts}. Let $g$ be the Hilbert function of $S/J$. Let $U$ be the irreducible set from Lemma~\ref{lem:ci_is_open}.
Let $\alpha\colon \mathrm{Hilb}_S^g \to \mathrm{Hilb}^r(\PP^n)$ be the natural map and let $V$ be the set-theoretic image of $U$ under $\alpha$. By Theorem~\ref{thm:restriction on saturable locus is closed immersion} it is a locally closed subset and $\dim V = \dim U$.
Let $K = (W) + J_{\geq d+1}$ and $h_{\geq d}$ be the Hilbert function of $S/K$. Let $\beta\colon \mathrm{Hilb}_S^{h_{\geq d}} \to \mathrm{Hilb}^r(\PP^n)$ be the natural map and $Z$ be the preimage of $V$. By construction $Z$ contains $K$.

We start with computing the dimension of $Z$. The fiber of the proper morphism $\beta_{|Z}\colon Z\to V$ over each point of $V$ is a projective space of dimension $\dim_\CC S_d - r$. Indeed, we need to choose a codimension one subspace of the degree $d$ part of the ideal. Hence a codimension one subspace of a linear space of dimension $\dim_\CC S_d - (r-1)$. 
We conclude that $Z$ is irreducible and
\begin{equation}\label{eq:1}
\dim Z = \dim_\CC S_d - r + \dim V = \dim_\CC S_d - r + \dim U.
\end{equation}

Let $Z_0\subset Z$ denote the closed locus of those $K'$ for which $(\overline{K'})^2_d \subseteq K'_d$.  Each fibre of the proper morphism $\beta_{|Z_0}\colon Z_0 \to V$ is the projective space of codimension one subspaces of $J_d/(J^2)_d$. Thus $Z_0$ is irreducible and has dimension $\dim U + (\dim_\CC S_d - \HF(S/J, d)) - (\dim_\CC S_d - \HF(S/J^2, d)) -1$, i.e.
\begin{equation}\label{eq:2}
\dim Z_0 = \dim U + \HF(S/J^2, d) - r = \sum_{i=1}^n \HF(S/J, a_i) + \HF(S/J^2, d) - r.
\end{equation}

Let $\gamma\colon \mathrm{Hilb}_S^{h}\to \mathrm{Hilb}_S^{h_{\geq d}}$ be the natural map and let $Y = \gamma(\mathrm{Slip}_{r,\PP^n})$. It is an irreducible closed subset of dimension $nr$. 
 We claim that $\overline{Z}\cap Y = \overline{Z_0}$ (set-theoretically). By Proposition \ref{lem:exts}, we have $Z\cap Y \subseteq Z_0$ and thus $Z\cap Y = Z_0\cap Y$. Consequently, $\overline{Z}\cap Y = \overline{Z_0}\cap Y \subseteq \overline{Z_0}$. We show that the opposite inclusion also holds. Pick any $K'\in Z_0\cap Y$ and let $J'=\overline{K'}$. By \cite[Lemma~2.6]{Man}, in order to show that $\overline{Z_0} = \overline{Z}\cap Y$, it is sufficient to show that 
\begin{equation}\label{eq:3}
    \dim_\CC T_{K'}\mathrm{Hilb}_S^{h_{\geq d}} = \dim \overline{Z}+1 
\end{equation}
and 
\begin{equation}\label{eq:4}
    \dim \overline{Z_0} = nr-1.
\end{equation}

\noindent By applying the left-exact covariant functor $\mathrm{Hom}_S(K', \cdot)_0$ to the exact sequence $0\rightarrow J'/K' \rightarrow S/K'\rightarrow S/J'\rightarrow 0$, we find
\begin{align*}
&\dim_\CC \mathrm{Hom}_S(K', S/K')_0 \leq \dim_\CC\mathrm{Hom}_S(K', J'/K')_0 + \dim_\CC \mathrm{Hom}_S(K', S/J')_0 \\
&\leq \dim_\CC \mathrm{Hom}_S(K', J'/K')_0 + \dim_\CC \mathrm{Hom}_S(J', S/J')_0 + \dim_\CC \mathrm{Ext}^1_S(J'/K', S/J')_0
\end{align*}
where the second inequality follows from applying the left-exact contravariant functor $\mathrm{Hom}_S(\cdot, S/J')$ to the short exact sequence $0\to K'\to J' \to J'/K'\to 0$.
We have $\dim_\CC \mathrm{Hom}_S(J', S/J')_0 = \dim U$ by Lemma~\ref{lem:ci_is_open}. Therefore, 
\[
\dim_\CC \mathrm{Hom}_S(K', S/K')_0 \leq (\dim_\CC S_d - r)(r-\HF(S/J, d)) + \dim U + \dim_\CC \mathrm{Ext}^1_S(J'/K', S/J')_0
\]
which is equal to $\dim \overline{Z} + 1$ by \eqref{eq:1} and Lemma~\ref{lem:exts}. 

Finally, we prove \eqref{eq:4}. By \eqref{eq:2} it is equivalent to 
$
\sum_{i=1}^n \HF(S/J, a_i) + \HF(S/J^2, d) - r = nr-1.
$
By \cite[Corollary~2.3]{gvt} we have 
\begin{align*}
\sum_{i=1}^n \HF(S/J, a_i) + \HF(S/J^2, d) -r &= \sum_{i=1}^n \HF(S/J, a_i) + \HF(S/J, d) + \sum_{i=1}^n \HF(S/J, d-a_i) -r\\
&= \sum_{i=1}^n \big(\HF(S/J, a_i) + \HF(S/J, d-a_i)\big) -1 = nr-1
\end{align*}
where the last equality follows from Lemma~\ref{lem:ci_symmetry_of_HF}.
\end{proof}

\subsection{Applications to $\VSPb$'s of monomials}
Let $n\geq 2$ and $T=\CC[x_0, \ldots, x_n]$ be the graded dual ring of $S$.
Let $1\leq a_0\leq a_1\leq\cdots \leq a_n$ and consider the monomial $F=x_0^{a_0}\cdots x_n^{a_n}\in T$.
Let $J = (y_0^{a_0+1}, \ldots, y_{n-1}^{a_{n-1}+1})$, $d=\sum_{i=0}^{n-1} a_i - 1$ and $r=\prod_{i=0}^{n-1} (a_i+1)$.

\begin{proposition}\label{prop:vsp_ci_easy}
    If $\binom{a_0+\cdots +a_{n-1}+n-2}{n} \leq \prod_{i=0}^{n-1} (a_i+1)$ and $a_n + 1 > \sum_{i=0}^{n-1} a_i$, then $\brk(F) = r$ and 
    $\VSPb(F, r)$ is
    \begin{enumerate}
        \item[(i)] a point if $\binom{a_0+\cdots +a_{n-1}+n-1}{n} \leq r$;
        \item[(ii)] isomorphic to $\PP^N$, where $N=\HF(S/J^2, d) - r$, otherwise.
    \end{enumerate}
\end{proposition}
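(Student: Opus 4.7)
The approach has two parts: compute the border rank directly, then exploit the rigid structure of ideals apolar to $F$ to describe $\VSPb(F,r)$. For the lower bound $\brk(F)\geq r$, I apply the standard catalecticant flattening $S_{a_n}\to T_{d+1}$, $\psi\mapsto \psi\circ F$. Its image is spanned by the nonzero images of monomials, namely $x_0^{a_0-b_0}\cdots x_n^{a_n-b_n}$ with $b_i\leq a_i$ and $\sum b_i=a_n$; under the hypothesis $a_n\geq d+1=\sum_{i<n}a_i$, the constraint $0\leq b_n\leq a_n$ is automatically satisfied once $(b_0,\ldots,b_{n-1})$ satisfies $0\leq b_i\leq a_i$, so the image has dimension $\prod_{i<n}(a_i+1)=r$. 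For the upper bound $\brk(F)\leq r$, note that $J$ is $B$-saturated and defines the zero-dimensional scheme $Z_J$ of length $r$ supported at $[0:\cdots:0:1]$; locally at this point $Z_J$ is the complete intersection $\mathrm{Spec}\,\CC[z_0,\ldots,z_{n-1}]/(z_0^{a_0+1},\ldots,z_{n-1}^{a_{n-1}+1})$ and hence smoothable. By the surjectivity of $\phi_{r,\PP^n}$ (Proposition~\ref{surjective map from Slip to Hilb^r}), there exists $I\in \mathrm{Slip}_{r,\PP^n}$ with $\overline{I}=J\subseteq \Ann(F)$, and therefore $I\in \VSPb(F,r)$.

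The core of the proof is the rigidity statement that \emph{every} $I\in \VSPb(F,r)$ satisfies $\overline{I}=J$ and $I_t=J_t$ for all $t\geq d+1$. Since $a_n+1>d+1$, one has $\Ann(F)_t=J_t$ for all $t\leq a_n$. In degree $d+1$, $\HF(S/J,d+1)=r\leq \dim S_{d+1}$ yields $h_{r,\PP^n}(d+1)=r$; thus $I_{d+1}\subseteq \Ann(F)_{d+1}=J_{d+1}$ with $\dim I_{d+1}=\dim S_{d+1}-r=\dim J_{d+1}$, forcing $I_{d+1}=J_{d+1}$. Since $J$ is generated in degrees $\leq a_{n-1}+1\leq d+1$, for every $t\geq d+2$ we have $J_t=S_1\cdot J_{t-1}$; combined with $\dim I_t=\dim J_t$ and $I_t\supseteq S_1\cdot I_{t-1}$, this yields $I_t=J_t$ inductively. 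Two $B$-saturated ideals agreeing in all sufficiently high degrees define the same subscheme by the Cox correspondence (Theorem~\ref{thm:Cox_bijection}), so $\overline{I}=J$.

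It remains to determine the low-degree parts of $I$. The preamble assumption $\binom{d-1+n}{n}\leq r$ gives $h_{r,\PP^n}(t)=\dim S_t$ for all $t\leq d-1$, forcing $I_t=0$ for such $t$. Thus $I$ is uniquely determined by $I_d\subseteq J_d$. In case (i), $\binom{d+n}{n}\leq r$ yields $h_{r,\PP^n}(d)=\dim S_d$, so $I_d=0$ and $I=(J_{d+1})$ is the unique element of $\VSPb(F,r)$. In case (ii), $\dim S_d>r$ gives $\dim I_d=\dim S_d-r$; since $\HF(S/J,d)=r-1$ by Lemma~\ref{lem:ci_symmetry_of_HF} (with $s=0$), we have $\dim J_d=\dim I_d+1$, so $I_d$ is a codimension-one subspace of $J_d$. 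By Theorem~\ref{prop:complete_intersection}, the condition $I\in \mathrm{Slip}_{r,\PP^n}$ is equivalent to $(J^2)_d\subseteq I_d$. Hence $\VSPb(F,r)$ is identified with the projective space of codimension-one subspaces of $J_d$ containing $(J^2)_d$, of dimension $\dim J_d-\dim(J^2)_d-1=\HF(S/J^2,d)-r=N$.

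The principal obstacle is the rigidity argument of the second paragraph: comparing $\Ann(F)$ and $J$ in each graded piece and propagating the equality $I_{d+1}=J_{d+1}$ upwards using the generation degrees of $J$ and the ideal structure of $I$. Once rigidity is established, the existence of at least one ideal in $\VSPb(F,r)$ follows from the surjectivity of $\phi_{r,\PP^n}$, and the full classification reduces to the membership characterization given by Theorem~\ref{prop:complete_intersection}.
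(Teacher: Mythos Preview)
Your proof is correct and follows essentially the same route as the paper: establish $\brk(F)=r$, show every $I\in\VSPb(F,r)$ satisfies $I_{d+1}=\Ann(F)_{d+1}=J_{d+1}$ and hence $\overline{I}=J$, force $I_t=0$ for $t\leq d-1$ from the hypothesis $\binom{d-1+n}{n}\leq r$, and then in case~(ii) invoke Theorem~\ref{prop:complete_intersection} to characterize the admissible $I_d$. The only difference is cosmetic: the paper cites \cite{LT10} for $\brk(F)=r$ whereas you prove it directly via the catalecticant and smoothability of $Z_J$, and the paper spells out the step $I=I_{\geq d}=I'_{\geq d}=I'$ that you compress into the sentence ``the condition $I\in\mathrm{Slip}_{r,\PP^n}$ is equivalent to $(J^2)_d\subseteq I_d$''.
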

\begin{proof}
We have $\brk(F) = r$ by \cite[Theorem 11.3]{LT10}. Furthermore, if $I\in \VSPb(F,r)$, then $I_{d+1}=\Ann(F)_{d+1} = J_{d+1}$. Since $J$ is generated in degrees at most $a_{n-1}+1 \leq d+1$ we conclude that $\overline{I} = J$. 
If $\binom{d+n}{n}=\binom{a_0+\cdots +a_{n-1}+n-1}{n} \leq r$ we obtain $I_d=0$ and hence $I = J_{\geq d+1}$ is the unique point of $\VSPb(F, r)$. 

Assume that $\binom{a_0+\cdots +a_{n-1}+n-1}{n} > r$. It follows from Theorem~\ref{prop:complete_intersection} that if $I\in \VSPb(F,r)$, then $(J^2)_d \subseteq I_d$. Conversely, we claim that for every codimension one subspace of $J_d$ containing $(J^2)_{d}$ there is a unique $I\in \VSPb(F,r)$ such that $I_d = W$. Let $W$ be such a subspace and $I = W+J_{\geq d+1}$. It follows from $\binom{(d-1)+n}{n}=\binom{a_0+\cdots +a_{n-1}+n-2}{n} \leq r$ that $I\in \mathrm{Hilb}_S^{h_{r,\PP^n}}$. From Theorem~\ref{prop:complete_intersection} we conclude that there exists $I'\in \mathrm{Slip}_{r,\PP^n}$ with $I'_{\geq d} = I_{\geq d}$. Hence
\[
I = I_{\geq d} = I'_{\geq d} = I'
\]
is in $\mathrm{Slip}_{r, \PP^n}$ and therefore, it is a point of $\VSPb(F, r)$. By construction, it is the unique point of $\VSPb(F,r)$ whose degree $d$ part is equal to $W$.
\end{proof}

\begin{remark}\label{rmk: complete intersections inside ann}
Assume that $F$ is a form in $T$ of border rank $r= \prod_{i=0}^{n-1}(a_i+1)$ and there exists a complete intersection ideal $J \subseteq \Ann(F)$ minimally generated by forms of degrees $a_0+1,\ldots, a_{n-1}+1$. 
Then the conclusions of Proposition~\ref{prop:vsp_ci_easy} hold for $\VSPb(F,r)$ with exactly the same proof.
\end{remark}

Assumptions of Proposition~\ref{prop:vsp_ci_easy} are very restrictive. Sometimes it is possible to relax the assumptions a little and still be able to conclude that the border variety of sums of powers has a rational irreducible component of dimension $\HF(S/J^2, d)-r$. We illustrate this in a special case when $n=2$. Consider the monomials of the form $x_0x_1^ax_2^b$ with $b\geq a+1$. Note that in this case $d$ as defined above is equal to $a$. The first inequality from the statement of Proposition~\ref{prop:vsp_ci_easy} holds if and only if $a\in \{1,2,3,4\}$. Furthermore, if $a\in \{1,2\}$, then $\VSPb(F, 2(a+1)) = \lbrace \mathrm{pt} \rbrace$ and if $a\in \{3,4\}$, then $\VSPb(F, 2(a+1))\cong \PP^{2(a-2)}$. 

\begin{proposition}\label{prop:application to vspbar of mons}
    Let $F=x_0x_1^ax_2^b$ with $5\leq a \leq 8$ and $b\geq a+1$. There is an irreducible component of $\VSPb(F,2(a+1))$ that is birational to $\mathbb{P}^{2(a-2)}$.
\end{proposition}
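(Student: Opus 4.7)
The plan is to construct a birational parametrization $\mathbb P^{2(a-2)} \dashrightarrow \VSPb(F, 2(a+1))$ whose image closure is an irreducible component. I use the complete intersection $J = (y_0^2, y_1^{a+1}) \subseteq \Ann(F)$ of degrees $(2, a+1)$, so in the notation of Theorem~\ref{prop:complete_intersection} we have $d = a$ and $r = 2(a+1)$. A short calculation gives $\HF(S/J^2, a) - r = 2(a-2)$, matching the target dimension. Since $J_a = y_0^2 S_{a-2}$ and $(J^2)_a = y_0^4 S_{a-4}$, the codimension-one subspaces $W \subseteq J_a$ containing $(J^2)_a$ correspond to hyperplanes $W' \subseteq S_{a-2}$ containing $y_0^2 S_{a-4}$ via $W = y_0^2 W'$; writing $W' = \Ann(G)_{a-2}$ for $[G] \in \PP(T_{a-2})$, this translates to $y_0^2 \circ G = 0$. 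The parameter space is therefore $\PP(\ker(y_0^2 \circ \colon T_{a-2} \to T_{a-4})) \cong \PP^{2(a-2)}$, since $y_0^2\circ$ is surjective (being dual to the injection $y_0^2\cdot\colon S_{a-4} \to S_{a-2}$), giving $\binom{a}{2} - \binom{a-2}{2} - 1 = 2(a-2)$.

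Next, for $[G]$ in a dense open $U \subseteq \PP^{2(a-2)}$ on which $\Ann(G)$ has the expected symmetric Hilbert function (of shape $(1,3,5,\ldots,5,3,1)$ adapted to socle degree $a-2$, which is the generic profile once $y_0^2 \circ G = 0$ is imposed), Theorem~\ref{prop:complete_intersection} produces an ideal in $\mathrm{Slip}_{r,\PP^2}$ whose degree-$\geq a$ part is $W_G + J_{\geq a+1}$, where $W_G = y_0^2 \Ann(G)_{a-2}$. I would check uniqueness and the containment $I \subseteq \Ann(F)$ simultaneously: for any such $I$, the $S$-module condition $I_k \cdot S_{a-k} \subseteq W_G$ for $k < a$ gives $I_k \subseteq (W_G : S_{a-k})_k$. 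Using the elementary identity $(\Ann(G) : S_j) = \Ann(G)$ for $j \geq 0$ (proved by the same computation as the $j=1$ case appearing earlier in the paper), one obtains $(W_G : S_{a-k})_k = y_0^2 \Ann(G)_{k-2}$, which is contained in $y_0^2 S_{k-2} = \Ann(F)_k$. A dimension count against the Hilbert function $h_{r,\PP^2}$, where $\dim y_0^2 \Ann(G)_{k-2}$ (computed from the Hilbert function of $\Ann(G)$) matches $\max(0,\binom{k+2}{2}-r)$ in the active range of $k$, forces $I_k = y_0^2 \Ann(G)_{k-2}$ when this dimension is positive and $I_k = 0$ otherwise. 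This determines $I_G$ uniquely from $[G]$, and shows $I_G \in \VSPb(F,r)$. The assignment $[G] \mapsto I_G$ is therefore an injective morphism $\pi\colon U \to \VSPb(F,r)$ with image an irreducible constructible subset of dimension $2(a-2)$.

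The closure $\mathcal{X} := \overline{\pi(U)} \subseteq \VSPb(F,r)$ is an irreducible closed subvariety of dimension $2(a-2)$, birational to $\PP^{2(a-2)}$ via $\pi$. The main obstacle is confirming that $\mathcal{X}$ is an irreducible \emph{component} rather than a proper subvariety of one. The cleanest approach is a tangent-space computation: at a generic $I_G$, show that $\dim_{\CC} T_{I_G}\VSPb(F,r) = 2(a-2)$ using the explicit description of $I_G$ and the identification of $T_{I_G}\VSPb(F,r)$ with the kernel of $\mathrm{Hom}_S(I_G, S/I_G)_0 \to \mathrm{Hom}_S(I_G, S/\Ann(F))_0$, i.e., with $\mathrm{Hom}_S(I_G, \Ann(F)/I_G)_0$. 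Any irreducible component of $\VSPb(F,r)$ through $I_G$ then has dimension at most $2(a-2)$ and therefore equals $\mathcal{X}$. Alternatively, one can exploit the proper morphism $\phi_{r,\PP^2}$: every $I \in \mathcal X$ has $\overline{I}=J$, so $\phi_{r,\PP^2}(\mathcal{X}) = \{V(J)\}$, and the uniqueness from the previous paragraph shows $\phi_{r,\PP^2}^{-1}(V(J)) \cap \VSPb(F,r) = \mathcal{X}$; combined with a fibre-dimension count this again yields that $\mathcal{X}$ is maximal.
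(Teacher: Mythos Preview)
Your overall strategy coincides with the paper's: parametrize codimension-one subspaces $W\subseteq J_a$ containing $(J^2)_a$ by $\PP^{2(a-2)}$, invoke Theorem~\ref{prop:complete_intersection} for existence in $\mathrm{Slip}_{r,\PP^2}$, and argue that for generic $W$ the resulting ideal is unique. Your repackaging via $G\in T_{a-2}$ with $y_0^2\circ G=0$ and the identity $(W_G:S_{a-k})_k=y_0^2\Ann(G)_{k-2}$ is correct and more conceptual than the paper's direct computation.

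There are, however, two genuine gaps. First, the assertion that a generic such $G$ has Hilbert function $\HF(S/\Ann(G),j)=\min(2j+1,\,2(a-2-j)+1)$ is precisely the crux of the proof and is \emph{not} automatic. Your dimension match requires $\HF(S/\Ann(G),k-2)=2(a-k)+1$ for every $k$ in the active range, equivalently $\Ann(G)_j=(y_0^2)_j$ for $0\le j\le a-k_0$; this inequality $a-k_0\le (a-2)/2$ holds exactly for $a\le 8$ and fails for $a\ge 9$, so this step is where the hypothesis $5\le a\le 8$ enters. The paper establishes the analogous genericity by an explicit \texttt{Macaulay2} rank computation; you must supply either that or a specific $G$ for each $a$ achieving the maximal Hilbert function.

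Second, your approach~(b) to maximality is misformulated. Since $b\ge a+1$ forces $I_{a+1}=\Ann(F)_{a+1}=J_{a+1}$ for every $I\in\VSPb(F,r)$, one has $\overline{I}=J$ always; hence $\phi_{r,\PP^2}^{-1}(V(J))\cap\VSPb(F,r)=\VSPb(F,r)$, and your claimed equality with $\mathcal X$ would assert irreducibility of $\VSPb(F,r)$, which the uniqueness over generic $W$ does not give (fibers over special $W$ may be positive-dimensional). The correct argument---implicit in the paper's ``by construction''---uses the morphism $\psi\colon\VSPb(F,r)\to\PP^{2(a-2)}$, $I\mapsto[I_a]$: it is well-defined (by Proposition~\ref{lem:exts}) and has generic fiber a single point (by your uniqueness), so any irreducible component containing $\mathcal X$ maps dominantly with zero-dimensional generic fiber, hence has dimension $2(a-2)$ and equals $\mathcal X$. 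Your tangent-space approach~(a) would also work in principle, but you do not carry it out.
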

\begin{proof}
As before, let $J=(y_0^2, y_1^{a+1})$. We show that for $5\leq a \leq 8$ there is a nonempty open subset of the space of those codimension one subspaces $W$ of $J_a$ containing $J^2_a$ for which there exists a unique ideal $I\in \mathrm{Hilb}_S^{h_{r,\PP^2}}$ with $I_{\geq a} = (W) + J_{\geq a+1}$. Then, the closure of the locus of those $I$ in $\mathrm{Hilb}_S^{h_{r, \PP^2}}$ is an irreducible closed subset birational to $\mathbb{P}^{2(a-2)}$. Furthermore, by Theorem~\ref{prop:complete_intersection} it is contained in $\VSPb(F, r)$ and hence is one of its irreducible components by construction.

We are left with verifying our claim. Using the pairing between $S_a$ and $T_a$, a codimension $1$ subspace $W$ of $J_a$ containing $J^2_a$ can be equivalently described by $W^\perp_a$ such that $ J^\perp _a \subset W^\perp_a \subset (J^2)^\perp_a$ with $\mathrm{codim}_{W^\perp_a}(J^\perp _a) = 1$. We have $J^\perp _a = \langle x_0^ix_1^jx_2^k \mid i\leq 1 \text{ and } i+j+k=a\rangle$ and $(J^2)^\perp_a = \langle x_0^ix_1^jx_2^k \mid i\leq 3 \text{ and } i+j+k = a \rangle$. Therefore, the choices of a codimension one subspace of $J_a$ containing $J^2_a$ are in natural bijection with the choices of  $\omega \in T_a$  all of whose monomials are divisible by $x_0^2$ but none of them is divisible by $x_0^4$. Given $\omega \in T_a$ we denote by $\partial_i \omega$ the subspace of $T_{a-i}$ spanned by all the derivatives of $\omega$ of order $i$. 
We have:
\begin{align*}
\mathrm{HF}(S/(y_0^2, y_1^6)) &= 1 \ \ 3 \ \ 5 \ \ 7 \ \ 9 \ \ 11 \ \ 12 \ \ 12 \ \ \cdots\\
\mathrm{HF}(S/(y_0^2, y_1^7)) &= 1 \ \ 3 \ \ 5 \ \ 7 \ \ 9 \ \ 11 \ \ 13 \ \ 14 \ \ 14 \ \ \cdots\\
\mathrm{HF}(S/(y_0^2, y_1^8)) &= 1 \ \ 3 \ \ 5 \ \ 7 \ \ 9 \ \ 11 \ \ 13 \ \ 15 \ \ 16 \ \ 16 \ \ \cdots\\
\mathrm{HF}(S/(y_0^2, y_1^9)) &= 1 \ \ 3 \ \ 5 \ \ 7 \ \ 9 \ \ 11 \ \ 13 \ \ 15 \ \ 17 \ \ 18 \ \ 18 \ \ \cdots.
\end{align*}
The form $\omega$ uniquely defines an ideal $I$ with generic Hilbert function such that $I_{\geq a} = (W) + J_{\geq a+1}$ if and only if $I_{a-i}:= (\partial_i \omega + J_{a-i}^\perp)^\perp_{a-i}$ has codimension $r = 2(a+1)$ in $S_{a-i}$ for all $i > 0$ such that $h_{r, \PP^2}(a-i) = r > \dim_\CC S_{a-i}$. That is, if and only if $\dim_\CC (\partial_i \omega + J_{a-i}^\perp) = 2(a+1)$ for those $i$.
Therefore, our claim is equivalent to the condition that for a general $\omega$ we have
\[
\dim_\CC (\partial_i \omega + J_{a-i}^\perp) = 2(a+1) \text { for } \begin{cases}
    i=1 \text{ if } a=5\\
    i=1,2 \text{ if } a=6, 7\\
    i=1,2,3 \text{ if } a=8.
\end{cases} 
\]
This is verified by the following \texttt{Macaulay2} script, run with the parameters 
\[
(a,e,r)\in \{(5,1,3), (6,1,3), (6,2,5), (7,1,3), (7,2,5), (8,1,3), (8,2,5), (8,3,7)\}.
\]
\begin{mybox}
{\color{blue}
\begin{verbatim}
(a,e,r) = (5,1,3);
T=QQ[x_0..x_2];
N=flatten entries super basis(a, ideal(x_0^2)/ideal(x_0^4));
n=#N;
A=QQ[b_1..b_n];
T=A[x_0..x_2];
N=apply(N, i-> sub(i, T));
w=0;
for i from 0 to n-1 do w=w+b_(i+1)*N#i;
Mon=flatten entries super basis (a-e, ideal(x_0^2));
Dif = flatten entries super basis(e, T);
for i from 0 to #Dif-1 do (
  C_i=(toList coefficients(diff(Dif#i, w), Monomials=>Mon))#1;
)
M=C_0;
for i from 1 to #Dif-1 do M=M|C_i;
I=sub(minors(r, M), A);
assert (dim I < n)
\end{verbatim}
}
\end{mybox}
\end{proof}

\section{Border identifiability and multigraded regularity}\label{sec:Multigraded reg}

We establish a criterion for border identifiability employing multigraded regularity, defined in \S\ref{sec:Hilbert schemes}. 
With this notion of regularity, Maclagan and Smith proved the following. 

\begin{proposition}[{\cite[Proposition 6.7]{MS}}]\label{prop: MS Prop. 6.7}
Let $Z\subset X$ be a zero-dimensional subscheme of length $r$ and let $I_Z$ be its $B$-saturated ideal in the Cox ring $S$. Then $\bb{m}\in \mathrm{reg}(S/I_Z)$ if and only if the space of forms vanishing on $Z$ has codimension $r$ in the space of forms of multidegree $\bb{m}$ in $S$. 
\end{proposition}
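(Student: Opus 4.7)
My plan is to reduce $\bb{m}$-regularity of $S/I_Z$ to a single vanishing statement, translate that into a Hilbert function equality via a standard sheafification sequence, and then bootstrap from $\bb{m}$ to $\bb{m}+\bb{u}$ using global generation of nef divisors on $X$. First, since $I_Z$ is $B$-saturated, $H^0_B(S/I_Z)=0$, so condition (ii) in Definition~\ref{def:CM-regularity of MS} holds vacuously. The four-term exact sequence
\[
0 \to H^0_B(S/I_Z)_{\bb{p}} \to (S/I_Z)_{\bb{p}} \to H^0(X, \mathcal{O}_Z(\bb{p})) \to H^1_B(S/I_Z)_{\bb{p}} \to 0
\]
together with the isomorphisms $H^i(X, \mathcal{O}_Z(\bb{p})) \cong H^{i+1}_B(S/I_Z)_{\bb{p}}$ for $i\geq 1$ and the vanishing $H^i(X, \mathcal{O}_Z(\bb{p})) = 0$ for $i\geq 1$ (since $Z$ is zero-dimensional) forces $H^j_B(S/I_Z)=0$ for all $j\geq 2$. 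Hence condition (i) in Definition~\ref{def:CM-regularity of MS} reduces to the case $i=1$, which in turn forces all $\lambda_j=0$ in the formula $\bb{p}=\bb{m}-\sum \lambda_j \bb{c}_j+\bb{u}$. Thus $\bb{m}\in \mathrm{reg}(S/I_Z)$ is equivalent to the single requirement
\[
H^1_B(S/I_Z)_{\bb{m}+\bb{u}} = 0 \quad \text{for every } \bb{u}\in \mathcal{K}.
\]

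Next I would observe that, since $Z$ is zero-dimensional, $\mathcal{O}_Z(\bb{p})$ becomes trivial on the (finite) support of $Z$, so $\dim_{\CC} H^0(X, \mathcal{O}_Z(\bb{p})) = r$ for every $\bb{p}\in \ZZ^s$. Combined with the injectivity of $(S/I_Z)_{\bb{p}}\hookrightarrow H^0(X,\mathcal{O}_Z(\bb{p}))$ coming from $B$-saturatedness, the vanishing of $H^1_B(S/I_Z)_{\bb{p}}$ is equivalent to $\HF(S/I_Z, \bb{p}) = r$, i.e., to $(I_Z)_{\bb{p}}$ having codimension $r$ in $S_{\bb{p}}$. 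So regularity at $\bb{m}$ is equivalent to $\HF(S/I_Z, \bb{m}+\bb{u}) = r$ for every $\bb{u}\in \mathcal{K}$. The ``only if'' direction of the stated equivalence now follows by taking $\bb{u}=\mathbf{0}\in \mathcal{K}$.

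For the ``if'' direction, I need to promote the single equality at $\bb{m}$ to the whole translate $\bb{m}+\mathcal{K}$. Assume the restriction map $S_{\bb{m}} \to H^0(X, \mathcal{O}_Z(\bb{m}))$ is surjective and fix $\bb{u}\in \mathcal{K}$. Since $\mathcal{K}$ is the nef cone and $X$ is smooth projective toric, $\mathcal{O}_X(\bb{u})$ is globally generated (as noted after Definition~\ref{nefcone}), so the evaluation map $S_{\bb{u}}\otimes \mathcal{O}_X \twoheadrightarrow \mathcal{O}_X(\bb{u})$ is surjective; restricting to $Z$ and tensoring with $\mathcal{O}_Z(\bb{m})$ yields a surjection $S_{\bb{u}}\otimes \mathcal{O}_Z(\bb{m}) \twoheadrightarrow \mathcal{O}_Z(\bb{m}+\bb{u})$. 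Taking global sections preserves this surjectivity because $Z$ is zero-dimensional (so all higher cohomology vanishes), and combining with the hypothesis that $S_{\bb{m}}$ surjects onto $H^0(X,\mathcal{O}_Z(\bb{m}))$ shows that $S_{\bb{m}+\bb{u}} \twoheadrightarrow H^0(X, \mathcal{O}_Z(\bb{m}+\bb{u}))$. Hence $\HF(S/I_Z, \bb{m}+\bb{u})=r$ for every $\bb{u}\in \mathcal{K}$, completing the equivalence. The main subtleties, which I would be careful about, are the correct derivation of the sheafification four-term sequence in the multigraded Cox setting and the bootstrapping step, which hinges crucially on the global generation characterization of $\mathcal{K}$ rather than mere nonnegativity.
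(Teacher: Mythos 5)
Your argument is correct. Note, however, that the paper does not prove this statement at all: it is quoted verbatim from Maclagan--Smith \cite[Proposition 6.7]{MS}, so there is no internal proof to compare against; what you have done is reconstruct, essentially along the same lines as the original source, a complete argument. Your reduction is sound: $B$-saturation kills $H^0_B(S/I_Z)$, so condition (ii) of Definition~\ref{def:CM-regularity of MS} is vacuous; the comparison between local cohomology with respect to $B$ and sheaf cohomology on $X$ (the four-term sequence plus $H^{i+1}_B(S/I_Z)_{\bb p}\cong H^i(X,\mathcal{O}_Z(\bb p))$ for $i\geq 1$) kills all $H^i_B$ with $i\geq 2$ because $Z$ is zero-dimensional, and identifies the sole remaining condition, $H^1_B(S/I_Z)_{\bb m+\bb u}=0$ for $\bb u\in\mathcal K$, with surjectivity of $S_{\bb m+\bb u}\to H^0(X,\mathcal{O}_Z(\bb m+\bb u))$, whose target has dimension $r$ since line bundles on a finite scheme are trivial. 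The ``only if'' direction is then $\bb u=\bb 0$, and your bootstrapping of the ``if'' direction via global generation of nef classes (every $\bb u\in\mathcal K$ gives a surjection $S_{\bb u}\otimes\mathcal{O}_Z(\bb m)\twoheadrightarrow\mathcal{O}_Z(\bb m+\bb u)$, and $H^0$ is exact on sheaves supported on $Z$) is exactly the right mechanism and is consistent with how the paper uses $\mathcal K$ elsewhere. The one ingredient you should cite rather than rederive is the multigraded local-cohomology/sheaf-cohomology exact sequence for the Cox ring, which you correctly flag as the technical input; with that reference in place the proof is complete.
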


We are now ready to state our criterion to decide border identifiability. 

\begin{theorem}\label{thm:idenfiability via toric regularity} Let $X\subset \PP(T_{\bb v})$ and $r=\brk_X(F)$ be the border rank of $F\in T_{\bb{v}}$. Suppose that there exists $\bb{u} \in \mathbf{\mathcal  K}$ such that
\[
\HF(S/\Ann(F), \bb{u}) = \HF(S/\Ann(F), \bb{u}+\bb{c}_1+\cdots + \bb{c}_l) = r.
\]
If there exists a $B$-saturated ideal $I\in \VSPb(F, r)$, then $\VSPb(F, r) = \{I\}$.
\end{theorem}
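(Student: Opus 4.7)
The plan is to use the two prescribed multidegrees together with the multigraded regularity of Maclagan-Smith to show that the value of any $J \in \VSPb(F, r)$ at $\bb u$ and $\bb u + \bb c_1 + \cdots + \bb c_l$ already forces $J = I$.

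\emph{Pinning down $I$ and any other $J$ in the two multidegrees.} The hypothesis $\HF(S/\Ann(F), \bb u) = r$ forces $\dim_\CC S_\bb u \geq r$, so $h_{r,X}(\bb u) = r$, and similarly $h_{r,X}(\bb u + \bb c_1 + \cdots + \bb c_l) = r$. Any $J \in \VSPb(F, r)$ has generic Hilbert function and satisfies $J \subseteq \Ann(F)$, so
\[
\dim_\CC J_\bb u \;=\; \dim_\CC S_\bb u - r \;=\; \dim_\CC \Ann(F)_\bb u,
\]
and the inclusion $J_\bb u \subseteq \Ann(F)_\bb u$ is an equality. The same reasoning applied to $I$ and to the multidegree $\bb u + \bb c_1 + \cdots + \bb c_l$ gives
\[
I_\bb u = J_\bb u = \Ann(F)_\bb u, \qquad I_{\bb u + \bb c_1 + \cdots + \bb c_l} = J_{\bb u + \bb c_1 + \cdots + \bb c_l} = \Ann(F)_{\bb u + \bb c_1 + \cdots + \bb c_l}.
\]

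\emph{Regularity of $I$ and generation.} Since $I$ is $B$-saturated and $\HF(S/I, \bb u) = r$, Proposition~\ref{prop: MS Prop. 6.7} yields $\bb u \in \mathrm{reg}(S/I)$. Chasing the long exact sequence of local cohomology attached to $0 \to I \to S \to S/I \to 0$ and using the vanishing of $H^i_B(S)$ in the relevant multidegrees, one upgrades $\bb u$-regularity of the quotient to $(\bb u + \bb c_1 + \cdots + \bb c_l)$-regularity of the ideal $I$; the shift by $\bb c_1 + \cdots + \bb c_l$ mirrors the familiar unit shift between $\mathrm{reg}(S/I)$ and $\mathrm{reg}(I)$ in the $\ZZ$-graded setting. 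The consequence of Maclagan-Smith regularity one invokes is that the multiplication maps
\[
S_{\bb w - (\bb u + \bb c_1 + \cdots + \bb c_l)} \otimes_\CC I_{\bb u + \bb c_1 + \cdots + \bb c_l} \;\longrightarrow\; I_\bb w
\]
are surjective for every $\bb w \in (\bb u + \bb c_1 + \cdots + \bb c_l) + \mathcal K$.

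\emph{Propagation and conclusion via saturation.} This surjectivity together with $I_{\bb u + \bb c_1 + \cdots + \bb c_l} = J_{\bb u + \bb c_1 + \cdots + \bb c_l}$ gives $I_\bb w \subseteq J_\bb w$ for all $\bb w \in (\bb u + \bb c_1 + \cdots + \bb c_l) + \mathcal K$. Matching Hilbert functions on this cone (both equal $r$ by regularity propagation for $I$ and by the generic Hilbert function for $J$) upgrades the inclusion to equality. Lemma~\ref{lem:equality_of_sat} then yields
\[
\overline{J} \;=\; \overline{J|_{(\bb u + \bb c_1 + \cdots + \bb c_l) + \mathcal K}} \;=\; \overline{I|_{(\bb u + \bb c_1 + \cdots + \bb c_l) + \mathcal K}} \;=\; \overline{I} \;=\; I,
\]
so $J \subseteq \overline{J} = I$. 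Since $J$ and $I$ share the Hilbert function $h_{r,X}$, the inclusion is an equality degree by degree, giving $J = I$.

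The delicate point will be converting $\bb u$-regularity of $S/I$ into the generation statement for $I$ in the cone $(\bb u + \bb c_1 + \cdots + \bb c_l) + \mathcal K$. The precise shift by $\bb c_1 + \cdots + \bb c_l$, calibrated by the $l$ minimal nef generators, is exactly why the hypothesis has to appear at both multidegrees $\bb u$ and $\bb u + \bb c_1 + \cdots + \bb c_l$ rather than at $\bb u$ alone.
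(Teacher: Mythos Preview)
Your proposal is correct and follows the same architecture as the paper: pin down $I$ and $J$ in degrees $\bb u$ and $\bb u+\bb e$ (with $\bb e=\bb c_1+\cdots+\bb c_l$), establish $(\bb u+\bb e)$-regularity of $I$, invoke the Maclagan--Smith generation statement \cite[Theorem~5.4]{MS} to get $(I_{\bb u+\bb e})=(I_{|\bb u+\bb e+\mathcal K})$, and finish via Lemma~\ref{lem:equality_of_sat} and equality of Hilbert functions.

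The one substantive difference is in how the regularity upgrade is obtained. The paper first uses nonzerodivisors in each degree $\bb c_i$ (available because $I$ is $B$-saturated) together with the two-endpoint hypothesis to deduce $\HF(S/I,\bb u+\varepsilon_1\bb c_1+\cdots+\varepsilon_l\bb c_l)=r$ for \emph{every} $\varepsilon\in\{0,1\}^l$, hence $S/I$ is regular at each vertex of the hypercube; the passage to $\bb u+\bb e\in\mathrm{reg}(I)$ then uses regularity at the vertices $\bb u+\bb e-\bb c_j$. You instead go directly from $\bb u\in\mathrm{reg}(S/I)$ to $\bb u+\bb e\in\mathrm{reg}(I)$ via the long exact sequence and $\bb 0$-regularity of $S$. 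This shortcut is valid: for $i\ge 2$ and $\sum\lambda_j=i-1$, one can always find $\mu_j\ge\max(0,\lambda_j-1)$ with $\sum\mu_j=i-2$, so that $\sum_j(1-\lambda_j+\mu_j)\bb c_j\in\mathcal K$ and the required vanishing of $H^{i-1}_B(S/I)$ follows from $\bb u$-regularity alone. Your sketch ``chasing the long exact sequence'' is therefore justifiable, but the paper's route has the advantage of being fully written out, and its monotonicity argument also makes transparent why the hypothesis is imposed at both $\bb u$ and $\bb u+\bb e$.
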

\begin{proof}
Let $\bb{e} = \bb{c}_1+\cdots +\bb{c}_l$. We claim that for every choice of $\varepsilon_i \in \{0,1\}$ for $i\in \{1,2,\ldots, l\}$ we have $\HF(S/I, \bb{u} + \varepsilon_1\bb{c}_1 + \cdots + \varepsilon_l \bb{c}_l) = r$. Indeed, since $I$ is $B$-saturated, for each $i$ there is a nonzerodivisor on $S/I$ of degree $\bb{c}_i$ (see the proof of Lemma~\ref{lem:equality_of_sat}). Thus $\HF(S/I, \bb{v}+\bb{c}_i) \geq \HF(S/I, \bb{v})$ for every $\bb{v}\in \ZZ^s$. Our claim follows from the assumed equality of Hilbert functions. By Proposition \ref{prop: MS Prop. 6.7}, we have $\bb{u}+\varepsilon_1\bb{c}_1+\cdots + \varepsilon_l\bb{c}_l \in \mathrm{reg}(S/I).$
We claim that $\bb{u}+\bb{e} \in \mathrm{reg}(I)$. Since $I \subseteq S$ and $S$ is an integral domain, we have $H^0_B(I)_{\bb{p}} = 0$ for every $\bb{p}\in \mathbb{Z}^s$. According to Definition \ref{def:CM-regularity of MS}, we need to show that $H^i_B(I)_{\bb{p}} = 0$ for every $i \geq 1$ and all $\bb{p}\in \bigcup (\bb{u}+\bb{e}-\lambda_1\bb{c}_1 - \cdots - \lambda_l\bb{c}_l + \mathbf{\mathcal  K})$ where the union is over all $\lambda_1, \ldots, \lambda_l\in \NN$ such that $\lambda_1+\cdots +\lambda_l = i-1$. \cite[Corollary~3.6]{MS} states that if $\bb{u}\in \mathbf{\mathcal  K}$, then the local cohomology group $H^i_B(S)_{\bb{u}}$ vanishes. Now, using the long exact sequence of local cohomology groups associated to the short exact sequence $0\to I \to S \to S/I \to 0$ we conclude that for every $\bb{u}\in \mathbf{\mathcal  K}$ and every $i\in \NN$ we have
\begin{equation}\label{eq:reg2}
    H_B^i(S/I)_{\bb{u}} \cong H^{i+1}_B(I)_{\bb{u}}.
\end{equation}

Assume first that $i=1$ and $\bb{w}\in \mathbf{\mathcal  K}$. Since $\lambda_j\in \NN$, the only zero linear combination is when they are all zero; so we need to show that $H_B^1(I)_{\bb{u}+\bb{e}+\bb{w}} = 0$. By \eqref{eq:reg2} we have
$H_B^1(I)_{\bb{u}+\bb{e}+\bb{w}} \cong H^0_B(S/I)_{\bb{u}+\bb{e}+\bb{w}}$ which is zero since $\bb{u}\in \mathrm{reg}(S/I)$ and $\bb{e}+\bb{w}-\bb{c}_1 \in \mathbf{\mathcal  K}$. 
Let $\bb{w}\in \mathbf{\mathcal  K}$, $i\geq 2$ and $\lambda_1,\ldots, \lambda_l$ be nonnegative integers whose sum is $i-1$. We have to prove that $H_B^{i}(I)_{\bb{u}+\bb{e}-\lambda_1\bb{c}_1 - \cdots -\lambda_l \bb{c}_l +\bb{w}} = 0$. Equivalently, by \eqref{eq:reg2}, this amounts to check $H_B^{i-1}(S/I)_{\bb{u}+\bb{e} -\lambda_1\bb{c}_1 - \cdots -\lambda_l \bb{c}_l + \bb{w}} = 0$. Up to permuting the vectors $\bb{c}_j$, we may assume that $\lambda_1 \geq 1$. We have 
\[
\bb{u} + \bb{e} - \lambda_1\bb{c}_1 - \cdots - \lambda_l\bb{c}_l + \bb{w}= (\bb{u}+\bb{e}-\bb{c}_1) - (\lambda_1-1)\bb{c}_1-\cdots - \lambda_l\bb{c}_l + \bb{w}
\]
which shows that $H_B^{i-1}(S/I)_{\bb{u}+\bb{e} -\lambda_1\bb{c}_1 - \cdots -\lambda_l \bb{c}_l + \bb{w}} = 0$ since $\bb{u}+\bb{e}-\bb{c}_1 \in \mathrm{reg}(S/I)$.

Let $J \in \VSPb(F, r)$. Since $\Ann(F)_{\bb{u}+\bb{e}} = I_{\bb{u}+\bb{e}}$, we have $J_{\bb{u}+\bb{e}} = I_{\bb{u}+\bb{e}}$. It follows that $J$ contains $(I_{\bb{u}+\bb{e}})$. The latter ideal is equal to $(I_{|\bb{u}+\bb{e}+ \mathcal  K})$ by \cite[Theorem~5.4]{MS} since $\bb{u}+\bb{e}\in \mathrm{reg}(I)$.
Due to the equality of Hilbert functions we have $J_{|\bb{u}+\bb{e}+\mathbf{\mathcal  K}} = I_{|\bb{u}+\bb{e}+\mathbf{\mathcal  K}}$. Hence, by Lemma~\ref{lem:equality_of_sat} we obtain $\overline{J} = \overline{I} = I$. In particular, $J\subseteq I$ which implies that $I=J$.
\end{proof}

This result immediately yields the following corollary, which we also prove in an alternative
elementary way. 

\begin{corollary}\label{cor:border identifiability for P^n}
Let $X = \PP^n$ and $S = \CC[y_0,\ldots, y_n]$ with $\deg(y_i) = 1$. Let $F\in T_d$ with $\brk(F) = r$ and suppose there exists $a\in \ZZ$ such that $\HF(S/\Ann(F),a) = \HF(S/\Ann(F),a+1) = r$. If $\VSPb(F, r)$ contains a saturated ideal $I$, then $\VSPb(F, r)  = \{I\}$.
\begin{proof}[Alternate proof] 
For every $J\in \VSPb(F, r)$ we have  $I_a = J_a$ and $I_{a+1} = J_{a+1}$. In particular, $(I_{a+1}) \subseteq J$. We claim that $(I_{a+1}) = I_{\geq a+1}$.
Indeed, if $c$ is the smallest integer such that $\HF(S/I,c) = r$, then by \cite[Theorem~1.69]{ik} the ideal $I$ has no minimal generators in degree greater than $c+1$. The claim follows since $c\leq a$.  The ideals $I$ and $J$ have the same Hilbert function so $I=\overline{I} = \overline{J}$. Hence $J=I$ by the equality of their Hilbert functions.\end{proof}
\end{corollary}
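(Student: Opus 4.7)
The plan is to obtain this corollary as a direct specialization of Theorem~\ref{thm:idenfiability via toric regularity}. First I would pin down the relevant data for $X = \PP^n$ with its standard embedding. The Picard group of $\PP^n$ is $\ZZ$, generated by the hyperplane class; accordingly the nef semigroup $\mathcal{K}$ equals $\NN$ and admits a single minimal generator, so $l = 1$ and $\bb{c}_1 = 1$. Taking $\bb{u} = a$, the hypothesis of Theorem~\ref{thm:idenfiability via toric regularity} becomes
\[
\HF(S/\Ann(F), a) = \HF(S/\Ann(F), a+1) = r,
\]
which is precisely the assumption of the corollary. The requirement $\bb{u} \in \mathcal{K}$, i.e.\ $a \geq 0$, is automatic from the Hilbert function condition: if $a<0$ then $S_a = 0$ and $\HF(S/\Ann(F), a) = 0 \neq r$. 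Applying the theorem then immediately yields $\VSPb(F, r) = \{I\}$.

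As a sanity check, one can also sketch an elementary route that bypasses multigraded regularity. For any $J \in \VSPb(F, r)$, both $I$ and $J$ sit inside $\Ann(F)$ and share the generic Hilbert function $h_{r, \PP^n}$, forcing $I_a = J_a$ and $I_{a+1} = J_{a+1}$. The key step is to propagate this equality upward: using \cite[Theorem~1.69]{ik}, if $c$ is the smallest integer with $\HF(S/I, c) = r$, then $I$ has no minimal generators of degree greater than $c+1$, and since $c \leq a$ one concludes $I_{\geq a+1} = (I_{a+1}) \subseteq J$. Since $I$ is $B$-saturated and equals its saturation, comparing this containment with the equality of Hilbert functions of $I$ and $J$ in every degree gives $J = I$.

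The direct route via Theorem~\ref{thm:idenfiability via toric regularity} presents essentially no obstacle beyond bookkeeping, namely verifying that the single-generator nef cone of $\PP^n$ matches the theorem's multi-generator setup with $l=1$. In the elementary route, the nontrivial ingredient is the Iarrobino--Kanev stabilization result controlling the degrees of minimal generators of ideals whose quotient has an eventually constant Hilbert function; this is where the genuine content sits once the toric formalism is stripped away.
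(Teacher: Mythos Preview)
Your proposal is correct and matches the paper's treatment on both fronts: the primary derivation by specializing Theorem~\ref{thm:idenfiability via toric regularity} to $\PP^n$ (where indeed $\mathcal{K}=\NN$, $l=1$, $\bb{c}_1=1$), and the elementary alternate proof via \cite[Theorem~1.69]{ik}, which is essentially identical to the paper's alternate argument, including the step $I=\overline{I}=\overline{J}$ followed by the Hilbert-function comparison.
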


\begin{corollary}\label{cor: p^n and deg 2s+1}
Let $d=2s+1$ and $r = \binom{n+s}{s}$. Then  
a general $[F]\in \sigma_{r}(\nu_d(\PP^n))$ is border identifiable. 
\begin{proof}
By \cite[Lemma 1.17]{ik}, a general $[F]\in \sigma_{r}(\nu_d(\PP^n))$ has apolar ideal
$\Ann(F)$ with Hilbert function
\[
\mathrm{HF}(S/\Ann(F), k) = \min\lbrace\dim_{\CC} S_k ,\dim_{\CC} S_{d-k} \rbrace, \mbox{ for } 0\leq k\leq d.
\]
Thus $\mathrm{HF}(S/\Ann(F),s)= \mathrm{HF}(S/\Ann(F), s+1)=r$. Proposition~\ref{prop:vspbar_of_general} and Corollary \ref{cor:border identifiability for P^n} prove the statement. 
\end{proof}

\end{corollary}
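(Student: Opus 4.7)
The plan is to reduce the statement directly to Corollary \ref{cor:border identifiability for P^n} (the $X=\PP^n$ specialisation of Theorem \ref{thm:idenfiability via toric regularity}) by checking its two hypotheses for a general element of $\sigma_r(\nu_d(\PP^n))$: the Hilbert function condition and the existence of a $B$-saturated ideal in $\VSPb(F,r)$.

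For the Hilbert function condition, I would invoke the classical fact, recorded as \cite[Lemma 1.17]{ik}, that a general $[F]\in \sigma_r(\nu_d(\PP^n))$ has apolar ideal with ``generic'' Hilbert function
\[
\mathrm{HF}(S/\Ann(F),k) \;=\; \min\{\dim_{\CC} S_k,\ \dim_{\CC} S_{d-k}\}\qquad (0\le k\le d).
\]
Because $d=2s+1$, the two middle degrees $k=s$ and $k=s+1$ are swapped by $k\mapsto d-k$, so each minimum equals $\dim_{\CC} S_s=\binom{n+s}{s}=r$. In particular
\[
\mathrm{HF}(S/\Ann(F),s)=\mathrm{HF}(S/\Ann(F),s+1)=r,
\]
which is exactly the hypothesis of Corollary \ref{cor:border identifiability for P^n} with $a=s$ (and of Theorem \ref{thm:idenfiability via toric regularity} in the case $X=\PP^n$, where $\mathcal K=\NN$ and $\bb c_1=1$).

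For the existence of a $B$-saturated ideal in $\VSPb(F,r)$, I would apply Proposition \ref{prop:vspbar_of_general}, which furnishes a Zariski open dense subset $W_{\mathrm{sat}}\subset\sigma_r(\nu_d(\PP^n))$ with the property that $\VSPb(F,r)$ contains a $B$-saturated ideal whenever $[F]\in W_{\mathrm{sat}}$. Intersecting $W_{\mathrm{sat}}$ with the (dense) locus where the apolar Hilbert function is generic, I obtain a nonempty open set of elements $[F]\in\sigma_r(\nu_d(\PP^n))$ satisfying both hypotheses of Corollary \ref{cor:border identifiability for P^n}. That corollary then yields $\VSPb(F,r)=\{I\}$, proving border identifiability of a general $[F]\in\sigma_r(\nu_d(\PP^n))$.

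There is no substantive obstacle: the two inputs (the generic apolar Hilbert function from \cite{ik} and the existence statement of Proposition \ref{prop:vspbar_of_general}) dovetail with the arithmetic $d=2s+1$ to hit the Hilbert-function hypothesis on the nose, and everything else is routine bookkeeping.
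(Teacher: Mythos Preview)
Your proposal is correct and follows essentially the same approach as the paper: both invoke \cite[Lemma 1.17]{ik} for the generic apolar Hilbert function, compute $\mathrm{HF}(S/\Ann(F),s)=\mathrm{HF}(S/\Ann(F),s+1)=r$ from $d=2s+1$, and then combine Proposition~\ref{prop:vspbar_of_general} with Corollary~\ref{cor:border identifiability for P^n}. Your write-up is simply more explicit about intersecting the two dense loci, which is a welcome clarification but not a different argument.
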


\begin{lemma}\label{lem:sufficient condition for all nonsat ideals}
Let $F\in T_{\bb{v}}$ and suppose that $\max \{\HF(S/\Ann(F), \bb{v}) \mid \bb{v}\in \mathbb{Z}\} =r$. Let $\bb{b}_1, \ldots, \bb{b}_k$ be the degrees of the minimal generators of the irrelevant ideal $B$. If there exist $\bb{0}\neq \bb{u}, \bb{u'} \in \ZZ^s$, with $\bb{u'} - \bb{u} -\bb{b}_i \in \mathcal{K}$ for every $i = 1,2,\ldots, k$ such that $\HF(S/\Ann(F),\bb{u}) < \dim_\CC S_{\bb{u}}\leq r$ and $\HF(S/\Ann(F), \bb{u'}) = r$, then $\VSPb(F, r)$ consists only of nonsaturated ideals. 
\begin{proof}
By assumption, there exists $0\neq \omega \in \Ann(F)_{\bb{u}}$. For every $I\in \VSPb(F, r)$ and every $i\in\{1,\ldots, k\}$ we
have $S_{\bb{u'}-\bb{u}-\bb{b}_i}S_{\bb{b}_i}\omega \in \Ann(F)_{\bb{u'}} = I_{\bb{u'}}$. Let $f_i \in S_{\bb{u'}-\bb{u}-\bb{b}_i}$ be a
nonzerodivisor on $S/\overline{I}$ (see the proof of Lemma~\ref{lem:equality_of_sat}). We conclude that $S_{\bb{b}_i}\omega \in \overline{I}$ for every
$i$ and therefore $\omega \in \overline{I}$. In particular $I\neq \overline{I}$. 
\end{proof}
\end{lemma}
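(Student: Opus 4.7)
My plan is to establish that every $I \in \VSPb(F, r)$ is properly contained in its $B$-saturation $\overline{I}$, by exhibiting a single witness $\omega \in \overline{I} \setminus I$. The hypothesis $\HF(S/\Ann(F), \bb{u}) < \dim_\CC S_{\bb{u}}$ furnishes a nonzero element $\omega \in \Ann(F)_{\bb{u}}$. On the other hand, $I$ lies in $\mathrm{Hilb}_S^{h_{r,X}}$, so $\HF(S/I, \bb{u}) = \min\{r, \dim_\CC S_{\bb{u}}\} = \dim_\CC S_{\bb{u}}$ by the hypothesis $\dim_\CC S_{\bb{u}} \leq r$. This forces $I_{\bb{u}} = 0$, and hence $\omega \notin I$. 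So the easier half of $\omega \in \overline{I} \setminus I$ is already secured from the Hilbert function constraints alone.

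The main content is to promote $\omega$ from $\Ann(F)$ into $\overline{I}$. By the definition of $\VSPb(F,r)$ we have $I \subseteq \Ann(F)$, and the other Hilbert function equality $\HF(S/\Ann(F), \bb{u}') = r = \HF(S/I, \bb{u}')$ then pins down $I_{\bb{u}'} = \Ann(F)_{\bb{u}'}$. Multiplying $\omega$ by elements of $S_{\bb{u}'-\bb{u}}$ therefore lands in $I_{\bb{u}'} \subseteq \overline{I}$. Next, for each degree $\bb{b}_i$ of a minimal generator of $B$, the assumption $\bb{u}'-\bb{u}-\bb{b}_i \in \mathcal{K}$ lets me pick a nonzerodivisor $f_i \in S_{\bb{u}'-\bb{u}-\bb{b}_i}$ on $S/\overline{I}$ via the prime avoidance argument already employed in the proof of Lemma \ref{lem:equality_of_sat}. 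Since $f_i \cdot S_{\bb{b}_i} \cdot \omega \subseteq I_{\bb{u}'} \subseteq \overline{I}$ and $f_i$ is a nonzerodivisor modulo $\overline{I}$, I can cancel $f_i$ to conclude $S_{\bb{b}_i} \cdot \omega \subseteq \overline{I}$. Ranging $i$ over $\{1, \ldots, k\}$ gives $B \cdot \omega \subseteq \overline{I}$, and the $B$-saturatedness of $\overline{I}$ then yields $\omega \in \overline{I}$.

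The delicate step is the production of the nonzerodivisors $f_i$, and this is where the nef-cone hypothesis $\bb{u}'-\bb{u}-\bb{b}_i \in \mathcal{K}$ is essential: only in degrees indexed by $\mathcal{K}$ does one obtain the standard prime-avoidance argument giving nonzerodivisors on $S/\overline{I}$. Once those are in hand, the remainder is a short bookkeeping exercise combining Hilbert function equalities with the bilinearity of the $S$-module structure on $\overline{I}$.
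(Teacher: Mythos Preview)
Your proof is correct and follows essentially the same approach as the paper's: pick $\omega \in \Ann(F)_{\bb{u}} \setminus \{0\}$, use the Hilbert function equalities to identify $I_{\bb{u'}} = \Ann(F)_{\bb{u'}}$, then use nonzerodivisors $f_i \in S_{\bb{u'}-\bb{u}-\bb{b}_i}$ on $S/\overline{I}$ (furnished by the prime avoidance argument from Lemma~\ref{lem:equality_of_sat}) to cancel and deduce $B\omega \subseteq \overline{I}$, hence $\omega \in \overline{I}$. You are in fact slightly more explicit than the paper in spelling out why $\omega \notin I$ (namely, $I_{\bb{u}} = 0$ because $h_{r,X}(\bb{u}) = \dim_\CC S_{\bb{u}}$), which the paper leaves implicit.
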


\begin{example}
Let $X=\PP^1\times \PP^1$ with Cox ring $S = \CC[\alpha_0, \alpha_1, \beta_0, \beta_1]$, where $\deg(\alpha_0) = \deg(\alpha_1) = (1,0)$ and $\deg(\beta_0) = \deg(\beta_1) = (0,1)$. 
Let $T=\CC[a_0, a_1, b_0, b_1]$ be the graded dual ring. Consider $F = a_0^4 b_1^4 + a_1^4b_0^4 + a_1^4b_1^4\in T_{(4,4)}$ and let $r=3$. One 
verifies that $\HF(S/\Ann(F), (2,0)) = 2 < 3 = \dim_{\CC} S_{(2,0)}$, but $\HF(S/\Ann(F), (3,1)) = 3$.
      Finally observe that $(\alpha_0\alpha_1, \alpha_0\beta_0, \beta_0\beta_1)$ is a radical and $B$-saturated ideal of $S$ contained in $\Ann(F)$ with Hilbert polynomial $3$. Hence 
      $\VSPb(F,3)$ is nonempty and by Lemma~\ref{lem:sufficient condition for all nonsat ideals} it consists only of nonsaturated ideals.
\end{example}

\section{Minimal border rank and wildness}\label{sec:minimal border rank}

\subsection{Concise minimal border rank tensors}
We start  with recalling a useful result about multigraded Hilbert functions of zero-dimensional schemes inside products of projective spaces. 
\begin{proposition}[{\cite[Proposition~1.9]{SVT06}, \cite[Lemma~4.24]{Man22}}]\label{prop:properties_of_MHF}
Let $X = \PP^{n_1}\times \cdots \times\PP^{n_s}$ and let $Z\subset X$ be a zero-dimensional scheme 
with $B$-saturated ideal $I_Z$. Let $\bb{e}_j\in \NN^s\subset \mathrm{Pic}(X)$ be the $j$-th standard basis vector. Then:
\begin{enumerate}
\item[(i)] for all $\bb{v}\in \NN^s$ and 
all $1\leq j\leq s$, one has 
$\mathrm{HF}(S/I_Z,\bb{v})\leq \mathrm{HF}(S/I_Z, \bb{v}+\bb{e}_j)$;

\item[(ii)] if $\mathrm{HF}(S/I_Z,\bb{v})=\mathrm{HF}(S/I_Z,\bb{v}+\bb{e}_j)$, 
then $\mathrm{HF}(S/I_Z, \bb{v}+\bb{e}_j)=\mathrm{HF}(S/I_Z, \bb{v}+2\bb{e}_j)$; 

\item[(iii)] $\mathrm{HF}(S/I_Z,\bb{v})\leq \mathrm{length}(Z)$ for all $\bb{v}\in \NN^s$. 
\end{enumerate}
\end{proposition}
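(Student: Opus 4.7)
The plan is to exploit the $B$-saturation of $I_Z$ together with the zero-dimensionality of $Z$. The key observation will be that, for each index $j$, there exists a linear form $\ell_j \in S_{\bb{e}_j}$ that is a nonzerodivisor on $S/I_Z$. Since $I_Z$ is $B$-saturated, its associated primes are exactly the homogeneous prime ideals of the finitely many closed points of $Z$; for a point $p = (p_1,\ldots, p_s) \in Z$, the degree $\bb{e}_j$ piece of its associated prime is the subspace of linear forms in $y_{j,0},\ldots, y_{j,n_j}$ vanishing at $p_j \in \PP^{n_j}$, which is a proper subspace of $S_{\bb{e}_j}$. Prime avoidance then yields the required $\ell_j$.

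Statement (i) will follow immediately: multiplication by $\ell_j$ defines an injective degree-$\bb{e}_j$ map $(S/I_Z)_{\bb{v}} \hookrightarrow (S/I_Z)_{\bb{v}+\bb{e}_j}$, so the Hilbert function cannot decrease. For (iii), the $B$-saturation of $I_Z$ combined with Theorem~\ref{thm:Cox_bijection} gives $(I_Z)_{\bb{v}} = H^0(X, \mathcal{I}_Z(\bb{v}))$, hence $(S/I_Z)_{\bb{v}}$ injects into $H^0(Z, \mathcal{O}_Z(\bb{v}))$; since $Z$ is zero-dimensional, any line bundle on $Z$ has global sections of dimension equal to the length of $Z$, which is $r$.

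The most substantive step is (ii). Here the hypothesis combined with (i) forces $\cdot \ell_j : (S/I_Z)_{\bb{v}} \to (S/I_Z)_{\bb{v}+\bb{e}_j}$ to be an isomorphism of finite-dimensional vector spaces, so every class in $(S/I_Z)_{\bb{v}+\bb{e}_j}$ is represented by $\ell_j \cdot h$ for some $h \in S_{\bb{v}}$. I will then show that the next multiplication map $\ell_j \cdot : (S/I_Z)_{\bb{v}+\bb{e}_j} \to (S/I_Z)_{\bb{v}+2\bb{e}_j}$ is also surjective. Any monomial $\mu$ of multidegree $\bb{v}+2\bb{e}_j$ contains at least $v_j + 2 \geq 2$ factors from $\{y_{j,0},\ldots,y_{j,n_j}\}$, so it factors as $\mu = y_{j,a} y_{j,b} m$ with $m \in S_{\bb{v}}$. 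Modulo $I_Z$ one can replace $y_{j,a} m$ by $\ell_j g$ for some $g \in S_{\bb{v}}$, giving $\mu \equiv \ell_j (y_{j,b} g) \pmod{I_Z}$. Ranging over a monomial basis of $S_{\bb{v}+2\bb{e}_j}$ yields surjectivity; combined with injectivity from (i), this upgrades to an isomorphism and forces $\HF(S/I_Z,\bb{v}+\bb{e}_j) = \HF(S/I_Z,\bb{v}+2\bb{e}_j)$.

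The only mild obstacle I foresee is the bookkeeping in (ii), where one must verify that the factorization $\mu = y_{j,a} y_{j,b} m$ leaves $m$ of multidegree exactly $\bb{v}$; this is automatic from the multigrading because the $j$-th coordinate of $\bb{v}+2\bb{e}_j$ is $v_j+2$. Otherwise the proof reduces to setting up the right nonzerodivisor via prime avoidance and unpacking the Cox correspondence degree by degree.
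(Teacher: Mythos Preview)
The paper does not supply a proof of this proposition; it is merely quoted from \cite{SVT06} and \cite{Man22}. Your argument is correct and is essentially the standard one. The existence of a nonzerodivisor $\ell_j\in S_{\bb{e}_j}$ is justified exactly as you say: since $I_Z$ is $B$-saturated, no associated prime of $S/I_Z$ contains $B$, and for a product of projective spaces a homogeneous prime not containing $B$ and containing the ideal of a closed point must equal that ideal (passing to $S/\mathfrak p$, the homogeneous elements are monomials in $s$ variables, so any larger homogeneous prime contains a full block of variables and hence $B$). Thus the associated primes are precisely the point ideals, each with $\mathfrak p_{\bb e_j}\subsetneq S_{\bb e_j}$, and graded prime avoidance applies; this is the same mechanism the paper uses in Lemma~\ref{lem:equality_of_sat}. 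Parts (i) and (iii) then follow immediately, the latter via the inclusion $(S/I_Z)_{\bb v}\hookrightarrow H^0(Z,\mathcal O_Z(\bb v))$ coming from Theorem~\ref{thm:Cox_bijection} and $S_{\bb v}=H^0(X,\mathcal O_X(\bb v))$ for $\bb v\in\NN^s$. Your treatment of (ii) is clean: the surjectivity of $\cdot\ell_j$ in degree $\bb v$ lets you peel off one $y_{j,a}$ factor from any monomial of degree $\bb v+2\bb e_j$ and replace it by $\ell_j$, which is exactly the ``persistence'' argument one finds in the cited sources.
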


Let $X = \PP^{m-1}\times \PP^{m-1}\times \PP^{m-1}$, $S$ be its Cox ring and $T$ the graded dual of $S$. Then $T_{\bb{1}}\cong \CC^m\otimes \CC^m\otimes \CC^m$,
where $\bb{1}=(1,1,1)$.

\begin{lemma}\label{lem:generic_function}
Let $K$ be a $B$-saturated ideal of $S$ such that $\HF(S/K, (a,b,c)) = m$ for every $(a,b,c) \in \{0,1\}^3$ with $a+b+c\in \{1,2\}$. If $\HF(S/K, \bb{1}) = m$, then $S/K$ has generic Hilbert function $h_{m, X}$.
\end{lemma}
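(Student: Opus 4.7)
The plan is to show $\HF(S/K,\bb{v})=h_{m,X}(\bb{v})$ for every $\bb{v}\in\NN^3$. Since $\HF(S/K,\bb{0})=1=h_{m,X}(\bb{0})$ and $h_{m,X}(\bb{v})=m$ for every $\bb{v}\neq\bb{0}$ (because $\dim_\CC S_\bb{v}\geq m$ whenever some $v_i\geq 1$ on $X=(\PP^{m-1})^3$), it suffices to establish $\HF(S/K,\bb{v})=m$ for every $\bb{v}\in\NN^3\setminus\{\bb{0}\}$. Write $\mathcal{S}:=\{\bb{v}\in \NN^3 : \HF(S/K,\bb{v})=m\}$; by hypothesis, $\mathcal{S}$ contains the seven nonzero elements of $\{0,1\}^3$.

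The key tool is the propagation rule, obtained by iterated application of Proposition~\ref{prop:properties_of_MHF}(ii): if $\bb{v},\bb{v}+\bb{e}_j\in\mathcal{S}$, then $\bb{v}+k\bb{e}_j\in\mathcal{S}$ for all $k\geq 0$. Starting from pairs of the form $\bigl((0,b,c),(1,b,c)\bigr)$ with $(b,c)\neq(0,0)$ and propagating in the $\bb{e}_1$ direction, then using the resulting pairs such as $\bigl((l,0,1),(l,1,1)\bigr)$ to propagate in the $\bb{e}_2$ direction, one fills the ``slab'' $\{v_3=1\}$ into $\mathcal{S}$. By the symmetric argument with the roles of the coordinates permuted, the slabs $\{v_1=1\}$ and $\{v_2=1\}$ are also contained in $\mathcal{S}$.

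Next, Proposition~\ref{prop:properties_of_MHF}(i) handles the boundary hyperplanes $\{v_j=0\}$. For $\bb{v}=(l,k,0)$ with $(l,k)\neq (0,0)$, pick $i$ with $v_i\geq 1$; then
\[
m \;=\; \HF(S/K,\bb{e}_i) \;\leq\; \HF(S/K,\bb{v}) \;\leq\; \HF(S/K,\bb{v}+\bb{e}_3) \;=\; m,
\]
because $\bb{v}+\bb{e}_3\in \{v_3=1\}\subseteq\mathcal{S}$. Hence $\bb{v}\in \mathcal{S}$, and similarly on the other two boundary faces. Finally, for any interior $\bb{v}=(l,k,v)$ with $(l,k)\neq(0,0)$, the pair $\bigl((l,k,0),(l,k,1)\bigr)$ lies in $\mathcal{S}$, so the propagation rule in the $\bb{e}_3$ direction yields $\bb{v}\in\mathcal{S}$; the few remaining points with two coordinates zero, such as $(0,0,v)$ with $v\geq 1$, are handled by the sandwich $m=\HF(S/K,\bb{e}_3)\leq \HF(S/K,(0,0,v))\leq \HF(S/K,(1,0,v))=m$.

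The whole argument is just combinatorial bookkeeping of Proposition~\ref{prop:properties_of_MHF}; the only mild subtlety is that $\HF(S/K,\bb{0})=1\neq m$, so the propagation rule (ii) cannot be initiated at $\bb{0}$, and each extension step must be arranged so as to avoid this single exceptional vertex, which is precisely why the slabs $\{v_i=1\}$ (rather than $\{v_i=0\}$) are reached first.
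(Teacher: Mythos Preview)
Your proof is correct and follows essentially the same approach as the paper: both arguments are combinatorial applications of Proposition~\ref{prop:properties_of_MHF}(i)--(ii), combining the monotonicity sandwich with the stabilization rule to propagate the value $m$ from the seven nonzero cube vertices to all of $\NN^3\setminus\{\bb{0}\}$. The only difference is the order of the bookkeeping---the paper first handles the coordinate axes, then the coordinate planes, then the interior, whereas you first fill the slabs $\{v_i=1\}$, then the boundary faces $\{v_j=0\}$, then propagate inward---but the underlying mechanism is identical.
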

\begin{proof}
Let $(a,b,c) \in \NN^3\setminus \{(0,0,0)\}$. We need to show that $\HF(S/K, (a,b,c)) = m$. Assume first that only one of $\{a,b,c\}$ is positive. We may and do assume that it is $a$. From $\HF(S/K, (0,1,0)) = \HF(S/K, (1,1,0)) =m$ we conclude using Proposition~\ref{prop:properties_of_MHF}(ii) that $\HF(S/K,(a,1,0))=m$. From  Proposition~\ref{prop:properties_of_MHF}(i), we derive
\[
m= \HF(S/K, (1,0,0)) \leq \HF(S/K, (a,0,0)) \leq \HF(S/K, (a,1,0))=m,
\]
which implies that $\HF(S/K, (a,0,0))=m$.

In what follows, we repeatedly use Proposition~\ref{prop:properties_of_MHF}(i) and (ii).
Assume that $a,b>0$ and $c=0$. We have  already established that $\HF(S/K, (a,0,0))= \HF(S/K, (a, 1, 0)) =m$. Thus, we obtain $\HF(S/K, (a,b,0))=m$. 

Finally, assume that $a,b,c >0$. It is enough to show that $\HF(S/K, (a,b,1)) = m$ to conclude that $\HF(S/K, (a,b,c)) = m$.
We have $\HF(S/K, (0,b,1))=m$, so it is sufficient to show that $\HF(S/K, (1,b,1))=m$. This follows since $\HF(S/K, (1,0,1))=\HF(S/K, \bb{1})=m$.
\end{proof}

\begin{theorem}\label{thm:minimalBRTensors}
Let $F\in T_{\bb{1}}$ be concise and of minimal border rank, i.e. $\brk(F)=m$. Let $I = (\Ann(F)_{(1,1,0)})+(\Ann(F)_{(1,0,1)}) + (\Ann(F)_{(0,1,1)})\subset S$ and $K=\overline{I}$. Then the following statements hold:
\begin{enumerate}
\item[(i)] If $\HF(S/I, \bb{1}) \neq m$, then $F$ is wild. 

\item[(ii)] If $\HF(S/I, \bb{1}) =m$, then $F$ is not wild if and only if $I_{(a,b,c)} = K_{(a,b,c)}$ for every $(a,b,c) \in \mathcal{S}$, where $\mathcal{S} = \{(1,0,0), (0,1,0), (0,0,1), (1,1,0), (1,0,1), (0,1,1),(1,1,1)\}$.
\end{enumerate}
\end{theorem}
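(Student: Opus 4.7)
The plan is the following. I first record Hilbert function equalities that hold for any concise $F$ of minimal border rank $m$. By conciseness, $\Ann(F)_{\bb{e}_i}=0$ and hence $\HF(S/I,\bb{e}_i)=m$. For $\bb{v}\in\{(1,1,0),(1,0,1),(0,1,1)\}$ the apolarity pairing $S_{\bb{v}}\times S_{\bb{1}-\bb{v}}\to\CC$, $(\phi_1,\phi_2)\mapsto(\phi_1\phi_2)\circ F$, has right kernel $\Ann(F)_{\bb{1}-\bb{v}}=0$, so both kernels have codimension $m$ and consequently $\HF(S/I,\bb{v})=m$. Together these yield $\HF(S/I,\bb{v})=m$ for every $\bb{v}\in\mathcal{S}\setminus\{\bb{1}\}$. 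Using Proposition \ref{prop:properties_of_MHF} together with conciseness, every apolar zero-dimensional subscheme of $X$ of length $m$ has the generic Hilbert function $h_{m,X}$, a fact I use tacitly below.

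Next I invoke border apolarity (Theorem \ref{mainbb}) to pick $J\in\mathrm{Slip}_{m,X}$ with $J\subseteq\Ann(F)$ and $\HF(S/J,\cdot)=h_{m,X}$. In each of the bidegrees $(1,1,0),(1,0,1),(0,1,1)$ the inclusion $J_{\bb{v}}\subseteq \Ann(F)_{\bb{v}}=I_{\bb{v}}$ together with the matching codimensions forces $J_{\bb{v}}=I_{\bb{v}}$, and since these three pieces generate $I$ one obtains the ideal containment $I\subseteq J$. Restricting to bidegree $\bb{1}$ then gives the unconditional lower bound $\HF(S/I,\bb{1})\geq m$. Part (i) is \cite[Theorem~9.2]{JLP}; in our language its contrapositive asserts that if $F$ is not wild, a smoothable apolar length-$m$ subscheme is available and its $B$-saturated ideal may be substituted for $J$; the additional structural fact that this ideal is generated by its mixed bidegree pieces then forces equality of $I$ and $I_Z$ in bidegree $\bb{1}$, i.e.\ $\HF(S/I,\bb{1})=m$.

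For the forward direction of (ii), assume $F$ is not wild and let $I_Z\in\mathrm{Slip}_{m,X}$ be the $B$-saturated ideal of a smoothable apolar length-$m$ subscheme. Using $I_Z$ in place of $J$ we get $I\subseteq I_Z$ and hence $K=\overline{I}\subseteq\overline{I_Z}=I_Z$. For each $\bb{v}\in\mathcal{S}$ the sandwich $I_{\bb{v}}\subseteq K_{\bb{v}}\subseteq I_{Z,\bb{v}}$ together with the matching Hilbert function values (the hypothesis $\HF(S/I,\bb{1})=m$ supplying the case $\bb{v}=\bb{1}$) collapses to $I_{\bb{v}}=K_{\bb{v}}$, as claimed.

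I expect the backward direction of (ii) to be the main obstacle. Assume $I_{\bb{v}}=K_{\bb{v}}$ for every $\bb{v}\in\mathcal{S}$; then $\HF(S/K,\bb{v})=m$ on $\mathcal{S}$, and since $K$ is $B$-saturated, Lemma \ref{lem:generic_function} promotes this to $\HF(S/K,\cdot)=h_{m,X}$, so $V(K)$ is a zero-dimensional subscheme of $X$ of length $m$. Taking $J\in\mathrm{Slip}_{m,X}$ as in the second paragraph, the inclusion $I\subseteq J$ gives $K=\overline{I}\subseteq \overline{J}$; both are $B$-saturated ideals with Hilbert polynomial $m$, so the induced inclusion $V(\overline{J})\subseteq V(K)$ of length-$m$ zero-dimensional subschemes forces equality, whence $K=\overline{J}$. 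By Proposition \ref{surjective map from Slip to Hilb^r}, $\phi_{m,X}(J)=V(\overline{J})=V(K)$ lies in the smoothable component $\mathrm{Hilb}_{sm}^m(X)$. Apolarity of $V(K)$ to $F$ is immediate from $K_{\bb{1}}=I_{\bb{1}}\subseteq\Ann(F)_{\bb{1}}$ via Proposition \ref{inclusion in onedeg}, so we have produced a smoothable apolar scheme of length $m$ and therefore $\mathrm{srk}(F)\leq m=\brk(F)$, i.e.\ $F$ is not wild. The delicate point to handle carefully is that $\overline{J}$ actually has Hilbert polynomial $m$ (so that $V(\overline{J})$ really is a length-$m$ subscheme); this is what turns the ideal containment $K\subseteq\overline{J}$ into the equality of ideals required for the argument.
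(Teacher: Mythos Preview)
Your proof is correct and follows essentially the same route as the paper's: part (i) is deferred to \cite[Theorem~9.2]{JLP}, and for (ii) you combine border apolarity with Lemma~\ref{lem:generic_function} to identify $K=\overline{J}$ for any $J\in\VSPb(F,m)$ and then read off smoothability of $V(K)$ via $\phi_{m,X}$. The only minor difference is that in the forward direction of (ii) the paper routes through cactus rank whereas you go straight to a smoothable apolar scheme; both work.

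One caveat: your explanatory gloss on the contrapositive of (i) is not a valid independent argument. The assertion that the saturated ideal $I_Z$ is ``generated by its mixed bidegree pieces'' (so as to force $I_{\bb{1}}=I_{Z,\bb{1}}$) is precisely the nontrivial content of \cite[Theorem~9.2]{JLP} and does not follow from anything you have set up --- from $I\subseteq I_Z$ alone you only get $\HF(S/I,\bb{1})\geq\HF(S/I_Z,\bb{1})=m$, which is the wrong inequality. Since you explicitly invoke the theorem, this does not affect correctness, but you should drop the gloss rather than suggest it constitutes a proof.
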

\begin{proof}
(i) Since $\brk(F)=m$, it follows from the border apolarity Theorem~\ref{mainbb} (see also \cite[Theorem 5.5]{bb19}) that $\HF(S/I, \bb{1}) \geq m$. If $\HF(S/I, \bb{1}) > m$, then, in the terminology of \cite{JLP}, the tensor $F$ is not $111$-sharp. Therefore, it is wild by \cite[Theorem~9.2]{JLP}. 

(ii) We then assume that $F$ is $111$-sharp, i.e.  $\HF(S/I, \bb{1}) =m$. 
    Suppose first that $F$ is not wild. According to Definition \ref{def:wild}, this implies that its cactus rank satisfies $\crk(F)\leq \mathrm{srk}(F) = \brk(F)=m$. By the cactus apolarity lemma (see Definition \ref{def:cactus}), there is a $B$-saturated homogeneous ideal $J\subseteq \Ann(F)\subset S$ having the multigraded Hilbert polynomial equal to $\crk(F)$. By Proposition~\ref{prop:properties_of_MHF}(iii), for every $(a,b,c)\in \NN^3$ we have $\HF(S/J, (a,b,c)) \leq \crk(F)$. If $(a,b,c)\in \{0,1\}^3$ with $a+b+c\in\{1,2\}$, then from
    \[
    m=\HF(S/\Ann(F), (a,b,c)) \leq \HF(S/J, (a,b,c)) \leq \crk(F) \leq m
    \]
    we conclude that $\crk(F) = m$ and $J_{(a,b,c)} = \Ann(F)_{(a,b,c)} = I_{(a,b,c)}$. Hence $I\subseteq J$ and therefore $K\subseteq J$. Using Proposition~\ref{prop:properties_of_MHF}(i)(iii) we get the inequalities 
    \[
     m = \HF(S/J, (1,1,0)) \leq \HF(S/J, \bb{1}) \leq \crk(F) = m,
    \]
    from which we conclude that $\HF(S/J, \bb{1})=m$.  It follows that $I_{(a,b,c)} = J_{(a,b,c)}$ for every $(a,b,c) \in \mathcal{S}$,  which implies that for any such $(a,b,c)$ we have also $I_{(a,b,c)} = K_{(a,b,c)}$. 

For the converse, assume that $I_{(a,b,c)} = K_{(a,b,c)}$ for every $(a,b,c)\in \mathcal{S}$. By Lemma~\ref{lem:generic_function}, the algebra $S/K$ has generic Hilbert function $h_{m,X}$. 

Let $J \in \VSPb(F, m)$. We have $J_{(a,b,c)} = I_{(a,b,c)}$ for every $(a,b,c)\in \mathcal{S}$. Therefore, $I\subseteq J$ and so $K=\overline{I}\subset \overline{J}$. 
However, $K$ and $J$ have the same multigraded Hilbert polynomial, so $\overline{J} = \overline{K} = K$. 
Hence $J\subseteq \overline{J} = K$. Since these two have the same Hilbert function, $J = K$. 
This implies that $\VSPb(F, m)=\{K\}$.  In particular, $\mathrm{Proj} (S/K)$ is a smoothable scheme. It follows that $F$ has minimal smoothable rank, i.e. $\mathrm{srk}(F)=m = \brk(F)$. Thus, $F$ is not wild. 
\end{proof}

\begin{corollary}\label{cor:minimalBRWild}
 If $F$ is a nonwild concise minimal border rank tensor in $T_{\bb{1}}$, then $\VSPb(F, m) = \{K\}$ where $K$ is the saturation of $I = (\Ann(F)_{(1,1,0)}) + (\Ann(F)_{1,0,1}) + (\Ann(F)_{(0,1,1)})$.
\end{corollary}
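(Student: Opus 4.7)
My plan is to show that this corollary is essentially extracted from the proof of Theorem~\ref{thm:minimalBRTensors}(ii); almost all the work has already been done there, and the task is to assemble the two parts of that theorem together.

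First I would argue that since $F$ is nonwild of minimal border rank $m$, part (i) of Theorem~\ref{thm:minimalBRTensors} forces $\HF(S/I,\bb{1})=m$. Indeed, if this equality failed, part (i) would declare $F$ wild, contradicting our hypothesis. Having established $111$-sharpness, the forward direction of part (ii) of the theorem then tells us that $I_{(a,b,c)} = K_{(a,b,c)}$ for every $(a,b,c)\in \mathcal{S}$.

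Next I would repeat, for clarity, the uniqueness argument that appears at the end of the proof of part (ii). By Lemma~\ref{lem:generic_function}, applied to $K$ (which agrees with $I$ at all the degrees in $\mathcal{S}$ and hence has $\HF(S/K, (a,b,c))=m$ on $\{0,1\}^3$ with $a+b+c\in\{1,2\}$, using conciseness of $F$ to get the one-variable degrees), the algebra $S/K$ has the generic Hilbert function $h_{m,X}$. Now let $J\in \VSPb(F,m)$. Since $J \subset \Ann(F)$, at every degree $(a,b,c)\in \mathcal{S}$ we have $I_{(a,b,c)} \subseteq J_{(a,b,c)}$, which implies $I\subseteq J$ and hence $K=\overline{I}\subseteq \overline{J}$. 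Because both $K$ and $J$ have the same multigraded Hilbert polynomial $m$, saturating gives $\overline{J}=K$, so $J\subseteq K$; and then equality of Hilbert functions forces $J=K$, proving $\VSPb(F,m)=\{K\}$.

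The main potential obstacle, which is not really an obstacle but the one place one must be careful, is checking that Lemma~\ref{lem:generic_function} applies to $K$. The lemma requires the Hilbert function of $S/K$ to equal $m$ at every $(a,b,c)\in \{0,1\}^3$ with $a+b+c\in \{1,2,3\}$. The values at $a+b+c\in\{1,2\}$ follow from conciseness of $F$ combined with $K\subseteq \Ann(F)$ and $\Ann(F)_{\bb{e}_i}=0$, while the value at $\bb{1}$ uses the identity $K_\bb{1}=I_\bb{1}$ together with the $111$-sharpness $\HF(S/I,\bb{1})=m$ obtained in the first step. Once this is in place, the rest is a routine containment-and-Hilbert-function argument.
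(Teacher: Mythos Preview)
Your proposal is correct and follows exactly the paper's approach: the paper's proof simply invokes Theorem~\ref{thm:minimalBRTensors} to get $I_{(a,b,c)}=K_{(a,b,c)}$ on $\mathcal{S}$ and then points to the last paragraph of that theorem's proof, which is precisely the argument you spell out. One small phrasing point: your justification that $\HF(S/K,(a,b,c))=m$ for $a+b+c\in\{1,2\}$ appeals to ``$K\subseteq \Ann(F)$'', which is true but not yet established at that stage; it is cleaner (and what you really use) to go directly through $K_{(a,b,c)}=I_{(a,b,c)}$ on $\mathcal{S}$, since $I_{\bb{e}_i}=0$ and $I_{(1,1,0)}=\Ann(F)_{(1,1,0)}$ (and similarly for the other two) by definition of $I$.
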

\begin{proof}
By Theorem~\ref{thm:minimalBRTensors},  $I_{(a,b,c)} = K_{(a,b,c)}$ for every $(a, b, c) \in \mathcal{S}$. Therefore, $\VSPb(F, m) = \{K\}$ by the argument presented in the last paragraph of the proof of Theorem~\ref{thm:minimalBRTensors}.
\end{proof}

\subsection{Tensors of minimal border rank three}

In this section we answer a question posed by Buczy\'nska and Buczy\'nski \cite[\S 5.2]{bb19}
finding the description of $\VSPb(F,3)$ for all minimal border rank three tensors $F$. Let $X = \PP^{n_1}\times \cdots \times \PP^{n_s}$ and let $\bb{1} = (1,\ldots, 1)\in \ZZ^s$, which corresponds to a very ample line bundle $D$. Then the closed embedding induced by $\mathcal L = \mathcal O_X(D)$ is the {\it Segre embedding} $\mathrm{Seg}(X)\subset \PP(T_{\bb{1}})$. Given $[G]\in \mathrm{Seg}(X)$, denote by $\widehat{T}_{[G]}\mathrm{Seg}(X)$ the tangent space at $G$ to the affine cone of $\mathrm{Seg}(X)$.

\begin{theorem}\label{theo:min border rank three}
Let $X = \PP^2\times \PP^2\times \PP^2$ and let $F$ be a border rank three concise tensor in $\CC^3\otimes \CC^3\otimes \CC^3\cong T_{\bb{1}}$.
The variety $\VSPb(F, 3)$ is a single point, unless $F=G'+H'$ where $H'\in \widehat{T}_{[H]}\mathrm{Seg}(\PP^2\times \PP^2\times \PP^2)$, $G'\in \widehat{T}_{[G]}\mathrm{Seg}(\PP^2\times \PP^2\times \PP^2)$ with $[G], [H]$ being two distinct points on a line in $\mathrm{Seg}(\PP^2\times \PP^2\times \PP^2)$. In the latter case, $F$ is wild and $\VSPb(F, 3)$ is isomorphic to $\PP^3$. 
\end{theorem}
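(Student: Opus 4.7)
The plan is to split the argument into the non-wild and the wild cases, and handle them with quite different tools.

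For the non-wild case (when $F$ is a concise minimal border rank tensor in $T_{\bb 1}$ but not wild), I would apply Corollary~\ref{cor:minimalBRWild} directly with $m=3$: it yields immediately that $\VSPb(F,3)=\{K\}$ is a single point, where $K$ is the saturation of the ideal generated by the linear syzygies of $F$ in multidegrees $(1,1,0), (1,0,1), (0,1,1)$. No classification of orbits is needed for this half.

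For the wild case, I would invoke the Bini--Landsberg classification \cite[Theorem 1.2]{BL14} of concise minimal border rank tensors in $\CC^3\otimes\CC^3\otimes\CC^3$ up to $\mathrm{GL}_3^{\times 3}$-action. Using Theorem~\ref{thm:minimalBRTensors}(i) (equivalently, the $111$-sharpness criterion of Jelisiejew--Landsberg--Pal), wildness is equivalent to $\HF(S/I,\bb{1})>3$ for $I=(\Ann(F)_{(1,1,0)})+(\Ann(F)_{(1,0,1)})+(\Ann(F)_{(0,1,1)})$. Checking this condition against the normal forms in the Bini--Landsberg list picks out a single wild orbit, and a geometric inspection of its normal form will identify it with tensors of the form $G'+H'$ with $G'\in\widehat T_{[G]}\mathrm{Seg}(X)$, $H'\in\widehat T_{[H]}\mathrm{Seg}(X)$ and $[G],[H]$ collinear in the Segre variety; this description is essentially the one for the symmetrization of a small Coppersmith--Winograd-type tensor. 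Once a single representative is fixed, $\mathrm{GL}_3^{\times 3}$-equivariance of $\VSPb$ reduces the problem to computing $\VSPb(F,3)$ for that representative.

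The remaining work is the explicit determination of $\VSPb(F,3)\cong\PP^3$ for the chosen wild representative. Since wildness implies that no ideal in $\VSPb(F,3)$ is $B$-saturated (by Proposition~\ref{lem:fiber_type_is_non_wild} and Theorem~\ref{borderdecgivespointinVSPb}), I would parameterize the non-saturated candidates by computing $\Ann(F)$ in the multidegrees relevant to the generic Hilbert function $h_{3,X}$, and then verify which families actually lie in $\mathrm{Slip}_{3,X}$ by exhibiting smoothings via border rank decompositions as in Theorem~\ref{borderdecgivespointinVSPb}. The expected $\PP^3$ arises as the $4$-dimensional space of allowable deformations of a fixed syzygy datum; an explicit family of this form is already sketched in \cite[\S 5.2]{bb19}.

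The hard part will be the exhaustiveness step in the wild case: producing a $\PP^3$ of ideals inside $\VSPb(F,3)$ is a bounded linear-algebra calculation, but ruling out further ideals, and verifying that the resulting subscheme of $\mathrm{Slip}_{3,X}$ is scheme-theoretically $\PP^3$ (not merely birational to it), requires controlling all multigraded limits of radical ideals of three points in $X$ contained in $\Ann(F)$. For this I would likely rely on a \texttt{Macaulay2}-assisted tangent space computation at a distinguished point of the claimed $\PP^3$, combined with the $\mathrm{GL}_3^{\times 3}$-symmetry of the wild orbit to propagate smoothness and irreducibility across all of $\VSPb(F,3)$.
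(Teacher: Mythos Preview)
Your non-wild case matches the paper exactly (apply Corollary~\ref{cor:minimalBRWild}), and your wild-case outline---reduce via the Bini--Landsberg classification to a single representative and compute $\VSPb(F,3)$ directly---is also the paper's strategy. The gap is in how you propose to execute the exhaustiveness step.

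You suggest a tangent-space computation at one point together with $\mathrm{GL}_3^{\times 3}$-symmetry to rule out further ideals and fix the scheme structure. This does not do what you need: a tangent space to $\mathrm{Hilb}_S^{h_{3,X}}$ at a point of $\VSPb(F,3)$ gives no upper bound on $\VSPb(F,3)$, and you have not argued that the stabilizer of $F$ acts transitively on the candidate $\PP^3$ (there is no reason it should). The paper separates the two inclusions and uses different tools for each. The \emph{upper bound} $\VSPb(F,3)\subseteq\mathcal X$ is a multidegree-by-multidegree computation: colon ideals such as $(I\colon(\beta_1,\beta_2,\beta_3))_{(2,0,0)}$ pin down $J_{(2,0,0)},J_{(0,2,0)},J_{(0,0,2)}$ uniquely for every $J\in\VSPb(F,3)$, and a Gr\"obner-basis check shows the only remaining freedom is a single cubic $C\in\CC[\alpha_1,\alpha_2]_3$, so $\mathcal X$ is a set-theoretic $\PP^3$; a bijective morphism to a normal target plus Zariski's Main Theorem then gives $\mathcal X\cong\PP^3$ as schemes. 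The tangent-space computation serves the \emph{lower bound}: at each of the four monomial specializations $I'+(\alpha_1^i\alpha_2^{3-i})$ one verifies that the tangent space to $\mathrm{Hilb}_S^{h_{3,X}}$ has dimension $18=\dim\mathrm{Slip}_{3,X}$, so these are smooth points of the ambient Hilbert scheme; since $\mathcal X$ is irreducible and a Gr\"obner degeneration of any $J\in\VSPb(F,3)$ (which is nonempty by border apolarity) lands at one of these four points, all of $\mathcal X$ lies in $\mathrm{Slip}_{3,X}$ and hence in $\VSPb(F,3)$. No symmetry is invoked anywhere. Your idea of using border rank decompositions as in Theorem~\ref{borderdecgivespointinVSPb} does appear in the paper's alternate proof, but only to show that the map $\VSPb(F,3)\to\PP^3$ hits a dense set; it still relies on the colon-ideal upper bound established first.
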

\begin{proof}
    By \cite[Theorem~1.2]{BL14}, it is sufficient to consider the case when $F$ is one of the four tensors whose normal forms (i)--(iv) are classified by Buczy\'nski and Landsberg \cite[pages 477--478]{BL14}.
    Let $I$ be the ideal generated by the degree $(1,1,0), (1,0,1)$ and $(0,1,1)$ 
    parts of $\Ann(F)$ and let $K = \overline{I}$.
    If $F$ is one of the tensors described in (i)--(iii) we conclude using Theorem~\ref{thm:minimalBRTensors} that $F$ is not wild. It follows from Corollary~\ref{cor:minimalBRWild} that $\VSPb(F, 3) = \{K\}$.

    Assume that $F$ has normal form (iv), i.e. $F=a_2\otimes b_1\otimes c_2 + a_2\otimes b_2\otimes c_1+ a_1\otimes b_1\otimes c_3 + a_1\otimes b_3 \otimes c_1 + a_3\otimes b_1\otimes c_1$. It follows from Theorem~\ref{thm:minimalBRTensors} that $F$ is wild (this was already known, see \cite[proof~of~Proposition~2.4]{bb15}). We show that $\VSPb(F, 3) \cong \PP^3$. Let $\alpha_i, \beta_j, \gamma_k$ be the generators of $S$ dual to $a_i, b_j, c_k$ respectively.

    First, we identify a subset $\mathcal{X}$ of $\mathrm{Hilb}_S^{h_{3, X}}$ which contains $\VSPb(F,3)$. As before, we consider the ideal  $I=(\Ann(F)_{(1,1,0)}) + (\Ann(F)_{(1,0,1)}) + (\Ann(F)_{(0,1,1)})$. We have $\HF(S/I, (2,1,0)) = \HF(S/I, (0,2,1)) = \HF(S/I, (0,1,2)) = 3$. We compute that $(I\colon (\beta_1, \beta_2, \beta_3))_{(0,0,2)} = \langle \gamma_2^2, \gamma_2\gamma_3, \gamma_3^2\rangle $, $(I\colon (\gamma_1,\gamma_2,\gamma_3))_{(0,2,0)} = \langle \beta_2^2, \beta_2\beta_3, \beta_3^2\rangle$ and $(I\colon (\beta_1,\beta_2,\beta_3))_{(2,0,0)} = \langle \alpha_1\alpha_3, \alpha_2\alpha_3, \alpha_3^2\rangle$. It follows that if $J\in \VSPb(F, 3)$, then 
    $J_{(2,0,0)} \oplus J_{(0,2,0)} \oplus J_{(0,0,2)}\subseteq \langle \alpha_1\alpha_3, \alpha_2\alpha_3, \alpha_3^2\rangle \oplus \langle \beta_2^2, \beta_2\beta_3, \beta_3^2\rangle \oplus \langle \gamma_2^2, \gamma_2\gamma_3, \gamma_3^2\rangle$. Since these three vector subspaces have codimension $3$ in $S_{(2,0,0)}, S_{(0,2,0)}$ and $S_{(0,0,2)}$, respectively, we conclude that the containment is an equality. Therefore, $J \supseteq I'$  where 
    \[
    I'= I + (\alpha_1\alpha_3, \alpha_2\alpha_3, \alpha_3^2, \beta_2^2, \beta_2\beta_3, \beta_3^2, \gamma_2^2, \gamma_2\gamma_3, \gamma_3^2).
    \]
    Furthermore, $\HF(S/I', (3,0,0)) = 4$. Therefore, $J$ contains a unique minimal generator of degree $(3,0,0)$ that is a cubic in the variables $\alpha_1,\alpha_2$. We show that for every nonzero cubic $C$ in $\alpha_1, \alpha_2$ the ideal $I'+(C)$ has the generic Hilbert function. Consider the lexicographic order with $\gamma_3\prec \gamma_2\prec \gamma_1 \prec \beta_3 \prec \beta_2\prec \beta_1 \prec \alpha_3\prec \alpha_2\prec \alpha_1$. By computing the $S$-polynomials we verify that the set
    \[
    \{
    \gamma_3^2, \gamma_2\gamma_3, \gamma_2^2, \beta_3\gamma_3, \beta_3\gamma_2, \beta_2\gamma_3, \beta_2\gamma_2, \beta_1\gamma_3-\beta_3\gamma_1, \beta_1\gamma_2-\beta_2\gamma_1, \beta_3^2, \beta_2\beta_3, \beta_2^2,
    \]
    \[
    \alpha_3\gamma_3, \alpha_3\gamma_2, \alpha_3\beta_3, \alpha_3\beta_2, \alpha_2\gamma_3, \alpha_2\gamma_2-\alpha_3\gamma_1, \alpha_2\beta_3, \alpha_2\beta_2-\alpha_3\beta_1, \alpha_1\gamma_3-\alpha_3\gamma_1,
    \]
    \[
    \alpha_1\gamma_2, \alpha_1\beta_3-\alpha_3\beta_1, \alpha_1\beta_2, \alpha_3^2, \alpha_2\alpha_3, \alpha_1\alpha_3, x\alpha_1^3+y\alpha_1^2\alpha_2+z\alpha_1\alpha_2^2+w\alpha_2^3
    \}
\]
forms a Gr\"obner basis for every choice of $(x,y,z,w) \in \CC^4\setminus \{(0,0,0,0)\}$.
There are four possible monomial ideals as initial ideals depending on which one of scalars $x,y,z$ and $w$ are zero. These ideals are generated by the initial monomials of all but the last generator displayed above and one of the four $\{\alpha_1^3, \alpha_1^2\alpha_2, \alpha_1\alpha_2^2, \alpha_2^3\}$. In all four cases, we verify in \texttt{Macaulay2} that the quotient algebra has the generic Hilbert function $h_{3, X}$. Let $\mathcal{X} = \{I'+(C) \mid C\in \CC[\alpha_1, \alpha_2]_3 \setminus \{0\}\}$. Before checking that $\VSPb(F,3) =\mathcal{X}$ we prove that the latter set is closed and with its reduced scheme structure is isomorphic to $\PP^3$. Let $g$ be the Hilbert function of $S/I'$. Consider the flag multigraded Hilbert scheme $\mathrm{Hilb}_S^{h_{3,X}, g}$ with its two projections $p_{h_{3,X}}\colon \mathrm{Hilb}_S^{h_{3,X}, g}\to \mathrm{Hilb}_S^{h_{3,X}}$ and $p_g\colon \mathrm{Hilb}_S^{h_{3,X}, g} \to \mathrm{Hilb}_S^g$. We have $\mathcal{X} = p_{h_{3,X}} (p_g^{-1}(I'))$ and it is therefore closed.
From now on, $\mathcal{X}$ will be the corresponding reduced closed subscheme.

Consider the flag variety $\mathrm{Flag}(6,7, S_{(3,0,0)})$ and its two projections $\pi_6\colon \mathrm{Flag}(6,7, S_{(3,0,0)}) \to \mathbb{G}(6, S_{(3,0,0)})$ and $\pi_7\colon \mathrm{Flag}(6,7, S_{(3,0,0)}) \to \mathbb{G}(7, S_{(3,0,0)})$. Let $\mathcal{Y}$ be the fiber of $\pi_6$ over $(\alpha_3)_3$. Observe that $\mathcal{Y}\cong \PP(S_{(3,0,0)}/(\alpha_3)_3)\cong \PP^3$. We claim that $\pi_7|_{\mathcal{Y}}\colon \mathcal{Y}\to \pi_7(\mathcal{Y})$ is an isomorphism.
Since the target is reduced and the map is surjective, it is sufficient to show that it is a closed immersion, or equivalently, that $\pi_7|_{\mathcal{Y}}\colon \mathcal{Y}\to \mathbb{G}(7, S_{(3,0,0)})$ is a closed immersion. By~\cite[Corollary~12.94]{GW}  it is enough to prove that the map of $\CC[\varepsilon]/(\varepsilon^2)$-points is injective. Let $R$ be a $\CC$-algebra.
The $R$-points of $\mathcal{Y}$ can be identified with those $R$-submodules $W\subset R\otimes_\CC S_{(3,0,0)}$ for which $(\alpha_3)_3\subset W$ and $(R\otimes_\CC S_{(3,0,0)})/W$ is a locally free $R$-module of rank $\dim_\CC S_{(3,0,0)}-7 = 3$. Under this identification, the map on $R$-points takes $W$ to $W$ and so it is injective. 

Consider the natural map $\pi\colon \mathrm{Hilb}^{h_{3,X}}_S \to \mathbb{G}(7, S_{(3,0,0)})$. The morphism $\pi$ restricted to $\mathcal{X}$ factors through $\pi_7(\mathcal{Y})$. Furthermore, the induced map $\mathcal{X}\to\pi_7(\mathcal{Y})$ is a bijective morphism of $\CC$-varieties whose target is normal. Hence it is an isomorphism by Zariski's Main Theorem.

We have established that $\mathcal{X}$ is an irreducible closed subset of $\mathrm{Hilb}_S^{h_{3,X}}$. The discussion above yields $\VSPb(F, 3) \subseteq \mathcal{X}$. In order to show the opposite inclusion consider four distinguished points of $\mathcal{X}$: $I'+(\alpha_1^3), I'+(\alpha_1^2\alpha_2), I'+(\alpha_1\alpha_2^2), I'+(\alpha_2^3)$. It can be checked in \texttt{Macaulay2} that the tangent space to the multigraded Hilbert scheme $\mathrm{Hilb}_S^{h_{3, X}}$ at all those four points is $18$-dimensional. Since this is also the dimension of $\mathrm{Slip}_{3,X}$ and every ideal in $\mathcal{X}$ is apolar to $F$, in order to show that $\VSPb(F, 3) = \mathcal{X}$, it is sufficient to show that at least one of the four points considered above is in $\mathrm{Slip}_{3,X}$.
By border apolarity Theorem~\ref{mainbb}, there is an ideal $J \in \VSPb(F,3)\subseteq \mathcal{X}$. Consider the weight vector $\mathbf{w}=(1,2,3,1,2,3,1,2,3)$. The initial ideal of $J$ with respect to $\mathbf{w}$ is a point of $\mathrm{Slip}_{3,X}$. Since all the quadratic generators of $J$ displayed above are homogeneous with respect to $\mathbf{w}$, the initial ideal is one of the four $I'+(\alpha_1^3), I'+(\alpha_1^2\alpha_2), I'+(\alpha_1\alpha_2^2), I'+(\alpha_2^3)$. 
\end{proof} 
\begin{proof}[Alternate proof of the last case] 
In this proof we use part of the first argument, highlighting the border rank decomposition perspective. The first proof shows that any ideal $I\in \VSPb(F,3)$ is such that there exists a unique cubic $C_{I}\in \langle \alpha_1^3, \alpha_1^2\alpha_2, \alpha_1\alpha_2^2, \alpha_2^3 \rangle$ up to scaling that is 
the generator of the vector space 
$(I/(\alpha_3))_{(3,0,0)}$. Such a cubic may be regarded as a point  $C_I\in \PP(\langle \alpha_1^3, \alpha_1^2\alpha_2, \alpha_1\alpha_2^2, \alpha_2^3 \rangle)$. This defines a morphism 
\[
\psi_{(3,0,0)}:\VSPb(F,3)\longrightarrow \PP(\langle \alpha_1^3, \alpha_1^2\alpha_2, \alpha_1\alpha_2^2, \alpha_2^3 \rangle)\cong \PP^3,
\]
\[
I\longmapsto [C_I].
\]
This morphism is clearly injective on $\CC$-points. It is also surjective on $\CC$-points. To see this, first notice that $\psi_{(3,0,0)}$ is projective and so closed. Thus it is enough to show its image contains a general point of $\PP(\langle \alpha_1^3, \alpha_1^2\alpha_2, \alpha_1\alpha_2^2, \alpha_2^3 \rangle)$.

Let $\ell_1 = e_1a_1+e_2a_2$
and $\ell_2 = f_1a_1+f_2a_2$ be two linearly independent vectors in $\CC^3$ and let $\ell_3 \in \langle \ell_1,\ell_2\rangle$. Up to scaling $\ell_1$ and $\ell_2$, we may assume $\ell_3 = \ell_1+\ell_2$. We now exhibit a minimal border rank decomposition for $F$ utilising $\ell_1$ and $\ell_2$. 
For any $t\in \mathbb{A}^1_{\CC}\setminus \lbrace 0\rbrace$, consider the expression 
\[
G_t = \frac{1}{t}\left[(ta_3 - \ell_3)\otimes b_1\otimes c_1 + \ell_1\otimes(b_1+t\cdot \delta_2b_2 + t\cdot \delta_3 b_3)\otimes (c_1+t\cdot \rho_2c_2 + t\cdot \rho_3 c_3)  \right]+
\]
\[
+ \frac{1}{t}\left[\ell_2\otimes (b_1+t\cdot \tau_2b_2 + t\cdot \tau_3 b_3)\otimes (c_1+t\cdot \eta_2c_2 + t\cdot \eta_3 c_3)\right]. 
\]
Given $f_1,f_2,e_1,e_2\in \CC$, one has that $\lim_{t\rightarrow 0} G_t = F$ if and only if there exist complex values for the  parameters $\delta_2,\delta_3, \ldots, \eta_2,\eta_3$ such that the following relations are satisfied
\[
\begin{pmatrix}
e_1 & f_1 \\
e_2 & f_2 \\
\end{pmatrix}\cdot 
\begin{pmatrix}
\delta_2 & \delta_3 & \rho_2 & \rho_3 \\
\tau_2 & \tau_3 & \eta_2 & \eta_3 \\
\end{pmatrix}= 
\begin{pmatrix}
0 & 1 & 0 & 1 \\
1 & 0 & 1 & 0 \\
\end{pmatrix}.
\]
Since $\ell_1$ and $\ell_2$ are linearly independent, such parameters' values do exist and are unique. 
By Theorem \ref{borderdecgivespointinVSPb} and using the following \texttt{Macaulay2} script (where, for simplicity of notation, 
we work with $a_i, b_j, c_k$ instead) 
\begin{mybox}
{\color{blue}
\begin{verbatim}
S=QQ[a_1..a_3]**QQ[b_1..b_3]**QQ[c_1..c_3];
S'=S[e_1,e_2,f_1,f_2, delta_2, delta_3, rho_2, rho_3,
tau_2, tau_3, eta_2, eta_3,t];
I=intersect(ideal((f_1+e_1)*a_3+t*a_1, (f_2+e_2)*a_3+t*a_2, b_2,b_3,c_2,c_3),
ideal(a_3, e_2*a_1-e_1*a_2, delta_2*t*b_1-b_2, delta_3*t*b_1-b_3,
rho_2*t*c_1-c_2, rho_3*t*c_1-c_3), ideal(a_3, f_2*a_1-f_1*a_2,tau_2*t*b_1-b_2,
tau_3*t*b_1-b_3, eta_2*t*c_1-c_2, eta_3*t*c_1-c_3 ));
J=trim sub(saturate(I, t), {t => 0});
R=S'/sub(ideal(a_3),S'); 
J'= trim sub(J,R); 
cJ = (select(flatten entries mingens J', (i -> (degree i)#1 == 3)))#0;
assert(cJ ==(e_2*a_1 - e_1*a_2)*(f_2*a_1-f_1*a_2)*((e_1+f_1)*a_2-(e_2+f_2)*a_1))
\end{verbatim}
}
\end{mybox}
\noindent we find that this border rank decomposition corresponds to an ideal $J\in \VSPb(F,3)$ whose unique generator modulo the ideal $(\alpha_3)$ in degree $(3,0,0)$ is the cubic 
\[
C_J=(e_2\alpha_1-e_1\alpha_2)(f_2\alpha_1-f_1\alpha_2)((e_1+f_1)\alpha_2-(e_2+f_2)\alpha_1). 
\]
Up to scaling, this cubic form is a general point in $\PP(\langle \alpha_1^3, \alpha_1^2\alpha_2, \alpha_1\alpha_2^2, \alpha_2^3 \rangle)\cong\PP^3$, because any two of the three linear forms are linearly independent. Hence $\psi_{(3,0,0)}$ is surjective. Since the morphism $\psi_{(3,0,0)}$ is bijective on $\CC$-points and the target is a normal scheme, Zariski's Main Theorem implies that $\psi_{(3,0,0)}$ is an isomorphism of $\CC$-schemes. 
\end{proof}

\subsection{Wildness}

A special and simple feature of the structure of $\VSPb$'s for wild elements (see Definition \ref{def:wild}) is that their closed points are nonsaturated ideals. 

\begin{lemma}\label{vspwild}
Let $F\in T_{\bb{v}}$ be wild with $\brk(F) = r$. Then any ideal $J\in \VSPb(F, r)$ is nonsaturated. 
\begin{proof}
On the contrary, suppose there exists a $B$-saturated ideal $J\in \VSPb(F, r)$. By Proposition~\ref{surjective map from Slip to Hilb^r} the scheme $Z$ defined by $J$ is smoothable. One has $\mathrm{srk}(F)\leq \mathrm{length}(Z) = r$, a contradiction.
\end{proof}
\end{lemma}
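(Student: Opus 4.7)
The plan is a short proof by contradiction, using the constructions of $\phi_{r,X}$ and the characterisation of smoothable rank via saturated apolar ideals. Suppose for contradiction that some $J\in \VSPb(F,r)$ is $B$-saturated. By Theorem~\ref{thm:Cox_bijection} (Cox's correspondence), $J$ is the unique $B$-saturated homogeneous ideal defining a closed subscheme $Z\subset X$; in particular, $Z$ is the scheme-theoretic image of $J$ under $\phi_{r,X}$.

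Since $J\in \mathrm{Slip}_{r,X}\subseteq \mathrm{Hilb}^{h_{r,X}}_S$, the point $\phi_{r,X}(J)$ lies in the smoothable component $\mathrm{Hilb}^r_{sm}(X)$ by Proposition~\ref{surjective map from Slip to Hilb^r}; hence $Z$ is a smoothable zero-dimensional subscheme of $X$. Its length equals the constant Hilbert polynomial $P=r$ (use, e.g., Lemma~\ref{lem:Hilbert_polynomials_of_the_family} with $R=\CC$). Moreover, $J\in \VSPb(F,r)$ means $J\subseteq \Ann(F)$, so by the multigraded apolarity recalled in Definition~\ref{def:cactus} (applied in its smoothable-rank version from the paragraph defining $\mathrm{srk}$), the scheme $Z$ witnesses that $\mathrm{srk}(F)\leq \mathrm{length}(Z)=r$.

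This yields $\mathrm{srk}(F)\leq r=\brk(F)$, contradicting the hypothesis that $F$ is wild, i.e. $\mathrm{srk}(F)>\brk(F)$. Hence no $J\in \VSPb(F,r)$ can be $B$-saturated.

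There is no real obstacle here; the only subtle point is to record that the scheme defined by a point of $\mathrm{Slip}_{r,X}$ is automatically smoothable (not merely of the right length), which is exactly the content of the surjection onto $\mathrm{Hilb}^r_{sm}(X)$ provided by Proposition~\ref{surjective map from Slip to Hilb^r}. Once this is invoked, the contradiction with the definition of wildness is immediate.
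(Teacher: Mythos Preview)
Your proof is correct and follows essentially the same approach as the paper: assume a $B$-saturated $J\in\VSPb(F,r)$, use Proposition~\ref{surjective map from Slip to Hilb^r} to conclude the associated scheme $Z$ is smoothable of length $r$, and obtain $\mathrm{srk}(F)\le r=\brk(F)$, contradicting wildness. You simply spell out a few more details (Cox's correspondence, why the length is $r$) than the paper does.
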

\noindent The previous lemma cannot be used as a criterion for wildness, because its converse is false even for $X = \PP^2$. The following example illustrates this.

\begin{example}\label{nonwild_no_saturated_ideals}
Let $F = x_0x_1^2x_2^3\in T_6$. Then $\underline{{\bf rk}}(F) = \mathrm{srk}(F) = \mathrm{crk}(F)=6$. In particular, $F$ is not wild. 
However, $\VSPb(F,6)$ consists of a unique nonsaturated ideal. 
\begin{proof}
Since $\mathrm{Hilb}^r(\PP^2)$ is irreducible, it follows that $\mathrm{srk}(F) = \mathrm{crk}(F)$. We have $\mathrm{Ann}(F) = (y_0^2, y_1^3, y_2^4)$ and its Hilbert function is  $\mathrm{HF}(S/\mathrm{Ann}(F)) = 1 \ \ 3 \ \ 5 \ \ 6 \ \ 5 \ \ 3 \ \ 1.$ As a result, we get $6\leq \brk(F) \leq \mathrm{srk}(F) = \mathrm{crk}(F)$. 
From the containment $(y_0^2,  y_1^3)\subset \mathrm{Ann}(F)\subset S$ we deduce that $\mathrm{crk}(F) = 6$ and therefore $\brk(F) = \mathrm{srk}(F) = \mathrm{crk}(F) = 6$. Any ideal $J\in \VSPb(F,6)$ satisfies $J_{\leq 2} = 0 $ and $J_{\geq 3} = (y_0^2, y_1^3)_{\geq 3}$ and hence $(y_0^2, y_1^3)_{\geq 3}$ is a unique such ideal. Note that it is not saturated.
\end{proof}
\end{example}

In order to find an equivalence between the presence of nonsaturated ideals in $\VSPb$ and wildness of $F$, we then have to impose further assumptions. 

\begin{proposition}\label{iff with a nonsat ideal in vsp for P^n}
Let $X = \PP^n$. Let $F\in T_d$ be such that there exists $a\in \ZZ$ with 
$\mathrm{HF}(S/\Ann(F),a) = \mathrm{HF}(S/\Ann(F),a+1) = r = \brk(F)$, and, for all $k<a$, one has
$\mathrm{HF}(S/\Ann(F),k) = h_{r,\PP^n}(k)$. Then 
$F$ is wild if and only if $\VSPb(F,r)$ contains a nonsaturated ideal. Furthermore, in that case $\VSPb(F,r)$ consists only of nonsaturated ideals.
\begin{proof}
If $F$ is wild, then $\VSPb(F,r)\neq \emptyset$ contains only nonsaturated ideals by Lemma \ref{vspwild}. 
For the converse, since $\VSPb(F,r)$ contains a nonsaturated ideal, by Theorem \ref{thm:idenfiability via toric regularity} it cannot
contain any saturated ideal. Suppose now by contradiction that $\mathrm{srk}(F) = \brk(F) = r$. 
Then there exists a $B$-saturated ideal $J\subset \Ann(F)$ of a smoothable scheme with Hilbert polynomial equal to $r$. By assumption on 
the Hilbert function of $\Ann(F)$, $J$ has the generic Hilbert function $h_{r,\PP^n}$. Hence $J\in \mathrm{Slip}_{r,\PP^n}$. Therefore 
$J\in \VSPb(F,r)$, a contradiction. Thus $F$ is wild. 
\end{proof}
\end{proposition}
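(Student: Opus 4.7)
The plan is to combine the border identifiability criterion of Corollary \ref{cor:border identifiability for P^n} with the structural Lemma \ref{vspwild} for wild elements. For the forward direction, if $F$ is wild, then $\VSPb(F,r)$ is nonempty by the border apolarity Theorem \ref{mainbb}, and Lemma \ref{vspwild} shows that every ideal therein is nonsaturated; this simultaneously settles the ``Furthermore'' part of the statement.

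For the converse, I would assume that $\VSPb(F,r)$ contains some nonsaturated ideal $J$. The hypothesis $\HF(S/\Ann(F),a) = \HF(S/\Ann(F),a+1) = r$ triggers Corollary \ref{cor:border identifiability for P^n}: the presence of any $B$-saturated ideal $I\in \VSPb(F,r)$ would force $\VSPb(F,r) = \{I\}$, contradicting the existence of $J$. Hence $\VSPb(F,r)$ contains no saturated ideal. I would then argue by contradiction: supposing $F$ is not wild, so $\mathrm{srk}(F) = r$, pick a smoothable length-$r$ scheme $Z \subset \PP^n$ with $B$-saturated ideal $I_Z \subset \Ann(F)$. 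The target is to show $I_Z \in \VSPb(F,r)$.

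First I would verify that $\HF(S/I_Z,\cdot) = h_{r,\PP^n}$ using the numerical hypotheses. The inclusion $I_Z \subset \Ann(F)$ gives $\HF(S/I_Z,k) \geq \HF(S/\Ann(F),k)$, while the length condition gives $\HF(S/I_Z,k) \leq r$ for every $k$. For $k<a$, the assumption $\HF(S/\Ann(F),k) = h_{r,\PP^n}(k) = \min\{r,\dim S_k\}$ pins $\HF(S/I_Z,k)$ between $h_{r,\PP^n}(k)$ and $\min\{r,\dim S_k\}$, forcing equality. For $k \geq a$, the estimate $r = \HF(S/\Ann(F),a) \leq \HF(S/I_Z,a) \leq r$ together with the fact that the Hilbert function of a zero-dimensional subscheme stabilizes at its multiplicity once reached gives $\HF(S/I_Z,k) = r = h_{r,\PP^n}(k)$.

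Next, to show $I_Z \in \mathrm{Slip}_{r,\PP^n}$, I would apply the surjectivity of $\phi_{r,\PP^n}\colon \mathrm{Slip}_{r,\PP^n}\to \mathrm{Hilb}^r_{sm}(\PP^n)$ from Proposition \ref{surjective map from Slip to Hilb^r} to produce some $J' \in \mathrm{Slip}_{r,\PP^n}$ with $\phi_{r,\PP^n}(J') = Z$, so $\overline{J'} = I_Z$. Since $J'$ and $I_Z$ both have the generic Hilbert function $h_{r,\PP^n}$, the inclusion $J' \subset \overline{J'} = I_Z$ together with equal Hilbert functions forces $J' = I_Z$, placing $I_Z$ in $\mathrm{Slip}_{r,\PP^n}$. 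Then $I_Z \in \VSPb(F,r)$ is a saturated ideal, contradicting what was established, hence $F$ must be wild. The main obstacle I expect is the bookkeeping for $\HF(S/I_Z,k)$ across the two ranges $k < a$ and $k \geq a$, which relies on nothing more than the numerical hypotheses and the uniform bound $\HF(S/I_Z,k) \leq r$.
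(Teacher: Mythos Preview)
Your proof is correct and follows essentially the same approach as the paper's: both directions use Lemma~\ref{vspwild} and the identifiability criterion (you invoke Corollary~\ref{cor:border identifiability for P^n}, the paper its parent Theorem~\ref{thm:idenfiability via toric regularity}), and the converse proceeds by contradiction via a smoothable apolar scheme. You simply spell out in more detail the two steps the paper compresses---the verification that $I_Z$ has Hilbert function $h_{r,\PP^n}$ and its membership in $\mathrm{Slip}_{r,\PP^n}$ (the latter being the argument of Lemma~\ref{irreducibility of saturablelocus} specialized to a smoothable $Z$).
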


This result along with a characterization of wildness for minimal border rank forms  in terms of Hessians yields the following corollary. 

\begin{corollary}\label{vspwithonlysat}
Let $X = \PP^n$. Let $d =  3$ or $d\geq n+2$ and $F\in T_d$ be a minimal border rank concise form, i.e. $\brk(F) = n+1$. Then $\mathrm{Hess}(F)\neq 0$ if and only if $\VSPb(F,n+1)$ consists of a unique saturated ideal. When this holds, the unique 
saturated ideal is $(\Ann(F)_2)$. 
\begin{proof}
By \cite[Theorem 4.9]{HMV} $\mathrm{Hess}(F)\neq 0$ if and only if $F$ is not wild. 
We claim that in both cases considered in the statement $\mathrm{HF}(S/\Ann(F),2)=n+1$. If $d=3$ it is clear. If $d\geq n+2$ we obtain the claim by using Macaulay's bound applied to $\mathrm{HF}(S/\Ann(F),d-2)$ and the symmetry of the Hilbert function of $\Ann(F)$. 
If $F$ is not wild, then $\VSPb(F,n+1)\neq \emptyset$ contains only saturated ideals by Proposition ~\ref{iff with a nonsat ideal in vsp for P^n}. Since, in particular, $\VSPb(F,n+1)$
contains a saturated ideal, then 
by Corollary~\ref{cor:border identifiability for P^n} it contains a unique one. For the converse, suppose $\VSPb(F,n+1)$ contains a unique 
saturated ideal. Again by Proposition \ref{iff with a nonsat ideal in vsp for P^n}, $F$ is not wild.   
\end{proof}
\end{corollary}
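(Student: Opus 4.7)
The strategy is to combine the wildness characterization in \cite[Theorem 4.9]{HMV}---that $\mathrm{Hess}(F)\neq 0$ is equivalent to $F$ being non-wild---with Proposition \ref{iff with a nonsat ideal in vsp for P^n} and Corollary \ref{cor:border identifiability for P^n}. Both of those results require a Hilbert function hypothesis on $\Ann(F)$ that I would verify with the parameter $a=1$ and $r=n+1$, namely
\[
\HF(S/\Ann(F),0)=1,\qquad \HF(S/\Ann(F),1)=\HF(S/\Ann(F),2)=n+1.
\]

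The degree $0$ and degree $1$ equalities are immediate (the first from $\Ann(F)_0=0$, the second from conciseness). For the degree $2$ equality, I would first invoke the catalecticant inequality $\HF(S/\Ann(F),k)\leq \brk(F)=n+1$ to get the upper bound. The matching lower bound splits into the two ranges. When $d=3$, Gorenstein symmetry $\HF(k)=\HF(d-k)$ of the Artinian Gorenstein algebra $S/\Ann(F)$ gives $\HF(S/\Ann(F),2)=\HF(S/\Ann(F),1)=n+1$ directly. When $d\geq n+2$, the same symmetry reduces the claim to $\HF(S/\Ann(F),d-2)\geq n+1$, using that $\HF(S/\Ann(F),d-1)=n+1$. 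Here I would proceed by contradiction via Macaulay's bound: if $\HF(S/\Ann(F),d-2)=c\leq n$, then $c\leq n\leq d-2$, so the $(d-2)$-binomial expansion of $c$ consists entirely of terms $\binom{j}{j}=1$ with distinct $j$'s, whence $c^{\langle d-2\rangle}=c\leq n$; this contradicts $n+1=\HF(S/\Ann(F),d-1)\leq c^{\langle d-2\rangle}$.

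With the Hilbert function facts in place, Proposition \ref{iff with a nonsat ideal in vsp for P^n} yields that $F$ is wild if and only if $\VSPb(F,n+1)$ contains a nonsaturated ideal, and that in the wild case every ideal of $\VSPb(F,n+1)$ is nonsaturated. Combining with \cite[Theorem 4.9]{HMV}, we obtain that $\mathrm{Hess}(F)\neq 0$ if and only if $\VSPb(F,n+1)$ contains a $B$-saturated ideal. When this holds, Corollary \ref{cor:border identifiability for P^n} upgrades existence to uniqueness: $\VSPb(F,n+1)=\{I\}$ for a single saturated $I$.

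To identify $I$ with $(\Ann(F)_2)$, I would compare degree-$2$ parts: $I\subseteq \Ann(F)$ forces $I_2\subseteq \Ann(F)_2$, and both sides have $\CC$-dimension $\dim_{\CC} S_2-(n+1)$ by the Hilbert function computation above, so $I_2=\Ann(F)_2$ and $(\Ann(F)_2)\subseteq I$. An argument analogous to the one sketched in Example \ref{minbr}---exploiting that $I$ is $B$-saturated with the generic Hilbert function $h_{n+1,\PP^n}$, which forces $I$ to equal the saturation of any ideal it contains whose generic-fibre Hilbert polynomial is already $n+1$---then promotes this inclusion to the equality $I=(\Ann(F)_2)$. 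The main technical hurdle I anticipate is the Hilbert function verification for $d\geq n+2$: it is the only step requiring substantive numerical input, namely the reverse direction of Macaulay's bound in conjunction with the Gorenstein symmetry of $S/\Ann(F)$.
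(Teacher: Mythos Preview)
Your proposal is correct and mirrors the paper's proof: both pivot on \cite[Theorem~4.9]{HMV}, verify $\HF(S/\Ann(F),1)=\HF(S/\Ann(F),2)=n+1$ (your Macaulay--bound argument for $d\geq n+2$ is precisely what the paper has in mind), and then invoke Proposition~\ref{iff with a nonsat ideal in vsp for P^n} together with Corollary~\ref{cor:border identifiability for P^n}. One minor tightening for the final identification: your saturation argument as written only yields $I=\overline{(\Ann(F)_2)}$; to get $I=(\Ann(F)_2)$ on the nose, observe that a saturated ideal $I$ with $\HF(S/I,1)=n+1$ has no minimal generators in degree larger than $2$ by \cite[Theorem~1.69]{ik}, so $I=(I_2)=(\Ann(F)_2)$.
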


We finish off this short subsection with an example when $X = \PP^3$. Continuing the program started in \cite{HMV} to better understand $\VSPb$'s of wild forms, we offer the description of $\VSPb(F,10)$ where $F$ is the well-known {\it Ikeda quintic} $F = x_0^3x_1x_2+x_0x_1^3x_3+x_2^3x_3^2\in T_5$. The Ikeda quintic does not have vanishing Hessian but it has {\it second higher vanishing Hessian}; see \cite{MW} and \cite[Chapter 7]{russo}. This description was not accessible 
with the methods in \cite{HMV}.

\begin{example}[$\VSPb$ of the Ikeda quintic]
Let $F = x_0^3x_1x_2+x_0x_1^3x_3+x_2^3x_3^2\in T_5$. It is a wild form with $\underline{{\bf rk}}(F)=10$. 
We have $\VSPb(F, 10) \cong \PP^7$.
\begin{proof}
    Let $J = (\Ann(F)_{\leq 3})$. Observe that $J_{\geq 6}$ is a monomial ideal and $\HF(S/J, d) = h_{10,\PP^3}(d)$ for every $d\leq 6$.
    Therefore, if $I\in \VSPb(F, 10)$, then we can take the Gr\"obner degeneration of $I_{\geq 6}$ to obtain an ideal $I'\in \VSPb(F,10)$ such that
    $I'_{\leq 6} = J_{\leq 6}$ and $I'_{\geq 6}$ is a monomial ideal. There are $81$ ideals $I$ in $\mathrm{Hilb}_S^{h_{10,\PP^3}}$ such that  
    $I_{\leq 6} = J_{\leq 6}$ and $I_{\geq 6}$ is a monomial ideal. These are computed using the \texttt{Macaulay2} script presented below. None of them is saturated but only for $8$ of them, we have $\dim_\CC \mathrm{Ext}^1_S(\overline{I}/I, S/\overline{I})_0 \neq 0$. 
It follows from \cite[Theorem~3.4]{JM} that only these $8$ ideals might be in $\mathrm{Slip}_{10, \PP^3}$. These ideals are the elements of the set $A=\{J + (y_0^ay_1^b) \mid 0\leq a,b\text{ and }a+b = 7\}$.
If $I\in \VSPb(F, 10)$, then for every monomial order $\prec$ the ideal obtained from $I$ by deforming $I_{\geq 6}$ to a monomial ideal according to $\prec$ is in $A$. It follows that $I$ is contained in 
    \[
      \mathcal{X} = \{J + (G) \mid G \text{ is a nonzero element of } \langle y_0^7, y_0^6y_1, \ldots, y_1^7\rangle\}. 
    \]
    We show that in fact $\VSPb(F, 10) = \mathcal{X}$. Indeed, since $\underline{{\bf rk}}(F)=10$, by the border apolarity Theorem~\ref{mainbb}, $\VSPb(F,10) \neq \emptyset$.
    Therefore, by the above arguments, there exists $K\in A\cap \VSPb(10, F)$. We verify that the dimension of the tangent space to $\mathrm{Hilb}_S^{h_{10,\PP^3}}$ at every ideal from $A$ is $30$ which is equal to dimension of $\mathrm{Slip}_{10, \PP^3}$. It follows that every closed irreducible subset of $\mathrm{Hilb}_S^{h_{10,\PP^3}}$ passing through the point $K$ is contained in $\mathrm{Slip}_{10, \PP^3}$. In particular, $\mathcal{X} \subseteq \mathrm{Slip}_{10, \PP^3}$ which implies that $\VSPb(F, 10) = \mathcal{X}$.
\begin{small}    
\begin{mybox}
{\color{blue}
\begin{verbatim}
S=QQ[x_0,x_1,x_2,x_3]
m=ideal vars S;
r=10;
hrn = (i,r) -> (return min(hilbertFunction(i, ideal(0_S)), r););
cohrn = (i,r) -> (return (hilbertFunction(i, ideal(0_S)) - hrn(i,r)););
testInDegree = (i,r, I) -> (
  if (hilbertFunction(i, I) != hrn(i, r)) then return false;
  if (hilbertFunction(i+1, I) < hrn(i+1, r)) then return false;
  return true;
);
howManyNewGenerators = (i, r, I) ->(return hilbertFunction(i, I) - hrn(i,r););
addingInDegree = (i, r,E) -> (
  F:=flatten entries super basis(i, ideal vars S);
  G:={};
  for k from 0 to #E-1 do(
    H:=subsets(F, howManyNewGenerators(i, r, E#k));
    if (#H==0) then G=append(G, E#k);
    for l from 0 to #H-1 do (
      I:=E#k+ideal(H#l);
      if (testInDegree(i,r,I) == true) then G=append(G, I);   
    );
  );
  return G;
);
apolarIdeals  = (r,J) -> (
  E:={J}; F:={};
  for i from 0 to r do E=addingInDegree(i, r, E);
  for i from 0 to #E-1 do (if (dim E#i == 1) and (degree E#i == r) 
    then F=append(F, E#i));
  return F;
);
F=x_0^3*x_1*x_2 + x_0*x_1^3*x_3 + x_2^3*x_3^2
e=select(flatten entries mingens(inverseSystem(F)), i -> ((degree i)#0 < 4 ));
E=apolarIdeals(10,ideal e);
\end{verbatim}
}
\end{mybox}    
\end{small}

\end{proof}
\end{example}

\begin{remark}
Let $X = \PP^n$. Let $F\in T_d$ be such that  $\mathrm{HF}(\Ann(F),s)=\underline{{\bf rk}}(F)= \binom{s+n}{s}$ for 
some $1\leq s\leq \lfloor d/2\rfloor$ and such that $\Ann(F)$ has unimodal Hilbert function. In particular, $F$ is $s$-concise, i.e. $\Ann(F)_{\leq s} = 0$. The condition that the higher Hessian $\mathrm{Hess}^{(s,s)}(F)$ vanishes implies that $F$ is wild \cite[Theorem 3.14]{DG}. We do not know whether the converse is true as in the minimal border rank regime 
\cite[Theorem 4.9]{HMV}. When $n=2$, if an $F$ satisfying this assumption existed, it would solve the problem of the first author \cite[Problem 1]{Man} in the positive. However, proving the equivalence and finding such an $F$ would also solve a long-standing open problem of Maeno and Watanabe in the negative \cite[Remark 5.1]{MW}: there would exist Gorenstein algebras $A=S/\Ann(F)$ with $\dim A_1=3$ {\it without} the Strong Lefschetz Property \cite[Corollary 7.2.21]{russo}.  
\end{remark}

\section{Botany of forms and their $\VSPb$'s}\label{sec:botany of forms}

In this section, we assume $X = \PP^n$, so $S$ and $T$ are polynomial rings with the standard grading. 
The surjective morphism $\phi_{r,\PP^n}: \mathrm{Slip}_{r,\PP^n}\rightarrow \mathrm{Hilb}_{sm}^{r}(\PP^n)$ 
is given on closed points by $J\mapsto \mathrm{Proj}(S/J)$.

\subsection{Binary forms}\label{ssec:binary}
Here let $n=1$. It follows from \cite[Lemma~4.1]{hs} that $\mathrm{Hilb}_S^{h_{r,\PP^1}}$ and $\mathrm{Hilb}^r(\PP^1)$ represent the same functor. In particular, $\mathrm{Hilb}_S^{h_{r,\PP^1}} \cong \mathrm{Hilb}^r(\PP^1) \cong \PP^r$ where the latter isomorphism is well-known \cite[Proposition 7.3.3~and~Example~7.1.3]{Fan05}. 

\begin{proposition}\label{prop:binary forms}
Let $F\in T_d$ be a binary form. Then $\mathrm{crk}(F) = \mathrm{srk}(F) = \brk(F)=r$ and $\VPS(F,r)\cong \VSPb(F,r)$
where the isomorphism is induced by the isomorphism $\phi_{r,\PP^1}$ and either $\VPS(F,r)\cong \PP^1$ or it is a point. If $\VPS(F,r) \cong \PP^1$, then $d$ is even. Moreover, if $\rk(F)=r$ then  $\VSP(F,r)\cong \VPS(F,r)$. 
\begin{proof}
By a theorem of Sylvester, for any $F\in T_d$ its annihilator is a complete intersection $\Ann(F)=(g_1,g_2)\subset S$ with $\deg(g_1)\leq \deg(g_2)$ and $\deg(g_1)+\deg(g_2) = d+2$. 
It is clear that $r:=\min\lbrace \deg(g_1), \deg(g_2)\rbrace = \deg(g_1) = \mathrm{crk}(F)$. Since $\mathrm{Hilb}^r(\PP^1)$ is irreducible, one has 
$\mathrm{crk}(F) = \mathrm{srk}(F)$. By the border apolarity Theorem \ref{mainbb}, we have $\brk(F)=r$ as well
and the equality is proven. 

Either $\deg(g_1) = \deg(g_2)$ or not. In the first case, $\dim_{\CC} \Ann(F)_{\deg(g_1)}=2$ and
so any $I = (g)\subset \Ann(F)$ with $\deg(g) = \deg(g_1)=r$ is a point in $\VPS(F,r)$. We have a morphism 
\[
\PP^1 = \PP(\Ann(F)_{\deg(g_1)})\longrightarrow \VPS(F,r)
\]
defined on closed points by $g \mapsto \mathrm{Proj}(S/(g))$, which is an isomorphism. Note that $d = 2(\deg(g_1)-1)$ is even.  In the second case, the only point of $\VPS(F,r)$ is given by $I=(g_1)$. 

If $\rk(F) = r$, then $\Ann(F)_{\deg(g_1)}$ contains a square-free form. If we are in the first case, then $\VSP^{0}$ which is set-theoretically given by all the square-free forms inside the pencil $\PP(\Ann(F)_{\deg(g_1)})$, is dense in $\PP(\Ann(F)_{\deg(g_1)})$.  If we are in the second case, then the unique point of $\VPS(F,r)$ is a radical ideal. 
\end{proof}
\end{proposition}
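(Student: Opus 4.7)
The backbone of the argument will be Sylvester's classical structure theorem for annihilators of binary forms together with the fact, noted at the start of this subsection, that for $X=\PP^1$ the multigraded Hilbert scheme $\mathrm{Hilb}^{h_{r,\PP^1}}_S$ and the usual Hilbert scheme $\mathrm{Hilb}^r(\PP^1)$ coincide; in particular $\phi_{r,\PP^1}$ is itself an isomorphism and $\mathrm{Slip}_{r,\PP^1}=\mathrm{Hilb}^{h_{r,\PP^1},\mathrm{sat}}_S$ by Lemma~\ref{irreducibility of saturablelocus}.

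First, I would invoke Sylvester to write $\Ann(F)=(g_1,g_2)$ as a complete intersection with $\deg g_1\leq \deg g_2$ and $\deg g_1+\deg g_2=d+2$, and set $r=\deg g_1$. The principal ideal $(g_1)$ is $B$-saturated, defines a length-$r$ subscheme of $\PP^1$ apolar to $F$, so $\mathrm{crk}(F)\leq r$; conversely any $I_Z\subset \Ann(F)$ with $\mathrm{length}(Z)<r$ would contribute a nonzero element of $\Ann(F)$ of degree $<r$, which is impossible. Since $\mathrm{Hilb}^r(\PP^1)\cong \PP^r$ is irreducible, every length-$r$ scheme is smoothable, hence $\mathrm{srk}(F)=\mathrm{crk}(F)=r$; combining with the basic inequality $\brk(F)\leq \mathrm{srk}(F)$ and border apolarity (Theorem~\ref{mainbb}) applied to the point $(g_1)\in \mathrm{Slip}_{r,\PP^1}\subset \Ann(F)$ yields $\brk(F)=r$.

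Next, I would identify $\VPS(F,r)$ with a linear system in $\Ann(F)$. Every ideal of a length-$r$ subscheme of $\PP^1$ contained in $\Ann(F)$ is principal, generated by a form of degree exactly $r$, so the map $[g]\mapsto \mathrm{Proj}(S/(g))$ defines an isomorphism $\PP(\Ann(F)_r)\xrightarrow{\sim} \VPS(F,r)$. Now $\dim_\CC\Ann(F)_r$ is either $1$ (when $\deg g_1<\deg g_2$) or $2$ (when $\deg g_1=\deg g_2$, forcing $d=2r-2$ to be even), giving the dichotomy $\VPS(F,r)\cong\{\mathrm{pt}\}$ or $\PP^1$. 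The isomorphism $\VPS(F,r)\cong \VSPb(F,r)$ induced by $\phi_{r,\PP^1}$ is then immediate from Lemma~\ref{isobetweenVSP-VPSbar}: in $\PP^1$ the length-$r$ Hilbert function is automatically $h_{r,\PP^1}$, every principal ideal is $B$-saturated, and by irreducibility of $\mathrm{Hilb}^r(\PP^1)$ it lies in $\mathrm{Slip}_{r,\PP^1}$.

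Finally, for the assertion on $\VSP(F,r)$, using Proposition~\ref{birfromVSPtoVPS} I only need a dense locus of smooth (equivalently, square-free) ideals in $\VPS(F,r)$. The assumption $\rk(F)=r$ guarantees a square-free form $g\in \Ann(F)_r$. In the one-point case this $g$ must be a scalar multiple of $g_1$ and so the unique point of $\VPS(F,r)$ is already a smooth scheme; in the pencil case, the locus of non-square-free forms in $\PP(\Ann(F)_r)\cong \PP^1$ is the vanishing of the discriminant, a proper closed subset, so smooth ideals form a dense open subset and $\VSP(F,r)=\VPS(F,r)$. No step here is genuinely hard: the only point requiring some care is making sure the hypotheses of Lemma~\ref{isobetweenVSP-VPSbar} are genuinely met in the $\PP^1$ setting, which is where the irreducibility of $\mathrm{Hilb}^r(\PP^1)$ does all the work.
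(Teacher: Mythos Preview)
Your proof is correct and follows essentially the same route as the paper: Sylvester's structure theorem for $\Ann(F)$, the dichotomy on whether $\deg g_1=\deg g_2$, and the density of square-free forms in the pencil when $\rk(F)=r$. The only cosmetic difference is that you package the identification $\VPS(F,r)\cong\VSPb(F,r)$ via Lemma~\ref{isobetweenVSP-VPSbar} and the final $\VSP=\VPS$ claim via Proposition~\ref{birfromVSPtoVPS}, whereas the paper argues both points directly; your way is arguably cleaner since it makes explicit which general results are doing the work.
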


\subsection{Cubic forms}\label{ssec:cubics}
\subsubsection{Ternary cubics}

\begin{proposition}\label{likegenericubic}
If $F$ is a plane cubic such that $\mathrm{Ann}(F)$ is generated by three quadrics, then $\VSPb(F,4)\cong \PP^2$. 
\begin{proof}
Let $\mathrm{Ann}(F) = (q_1,q_2,q_3)$. 
The quadrics $q_1,q_2,q_3$ form a regular sequence. Let $q'$ and $q''$ be two linear combinations of $q_1,q_2,q_3$ such that 
$\dim_\CC\langle q', q''\rangle = 2$.  Then $q'$ and $q''$ form a regular sequence, i.e. the ideal they generate $I=(q',q'')$ has codimension $2$
and it is saturated. Therefore, any choice of a $2$-dimensional subspace generates a saturated ideal with Hilbert function $h_{4, \PP^2}$. Hence $\VSPb(F,4)\cong \mathbb G(2,3)\cong \PP^2$.
\end{proof}
\end{proposition}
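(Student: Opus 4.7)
The plan is to identify $\VSPb(F,4)$ with the Grassmannian $\mathbb{G}(2,\Ann(F)_2)\cong\PP^2$ via the degree-two component map $J\mapsto J_2$. Since $\Ann(F)=(q_1,q_2,q_3)$ has no linear generators, $F$ is concise, so $\HF(S/\Ann(F),1)=3$; the Gorenstein algebra $S/\Ann(F)$ has socle degree $3$, so its Hilbert function is palindromic, namely $(1,3,3,1)$. In particular $\dim_\CC\Ann(F)_2=3$ and $\dim_\CC\Ann(F)_3=9$. Since $h_{4,\PP^2}=(1,3,4,4,4,\ldots)$, any $J\in\VSPb(F,4)$ satisfies $\dim_\CC J_2=2$ with $J_2\subset\Ann(F)_2$, providing the target point in $\mathbb{G}(2,\Ann(F)_2)$.

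The key step, which I expect to be the main obstacle, is to show that \emph{every} $2$-dimensional subspace $W\subset\Ann(F)_2$ is generated by two quadrics with no common linear factor, so that $I_W:=(W)$ is a complete intersection. Suppose to the contrary $W=\ell\cdot L$ for some linear form $\ell$ and a $2$-dim space $L$ of linear forms. Completing $W$ to a basis $\{q',q'',q'''\}$ of $\Ann(F)_2$ gives $\Ann(F)_3=W\cdot S_1+q'''\cdot S_1$, because by hypothesis $\Ann(F)$ has no minimal generators in degree three. The Koszul syzygy of two linearly independent linear forms shows $\dim(L\cdot S_1)=5$, hence $\dim(W\cdot S_1)=\dim(\ell\cdot(L\cdot S_1))=5$, and thus $\dim\Ann(F)_3\le 5+3=8$, contradicting $\dim\Ann(F)_3=9$. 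It is precisely here that the hypothesis that $\Ann(F)$ is generated by three quadrics (and not, as for the Fermat cubic, by quadrics together with extra cubics) is used.

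Granted this claim, the Koszul complex gives the Hilbert series $(1-t^2)^2/(1-t)^3=(1+t)^2/(1-t)$ for $S/I_W$, so the Hilbert function is exactly $h_{4,\PP^2}$. Being a complete intersection, $I_W$ is Cohen-Macaulay and therefore $B$-saturated. Since $\mathrm{Hilb}^4(\PP^2)$ is irreducible by Fogarty, Lemma~\ref{irreducibility of saturablelocus} places $I_W$ in $\mathrm{Slip}_{4,\PP^2}$, and $I_W\subset\Ann(F)$ by Proposition~\ref{inclusion in onedeg}; so $I_W\in\VSPb(F,4)$. Conversely, any $J\in\VSPb(F,4)$ with $J_2=W$ contains $I_W$ and has the same Hilbert function, whence $J=I_W$. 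The assignment $W\mapsto I_W$, defined as a morphism via the universal family on the Grassmannian, is therefore inverse to $J\mapsto J_2$ on closed points; since $\mathbb{G}(2,\Ann(F)_2)\cong\PP^2$ is normal, Zariski's main theorem (as in the alternate proof of Theorem~\ref{theo:min border rank three}) upgrades this bijection to the desired scheme isomorphism $\VSPb(F,4)\cong\PP^2$.
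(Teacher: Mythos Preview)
Your proof is correct and follows the same overall strategy as the paper: identify $\VSPb(F,4)$ with $\mathbb{G}(2,\Ann(F)_2)\cong\PP^2$ via $J\mapsto J_2$, by showing that every two-dimensional subspace $W\subset\Ann(F)_2$ generates a saturated complete intersection with Hilbert function $h_{4,\PP^2}$.

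The one real difference is how you justify the key step. The paper observes that since $S/\Ann(F)$ is Artinian in a three-variable ring, the three generating quadrics form a regular sequence, and hence any two linearly independent combinations of them are automatically a regular sequence of length two; this is a one-line conceptual argument. Your route---arguing that a common linear factor would force $\dim(W\cdot S_1)=5$ and hence $\dim\Ann(F)_3\le 8$, contradicting the palindromic Hilbert function---is more elementary and makes the role of the ``no cubic generators'' hypothesis visible in a different place. Both arguments land in the same spot. Your write-up is also more explicit than the paper's on points it leaves tacit: membership of $I_W$ in $\mathrm{Slip}_{4,\PP^2}$ via Lemma~\ref{irreducibility of saturablelocus}, the converse $J_2=W\Rightarrow J=I_W$, and upgrading the set-theoretic bijection to a scheme isomorphism via Zariski's Main Theorem.
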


The $\mathrm{PGL}(3,\CC)$-orbits of ternary cubic forms with their ranks and border ranks are reported 
in Table \ref{table: ternary cubic forms} \cite[Theorem 8.1]{LT10}. 

\begin{table}
\begin{center}
\begin{tabular}{|l l c r|}
\hline
Description of $F$ & $F$ in normal form & $\mathrm{rk}(F)$ & $\brk(F)$ \\
\hline
triple line & $x_0^3$ & $1$ & $1$ \\  
\hline
three concurrent lines & $x_0x_1(x_0+x_1)$ & $2$ & $2$ \\  
\hline
double line + line & $x_0^2x_1$ & $3$ & $2$ \\  
\hline
irreducible Fermat & $x_1^2x_2 - x_0^3 -  x_2^3$ & $3$ & $3$ \\
\hline
irreducible & $x_1^2x_2 - x_0^3 - x_0x_2^2$ & $4$ & $4$ \\ 
\hline
cusp & $x_1^2x_2 - x_0^3$ & $4$ & $3$ \\
\hline
triangle & $x_0x_1x_2$ & $4$ & $4$ \\
\hline
conic + transversal line & $x_0(x_0^2+x_1x_2)$ & $4$ &$4$ \\
\hline
irreducible, smooth ($\lambda^3 \neq -27/4$ and $\lambda \neq 0$) & $x_1^2x_2 - x_0^3 - \lambda x_0x_2^2 - x_2^3$ & $4$ &$4$ \\
\hline
irreducible, singular ($\lambda^3 = -27/4$) & $x_1^2x_2 - x_0^3 - \lambda x_0x_2^2 - x_2^3$ & $4$ &$4$ \\
\hline
conic + tangent line & $x_1(x_0^2 + x_1x_2)$ & $5$ & $3$ \\
\hline
\end{tabular}
\caption{Ranks and border ranks of ternary cubic forms under $\mathrm{PGL}(3,\CC)$-action.}\label{table: ternary cubic forms}
\end{center}
\end{table}

\begin{theorem}\label{thm:ternary_cubic}
Let $F$ be any ternary cubic form. Then either $\VSPb(F,\brk(F))\cong \PP^2$ or 
it is a point. 
\begin{proof}
It is sufficient to prove the result for the normal forms presented in Table \ref{table: ternary cubic forms}.
When $\brk(F)\leq 2$, $F$ is a binary form and we apply Proposition \ref{prop:binary forms}. 
For the cases when $\brk(F) = 3$ (i.e. minimal border rank), we use Corollary~\ref{vspwithonlysat} for $n=2, d=3$. In the cases $\brk(F)=4$ (i.e. generic border rank), we employ Proposition~\ref{likegenericubic}.
\end{proof}
\end{theorem}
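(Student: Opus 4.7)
The plan is to classify by normal form. Since $\VSPb(F,\brk(F))$ is $\mathrm{PGL}(3,\CC)$-equivariantly defined, it suffices to check the statement on each representative in Table~\ref{table: ternary cubic forms}. The eleven orbits split naturally into three regimes according to $\brk(F)\in\{1,2,3,4\}$, and I will argue separately in each.

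For $\brk(F)\le 2$ (the orbits $x_0^3$, $x_0x_1(x_0+x_1)$, $x_0^2x_1$), observe that $y_2\in\Ann(F)_1$, so one reduces to what is essentially a two-variable computation. A short degree-by-degree dimension count on $\mathrm{Hilb}_S^{h_{\brk(F),\PP^2}}$ shows there is at most one homogeneous ideal $I\subseteq\Ann(F)$ with the correct Hilbert function. Since $\VSPb(F,\brk(F))\neq\emptyset$ by border apolarity Theorem~\ref{mainbb}, this candidate must lie in $\mathrm{Slip}_{\brk(F),\PP^2}$ and be the unique closed point of $\VSPb(F,\brk(F))$. This is the binary-form type argument underlying Proposition~\ref{prop:binary forms}.

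For $\brk(F)=3$ (Fermat, cusp, conic plus tangent line), each representative is concise and of minimal border rank, so Corollary~\ref{vspwithonlysat} applies with $n=2$ and $d=3$. What remains is to verify $\mathrm{Hess}(F)\ne 0$ in each instance, which is a one-line $3\times 3$ determinant calculation in each case. The corollary then produces a unique saturated ideal $(\Ann(F)_2)\in\VSPb(F,3)$ and no others, so $\VSPb(F,3)$ is a single point.

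For $\brk(F)=4$ (the five orbits of generic border rank), each form is concise and its apolar ideal is Gorenstein with Hilbert function $(1,3,3,1)$, hence $\dim_\CC\Ann(F)_2=3$ and $\Ann(F)$ is minimally generated by those three quadrics. Proposition~\ref{likegenericubic} then gives $\VSPb(F,4)\cong\PP^2$ directly. The only step I expect to demand any attention is certifying the non-wildness (equivalently, the nonvanishing of the Hessian) of each of the three concise minimal border rank cubics; the remaining cases reduce to elementary Hilbert-function bookkeeping or to an already-proven proposition.
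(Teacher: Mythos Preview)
Your proposal is correct and follows essentially the same three-case strategy as the paper: reduce to binary forms for $\brk(F)\le 2$, invoke Corollary~\ref{vspwithonlysat} for $\brk(F)=3$, and invoke Proposition~\ref{likegenericubic} for $\brk(F)=4$. You are somewhat more explicit than the paper about the auxiliary verifications (nonvanishing Hessians, the apolar ideal being generated by quadrics), which is fine.

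One small point worth tightening: for the $\brk(F)=4$ cases you write that the Hilbert function $(1,3,3,1)$ \emph{hence} implies $\Ann(F)$ is minimally generated by three quadrics. This is not automatic from the Hilbert function alone; one needs that the three quadrics in $\Ann(F)_2$ form a regular sequence (equivalently, have no common curve in $\PP^2$). For a concise ternary cubic this does hold: a common line $\ell$ would force $\ell\cdot S_1\subseteq\Ann(F)_2$, hence $\ell\in\Ann(F)_1$, contradicting conciseness, and a common conic would force all three quadrics to be proportional. The paper's proof glosses over the same hypothesis check, so this is not a discrepancy between the two arguments, just a detail you may wish to record.
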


\subsubsection{Reducible cubics}\label{ssec: reducible cubics}
Concise reducible cubic forms in $T=T[X]$ for $X=\PP^n$ fall in the following four classes up to the $\mathrm{PGL}(n+1,\CC)$-action \cite{CCV16}: 
\begin{enumerate}

\item[(i)] $A = x_0(x_0^2+x_1^2+\cdots + x_n^2)$;

\item[(ii)] $B = x_0(x_1^2+x_2^2+\cdots + x_n^2)$;

\item[(iii)] $C = x_0(x_0x_1+x_2^2+\cdots+x_n^2)$;

\item[(iv)] monomials (which are all nonconcise if and only if $n\geq 3$).

\end{enumerate}

So the fourth cases appear only for $n\leq 2$, and hence their $\VSPb$'s are described by Theorem~\ref{thm:ternary_cubic}.
In this subsection we focus on cases (i)--(iii). The cubic form $B$ is the symmetrization of the {\it small Coppersmith-Winograd tensor} $T_{cw,n}$. The cubic form $C$ is the symmetrization of the {\it big Coppersmith-Winograd tensor} $T_{CW,n-1}$. 

\begin{proposition}\label{typeB}
Let $n\geq 2$ and let $B\in T_3$ be the symmetrization of the small Coppersmith-Winograd tensor. Then 
$\VSPb(B,n+2)$ is $\PP^2$ for $n=2$ and a single point for $n\geq 3$. 
\begin{proof}
By \cite[Proposition~3.4.9.1]{Landsberg17} we have $\brk(B) = n+2$.
For $n=2$, the statement is a special case of Proposition \ref{likegenericubic}. Let $n\geq 3$ and let $I\subset \Ann(B)$ be an ideal with generic Hilbert function $h_{n+2,\PP^n}$. Then $I_{2}$ has codimension one in $\Ann(B)_2$. 
Write $\Ann(B)=(y_0^2,y_iy_j,y_i^2-y_n^2 \mid 1\leq i < j \leq n)$. 
Fix a graded lexicographic monomial ordering with $y_0\succ \cdots \succ y_n$. 

We now prove that $y_0^2\notin J =\mathrm{in}_{\succ}(I)$. If not, then $J_2 = \langle y_0^2, m\in \mathcal M\rangle$, where $\mathcal M$ is the set of the initial monomials of the generators of $\Ann(B)$ except one. 
Suppose $y_iy_j\notin \mathcal M$ with $i\neq j$. Then $y_{\ell}^2\in J$ for all $0\leq \ell\leq n-1$. If $i,j\neq n$, then the quotient $S_k/J_k$ for $k\geq 4$ is spanned by at most two monomials: $y_0y_n^{k-1}$ and $y_n^k$. Thus $\mathrm{HF}(S/I,k) \leq 2$ for $k\geq 4$, a contradiction. If $y_iy_n\notin \mathcal M$, then the quotient $S_k/J_k$ for $k\geq 3$ is spanned by at most four monomials: $y_0y_iy_n^{k-2}, y_iy_n^{k-1}, y_0y_n^{k-1}$ and $y_n^k$. Thus $\mathrm{HF}(S/I,k) \leq 4<n+2$ for $k\geq 3$, a contradiction. If $y_i^2\notin \mathcal M$ for some $0<i<n$, then the quotient 
$S_k/J_k$ for $k\geq 3$ is spanned by at most four monomials: $y_0y_i^{k-1},y_0y_n^{k-1},y_i^{k},y_n^{k}$. Thus $\mathrm{HF}(S/I,k)\leq 4<n+2$ for $k\geq 3$, a contradiction. 

Therefore $y_0^2\notin J$.
This implies that $I =(y_iy_j,y_i^2-y_n^2)$, which is the unique point of $\VSPb(F,n+2)$. Indeed, it is easy to check that the basis of the quotient $S_k/I_k$ for $k\geq 3$ is 
spanned by $y_0^k, y_0^{k-1}y_1,\ldots, y_0^{k-1}y_{n},y_0^{k-2}y_1^2$.
\end{proof}
\end{proposition}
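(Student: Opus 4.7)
The plan is to split the analysis into the two cases $n=2$ and $n\geq 3$. In both, the first step is to compute $\Ann(B)$ explicitly: the degree-two part is spanned by $y_0^2$, the monomials $y_iy_j$ for $1\leq i<j\leq n$, and the elements $y_i^2-y_n^2$ for $1\leq i<n$, giving $\dim \Ann(B)_2 = \binom{n+1}{2}$. One also records that $\mathrm{HF}(S/\Ann(B))=(1,\,n{+}1,\,n{+}1,\,1)$, and that $\brk(B)=n+2$ by \cite[Proposition~3.4.9.1]{Landsberg17}. For $n=2$ the ideal $\Ann(B)=(y_0^2,y_1y_2,y_1^2-y_2^2)$ is a three-generated quadric ideal in three variables, so Proposition~\ref{likegenericubic} immediately yields $\VSPb(B,4)\cong \PP^2$.

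For $n\geq 3$, I would first exploit the Hilbert-function constraint: any $I\in\VSPb(B,n+2)$ must satisfy $\mathrm{HF}(S/I,k)=n+2$ for $k\geq 2$, so $I_2\subset \Ann(B)_2$ has codimension one in $\Ann(B)_2$. Fix the graded lexicographic order with $y_0\succ y_1\succ\cdots\succ y_n$. Under this order the initial monomials of the generators of $\Ann(B)_2$ are $y_0^2$, $y_iy_j$ for $i<j$, and $y_\ell^2$ for $\ell<n$; in particular, $\mathrm{in}_\succ(\Ann(B))_2$ has dimension $\binom{n+1}{2}$. Since $I_2\subset\Ann(B)_2$, the monomial space $\mathrm{in}_\succ(I)_2$ is a codimension-one subspace of this space, so exactly one of the initial monomials is absent.

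The core of the argument is to show that the missing monomial must be $y_0^2$. Assume toward contradiction that $y_0^2\in\mathrm{in}_\succ(I)_2$ and some other element is missing. There are three subcases: (i) $y_iy_j$ missing with $1\leq i<j<n$; (ii) $y_iy_n$ missing for some $1\leq i<n$; (iii) $y_\ell^2$ missing for some $1\leq\ell<n$. In each case the constraints on a degree-$k$ monomial $m$ not divisible by any element of $\mathrm{in}_\succ(I)_2$ force the support of $m$ in $\{1,\ldots,n\}$ to lie in a very restricted set and bound the exponents sharply. A direct enumeration then shows that, for $k$ large enough, the number of such monomials is at most $2$ in case (i), and at most $4$ in cases (ii) and (iii). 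Since $n\geq 3$ gives $n+2\geq 5$, the initial-ideal Hilbert function in high degree is strictly less than $n+2$, contradicting that $\mathrm{in}_\succ(I)$ has the same Hilbert function as $I$. Hence $y_0^2\notin\mathrm{in}_\succ(I)_2$, and $\mathrm{in}_\succ(I)_2=\langle y_iy_j,y_\ell^2 : 1\leq i<j\leq n,\,1\leq \ell<n\rangle$. Lifting back inside $\Ann(B)_2$ forces $I_2=\langle y_iy_j,\,y_\ell^2-y_n^2\rangle$.

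To finish, I would verify that the candidate ideal $I=(y_iy_j,\,y_\ell^2-y_n^2)$ has the generic Hilbert function $h_{n+2,\PP^n}$: modulo $I$ one has $y_iy_j\equiv 0$ for $i\neq j$ (with $i,j\geq 1$), $y_i^2\equiv y_n^2$ for $i<n$, and consequently $y_i^k\equiv 0$ for $i\geq 1$ and $k\geq 3$, so a basis of $(S/I)_k$ for $k\geq 2$ is $\{y_0^k,\,y_0^{k-1}y_1,\ldots,y_0^{k-1}y_n,\,y_0^{k-2}y_1^2\}$ of size $n+2$. Since $\brk(B)=n+2$, Theorem~\ref{mainbb} guarantees that $\VSPb(B,n+2)$ is nonempty, and by the uniqueness established above this single point must be $I$. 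The main obstacle is the case analysis ruling out $y_0^2\in\mathrm{in}_\succ(I)$; each subcase is elementary but depends on carefully recording which generators of $\mathrm{in}_\succ(\Ann(B))_2$ survive and which monomials they kill in large degree.
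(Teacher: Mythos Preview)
Your proposal is correct and follows essentially the same route as the paper: the same computation of $\Ann(B)$, the same reduction via Proposition~\ref{likegenericubic} for $n=2$, the same graded-lex initial-ideal argument for $n\geq 3$ with the identical three-case analysis and the same bounds ($\leq 2$ in case (i), $\leq 4$ in cases (ii) and (iii)), and the same explicit basis of $(S/I)_k$. Your explicit invocation of Theorem~\ref{mainbb} to guarantee nonemptiness of $\VSPb(B,n+2)$ is a point the paper leaves implicit, but otherwise the arguments coincide.
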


\begin{proposition}\label{typeC}
Let $n\geq 2$ and let $C\in T_3$ be the symmetrization of the big Coppersmith-Winograd tensor. Then 
$\VSPb(C,n+1)$ is a single point.
\begin{proof}
By \cite[Exercise 3.4.9.3]{Landsberg17} we have $\brk(C)=n+1$, i.e. $C$ has minimal border rank. Corollary~\ref{vspwithonlysat} shows that $\VSPb(F,n+1)$ contains a single point which is the saturated ideal $(\Ann(C)_2)$. 
\end{proof}
\end{proposition}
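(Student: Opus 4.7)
The plan is to reduce Proposition \ref{typeC} directly to Corollary \ref{vspwithonlysat}, which says that for $X=\PP^n$, $d=3$ and a concise minimal border rank cubic $F$, nonvanishing of $\mathrm{Hess}(F)$ is equivalent to $\VSPb(F,n+1)$ being the single saturated ideal $(\Ann(F)_2)$. So there are really only two things to check about $C=x_0(x_0x_1+x_2^2+\cdots+x_n^2)$: that $\brk(C)=n+1$, and that $\mathrm{Hess}(C)\neq 0$.

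First I would verify conciseness. The partials $\partial_{y_0}C=2x_0x_1+x_2^2+\cdots+x_n^2$, $\partial_{y_1}C=x_0^2$, $\partial_{y_i}C=2x_0x_i$ for $i\geq 2$ are visibly linearly independent in $T_2$, so $\Ann(C)_1=0$ and $C$ is concise. Conciseness forces $\brk(C)\geq n+1=\dim T_1$. For the opposite inequality I would exhibit the standard degeneration of $n+1$ points on $\nu_3(\PP^n)$ to $C$: this is classical for the big Coppersmith--Winograd tensor and also follows from the observation that $C$ lies in a limit of $n+1$ cubes of linear forms (e.g.\ taking $\lim_{t\to 0}\tfrac1{t^2}\big((x_0+tx_1)^3-x_0^3\big)$ contributes $3x_0^2x_1$, and adding similar limits for the $x_0x_i^2$ terms, one uses exactly $n+1$ cubes after rescaling). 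This gives minimal border rank $\brk(C)=n+1$.

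Second, I would compute the Hessian directly. The $(n+1)\times(n+1)$ matrix $\mathrm{Hess}(C)$ has the row indexed by $x_1$ equal to $(2x_0,0,\ldots,0)$; expanding the determinant along this row leaves the minor obtained by deleting the $x_1$-row and $x_0$-column, which is the upper-triangular matrix with diagonal entries $2x_0,2x_0,\ldots,2x_0$. Hence $\mathrm{Hess}(C)=\pm 2^{n+1}x_0^{n+1}\neq 0$.

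With these two ingredients, Corollary \ref{vspwithonlysat} applies and yields $\VSPb(C,n+1)=\{(\Ann(C)_2)\}$. The only genuinely non-formal step is the upper bound $\brk(C)\leq n+1$, but this is well known in the complexity literature (and otherwise one could argue directly by noting $(\Ann(C)_2)$ is saturated with Hilbert function $h_{n+1,\PP^n}$—which can be checked from Macaulay's bound and the fact that $\HF(S/\Ann(C),2)=n+1$—and that its zero locus lies in $\mathrm{Hilb}^{n+1}_{sm}(\PP^n)$ since this Hilbert scheme is irreducible for $r=n+1$).
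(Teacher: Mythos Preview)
Your approach is exactly the paper's: establish $\brk(C)=n+1$ and invoke Corollary~\ref{vspwithonlysat}; you are in fact more careful than the paper, which applies the corollary without explicitly checking $\mathrm{Hess}(C)\neq 0$. Two minor caveats on your side remarks: in the degeneration sketch one should divide by $t$ rather than $t^2$ (and the full $n{+}1$-cube limit is not spelled out, though as you say the bound is standard), and your parenthetical alternative is not safe for large $n$, since $\mathrm{Hilb}^{n+1}(\PP^n)$ is reducible once $n+1\geq 8$ (there exist nonsmoothable length-$8$ schemes).
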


\begin{proposition}\label{typeA}
Let $n\geq 2$ and let $A\in T_3$ be as above. Then 
$\VSPb(A,n+2)$ is $\PP^2$ for $n=2$ and a single point for $n\geq 3$.
\begin{proof}
For $n=2$, this follows from Proposition \ref{likegenericubic}.
Assume that $n\geq 3$. Since $\Ann(F)$ is generated by quadrics we have $\brk(A) \geq n+2$. On the other hand, the completely analogous proof to the one for the cubic $B$ shows that $I =(y_iy_j,y_i^2-y_n^2)$ is the unique apolar ideal of $A$ with generic Hilbert function $h_{n+2, \PP^n}$. Furthermore, it is in $\mathrm{Slip}_{n+2, \PP^n}$. We conclude that $\brk(A) = n+2$ and $\VSPb(A, n+2) = \{I\}$. 
\end{proof}
\end{proposition}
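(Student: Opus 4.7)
The statement splits into the plane cubic case $n=2$ and the higher-dimensional case $n\geq 3$. The plan for $n=2$ is to reduce to Proposition~\ref{likegenericubic} by explicitly identifying $\Ann(A)$ as a complete intersection of three quadrics. A direct apolarity computation yields $\Ann(A)_2 = \langle y_1y_2,\,y_1^2-y_2^2,\,y_0^2-3y_2^2\rangle$, a three-dimensional subspace of $S_2$; these quadrics have only the trivial common zero in $\CC^3$ (vanishing of $y_1y_2$ forces $y_1$ or $y_2$ to vanish, and this cascades through the remaining two relations to force $y_0=0$), hence they form a regular sequence. The resulting complete intersection is Gorenstein with Hilbert function $(1,3,3,1)$, matching $S/\Ann(A)$, so it coincides with $\Ann(A)$. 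Proposition~\ref{likegenericubic} then delivers $\VSPb(A,4)\cong \PP^2$.

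For $n\geq 3$, the strategy is to transpose the proof of Proposition~\ref{typeB} almost verbatim, because the only structural input used there---the set of initial monomials of a basis of $\Ann(A)_2$ under graded lex $y_0\succ y_1\succ\cdots\succ y_n$---agrees exactly with that of $\Ann(B)_2$. Explicitly,
\[
\Ann(A)_2 = \langle y_iy_j,\; y_i^2-y_n^2,\; y_0^2-3y_n^2 \mid 1\leq i<j\leq n,\; 1\leq i<n\rangle,
\]
of dimension $\binom{n+1}{2}$, with leading monomials $\{y_0^2,\,y_iy_j,\,y_i^2\}$ identical to those appearing in \emph{loc.\ cit.} Given any $I\in \VSPb(A,n+2)$, the degree $2$ part of $J=\mathrm{in}_{\succ}(I)$ is a codimension-one monomial subspace of these leading monomials. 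The same sub-case analysis used in the proof of Proposition~\ref{typeB} bounds $\HF(S/J,k)\leq 4$ for $k\geq 3$ whenever $y_0^2\in J$, contradicting $\HF(S/J,k)=n+2\geq 5$. Hence $y_0^2\notin J$, so the missing generator from $\Ann(A)_2$ must be $y_0^2-3y_n^2$, giving $I_2=(I_0)_2$ with $I_0=(y_iy_j,\,y_i^2-y_n^2)$. The standard-monomial count from \emph{loc.\ cit.} gives $\HF(S/I_0)=h_{n+2,\PP^n}$, forcing $I=I_0$.

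Finally, $I_0$ is precisely the ideal identified in Proposition~\ref{typeB} as the unique point of $\VSPb(B,n+2)$, so in particular $I_0\in \mathrm{Slip}_{n+2,\PP^n}$; a direct apolarity check confirms $I_0\subseteq \Ann(A)$, hence $I_0\in \VSPb(A,n+2)$ and $\brk(A)\leq n+2$. For the matching lower bound, any $I\in \VSPb(A,r)$ with $r\leq n+1$ would satisfy $\dim I_2 \geq \binom{n+2}{2}-(n+1)=\binom{n+1}{2}=\dim \Ann(A)_2$, forcing $I_2=\Ann(A)_2$ and $I \supseteq (\Ann(A)_2)$; a standard-monomial count shows $\HF(S/(\Ann(A)_2),k)\leq 2$ for $k\geq 3$, contradicting $\HF(S/I,k)=r\geq n+1\geq 4$ in the stable range. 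Combining, $\brk(A)=n+2$ and $\VSPb(A,n+2)=\{I_0\}$. The only delicate step is the Gr\"obner case analysis in the second paragraph, but since the initial monomials match those in Proposition~\ref{typeB}, the sub-case counting there transfers without modification.
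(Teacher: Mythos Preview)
Your proof is correct and follows essentially the same route as the paper's: reduce $n=2$ to Proposition~\ref{likegenericubic}, and for $n\geq 3$ transplant the Gr\"obner case analysis from Proposition~\ref{typeB} verbatim by observing that the initial monomials of a basis of $\Ann(A)_2$ coincide with those of $\Ann(B)_2$. The only difference is that you spell out the lower bound $\brk(A)\geq n+2$ via an explicit Hilbert-function contradiction, whereas the paper compresses this into the one-line remark that $\Ann(A)$ is generated by quadrics; these are equivalent, since your computation of $\HF(S/(\Ann(A)_2),k)$ for $k\geq 3$ is exactly what verifies that $(\Ann(A)_2)=\Ann(A)$.
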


\subsection{Ternary monomials}\label{ssec:monomials}

We shift gears to monomials in $T=\CC[x_0,x_1,x_2]$. Before proceeding, we record a result about $\VSP$'s of monomials with the same exponents, which seems to 
be not explicitly written anywhere in the literature. 

\begin{proposition}[{\cite[Proposition~25]{bbt13}}]\label{vspmonomials}
Let $F = x_0^k\cdots x_n^k$. Then  $\VSP(F, \rk(F))\cong \PP^{n}$. 
\begin{proof}
The dimension of $\VSP(F, \rk(F))$ follows from \cite[Proposition~25]{bbt13}. By \cite[Theorem 4]{bbt13}, $\VSP^{0}(F, \rk(F))\cong \mathbb (\CC^{*})^n$, and hence its closure $\VSP(F,\rk(F))$ is irreducible. 
More explicitly, the open variety of sums of powers $\VSP^{0}(F, \rk(F))\subset \VSP(F, \rk(F))$ may be described as the locally closed set of zero-dimensional schemes defined
by ideals of the form 
$(y_0^{k+1}-\alpha_0y_i^{k+1},\ldots, y_n^{k+1}-\alpha_n y_i^{k+1})\subset \CC[y_0,\ldots,y_n]$ with $\prod_{i=1}^n \alpha_i\neq 0$. We show that $\VSP(F, \rk(F))$ contains an isomorphic copy of $\PP^n$ as a subvariety. This is enough as they are both $n$-dimensional  and $\VSP(F,\rk(F))$ is irreducible. 

For each $0\leq i\leq n$, let $U_i\subset \mathrm{Hilb}_{sm}^{\rk(F)}(\PP^n)$ be the subset whose points are the zero-dimensional schemes corresponding to the ideals of the form $I_i=(y_0^{k+1}-\alpha_0y_i^{k+1},\ldots, y_n^{k+1}-\alpha_n y_i^{k+1})$ with $\alpha_j\in \CC$. One has $U_i\subset \VSP(F, \rk(F))$ because the latter is the closure of $\VSP^{0}(F, \rk(F))$ in $\mathrm{Hilb}_{sm}^{\rk(F)}(\PP^n)$. 

Let $\mathbb A^n\cong H_i\subset \PP^n$ be defined by $H_i = \lbrace \lambda_i\neq 0\rbrace = \left\lbrace \left(\frac{\lambda_0}{\lambda_i}, \ldots, 1, \ldots, \frac{\lambda_n}{\lambda_i}\right)\right\rbrace$, for each $0\leq i\leq n$. Then, for each $0\leq i\leq n$, define the map $\psi_i: H_i\rightarrow U_i$ given by 
\[
\left(\frac{\lambda_0}{\lambda_i}, \ldots, 1, \ldots, \frac{\lambda_n}{\lambda_i}\right)
\mapsto 
\left(y_0^{k+1}-\frac{\lambda_0}{\lambda_i}y_i^{k+1},\ldots, y_n^{k+1}-\frac{\lambda_n}{\lambda_i} y_i^{k+1}\right).
\]
This is an isomorphism of $n$-dimensional affine spaces. The maps $\psi_i$ and $\psi_j$ glue together on the intersections $H_i\cap H_j$ giving an isomorphism $\psi$ between $\PP^n$ and its image $\psi(\PP^n)\subseteq \VSP(F,\rk(F))$. This concludes the proof.   
\end{proof}
\end{proposition}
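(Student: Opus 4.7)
The strategy is to realize $\VSP(F, \rk(F))$ as the image of an explicit morphism from $\PP^n$, then conclude by matching dimensions of irreducible varieties.

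First, I would read off $\dim \VSP(F, \rk(F)) = n$ from \cite[Proposition~25]{bbt13} and identify $\VSP^0(F, \rk(F)) \cong (\CC^*)^n$ from \cite[Theorem~4]{bbt13}. Since $\VSP(F,\rk(F))$ is by definition the closure of $\VSP^0(F, \rk(F))$, it is irreducible of dimension $n$. The apolar ideal is the complete intersection $\Ann(F) = (y_0^{k+1}, \ldots, y_n^{k+1})$, and the radical apolar ideals parameterizing $\VSP^0$ have the form $(y_j^{k+1} - \alpha_j y_i^{k+1})_{j \neq i}$ with all $\alpha_j \neq 0$, recovered from fixing a pivot index $i$.

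Next, I would construct a morphism $\psi\colon \PP^n \to \VSP(F, \rk(F))$ by patching affine charts. On $H_i = \{\lambda_i \neq 0\} \cong \mathbb{A}^n \subset \PP^n$, define
\[
\psi_i([\lambda_0:\cdots:\lambda_n]) := \mathrm{Proj}\bigl(S/(\lambda_i y_j^{k+1} - \lambda_j y_i^{k+1} : j \neq i)\bigr).
\]
Each such ideal is apolar to $F$ and lies in the closure of $\VSP^0(F,\rk(F))$, since when all $\lambda_j \neq 0$ it already parameterizes a point of $\VSP^0$, and this locus is dense in $H_i$. Each $\psi_i$ is a bijection of $H_i$ onto its image $U_i$, with inverse reading off the ratios $\lambda_j/\lambda_i$ linearly from the coefficients of the generators. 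The elementary syzygy $\lambda_i (\lambda_j y_\ell^{k+1} - \lambda_\ell y_j^{k+1}) = \lambda_j (\lambda_i y_\ell^{k+1} - \lambda_\ell y_i^{k+1}) - \lambda_\ell (\lambda_i y_j^{k+1} - \lambda_j y_i^{k+1})$ shows that $\psi_i$ and $\psi_j$ agree on the overlap $H_i \cap H_j$, so the $\psi_i$ glue into a morphism $\psi\colon \PP^n \to \VSP(F, \rk(F))$.

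Finally, the image $\psi(\PP^n)$ is closed (as $\PP^n$ is proper) and irreducible of dimension $n$. Since $\VSP(F,\rk(F))$ is itself irreducible of dimension $n$, the image must equal the entire target. The chart-by-chart bijectivity with linear inverse yields that each $\psi_i$ is an isomorphism of $H_i$ onto the corresponding affine open $U_i$ of $\VSP(F,\rk(F))$, and hence $\psi$ is an isomorphism. The main technical point I anticipate is confirming that $\psi_i([\lambda])$ actually lands in the smoothable component $\mathrm{Hilb}_{sm}^{\rk(F)}(\PP^n)$ for every $[\lambda] \in H_i$ and not only on the torus locus; this follows from the fact that the family given by the explicit generators is flat over $H_i$ (the generators define a regular sequence for every $[\lambda] \in H_i$), and its restriction to the dense open $\{\prod_j \lambda_j \neq 0\}$ consists of radical ideals lying in $\VSP^0$, so the entire family lies in the closure $\VSP(F, \rk(F))$.
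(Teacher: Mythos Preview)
Your proposal is correct and follows essentially the same route as the paper: both cite \cite{bbt13} for the dimension and the torus description of $\VSP^0$, then build a morphism $\PP^n\to\VSP(F,\rk(F))$ by defining the same chart maps $\psi_i$ on the standard affines $H_i=\{\lambda_i\neq 0\}$, check they glue, and conclude by comparing dimensions of irreducible varieties. You are slightly more explicit than the paper in two places---you write out the syzygy that witnesses gluing and you address why the flat family over $H_i$ lands in $\mathrm{Hilb}_{sm}$---but the architecture is identical.
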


Let $F = x_0^ax_1^bx_2^c$ be a monomial with exponents $0<a\leq b\leq c$ and let $r=(a+1)(b+1)=\brk(F)$. In the following we will give an explicit description of $\VPS(F,r)$ and show that $\VSPb(F,r)$ is of fiber type, i.e. $\VSPb(F,r) = \phi^{-1}_{r,\PP^2}(\VPS(F,r))$. 
This is proven in Proposition \ref{prop:case c>= a+b}, Theorem \ref{thm:case c=a+b-1} and Theorem \ref{prop:vps_vspb_>=-2}. 

We shall assume familiarity with Macaulay's representation of integers \cite[\S 49]{Peeva} and Macaulay's bound theorem \cite[Theorem 49.5]{Peeva}:
for any graded ideal $J$ in a standard graded polynomial ring $S$, one has that $\mathrm{HF}(S/J,k+1)\leq \mathrm{HF}(S/J,k)^{\langle k\rangle}$. 

\subsubsection{Monomials $x_0^ax_1^bx_2^c$ with $c\geq a+b-1$}

\begin{lemma}\label{lem: T/J in deg a+b}
Let $J = (y_0^{a+1},y_1^{b+1})$ for some positive integers 
$a,b$. We have $\mathrm{HF}(S/J,a+b) = r$. 
\begin{proof}
This is easily seen either from a monomial counting, as in the proof of Theorem \ref{borderrankmonimprove}, or as follows. 
Consider the $a+b$ degree piece of the minimal free resolution (a Koszul resolution) as an $S$-module of $S/J$:
\[
0\longrightarrow S(-a-b-2)\longrightarrow S(-a-1)\oplus S(-b-1)\longrightarrow S\longrightarrow S/J\longrightarrow 0. 
\]
Then one finds $\mathrm{HF}(S/J,a+b) = \mathrm{HF}(S,a+b)-\mathrm{HF}(S(-a-1),a+b)) - \mathrm{HF}(S(-b-1),a+b) = r$. 
\end{proof}
\end{lemma}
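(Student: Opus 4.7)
The plan is to give a direct monomial-basis computation; the Koszul resolution sketched in the statement is an equally short alternative, and I will indicate both.

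First I would observe that $J = (y_0^{a+1}, y_1^{b+1})$ is a monomial ideal, so a $\CC$-basis of $(S/J)_{a+b}$ is given by the monomials $y_0^i y_1^j y_2^k$ with $i+j+k = a+b$, $0 \le i \le a$, and $0 \le j \le b$. The key point is that whenever these two inequalities hold, the third coordinate $k = a+b - i - j$ is automatically nonnegative, since $i+j \le a+b$. Hence every pair $(i,j) \in \{0,\dots,a\} \times \{0,\dots,b\}$ yields a unique basis monomial, so the count is exactly $(a+1)(b+1) = r$.

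Alternatively, one can follow the Koszul route indicated in the statement. Since $y_0^{a+1}, y_1^{b+1}$ is a regular sequence, the minimal free resolution is the displayed short complex, and evaluating in degree $a+b$ gives
\[
\mathrm{HF}(S/J, a+b) = \binom{a+b+2}{2} - \binom{b+1}{2} - \binom{a+1}{2},
\]
since the top term $S(-a-b-2)$ contributes zero in this degree. A short binomial manipulation reduces the right-hand side to $(a+1)(b+1)$.

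There is no essential obstacle: the statement is a clean numerical identity and both approaches are a few lines. The only thing to be mildly careful about is the automatic nonnegativity of the $y_2$-exponent in the monomial-counting argument, which is precisely where the assumption on the degree $a+b$ (rather than anything larger) is used.
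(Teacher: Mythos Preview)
Your proposal is correct and matches the paper's proof: the paper explicitly offers both the monomial-counting route and the Koszul-resolution route, and you have carried out each cleanly. The only minor remark is that your final comment is slightly off---the nonnegativity of the $y_2$-exponent in fact holds for all degrees $d\ge a+b$, so $a+b$ is simply the \emph{smallest} degree at which the full $(a+1)(b+1)$ count is attained, not the unique one.
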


\begin{proposition}\label{prop:case c>= a+b}
If $c\geq a+b$, then $\VPS(F,r)=\lbrace \mathrm{Proj}(S/J)\rbrace$ where
$J = (y_0^{a+1}, y_1^{b+1})$ and 
$\VSPb(F,r)$ is of fiber type. 
\begin{proof}
We show that if $K\subset \Ann(F)$ is a homogeneous ideal with Hilbert polynomial $r$ and Hilbert function bounded 
from above by $r$, then $\overline{K}=J$. By assumption on the 
Hilbert function and Lemma~\ref{lem: T/J in deg a+b},
we have $K_{a+b} = \Ann(F)_{a+b}$. Since $\Ann(F)_{a+b} = J_{a+b}$, 
it follows that $(J_{a+b})\subset K$. Since $J_{\geq a+b} = (J_{a+b})$ and $J,K$ have the same Hilbert polynomial we conclude that $\overline{K}=\overline{J}=J$. The inclusion $\VSPb(F,r) \subseteq \phi_{r,\PP^2}^{-1}(\VPS(F,r))$ follows. Lemma \ref{lem:trivial_containment} establishes 
that the containment is an equality. 
\end{proof}
\end{proposition}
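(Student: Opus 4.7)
The plan is to pivot everything on the single degree $\bb{k}=a+b$, where the Hilbert function of $S/J$ already equals the border rank $r=(a+1)(b+1)$. First I would compute this value explicitly: using the Koszul resolution
\[
0\longrightarrow S(-a-b-2)\longrightarrow S(-a-1)\oplus S(-b-1)\longrightarrow S\longrightarrow S/J\longrightarrow 0,
\]
one reads off $\HF(S/J,a+b) = \binom{a+b+2}{2}-\binom{b+1}{2}-\binom{a+1}{2}=(a+1)(b+1)=r$, and the same resolution gives $J_{\geq a+b}=(J_{a+b})$ (equivalently, $J$ has no minimal generators beyond degree $\max(a+1,b+1)\leq a+b$). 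Since the assumption $c\geq a+b$ forces $y_2^{c+1}$ not to contribute in degree $a+b$, we obtain $\Ann(F)_{a+b}=J_{a+b}$.

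The key claim I would then establish is: every homogeneous ideal $K\subset \Ann(F)$ whose Hilbert function is bounded above by $r$ and whose Hilbert polynomial is $r$ satisfies $\overline{K}=J$. Indeed, the bound forces $K_{a+b}$ to have codimension at least $\binom{a+b+2}{2}-r = \dim_\CC J_{a+b}$ inside $S_{a+b}$, and the containment $K_{a+b}\subset \Ann(F)_{a+b}=J_{a+b}$ then implies equality $K_{a+b}=J_{a+b}$. By the Koszul fact above, $(J_{a+b})=J_{\geq a+b}\subset K$; since $K$ and $J$ share the same Hilbert polynomial $r$, saturating gives $\overline{K}=\overline{J}=J$.

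Applying this to $K=I_Z$ for any $Z\in \VPS(F,r)$ (Hilbert function of a length-$r$ scheme is bounded by $r$ by Proposition~\ref{prop:properties_of_MHF}(iii)) yields $I_Z=J$, so $\VPS(F,r)=\{\mathrm{Proj}(S/J)\}$. Applying it to any $K\in \VSPb(F,r)$ (where the Hilbert function $h_{r,\PP^2}$ is by construction bounded by $r$) yields $\overline{K}=J$, so $\phi_{r,\PP^2}(K)=\mathrm{Proj}(S/J)\in \VPS(F,r)$, i.e. $\VSPb(F,r)\subseteq \phi_{r,\PP^2}^{-1}(\VPS(F,r))$. Combined with Lemma~\ref{lem:trivial_containment}, this gives the fiber type statement.

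The only delicate point is the regularity-flavored fact that $J_{\geq a+b}=(J_{a+b})$, i.e. that multiplication maps $S_1\otimes J_{a+b+i}\to J_{a+b+i+1}$ are surjective for $i\geq 0$; but this is immediate from the Koszul resolution since the last syzygy sits in degree $a+b+2$ and the generators are in degrees $a+1,b+1\leq a+b$. Everything else is bookkeeping with Hilbert functions, so no serious obstacle is expected.
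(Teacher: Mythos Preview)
Your proposal is correct and follows essentially the same approach as the paper's proof: both pivot on the degree $a+b$, use the Koszul computation $\HF(S/J,a+b)=r$ (the paper packages this as Lemma~\ref{lem: T/J in deg a+b}), deduce $K_{a+b}=J_{a+b}$ from the Hilbert function bound, and conclude $\overline{K}=J$ from $J_{\geq a+b}=(J_{a+b})$ and equality of Hilbert polynomials. Your write-up is simply more explicit in separating the applications to $\VPS$ and $\VSPb$ and in justifying $\Ann(F)_{a+b}=J_{a+b}$ via the hypothesis $c\geq a+b$.
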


\begin{lemma}\label{lem: Hilbpoly at most r}
Suppose that $I$ is a monomial ideal contained in $J = (y_0^{a+1},y_1^{b+1},y_2^{c+1})$ with $0<a, b\leq c$. Let $d\geq \max\{c+1, a+2\}$ and assume that $I_d$ contains $(y_1^{b+1},y_2^{c+1})_d$ and has codimension $h$ in $J_d$. If $h^{\langle d-a-1\rangle}\leq h$ then the Hilbert polynomial of $S/I$ is at most $h$. 
\begin{proof}
Let $K$ be the monomial ideal generated by all monomials of degree $d-a-1$ that multiplied by $y_0^{a+1}$ belong to $I_d$. Since $I_d$ is of codimension $h$ in $J_d$ and it contains $(y_1^{b
+1},y_2^{c+1})_d$, there are exactly $h$ monomials in $(y_0^{a+1})_d$ that are not in $I_d$. It follows that $K_{d-a-1}$ is of codimension $h$. Therefore, by Macaulay's bound $\mathrm{HF}(S/K,k)\leq h$ for $k\gg 0$. Since $(y_1^{a+1},y_2^{b+1})_d\subset I_d$,
we have $\mathrm{HF}(S/I,k)\leq \mathrm{HF}(S/K,k-a-1)\leq h$ for $k\gg 0$. 
\end{proof}
\end{lemma}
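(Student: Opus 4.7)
The plan is to reduce the estimate on $\mathrm{HF}(S/I,k)$ to an estimate on a Hilbert function in degree $k-a-1$ of a monomial ideal that records where the ``missing'' monomials of $J_d/I_d$ sit, and then to propagate this bound to all large $k$ via Macaulay's inequality.

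First I would localize the codimension. Since $I_d \supseteq (y_1^{b+1},y_2^{c+1})_d$, every monomial in $J_d \setminus I_d$ must be divisible by $y_0^{a+1}$ but not by $y_1^{b+1}$ or $y_2^{c+1}$. The map $m \mapsto y_0^{a+1}m$ is a bijection between monomials of $S_{d-a-1}$ and monomials of $(y_0^{a+1})_d$; define the monomial ideal $K$ to be generated by those $m \in S_{d-a-1}$ with $y_0^{a+1} m \in I_d$. By construction $\mathrm{HF}(S/K, d-a-1) = h$.

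Second, I would iterate Macaulay's bound $\mathrm{HF}(S/K, k+1) \leq \mathrm{HF}(S/K, k)^{\langle k\rangle}$ starting from degree $d-a-1$. The assumption $h^{\langle d-a-1\rangle} \leq h$, combined with the standard facts that the operation $h \mapsto h^{\langle k\rangle}$ is monotone in $h$ and non-increasing in $k$, lets me conclude inductively $\mathrm{HF}(S/K, k) \leq h$ for every $k \geq d-a-1$.

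Finally, I would transfer this bound back to $I$. Because $d \geq c+1$ and $I_d$ already contains $(y_1^{b+1},y_2^{c+1})_d$, any monomial of degree $k\geq d$ divisible by $y_1^{b+1}$ or by $y_2^{c+1}$ lies in $I_k$: factor it as $y_1^{b+1} n'$ (resp.\ $y_2^{c+1} n'$) with $\deg n' \geq d-b-1$ (resp.\ $d-c-1$), pick a submonomial of $n'$ of the right degree, and use $I_d \supseteq (y_1^{b+1},y_2^{c+1})_d$. Thus the monomials in $S_k \setminus I_k$ split into those outside $J_k$ and those of the form $y_0^{a+1}m$ with $m \notin L_{k-a-1}$ where $L := (I : y_0^{a+1})$, giving
\[
\mathrm{HF}(S/I,k) \;=\; \mathrm{HF}(S/J,k) \;+\; \mathrm{HF}(S/L,k-a-1).
\]
Since $J$ is Artinian, the first summand vanishes for $k \gg 0$; since $K \subseteq L$, the second is bounded above by $\mathrm{HF}(S/K,k-a-1) \leq h$ by the preceding step. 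The one delicate point is the non-increasing behaviour of $h^{\langle k\rangle}$ in $k$, which is what justifies turning a single hypothesis in degree $d-a-1$ into a uniform bound in every higher degree.
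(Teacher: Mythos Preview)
Your proof is correct and follows essentially the same approach as the paper: define the auxiliary ideal $K$ recording which degree $d-a-1$ monomials survive multiplication by $y_0^{a+1}$ into $I_d$, use Macaulay's bound together with the hypothesis $h^{\langle d-a-1\rangle}\le h$ to force $\mathrm{HF}(S/K,k)\le h$ for all larger $k$, and then compare $\mathrm{HF}(S/I,k)$ with $\mathrm{HF}(S/K,k-a-1)$. Your version is simply more explicit: you spell out the decomposition $\mathrm{HF}(S/I,k)=\mathrm{HF}(S/J,k)+\mathrm{HF}(S/L,k-a-1)$ with $L=(I:y_0^{a+1})\supseteq K$ and use that $J$ is Artinian, whereas the paper compresses this into a single inequality. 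One small remark on your ``delicate point'': the condition $h^{\langle d-a-1\rangle}\le h$ actually forces $d-a-1\ge h$ (since $h^{\langle k\rangle}-h=\sum_i\binom{m_i}{i+1}\ge 0$ with equality only when every $m_i=i$, i.e.\ when $k\ge h$), and then $h^{\langle k\rangle}=h$ for all $k\ge h$; so the monotonicity in $k$ you invoke is, in this situation, simply constancy.
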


\begin{lemma}\label{lem:containments}
Let $c=a+b-1$. If $I\subset \Ann(F)$ is a homogeneous ideal with Hilbert function of $S/I$ bounded above by $r$ and the Hilbert polynomial of $S/I$ equals $r$, then 
$I_{a+b}$ is of codimension $1$ in $\Ann(F)_{a+b}$. 
Furthermore, if $b>1$ \textnormal{(}respectively $a>1$\textnormal{)}
then $I_{a+b}$ contains $(y_0^{a+1})_{a+b}$
\textnormal{(}respectively is equal to $(y_0^{a+1},y_1^{b+1})_{a+b}$\textnormal{)}. 
\begin{proof}
Note that, by definition, $r\geq 4$. By Lemma \ref{lem: T/J in deg a+b}, we obtain that
$\mathrm{HF}(S/\Ann(F),a+b)=r-1$. The claim about the codimension of $I_{a+b}$ follows easily from this. Assume that $b>1$ and that $(y_0^{a+1})_{a+b}$ is not contained in $I_{a+b}$. Let $\mathcal M$ be the set of all degree $a+b$ monomials in $\Ann(F)$ and let $m\in \mathcal M$ be a monomial in $(y_0^{a+1})_{a+b}$ that is not in $I_{a+b}$. Since $I_{a+b}$ is of codimension $1$ in $\Ann(F)_{a+b}$ we deduce that for every $m'\in \mathcal M\setminus\lbrace m\rbrace$, there exists $\alpha\in \CC$
such that $(m'+\alpha m)_{m'\in \mathcal M\setminus\lbrace m\rbrace}$ is a $\CC$-vector space basis of $I_{a+b}$. We consider the initial ideal $J=\mathrm{in}_{\prec}(I)$ of $I$ with respect to the graded reverse lexicographic order with $y_0\prec y_1\prec y_2$. If $m'\in (y_1^{b+1},y_2^{a+b})$ then $m\prec m'$ and we conclude that $J$ contains $m'$. Therefore, it is a monomial ideal containing $y_2^{a+b}, (y_1^{b+1})_{a+b}$ and all but one monomial in $(y_0^{a+1})_{a+b}$. By Lemma \ref{lem: Hilbpoly at most r} applied for $h=1$, the Hilbert polynomial of $S/J$ is at most $1$, which is a contradiction. If $a>1$ then $b>1$, so the argument above shows that $(y_0^{a+1})_{a+b}$ is contained in $I_{a+b}$. If $(y_1^{b+1})_{a+b}$ is not contained in $I_{a+b}$ then, as before, using Lemma~\ref{lem: Hilbpoly at most r} with the roles of $y_0$ and $y_1$ exchanged, we conclude that 
$S/J$ has Hilbert polynomial at most $1$. 
\end{proof}
\end{lemma}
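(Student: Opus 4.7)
My plan is to first establish the codimension-one claim and then tackle each of the ``furthermore'' statements via a Gröbner-basis argument. For the codimension, note that since $c = a+b-1$, the apolar ideal $\Ann(F) = (y_0^{a+1}, y_1^{b+1}, y_2^{a+b})$ has all minimal generators in degree at most $a+b$. Lemma~\ref{lem: T/J in deg a+b} gives $\mathrm{HF}(S/J, a+b) = r$ for $J = (y_0^{a+1}, y_1^{b+1})$, and adding the extra generator $y_2^{a+b}$, which is not in $J_{a+b}$, drops the Hilbert function value by one; hence $\mathrm{HF}(S/\Ann(F), a+b) = r-1$. Since $I \subseteq \Ann(F)$ and $\mathrm{HF}(S/I, a+b) \leq r$ by hypothesis, we have $\mathrm{HF}(S/I, a+b) \in \{r-1, r\}$. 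If it equals $r-1$, then $I_{a+b} = \Ann(F)_{a+b}$; because $\Ann(F)$ is generated in degrees $\leq a+b$, this forces $I$ to contain $\Ann(F)_{\geq a+b}$, so $S/I$ agrees with the Artinian $S/\Ann(F)$ in all sufficiently large degrees, making the Hilbert polynomial of $S/I$ zero and contradicting the hypothesis that it equals $r \geq 4$. Hence $I_{a+b}$ has codimension exactly $1$ in $\Ann(F)_{a+b}$.

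Assuming $b \geq 2$, suppose for contradiction that some monomial $m \in (y_0^{a+1})_{a+b}$ is absent from $I_{a+b}$. Let $\mathcal{M}$ be the distinguished monomial basis of $\Ann(F)_{a+b}$; the codimension-one fact yields a basis of $I_{a+b}$ of the form $\{m' + \alpha_{m'} m : m' \in \mathcal{M} \setminus \{m\}\}$ with scalars $\alpha_{m'} \in \CC$. Under the graded reverse lexicographic order $\prec$ with $y_0 \prec y_1 \prec y_2$, the key comparison is that $m \prec m'$ holds whenever $m' \in (y_1^{b+1}, y_2^{a+b})_{a+b}$: the $y_0$-exponent of $m$ is at least $a+1$, whereas the $y_0$-exponent of such $m'$ is at most $a-1$ (for $m' \in (y_1^{b+1})_{a+b}$) or zero (for $m' = y_2^{a+b}$), and in grevlex with $y_0$ the least variable, a larger $y_0$-exponent forces a smaller monomial. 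Therefore $\mathrm{in}_\prec(I)_{a+b}$ contains $(y_1^{b+1}, y_2^{a+b})_{a+b}$ together with every monomial of $(y_0^{a+1})_{a+b}$ except possibly $m$, so it has codimension at most $1$ in $\Ann(F)_{a+b}$. Applying Lemma~\ref{lem: Hilbpoly at most r} to $\mathrm{in}_\prec(I)$ with $d = a+b$ and $h = 1$, whose hypotheses hold since $d = c+1$, $d \geq a+2$ follows from $b \geq 2$, and $1^{\langle b-1\rangle} = 1$, we obtain that $S/\mathrm{in}_\prec(I)$ has Hilbert polynomial at most $1$. But Gröbner deformation preserves the Hilbert polynomial, which equals $r \geq 4$, a contradiction.

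When $a \geq 2$ we automatically have $b \geq 2$, so the previous paragraph provides $(y_0^{a+1})_{a+b} \subset I_{a+b}$. The identical Gröbner argument, now carried out with the grevlex order in which $y_1$ is the least variable and with the roles of $y_0$ and $y_1$ swapped in Lemma~\ref{lem: Hilbpoly at most r}, yields $(y_1^{b+1})_{a+b} \subset I_{a+b}$ as well; here the numerical hypothesis becomes $d \geq \max\{c+1, b+2\}$, which holds because $a \geq 2$ gives $a+b \geq b+2$. Since $(y_0^{a+1})_{a+b}$ and $(y_1^{b+1})_{a+b}$ are disjoint monomial sets of respective sizes $\binom{b+1}{2}$ and $\binom{a+1}{2}$, whose sum equals $\dim \Ann(F)_{a+b} - 1 = \dim I_{a+b}$, the inclusion $I_{a+b} \supseteq (y_0^{a+1}, y_1^{b+1})_{a+b}$ must be an equality. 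The main technical obstacle is executing the grevlex comparisons correctly and confirming the arithmetic hypotheses of Lemma~\ref{lem: Hilbpoly at most r} in both applications; the key enabler is that Macaulay's bound gives $1^{\langle k\rangle} = 1$ for every $k \geq 1$, which is exactly what makes the $h=1$ case of the lemma strong enough to close both contradictions.
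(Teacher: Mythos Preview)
Your proof is correct and follows essentially the same approach as the paper: the same grevlex order with $y_0$ smallest, the same codimension-one basis manipulation, and the same application of Lemma~\ref{lem: Hilbpoly at most r} with $h=1$, then swapping the roles of $y_0$ and $y_1$ for the case $a>1$. You supply more detail than the paper in a few places (the explicit argument ruling out $\HF(S/I,a+b)=r-1$, the verification of the numerical hypotheses $d\geq\max\{c+1,a+2\}$ and $1^{\langle b-1\rangle}=1$, and the final dimension count), and one phrasing is slightly loose---the monomial missing from $\mathrm{in}_\prec(I)_{a+b}$ need not be $m$ itself but merely some monomial in $(y_0^{a+1})_{a+b}$---but this does not affect the argument, since Lemma~\ref{lem: Hilbpoly at most r} only requires the containment $(y_1^{b+1},y_2^{c+1})_d\subseteq I_d$ and the codimension bound.
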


\begin{theorem}\label{thm:case c=a+b-1}
If $c = a+b-1$, then $\VSPb(F,r)$ is of fiber type. Furthermore,
\begin{enumerate}
\item[(i)] if $a=b=1$, then $\VSPb(F,4)\cong \VPS(F,4)\cong \PP(\Ann(F)_2)=\PP^2$;

\item[(ii)] if $a=1<b$, then $\VPS(F,2(b+1))=\lbrace \mathrm{Proj}(S/(y_0^2,sy_1^{b+1}+ty_2^{b+1})), [s:t]\in \PP^1\rbrace\cong \PP^1$;

\item[(iii)] if $a\geq 2$, then $\VPS(F,r) = \lbrace \mathrm{Proj}(S/(y_0^{a+1},y_1^{b+1}))\rbrace$. 
\end{enumerate}
\begin{proof}
Assume that $a=b=1$. If $I\in \VSPb(F,4)$ or $I = \overline{I}$ and $\mathrm{Proj}(S/I)\in \VPS(F,4)$, then by Lemma \ref{lem:containments} we derive that 
$I_2$ is of codimension $1$ in $\Ann(F)_2$. Since any choice of a codimension one subspace of $\Ann(F)_2$ gives a saturated ideal with Hilbert function $h_{4,\PP^2}$, the proof of this case is completed. 

Assume that $a=1$ and $b>1$. Then $b=c$. Let $I\subset \Ann(F)$ be a homogeneous ideal such that $S/I$ has Hilbert polynomial $2(b+1)$. Assume that 
$I\in \VSPb(F, 2(b+1))$ or $I = \overline{I}$. By Lemma \ref{lem:containments}, we obtain that $I_{1+b}$ is a subspace of codimension $1$ of $\Ann(F)_{1+b}$ that contains $(y_0^2)_{1+b}$. It follows that it is of the form 
\[
I_{1+b} = (y_0^2)_{1+b} + (sy_1^{b+1}+ty_2^{b+1})_{1+b}, \mbox{ for some } [s:t]\in \PP^1.
\]
We see that $\overline{(I_{1+b})}$ is equal to $(y_0^2, sy_1^{b+1}+ty_2^{1+b})$ which is contained in $\Ann(F)$ and defines a length $2(b+1)$ subscheme of $\PP^2$. It follows that $\overline{I}=(y_0^2,sy_1^{b+1}+ty_2^{b+1})$. This describes 
$\VPS(F,2(b+1))$. The other containment between $\VSPb(F, 2(b+1))$ and $(\phi_{r,\PP^2})^{-1}(\VPS(F,2(b+1)))$ follows from Lemma~\ref{lem:trivial_containment}. 

Assume $a\geq 2$. Let $I\subset \Ann(F)$ be an ideal such that $S/I$ has Hilbert polynomial $r$. Assume that $I\in \VSPb(F,r)$ or $I=\overline{I}$. By Lemma \ref{lem:containments}, we get $I_{a+b}=(y_0^{a+1},y_1^{b+1})_{a+b}$. It follows that 
$\overline{I} = (y_0^{a+1},y_1^{b+1})$. Hence
$\VPS(F,r)$ is as claimed and $\VSPb(F,r)$ is contained in the fibre of $\phi_{r,\PP^2}$ over the unique point of $\VPS(F,r)$. Again, the other containment follows from Lemma \ref{lem:trivial_containment}.
\end{proof}

\end{theorem}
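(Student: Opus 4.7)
The plan is to reduce everything to the technical Lemma~\ref{lem:containments}, which constrains the degree $(a+b)$ piece of any homogeneous $I \subseteq \Ann(F)$ whose Hilbert polynomial is $r$ and whose Hilbert function is bounded above by $r$. This hypothesis applies equally well to any $I \in \VSPb(F,r)$ (whose Hilbert function is exactly $h_{r,\PP^2} \leq r$) and to the $B$-saturated ideal $I_Z$ of any length-$r$ subscheme $Z \subseteq \PP^2$ apolar to $F$ (the Hilbert function of such an $I_Z$ is bounded above by the length of $Z$). So the case analysis for $\VPS(F,r)$ and the case analysis for $\VSPb(F,r)$ can be done in parallel, and once $I_{a+b}$ is pinned down the saturation $\overline{I}$ is determined.

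The three cases then unfold as follows. If $a = b = 1$, Lemma~\ref{lem:containments} only asserts that $I_2$ has codimension one in the $3$-dimensional space $\Ann(F)_2$, so we get a natural morphism to $\PP(\Ann(F)_2) \cong \PP^2$, which I would verify to be an isomorphism by exhibiting its inverse: any codimension-one subspace $W \subset \Ann(F)_2$ spans an ideal that one checks has the generic Hilbert function $h_{4,\PP^2}$ and is $B$-saturated. If $a = 1 < b$, then $c = b$ and Lemma~\ref{lem:containments} forces $(y_0^2)_{1+b} \subseteq I_{1+b}$ and codimension one in $\Ann(F)_{1+b}$; the remaining direction corresponds to a point $[s:t] \in \PP^1$ with generator $sy_1^{b+1} + ty_2^{b+1}$, and the resulting ideal $(y_0^2, sy_1^{b+1} + ty_2^{b+1})$ is already saturated with the generic Hilbert function. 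If $a \geq 2$, Lemma~\ref{lem:containments} pins $I_{a+b}$ down to $(y_0^{a+1},y_1^{b+1})_{a+b}$, so the saturation is uniquely $(y_0^{a+1},y_1^{b+1})$. In all three cases, smoothability is automatic since $\mathrm{Hilb}^r(\PP^2)$ is irreducible.

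For the fiber-type assertion, one direction is always Lemma~\ref{lem:trivial_containment}. The converse says: for each $I \in \VSPb(F,r)$, the saturation $\overline{I}$ actually lies in $\VPS(F,r)$. But $\overline{I}$ is precisely one of the saturated ideals identified above (this is where the parallelism of the two case analyses pays off), hence defines a point of $\VPS(F,r) \cap \mathrm{Hilb}^r_{sm}(\PP^2)$, and so $I \in \phi_{r,\PP^2}^{-1}(\VPS(F,r) \cap \mathrm{Hilb}^r_{sm}(\PP^2))$ as required.

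The main obstacle I anticipate is case (i): verifying that \emph{every} hyperplane $W \subset \Ann(F)_2$ (and not just a generic one, where $W$ is the complete intersection of two conics cutting out four points) produces a saturated ideal with Hilbert function $h_{4,\PP^2}$ and sits inside $\mathrm{Slip}_{4,\PP^2}$. The degenerate pencils require some care, but in this three-variable cubic setting they can be handled either by a direct monomial-by-monomial computation of $\mathrm{HF}(S/(W))$ or by invoking irreducibility of $\mathrm{Hilb}^4(\PP^2)$ together with Lemma~\ref{irreducibility of saturablelocus}. The other cases present no real obstacle once Lemma~\ref{lem:containments} is in hand.
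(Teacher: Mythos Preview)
Your proposal is correct and follows essentially the same approach as the paper: both reduce to Lemma~\ref{lem:containments} applied in parallel to ideals in $\VSPb(F,r)$ and to saturated ideals of apolar schemes, with the same three-way case split and Lemma~\ref{lem:trivial_containment} for one direction of fiber type. You are, if anything, slightly more explicit than the paper about the verification in case~(i) (the paper simply asserts that any codimension-one subspace of $\Ann(F)_2$ generates a saturated ideal with Hilbert function $h_{4,\PP^2}$, which is the regular-sequence argument from Proposition~\ref{likegenericubic}) and about smoothability and membership in $\mathrm{Slip}$ via Lemma~\ref{irreducibility of saturablelocus}.
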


\subsubsection{Monomials $x_0^ax_1^bx_2^c$ with $c\leq a+b-2$}
Let $d=a+b-c$ and assume that $d\geq 2$. Suppose that $I\subseteq \mathrm{Ann}(F)$ is a monomial ideal such that $\HF(S/I, a+b) = \HF(S/I, a+b+1)= r$. We have
$\mathrm{Ann}(F)_{a+b} = y_0^{a+1}S_{b-1} \oplus y_1^{b+1}S_{a-1}\oplus y_2^{c+1}S_{d-1}$ and
$\mathrm{Ann}(F)_{a+b+1} = y_0^{a+1}S_b \oplus y_1^{b+1}S_{a}\oplus y_2^{c+1}S_{d}$. Therefore, there are $V_0\subseteq S_{b-1}, V_1\subseteq S_{a-1},V_2\subseteq S_{d-1}$ and $W_0\subseteq S_{b}, W_1\subseteq S_{a}, W_2\subseteq S_{d}$ such that
\[
I_{a+b} = y_0^{a+1}V_0 \oplus y_1^{b+1}V_1\oplus y_2^{c+1}V_2 \text{ and } I_{a+b+1} = y_0^{a+1}W_0 \oplus y_1^{b+1}W_1\oplus y_2^{c+1}W_2, 
\]
where $S_1V_i \subseteq W_i$ for $i=0,1,2$. Let $h_i = \mathrm{codim}(V_i)$ and $H_i =\mathrm{codim}(W_i)$ (where codimensions 
are taken in the respective vector spaces). 
From the assumption on the Hilbert function of $S/I$ we get $h_0+h_1+h_2 = \dim_\CC S_{d-1}$ and $H_0+H_1+H_2 = \dim_\CC S_{d}$. To see this, note that 
\[
h_0+h_1+h_2 = \dim_{\CC} S_{b-1}+\dim_{\CC} S_{a-1} + \dim_{\CC} S_{d-1} - (\dim_{\CC} V_0 + \dim_\CC V_1 + \dim_\CC V_2)=
\]
\[
= \dim_{\CC} S_{b-1}+\dim_{\CC} S_{a-1} + \dim_{\CC} S_{d-1} - \dim_\CC S_{a+b} +r = \dim_\CC S_{d-1}. 
\]
The second equality can be proved in a similar way.

By Macaulay's bound we have 
\[
H_0 \leq h_0^{\langle b-1\rangle},  H_1 \leq h_1^{\langle a-1\rangle} \text{ and } H_2\leq h_2^{\langle d-1\rangle}.
\]
We claim that two out of the three values of $h_0,h_1,h_2$ are zero. We use the following result about Macaulay exponents.

\begin{lemma}[{\cite[Lemma~6.12]{bb19}}]\label{bblem}
Suppose $q,r,\ell,e$ are nonnegative integers with $\ell\geq e >0$. Then 
\[
q^{\langle \ell \rangle} + r^{\langle e \rangle}\leq (q+r)^{\langle e \rangle}.
\]
\end{lemma}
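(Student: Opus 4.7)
The plan is to split the claimed inequality into two parts: a monotonicity step that reduces the problem to the case $\ell = e$, and a superadditivity step that handles the resulting inequality in a single degree.

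For the reduction, I would first prove that for every fixed positive integer $q$ the function $\ell \mapsto q^{\langle \ell \rangle}$ is non-increasing on $\{\ell \geq 1\}$. Starting from the $\ell$-th Macaulay expansion $q = \binom{a_\ell}{\ell} + \binom{a_{\ell-1}}{\ell - 1} + \cdots + \binom{a_j}{j}$ with $a_\ell > \cdots > a_j \geq j$, one rewrites $q$ in its $(\ell+1)$-Macaulay form by inductively applying Pascal's identity $\binom{a}{b} = \binom{a-1}{b} + \binom{a-1}{b-1}$ to the topmost term until the upper index $\ell + 1$ is reached. A term-by-term comparison of the shifted binomial sums $\sum_i \binom{a_i + 1}{i + 1}$ obtained from the two expansions then yields $q^{\langle \ell + 1 \rangle} \leq q^{\langle \ell \rangle}$; iterating reduces the lemma to the case $\ell = e$.

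For the remaining superadditivity $q^{\langle e \rangle} + r^{\langle e \rangle} \leq (q + r)^{\langle e \rangle}$, I would argue via extremal (lex-segment) ideals. Fix a standard graded polynomial ring $R$ in sufficiently many variables, and let $L_q \subset L_{q+r} \subset R_e$ be nested lex segments of the indicated sizes; set $L'_r = L_{q+r} \setminus L_q$, so $|L'_r| = r$. Macaulay's theorem gives $|R_1 \cdot L_q| = q^{\langle e \rangle}$ and $|R_1 \cdot L_{q+r}| = (q+r)^{\langle e \rangle}$. Thus it is enough to show that the "new" shadow $(R_1 \cdot L_{q+r}) \setminus (R_1 \cdot L_q)$ has cardinality at least $r^{\langle e \rangle}$. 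The key observation is that, after restricting attention to the variables appearing strictly below the boundary of $L_q$ in the lex order, the set $L'_r$ becomes the lex segment of size $r$ in a smaller polynomial subring, and the contribution of $L'_r$ to the shadow that is not already accounted for by $L_q$ matches the shadow of that smaller lex segment, which by Macaulay's theorem has cardinality exactly $r^{\langle e \rangle}$.

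The main obstacle is the second step: making the identification of $L'_r$ with a lex segment in a subring precise. This requires a careful analysis of the lex order on $R_e$ relative to the boundary of $L_q$, and in particular a case analysis on whether and how the Macaulay representations of $q$ and $r$ interact when one forms the Macaulay representation of $q + r$. An alternative, perhaps cleaner route is to induct on $r$, reducing to the convexity-type statement that $q \mapsto (q+1)^{\langle e \rangle} - q^{\langle e \rangle}$ is non-decreasing; this in turn is proved by tracking the "carry" that occurs in the Macaulay representation of $q$ as $q$ is incremented by one, and checking that the induced jump in $q^{\langle e \rangle}$ is itself monotone. Either approach requires delicate but routine combinatorial bookkeeping, which I expect to be the crux of the argument.
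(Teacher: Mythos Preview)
The paper does not supply its own proof of this lemma; it is quoted verbatim from \cite[Lemma~6.12]{bb19}. Your decomposition into a monotonicity step ($q^{\langle \ell\rangle}\le q^{\langle e\rangle}$ for $\ell\ge e$) and a superadditivity step ($q^{\langle e\rangle}+r^{\langle e\rangle}\le (q+r)^{\langle e\rangle}$) is a natural one, and the monotonicity can indeed be obtained by the binomial rewriting you outline.

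Both of your proposed arguments for superadditivity, however, break down. The convexity route is false outright: the map $q\mapsto q^{\langle e\rangle}$ is \emph{not} convex. For $e=2$ one has $2^{\langle 2\rangle}=2$, $3^{\langle 2\rangle}=4$, $4^{\langle 2\rangle}=5$, so the successive differences $1,1,2,1$ are not monotone, and the inductive reduction to ``$(q+1)^{\langle e\rangle}-q^{\langle e\rangle}$ non-decreasing'' cannot succeed. The lex-segment route rests on the identity $|R_1\cdot L_q|=q^{\langle e\rangle}$, which is also wrong: for a lex segment $L_q\subset R_e$ of size $q$, the upper shadow $R_1\cdot L_q$ has size growing with the number of variables and is in general far from $q^{\langle e\rangle}$. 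Macaulay's theorem ties $q^{\langle e\rangle}$ to the \emph{codimension} of the shadow of the complementary lex segment, so the splitting $L_{q+r}=L_q\sqcup L'_r$ does not yield the comparison you want. A correct and short argument for superadditivity: choose lex quotients $A_1,A_2$ in disjoint variable sets with $\HF(A_1,e)=q$, $\HF(A_1,e+1)=q^{\langle e\rangle}$ and $\HF(A_2,e)=r$, $\HF(A_2,e+1)=r^{\langle e\rangle}$; their fiber product $A_1\times_{\CC}A_2$ is standard graded with $\HF(\cdot,e)=q+r$ and $\HF(\cdot,e+1)=q^{\langle e\rangle}+r^{\langle e\rangle}$, and Macaulay's bound applied to this algebra gives the inequality directly.
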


We apply Lemma~\ref{bblem} with $q+r=\dim_\CC S_{d-1}$ and $e=d-1$. In that case, we observe that the inequality from the lemma is strict if $\ell = e$ and $q,r > 0$ (Lemma~\ref{lem:numerics_of_MC})
or if $r=0$ and $\ell > d-1$ (Lemma~\ref{lem:numerics_of_MC2}). 

\begin{lemma}\label{lem:numerics_of_MC}
If $s$ and $t$ are positive integers with $s+t = \dim_\CC S_{d-1}$, then $s^{\langle d-1 \rangle} + t^{\langle d-1 \rangle} < \dim_\CC S_d.$ 
\end{lemma}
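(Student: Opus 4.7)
The plan is to apply Lemma~\ref{bblem} for the non-strict version of the inequality, and then upgrade to strictness by analyzing the Macaulay base-$(d-1)$ representations of $s$ and $t$. Recall that in our setting $S=\CC[y_0,y_1,y_2]$, so $\dim_\CC S_{d-1}=\binom{d+1}{d-1}$ and $\dim_\CC S_d=\binom{d+2}{d}$.

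First, applying Lemma~\ref{bblem} with $q=s$, $r=t$, and $\ell=e=d-1$ yields $s^{\langle d-1\rangle}+t^{\langle d-1\rangle}\leq (s+t)^{\langle d-1\rangle}$. Since $s+t=\binom{d+1}{d-1}$ has the single-term Macaulay base-$(d-1)$ representation, its shift is $\binom{d+2}{d}=\dim_\CC S_d$, producing the non-strict version.

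For strictness I would write the Macaulay representations
\[
s=\binom{a}{d-1}+s',\qquad t=\binom{b}{d-1}+t',
\]
where $s',t'$ have Macaulay representations using indices at most $d-2$. Setting $N=\binom{d+1}{d-1}$, the bounds $1\leq s,t\leq N-1$ force $a,b\in\{d-1,d\}$, and the configuration $a=b=d-1$ can be ruled out immediately since it would force $s+t\leq 2(d-1)<N$. The remaining configurations are analyzed via the term-by-term shift $s^{\langle d-1\rangle}=\binom{a+1}{d}+(s')^{\langle d-2\rangle}$ (and analogously for $t$) combined with a second application of Lemma~\ref{bblem} to bound $(s')^{\langle d-2\rangle}+(t')^{\langle d-2\rangle}\leq (s'+t')^{\langle d-2\rangle}$. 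In the asymmetric configurations $(a,b)\in\{(d-1,d),(d,d-1)\}$ one has $s'+t'=\binom{d}{2}-1$, and the identity
\[
\Bigl(\binom{d}{2}-1\Bigr)^{\langle d-2\rangle}=\binom{d+1}{2}-3,
\]
obtained from the Macaulay expansion $\binom{d}{2}-1=\binom{d-1}{d-2}+\binom{d-2}{d-3}+\cdots+\binom{2}{1}$, yields the overall bound $s^{\langle d-1\rangle}+t^{\langle d-1\rangle}\leq \dim_\CC S_d-2$, which is strict.

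The main obstacle is the symmetric large configuration $(a,b)=(d,d)$, arising only for $d\geq 3$: here $s'+t'=\binom{d-1}{2}-1$, and the required strictness reduces to the sub-inequality
\[
\Bigl(\binom{d-1}{2}-1\Bigr)^{\langle d-2\rangle}<\binom{d}{2}-1,
\]
which is essentially a sharper form of Lemma~\ref{bblem} for this specific numerical input. I plan to verify this sub-inequality by explicit computation of the Macaulay base-$(d-2)$ representation of $\binom{d-1}{2}-1$ and its shift, using the greedy algorithm together with Pascal-type identities. The base case $d=2$ of the overall statement is handled separately by directly checking $(s,t)\in\{(1,2),(2,1)\}$, where only the asymmetric case arises and the previous argument gives the strict inequality $4<6$.
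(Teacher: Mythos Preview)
Your approach is correct in outline but takes a different, more laborious route than the paper. You split on the top Macaulay coefficients $(a,b)\in\{d-1,d\}^2$, dispatch the asymmetric configurations via the closed form $\bigl(\binom{d}{2}-1\bigr)^{\langle d-2\rangle}=\binom{d+1}{2}-3$, and reduce the symmetric case $(d,d)$ to the sub-inequality $\bigl(\binom{d-1}{2}-1\bigr)^{\langle d-2\rangle}<\binom{d}{2}-1$. This reduction is valid. However, your proposed execution of the last step---computing the Macaulay base-$(d-2)$ expansion of $\binom{d-1}{2}-1$ via the greedy algorithm---will not produce a uniform closed form: unlike $\binom{d}{2}-1$, the number $\binom{d-1}{2}-1$ does \emph{not} have the tidy expansion $\sum_{i}\binom{i+1}{i}$ in base $d-2$ (try $d=7$: one gets $14=\binom{6}{5}+\binom{5}{4}+\binom{3}{3}+\binom{2}{2}+\binom{1}{1}$). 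So ``explicit computation'' would devolve into case analysis.

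The paper's argument bypasses the case split entirely. Since $s,t<\binom{d+1}{d-1}$, every Macaulay coefficient satisfies $s(i),t(i)\leq i+1$; hence $s^{\langle d-1\rangle}-s$ equals the number $n_s$ of indices with $s(i)=i+1$, and likewise for $t$. The claim becomes $n_s+n_t<d+1$. Assuming $n_s+n_t=d+1$, monotonicity of the coefficients forces $s\geq\binom{d+1}{2}-\binom{d-n_s+1}{2}$ and $t\geq\binom{d+1}{2}-\binom{n_s}{2}$, and adding these overshoots $\binom{d+1}{2}$. This same ``count the maximal indices'' observation would also cleanly finish your sub-inequality: since $\binom{d-1}{2}-1<\binom{d}{d-2}$, all its base-$(d-2)$ coefficients are $\leq i+1$, so the increment under $\langle d-2\rangle$ is at most the number of indices, namely $d-2$; thus $\bigl(\binom{d-1}{2}-1\bigr)^{\langle d-2\rangle}\leq \binom{d-1}{2}-1+(d-2)=\binom{d}{2}-2<\binom{d}{2}-1$. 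In short, the idea you still need for the symmetric case is exactly the one the paper uses from the start, making the detour through $(a,b)$ unnecessary.
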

\begin{proof}
Let 
\[
s = \sum_{i=1}^{d-1} \binom{s(i)}{i} \text{ and } t = \sum_{i=1}^{d-1} \binom{t(i)}{i}
\]
be the Macaulay representations of the integers $s$ and $t$. Assume that $0<s<\dim_\CC S_{d-1}=\binom{d+1}{2} = \binom{d+1}{d-1}$. Hence
$s(d-1) \leq d$ and $t(d-1)\leq d$. As a result, for every $i$ we have $s(i)\leq i+1$ and $t(i)\leq i+1$.
It follows that $s^{\langle d-1 \rangle} - s$ is the number $n_s$ of those $i$ for which $s(i) = i+1$. We define $n_t$ analogously.
We have $\dim_\CC S_d - \dim_\CC S_{d-1} = d+1$ so we need to show that $n_s + n_t < d+1$. By Lemma \ref{bblem}, we have $n_s+n_t\leq d+1$ so we may assume that $n_s > 0$ and $n_t = (d+1-n_s)$ and argue by contradiction. By monotonicity of the coefficients $s(i)$ and $t(i)$ we get 
$s \geq \sum_{i=d-n_s}^{d-1} \binom{i+1}{i}  = \binom{d+1}{2} - \binom{d-n_s+1}{2}$. Similarly, $t \geq \binom{d+1}{2} - \binom{d-n_t+1}{2} = \binom{d+1}{2} - \binom{n_s}{2}$. We claim that the sum $s+t$ is then strictly larger than $\dim_\CC S_{d-1} = \binom{d+1}{2}$.
It is sufficient to show that $\binom{d+1}{2} > \binom{d-n_s+1}{2} + \binom{n_s}{2}$, which is implied by the inequality
$n_s(d-n_s+1) = n_sn_t> 0.$ 
\end{proof}

\begin{lemma}\label{lem:numerics_of_MC2}
If $\ell > d-1$, then $(\dim_\CC S_{d-1})^{\langle \ell \rangle} < (\dim_\CC S_{d-1})^{\langle d-1 \rangle}$.
\end{lemma}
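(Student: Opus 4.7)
The plan is to prove the lemma by a direct analysis of the $\ell$-th Macaulay representation of $n := \dim_\CC S_{d-1} = \binom{d+1}{2} = \binom{d+1}{d-1}$ and compare the resulting quantity $n^{\langle \ell \rangle}$ against the target value for $\ell = d-1$.

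First I would compute the right-hand side $n^{\langle d-1 \rangle}$: the $(d-1)$-th Macaulay representation of $n = \binom{d+1}{d-1}$ is a single term, so by definition
$n^{\langle d-1 \rangle} = \binom{d+2}{d} = \binom{d+2}{2} = \dim_\CC S_d$. This pins down the value to beat.

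Second, for $\ell \geq d$, the pivotal numerical observation is that $\binom{\ell+2}{\ell} = \binom{\ell+2}{2} \geq \binom{d+2}{2} > \binom{d+1}{2} = n$, which forces the leading index $a_\ell$ in the greedy algorithm producing the $\ell$-th Macaulay representation of $n$ to satisfy $a_\ell \leq \ell+1$. Hence the top contribution to $n^{\langle \ell \rangle}$ is at most $\binom{\ell+2}{\ell+1} = \ell+2$, which is strictly smaller than $\binom{d+2}{d}$ already for $d\geq 2$. Iterating the greedy algorithm on the remainder (with the constraint that each new index is strictly smaller than the previous one), I would obtain a cumulative upper bound on $n^{\langle \ell \rangle}$ and compare it to $\binom{d+2}{2} = (d+1)(d+2)/2$ by elementary arithmetic. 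A small calculation shows the expected behavior: for instance at $\ell = d$, writing $n = \binom{d+1}{d} + (d-2)(d+1)/2$ and recursing gives $n^{\langle d \rangle} = (d+2) + (\text{strictly smaller terms})$, from which the strict comparison with $\binom{d+2}{2}$ follows.

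To avoid a lengthy case analysis for each $\ell \geq d$, I would establish a monotonicity statement: $n^{\langle \ell + 1 \rangle} \leq n^{\langle \ell \rangle}$ whenever the top index in the $\ell$-th representation is at most $\ell+1$, which by the previous paragraph holds throughout $\ell \geq d$. Granting this, the problem reduces to verifying $n^{\langle d \rangle} < n^{\langle d-1 \rangle}$, which is precisely the single-degree computation above. The monotonicity itself can be derived from comparing lex-segment ideals in consecutive degrees, or directly from the observation that the transformation sending each term $\binom{a_i}{i}$ in the $\ell$-th representation to $\binom{a_i+1}{i+1}$ produces an admissible $(\ell+1)$-th representation of $n^{\langle \ell \rangle}$, whose own Macaulay growth is then bounded by $n^{\langle \ell \rangle}$ itself.

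The main obstacle is the bookkeeping inherent in the greedy algorithm: although each step is elementary, tracking the strict inequality uniformly in $\ell$ requires care because the Macaulay representation of $\binom{d+1}{2}$ in degree $\ell$ changes shape depending on the relative sizes of $\ell$ and $d$. Accordingly, I would invest effort in formulating a clean monotonicity lemma for $\ell \mapsto n^{\langle \ell \rangle}$ valid in the relevant range, which both dispatches the strict inequality and unifies the general case $\ell > d$ with the boundary case $\ell = d$.
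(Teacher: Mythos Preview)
Your strategy—reduce to $\ell = d$ via the monotonicity $n^{\langle \ell \rangle} \le n^{\langle d \rangle}$ for $\ell \ge d$, then check the single case $\ell = d$—is valid and cleaner than the paper's approach. The paper treats every $\ell > d-1$ directly: after splitting off the trivial range $\ell \ge n$, it establishes $s(\ell) = \ell+1$, deduces that $s(i) - i \in \{1,0,-1\}$ for all $i$, computes $n^{\langle \ell \rangle} - n$ as the number of indices with $s(i) = i+1$, and finally shows this number is at most $d$ by a contradiction argument involving a telescoping sum. Your route avoids both the case split and the contradiction step.

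That said, two steps in your writeup are not complete. First, your justification for the monotonicity is garbled: knowing that the shifted expansion is the $(\ell+1)$-representation of $n^{\langle \ell \rangle}$ tells you something about $(n^{\langle \ell \rangle})^{\langle \ell+1\rangle}$, which is the wrong quantity. The monotonicity you want is, however, already available: it is the case $r = 0$ of Lemma~\ref{bblem}. Second, you never actually carry out the base case $\ell = d$; the phrase ``$(d+2) + (\text{strictly smaller terms})$'' does not by itself compare with $\binom{d+2}{2}$. The quick finish is to reuse your own observation that $a_d \le d+1$: by strict decrease this forces $a_i \le i+1$ for every index $i \le d$, so each summand contributes $\binom{a_i+1}{i+1} - \binom{a_i}{i} = \binom{a_i}{i+1} \in \{0,1\}$ to $n^{\langle d\rangle} - n$. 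Since there are at most $d$ summands, $n^{\langle d \rangle} - n \le d < d+1 = n^{\langle d-1 \rangle} - n$. With these two fixes your argument is complete and shorter than the paper's.
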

\begin{proof}

Let 
\[
\dim_\CC S_{d-1} = \binom{d+1}{2} = \sum_{i=1}^{\ell} \binom{s(i)}{i}
\]
be the Macaulay representation. We consider two cases. If $\ell \geq \dim_\CC S_{d-1}$, then for every $i\in \{1,\ldots, \ell\}$ we have $s(i) \leq i$, so  
$(\dim_\CC S_{d-1})^{\langle \ell \rangle} = \dim_\CC S_{d-1} <  \dim_\CC S_{d} = (\dim_\CC S_{d-1})^{\langle d-1 \rangle}$.

Assume that $\ell < \dim_\CC S_{d-1}$. It follows that $s(\ell) \geq \ell+1$.  Since $\ell > d-1$ we have in fact $s(\ell) =\ell+1$. 
Thus, there are integers $k$ and $k'$ with $\ell \geq k \geq k' \geq 1$ such that 
$s(i) = i+1$ for every $k\leq i \leq \ell$, $s(i) = i$ for every $k'\leq i \leq k-1$ and $s(i)= i-1$ for every $1\leq i \leq k'-1$.
As a result, $(\dim_\CC S_{d-1})^{\langle \ell \rangle} - \dim_\CC S_{d-1} = \ell-k+1$.
We have $(\dim_\CC S_{d-1})^{\langle d-1 \rangle} - \dim_\CC S_{d-1} = d+1$ so to conclude the proof we need to show that $\ell - k +1 < d+1$. Assume by contradiction that 
$k\leq \ell-d$. We get
\[
\binom{d+1}{2} = \dim_\CC S_{d-1} \geq \sum_{i=k}^\ell \binom{i+1}{i} = \binom{\ell+2}{2} - \binom{k+1}{2}
\geq  \binom{\ell+2}{2} - \binom{\ell-d+1}{2}.
\]
We claim that the last expression is strictly larger than $\binom{d+1}{2}$ which gives a contradiction. Indeed, after rearranging the terms we obtain an equivalent inequality $(d+1)(\ell -d+1 ) > 0$ which is true by the assumption that $\ell > d-1$.
\end{proof}

\begin{lemma}\label{lem:inequality_of_MC}
If $h_0,h_1,h_2$ are nonnegative integers with $h_0+h_1+h_2 = \dim_\CC S_{d-1}$ and $h_0^{\langle b-1\rangle} + h_1^{\langle a-1\rangle} + h_2^{\langle d-1 \rangle} \geq \dim_\CC S_{d}$, then $\{h_0,h_1,h_2\} = \{0, \dim_\CC S_{d-1}\}$. 
Furthermore, 
\begin{enumerate}
    \item[(i)] if $b<c$, then $h_0 = h_1 = 0$;
    \item[(ii)] if $a < b$, then $h_0 = 0$.
\end{enumerate}
\end{lemma}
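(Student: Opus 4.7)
The plan is to apply Lemma \ref{bblem} twice in cascade and then read off the two resulting equalities using the strict-inequality Lemmas \ref{lem:numerics_of_MC} and \ref{lem:numerics_of_MC2}. The hypotheses $0<a\le b\le c$ and $d=a+b-c\ge 2$ imply $a\ge 2$, so all exponents $d-1,a-1,b-1$ are positive. I will also use the monotonicity $n^{\langle \ell\rangle}\le n^{\langle e\rangle}$ for $\ell\ge e>0$, which is itself Lemma \ref{bblem} with one summand set to zero.

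First, combining
\[
h_0^{\langle b-1\rangle}+h_1^{\langle a-1\rangle}\le (h_0+h_1)^{\langle a-1\rangle}
\]
(Lemma \ref{bblem} with $\ell=b-1,\ e=a-1$, valid since $b\ge a$) with
\[
(h_0+h_1)^{\langle a-1\rangle}+h_2^{\langle d-1\rangle}\le (h_0+h_1+h_2)^{\langle d-1\rangle}=\dim_{\CC} S_d
\]
(Lemma \ref{bblem} with $\ell=a-1,\ e=d-1$, valid since $a\ge d$), the hypothesis forces equality in both inequalities.

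From the outer equality and monotonicity $(h_0+h_1)^{\langle a-1\rangle}\le(h_0+h_1)^{\langle d-1\rangle}$, I obtain $(h_0+h_1)^{\langle d-1\rangle}+h_2^{\langle d-1\rangle}\ge \dim S_d$, so Lemma \ref{lem:numerics_of_MC} forces either $h_0+h_1=0$ or $h_2=0$. In the first case $(h_0,h_1,h_2)=(0,0,\dim S_{d-1})$ and we are done. In the second, $(\dim S_{d-1})^{\langle a-1\rangle}=\dim S_d$, and Lemma \ref{lem:numerics_of_MC2} forces $a-1=d-1$, that is $a=d$, equivalently $b=c$. This immediately gives (i): the hypothesis $b<c$ excludes this second branch.

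To finish, I analyze the inner equality $h_0^{\langle b-1\rangle}+h_1^{\langle a-1\rangle}=\dim S_d$ under $h_2=0$, $a=d$ and $h_0+h_1=\dim S_{a-1}$. If both $h_0,h_1>0$, monotonicity $h_0^{\langle b-1\rangle}\le h_0^{\langle a-1\rangle}$ upgrades this to $h_0^{\langle a-1\rangle}+h_1^{\langle a-1\rangle}\ge\dim S_d$, which is directly contradicted by Lemma \ref{lem:numerics_of_MC}. Hence exactly one of $h_0,h_1$ vanishes; if $h_1=0$ then $(\dim S_{d-1})^{\langle b-1\rangle}=\dim S_d$, and Lemma \ref{lem:numerics_of_MC2} forces $b=a$, producing the symmetric case $h_1=0$, $h_0=\dim S_{d-1}$. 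This single subcase is exactly what $a<b$ of (ii) rules out, so (ii) follows. I expect the main obstacle to be purely organizational: keeping track of which exponent inequality ($b-1\ge a-1\ge d-1$) is driving each application of the numerical lemmas, so that the equalities cascade in the correct order.
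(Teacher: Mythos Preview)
Your argument is correct and uses the same three ingredients as the paper's proof: Lemma~\ref{bblem}, Lemma~\ref{lem:numerics_of_MC}, and Lemma~\ref{lem:numerics_of_MC2}. The only difference is organizational: the paper first applies termwise monotonicity $h_i^{\langle b-1\rangle}\le h_i^{\langle d-1\rangle}$ (respectively with $a-1$) to reduce all three exponents to $d-1$, and then combines via Lemma~\ref{bblem}; you instead combine $h_0,h_1$ first, then add $h_2$, and read off the consequences of each equality separately. Both routes arrive at the same strict-inequality contradictions from Lemmas~\ref{lem:numerics_of_MC} and~\ref{lem:numerics_of_MC2}, and your pairwise cascade makes the derivation of (i) and (ii) slightly more explicit than the paper's terse ``counterexample implies \eqref{eq:MC} is strict'' step.
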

\begin{proof}
By assumptions we have $d-1 = a+b-c-1 \leq a-1 \leq b-1$.
By Lemma \ref{bblem}, it follows that
\begin{equation}\label{eq:MC}
h_0^{\langle b-1\rangle} + h_1^{\langle a-1\rangle} + h_2^{\langle d-1 \rangle} \leq h_0^{\langle d-1\rangle} + h_1^{\langle d-1\rangle} + h_2^{\langle d-1 \rangle}.
\end{equation}
Furthermore, by Lemma \ref{bblem} we get 
\begin{equation}\label{eq:mac_coeff_ineq}
h_0^{\langle d-1\rangle} + h_1^{\langle d-1\rangle} + h_2^{\langle d-1 \rangle}\leq (h_0+h_1+h_2)^{\langle d-1 \rangle} =  (\dim_\CC S_{d-1})^{\langle d-1 \rangle} = \dim_\CC S_{d}.
\end{equation}
We show that if at least two of the numbers $h_0,h_1$ and $h_2$ are nonzero then the inequality in  \eqref{eq:mac_coeff_ineq} is strict. 
Using Lemma \ref{bblem} again we can reduce to the numerical problem considered in Lemma~\ref{lem:numerics_of_MC}. Therefore, exactly one of $h_0, h_1$ and $h_2$ is nonzero. Suppose we have a counterexample to (i) or (ii). Then inequality \eqref{eq:MC} is strict by Lemma~\ref{lem:numerics_of_MC2}. Using inequality \eqref{eq:mac_coeff_ineq} we obtain a contradiction.
\end{proof}

\begin{lemma}\label{lem:apolar_monomial_ideals}
Let $I\subseteq \mathrm{Ann}(F)$ be a monomial ideal with $\HF(S/I, a+b) = \HF(S/I,a+b+1) = r$.
If $b<c$, then $I_{a+b} = (y_0^{a+1}, y_1^{b+1})_{a+b}$. If $a < b=c$, then $I_{a+b} = (y_0^{a+1}, y_1^{b+1})_{a+b}$ or $I_{a+ b} = (y_0^{a+1}, y_2^{b+1})_{a+b}$. If  $a=b=c$, then $I_{a+b}$ is one of the three $(y_0^{a+1}, y_1^{a+1})_{a+b}$, $(y_0^{a+1}, y_2^{a+1})_{a+b}$ or $(y_1^{a+1}, y_2^{a+1})_{a+b}$.
\end{lemma}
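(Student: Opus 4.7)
The proof plan exploits the preceding discussion directly, which essentially reduces the lemma to a numerical/combinatorial statement that has already been settled in Lemma~\ref{lem:inequality_of_MC}. Before the statement, the paper already sets up, for a monomial ideal $I\subseteq\Ann(F)$ with $\HF(S/I,a+b)=\HF(S/I,a+b+1)=r$, decompositions
\[
I_{a+b}=y_0^{a+1}V_0\oplus y_1^{b+1}V_1\oplus y_2^{c+1}V_2,\qquad I_{a+b+1}=y_0^{a+1}W_0\oplus y_1^{b+1}W_1\oplus y_2^{c+1}W_2,
\]
with $S_1V_i\subseteq W_i$, and proves the numerical identities $h_0+h_1+h_2=\dim_\CC S_{d-1}$, $H_0+H_1+H_2=\dim_\CC S_{d}$, together with the inequality $H_0^{\langle b-1\rangle}+H_1^{\langle a-1\rangle}+H_2^{\langle d-1\rangle}\ge \dim_\CC S_d$ after summing the Macaulay bounds $H_i\le h_i^{\langle \cdot\rangle}$. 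The first step of the plan is therefore to invoke Lemma~\ref{lem:inequality_of_MC} directly to conclude that exactly one of $h_0,h_1,h_2$ equals $\dim_\CC S_{d-1}$ and the other two vanish, with the additional restrictions: $h_0=h_1=0$ whenever $b<c$, and $h_0=0$ whenever $a<b$.

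The second step is a short case analysis dictated by these constraints. If $b<c$, we must have $h_0=h_1=0$ and $h_2=\dim_\CC S_{d-1}$, hence $V_0=S_{b-1}$, $V_1=S_{a-1}$, $V_2=0$, so $I_{a+b}=y_0^{a+1}S_{b-1}\oplus y_1^{b+1}S_{a-1}$, which equals $(y_0^{a+1},y_1^{b+1})_{a+b}$ (the sum is direct since any common monomial would be divisible by $y_0^{a+1}y_1^{b+1}$, forcing degree $\ge a+b+2$). If $a<b=c$, then $d=a$ and $\dim_\CC S_{a-1}=\dim_\CC S_{d-1}$, so the two surviving possibilities $h_1=\dim_\CC S_{d-1}$ or $h_2=\dim_\CC S_{d-1}$ yield respectively $I_{a+b}=(y_0^{a+1},y_2^{c+1})_{a+b}=(y_0^{a+1},y_2^{b+1})_{a+b}$ and $I_{a+b}=(y_0^{a+1},y_1^{b+1})_{a+b}$. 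If $a=b=c$, then $d=a$ and all three ambient spaces $S_{b-1},S_{a-1},S_{d-1}$ have the same dimension, so any of the three indices is allowed to be the one where $h_i=\dim_\CC S_{d-1}$, producing the three symmetric options $(y_0^{a+1},y_1^{a+1})_{a+b}$, $(y_0^{a+1},y_2^{a+1})_{a+b}$, $(y_1^{a+1},y_2^{a+1})_{a+b}$.

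The real work has already been absorbed into the numerical Lemma~\ref{lem:inequality_of_MC} (and the auxiliary Lemmas~\ref{lem:numerics_of_MC} and \ref{lem:numerics_of_MC2}), so the only remaining obstacle is the bookkeeping: making sure that in each case the identification $V_i=S_\bullet$ or $V_i=0$ matches the claimed generators, and verifying that the decomposition $y_0^{a+1}S_{b-1}\oplus y_j^{?+1}S_?$ really does exhaust $(y_0^{a+1},y_j^{?+1})_{a+b}$ as a vector space. Both verifications are immediate because the pairwise products $y_i^{a_i+1}y_j^{a_j+1}$ all live in degrees strictly greater than $a+b$, so the degree $a+b$ pieces of the two-generator monomial ideals split as direct sums. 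No separate argument is needed beyond invoking Lemma~\ref{lem:inequality_of_MC} and tracking the equality of dimensions $\dim_\CC S_{d-1}=\dim_\CC S_{a-1}$ when $b=c$ (equivalently $d=a$).
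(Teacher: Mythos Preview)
Your proposal is correct and follows the same approach as the paper: invoke the setup preceding the lemma, apply Lemma~\ref{lem:inequality_of_MC} to force two of the $h_i$ to vanish (with the additional restrictions in cases $b<c$ and $a<b$), and then read off $I_{a+b}$ from the resulting $V_i$. One small slip: in your displayed inequality you wrote $H_0^{\langle b-1\rangle}+H_1^{\langle a-1\rangle}+H_2^{\langle d-1\rangle}\ge \dim_\CC S_d$, but the hypothesis of Lemma~\ref{lem:inequality_of_MC} involves the $h_i$'s, not the $H_i$'s---summing the Macaulay bounds $H_i\le h_i^{\langle\cdot\rangle}$ together with $H_0+H_1+H_2=\dim_\CC S_d$ gives $h_0^{\langle b-1\rangle}+h_1^{\langle a-1\rangle}+h_2^{\langle d-1\rangle}\ge \dim_\CC S_d$, which is what you need.
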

\begin{proof}
Let $V_i, W_i, h_i$ and $H_i$ be as defined at the beginning of the subsection. 
We have $h_0+h_1+h_2 = \dim_\CC S_{d-1}$ and by Lemma~\ref{lem:inequality_of_MC} two of these numbers are zero. 
If $c > b$, then Lemma~\ref{lem:inequality_of_MC}(i) gives also $h_0=h_1=0$.
If $c=b > a$, then $h_0 = 0$ and this leads to the two cases considered above.
If $a=b=c$, there are three possible choices depending on which two of the numbers $h_0,h_1$ and $h_2$ are zero.
\end{proof}

\begin{lemma}\label{lem:apolar_ideals}
Assume that $F=x_0^ax_1^bx_2^c$ with $1\leq a\leq b\leq c$ and $a+b-c \geq 2$ and that $I\subseteq \Ann(F)$ is a homogeneous ideal such that $\HF(S/I, a+b)=\HF(S/I, a+b+1)=r$.
\begin{enumerate}
    \item[(i)]\label{lempart1} If $b < c$, then $I_{a+b} = (y_0^{a+1}, y_1^{b+1})_{a+b}$.
    \item[(ii)]\label{lempart2} If $a < b=c$, then $I_{a+b}=(y_0^{a+1}, sy_1^{b+1}+ty_2^{b+1})_{a+b}$ for some $[s:t]\in \PP^1$.
    \item[(iii)]\label{lempart3} If $a=b=c$, then $I_{a+b} = (W)_{2a}$ for some $W\in \mathbb{G}(2, \langle y_0^{a+1}, y_1^{a+1}, y_2^{a+1}\rangle)$.
\end{enumerate}
\end{lemma}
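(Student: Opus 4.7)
The plan is to reduce to the monomial case of Lemma~\ref{lem:apolar_monomial_ideals} via Gröbner deformation, then exploit the Hilbert-function constraint in the next degree to pin down the linear-family structure. For any monomial order (or weight order with tie-breaking) $\prec$ on $S$, the initial ideal $J^\prec := \mathrm{in}_\prec(I)$ is a monomial ideal with the same Hilbert function as $I$; since $\Ann(F) = (y_0^{a+1}, y_1^{b+1}, y_2^{c+1})$ is itself monomial, $J^\prec \subseteq \Ann(F)$, so Lemma~\ref{lem:apolar_monomial_ideals} applies to $J^\prec$ and identifies $J^\prec_{a+b}$ as one of the monomial subspaces listed there.

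For Case (i), I would take $\prec$ to be the lexicographic order with $y_2 > y_1 > y_0$, so that by Lemma~\ref{lem:apolar_monomial_ideals}(i) one has $J^\prec_{a+b} = (y_0^{a+1}, y_1^{b+1})_{a+b}$. Every monomial in this space has $y_2$-exponent at most $b-1$, whereas every monomial in the remaining component $y_2^{c+1} S_{d-1}$ of $\Ann(F)_{a+b}$ has $y_2$-exponent at least $c+1 > b-1$. Thus any element of $I_{a+b}$ with a nonzero component in $y_2^{c+1} S_{d-1}$ would have leading term in $y_2^{c+1} S_{d-1}$, lying outside $J^\prec_{a+b}$, a contradiction. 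So $I_{a+b} \subseteq (y_0^{a+1}, y_1^{b+1})_{a+b}$, and equality follows by dimension count.

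For Cases (ii) and (iii), I would use weight orders such as $w = (0,1,1)$ (refined by lex) and its cyclic variants. In Case (ii), the gap argument of Case (i) now shows that $y_0^{a+1} S_{b-1} \subseteq I_{a+b}$: for each monomial $m \in y_0^{a+1} S_{b-1}$, the reduced Gröbner basis element $m + g_m \in I_{a+b}$ must have $g_m$ supported on non-$J^\prec$ monomials strictly less than $m$ in $\prec$, but every such monomial in $\Ann(F)_{a+b}$ has $w$-weight at least $b+1 > b-1$, forcing $g_m = 0$. Writing $I_{a+b} = y_0^{a+1} S_{b-1} \oplus V$ with $V \subseteq (y_1^{b+1}, y_2^{b+1}) S_{a-1}$ of dimension $\binom{a+1}{2}$, the inclusion $S_1 \cdot I_{a+b} \subseteq I_{a+b+1}$ combined with $\HF(S/I, a+b+1) = r$ and the direct-sum decomposition of $\Ann(F)_{a+b+1}$ gives $\dim S_1 V \leq \dim S_a$. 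Under the identification $V \subseteq \langle y_1^{b+1}, y_2^{b+1}\rangle \otimes_\CC S_{a-1}$, this bound (a Gotzmann-type persistence statement) forces $V = \langle s y_1^{b+1} + t y_2^{b+1}\rangle \otimes S_{a-1}$ for some $[s:t] \in \PP^1$. Case (iii) proceeds symmetrically, yielding $I_{2a} = W \otimes S_{a-1}$ for a $2$-dimensional $W \subseteq \langle y_0^{a+1}, y_1^{a+1}, y_2^{a+1}\rangle$ under the identification $\Ann(F)_{2a} \cong \CC^3 \otimes S_{a-1}$.

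The hard part will be the Gotzmann-type persistence step: while the Gröbner degeneration plus Lemma~\ref{lem:apolar_monomial_ideals} locks in the ''pure'' component $y_0^{a+1} S_{b-1}$ of $I_{a+b}$ (and dictates the dimension of the residual $V$), identifying $V$ as a tensor product with $S_{a-1}$ requires extracting the maximum rigidity from $\dim S_1 V \leq \dim S_a$ for subspaces of $\CC^k \otimes S_{a-1}$ (with $k=2$ or $3$). The general principle behind this is classical, but the concrete bookkeeping — matching dimensions in $\Ann(F)_{a+b+1}$, verifying the pullback of the constraint to the tensor description, and separating the contributions of the three components in Case (iii) — is the technical crux of the argument.
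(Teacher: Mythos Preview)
Your outline for Case~(i) is correct and essentially matches the paper's: both pass to an initial ideal, invoke Lemma~\ref{lem:apolar_monomial_ideals}, and use a ``gap'' in the monomial order to deduce $I_{a+b}\subseteq (y_0^{a+1},y_1^{b+1})_{a+b}$. Likewise, your reduction in Case~(ii) establishing $y_0^{a+1}S_{b-1}\subseteq I_{a+b}$ via the weight $w=(0,1,1)$ and the dimension bound $\dim S_1V\le\dim S_a$ are both valid; the bookkeeping checks out.

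The gap is the ``Gotzmann-type persistence'' step itself. This is \emph{not} a classical named result: Gotzmann's persistence concerns ideals in $S$, and there is no off-the-shelf version for subspaces $V\subseteq\CC^k\otimes S_{a-1}$ forcing $V=W\otimes S_{a-1}$ from $\dim S_1V\le(k-1)\dim S_a$. What you have left unproven is exactly the substance of the lemma. The paper does not appeal to any such principle; instead it exploits the Gr\"obner structure further. Knowing (say) $\mathrm{in}_\prec(I)_{a+b}=(y_0^{a+1},y_2^{b+1})_{a+b}$ forces $V$ to be the graph of a linear map $\phi\colon S_{a-1}\to S_{a-1}$ (writing $f_m=y_2^{b+1}m+y_1^{b+1}h_m$), and the same initial-ideal description persists in degree $a+b+1$. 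The paper then shows $\phi$ is scalar by an explicit inductive syzygy argument: from $y_jf_m-y_if_{\widehat m}\in I_{a+b+1}$ one extracts $y_jh_m=y_ih_{\widehat m}$, whence every monomial dividing $m$ divides $h_m$, so $h_m=s\cdot m$ with $s$ independent of $m$. Case~(iii) is handled the same way with two tails. Your dimension inequality alone, without the graph structure and this induction, does not obviously yield the conclusion (and in Case~(iii) you do not even isolate a pure summand before invoking the principle for $k=3$). You should either supply this argument or recognize that the ``persistence'' you cite is precisely Claim~1 of the paper's proof, not an external input.
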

\begin{proof}
    First we show that if $a < c$, then $(y_0^{a+1})_{a+b}\subseteq I_{a+b}$. Let $\prec$ be the graded reverse lexicographic monomial order with $y_0 \prec y_1 \prec y_2$. Since $F$ is a monomial, we have $\mathrm{in}_{\prec}(I)\subseteq \Ann(F)$ and by Lemma~\ref{lem:apolar_monomial_ideals} we have $(y_0^{a+1})_{a+b} \subseteq \mathrm{in}_{\prec}(I)$. Since every degree $a+b$ monomial in $(y_0^{a+1})$ is smaller than every monomial in $\Ann(F)_{a+b}$ that is not divisible by $y_0^{a+1}$ we conclude that $(y_0^{a+1})_{a+b} \subseteq I$. Analogously, we show that if $b<c$, then $(y_1^{b+1})_{a+b}\subseteq I_{a+b}$. Which finishes the proof of part (i) since $\dim_\CC I_{a+b} = \dim_\CC (y_0^{a+1}, y_1^{b+1})_{a+b}$.
    
For the remaining cases we consider the set $\mathcal{M} = \{m_1, \ldots, m_{\dim_{\CC} S_{a-1}}\}$ of all monomials in $S$ of degree $a-1$.  
    
Assume that $a < b=c$. By Lemma~\ref{lem:apolar_monomial_ideals} we may assume without loss of generality that $\left(\mathrm{in}_{\prec}(I)\right)_{a+b} = (y_0^{a+1}, y_2^{b+1})_{a+b}$ which implies that also 
$\left(\mathrm{in}_{\prec}(I)\right)_{a+b+1}=  (y_0^{a+1}, y_2^{b+1})_{a+b+1}$ since the two ideals have the same Hilbert function in degree $a+b+1$ and the latter has no minimal generators in that degree.
From the assumption on the degree $(a+b)$-part of the initial ideal we get $I_{a+b} = (y_0^{a+1},f_m \mid m\in \mathcal{M})_{a+b}$ where
\begin{equation}\label{eq1}
f_m = y_2^{b+1}m + g_m \text{ with } g_m \in y_1^{b+1}S_{a-1}.
\end{equation}
\begin{claim}\label{cl:1}
There exists $s\in \CC$ such that $g_m = sy_1^{b+1}m$ for every $m\in \mathcal{M}$.
\end{claim}
\begin{proof}[Proof of Claim~\ref{cl:1}]
    We first show that for every $m\in \mathcal{M}$, $g_m$ is of the form $s_my_1^{b+1}m$ for some $s_m\in \CC$. 
    For every $m\in \mathcal{M}$ let $h_m \in S_{a-1}$ be the polynomial for which $g_m = y_1^{b+1}h_m$.
    
    It is sufficient to show by induction on the degree $\delta$ of a monomial $p$ dividing $m$ that $p$ divides $h_m$. We start with $\delta=1$. Let $p=y_i$ and $m=y_im'$. Let $j\neq i$ be another element of the index set $\{0,1,2\}$ and let $\widehat{m} = y_jm'$.
    Consider the polynomial 
    \[
    y_jf_m-y_if_{\widehat{m}} =  y_jg_m - y_ig_{\widehat{m}} = y_1^{b+1}(y_jh_{m} - y_ih_{\widehat{m}}).
    \]
    Its initial monomial is in $\left(\mathrm{in}_{\prec}(I)\right)_{a+b+1}=  (y_0^{a+1}, y_2^{b+1})_{a+b+1}$ which implies that 
    $y_jh_{m} - y_ih_{\widehat{m}}=0$ since it is a polynomial of degree $a$. Thus, $p=y_i$ divides $h_m$.

    Assume that for every $m\in \mathcal{M}$ and for every monomial $q$ of degree $\delta \geq 1$ dividing $m$ the monomial $q$ divides $g_m$.
    Let $p$ be a monomial of degree $\delta+1$ that divides $m$. Let $m=pt$ and $y_i$ divides $p$ so that $p=y_iq$.
    Let $j\neq i$ be an element of the set $\{0,1,2\}$ and $\widehat{m}=y_jqt$. By induction, $q$ divides $h_m$ and $h_{\widehat{m}}$. Therefore, there are polynomials $k_m$ and $k_{\widehat{m}}$ such that $h_m = qk_m$ and $h_{\widehat{m}} = qk_{\widehat{m}}$.
    It follows that
    \[
    y_jf_m - y_if_{\widehat{m}} = y_jg_m - y_ig_{\widehat{m}} = y_1^{b+1}(y_jh_{m} - y_ih_{\widehat{m}}) = y_1^{b+1}q (y_jk_m - y_ik_{\widehat{m}}).
    \]
    Its initial monomial is in $\left(\mathrm{in}_{\prec}(I)\right)_{a+b+1}=  (y_0^{a+1}, y_2^{b+1})_{a+b+1}$ which implies that 
    $y_jk_{m} - y_ik_{\widehat{m}}=0$ since $q(y_jh_{m} - y_ih_{\widehat{m}})$ is of degree $a$. We conclude that $y_i$ divides $k_m$ so $p=y_iq$ divides $h_m = qk_m$.

    Having established that for every $m\in \mathcal{M}$, there is $s_m\in \CC$ such that $h_m = s_mm$. We are left with showing that all the constants coincide.
    This follows from the fact that in the inductive proof above we showed that $y_jh_m = y_ih_{\widehat{m}}$ if $\widehat{m}=\frac{y_j}{y_i}m$.
    Therefore, for such $m$ and $\widehat{m}$ we have $s_m = s_{\widehat{m}}$. Since we can connect any two monomials in $\mathcal{M}$ by a sequence of monomials each of which is obtained by changing exactly one linear factor we conclude that $s_m = s_{m'}$ for all $m,m'\in\mathcal{M}$.
    \end{proof}

    It follows from \eqref{eq1} and Claim~\ref{cl:1} that $I_{a+b} = (y_0^{a+1}, y_2^{b+1}+sy_1^{b+1})_{a+b}$ which is of the form described in the statement.

    Finally assume that $a=b=c$. By Lemma~\ref{lem:apolar_monomial_ideals} we may assume without loss of generality that $\left(\mathrm{in}_{\prec}(I)\right)_{2a} = (y_1^{a+1}, y_2^{a+1})_{2a}$. 
     It follows that $I_{a+b} = (f_m,f'_m \mid m\in \mathcal{M})_{a+b}$ where
\[
f_m = y_1^{a+1}m + g_m \text{ with } g_m \in y_0^{a+1}S_{a-1}
\]
and
\[
f'_m = y_2^{a+1}m + g'_m \text{ with } g'_m \in y_0^{a+1}S_{a-1}.
\]
Arguing as in Claim~\ref{cl:1} we conclude that there are $s,t \in \CC$ such that $g_m = sy_0^{a+1}m$ and $g'_m = ty_0^{a+1}m$. As a result
\[
f_m = m(y_1^{a+1} + sy_0^{a+1}) \text{ and }f'_m = m(y_2^{a+1} + ty_0^{a+1}) \text{ for every } m\in \mathcal{M}, 
\]
so $I_{2a} = (y_1^{a+1} + sy_0^{a+1}, y_2^{a+1}+ty_0^{a+1})_{a+b}$ which is of the claimed form.
\end{proof}

\begin{theorem}\label{prop:vps_vspb_>=-2}
Assume that $F=x_0^ax_1^bx_2^c$ with $1\leq a\leq b\leq c$ and $a+b-c\geq 2$. 
Then $\VSPb(F,r)$ is of fiber type. Furthermore, 
\begin{enumerate}
    \item[(i)] If $b < c$, then $\VPS(F,r)  = \{\mathrm{Proj}(S/(y_0^{a+1}, y_1^{b+1}))\}$.
    \item[(ii)] If $a < b=c$, then $\VPS(F,r)  = \{\mathrm{Proj}(S/(y_0^{a+1}, sy_1^{b+1}+ty_2^{b+1}))\mid [s:t]\in \PP^1\} \cong \PP^1$.
    \item[(iii)] If $a=b=c$, then $\VPS(F,r) = \{\mathrm{Proj}(S/(W)) \mid [W]\in \mathbb{G}(2, \langle y_0^{a+1}, y_1^{a+1}, y_2^{a+1}\rangle)\}\cong \PP^2$. In this case, $\VPS(F,r) = \VSP(F,r)$. 
\end{enumerate}
\end{theorem}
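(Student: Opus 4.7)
The plan is to reduce each of (i)--(iii) to the classification of $I_{a+b}$ in Lemma~\ref{lem:apolar_ideals}, together with Lemma~\ref{lem:trivial_containment}. First, I would check that Lemma~\ref{lem:apolar_ideals} applies to every $I\in\VSPb(F,r)$: such an $I$ has generic Hilbert function $h_{r,\PP^2}(k)=\min\{r,\binom{k+2}{2}\}$, and the identity $\binom{a+b+2}{2}-(a+1)(b+1)=\binom{a+1}{2}+\binom{b+1}{2}\geq 0$ shows that $\HF(S/I,a+b)=\HF(S/I,a+b+1)=r$. Thus the three cases of Lemma~\ref{lem:apolar_ideals} describe $I_{a+b}$ explicitly.

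Next, in each case let $J$ denote the ideal generated by the subspace prescribed by Lemma~\ref{lem:apolar_ideals}: $(y_0^{a+1},y_1^{b+1})$ in (i), $(y_0^{a+1},\, sy_1^{b+1}+ty_2^{b+1})$ in (ii), or $(W)$ in (iii). In every case $J$ is a complete intersection of two forms of degrees $a+1$ and $b+1$ (respectively $a+1$ and $a+1$ in (iii)), and hence a saturated ideal cutting out a length-$r$ subscheme of $\PP^2$. Since $J$ is generated in degrees $\leq a+b$, one has $(J_{a+b})=J_{\geq a+b}\subseteq I$, so $J\subseteq \overline{I}$; as both ideals are saturated and correspond to length-$r$ subschemes, equality holds, giving $\overline{I}=J\subseteq\Ann(F)$. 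This yields $\VSPb(F,r)\subseteq \phi_{r,\PP^2}^{-1}(\VPS(F,r))$, while Lemma~\ref{lem:trivial_containment} supplies the reverse inclusion, establishing fiber type. Using irreducibility of $\mathrm{Hilb}^r(\PP^2)$, one obtains $\phi_{r,\PP^2}(\VSPb(F,r))=\VPS(F,r)$, and the three explicit forms of $\overline{I}$ give the three displayed descriptions of $\VPS(F,r)$.

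For the isomorphisms $\VPS(F,r)\cong\PP^1$ in (ii) and $\VPS(F,r)\cong\PP^2$ in (iii), I would follow the argument of Theorem~\ref{theo:min border rank three}: the natural parameterizations $\PP^1\to\VPS(F,r)$ and $\mathbb{G}(2,\langle y_0^{a+1},y_1^{a+1},y_2^{a+1}\rangle)\to\VPS(F,r)$ defined through the respective complete intersection families are bijective morphisms onto their image, and Zariski's Main Theorem together with the normality of the source yields that they are isomorphisms. Finally, for the equality $\VPS(F,r)=\VSP(F,r)$ in (iii), I would verify that for a generic $W$ the complete intersection $\mathrm{Proj}(S/(W))$ is reduced (e.g.\ by exhibiting $W=\langle y_1^{a+1}+sy_0^{a+1}, y_2^{a+1}+ty_0^{a+1}\rangle$ for generic $s,t$ and checking smoothness directly), so that $\VSP^0(F,r)$ is dense in $\VPS(F,r)$. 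The main technical obstacle is verifying that $J$ has length exactly $r$ for every allowed parameter value: in (iii) this requires showing that any two linearly independent elements of $\langle y_0^{a+1},y_1^{a+1},y_2^{a+1}\rangle$ form a regular sequence, which can be done by Gaussian-elimination changes of basis together with factorizations of $\alpha y_i^{a+1}+\beta y_j^{a+1}$ into linear forms; and in upgrading the bijective parameterizations to scheme-theoretic isomorphisms, which is handled via Zariski's Main Theorem exactly as in the proof of Theorem~\ref{theo:min border rank three}.
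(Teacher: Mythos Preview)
Your approach is essentially the paper's: apply Lemma~\ref{lem:apolar_ideals} to identify $I_{a+b}$, recognize the resulting $J$ as a saturated complete-intersection ideal of length $r$ inside $\Ann(F)$, deduce $\overline{I}=J$, and combine with Lemma~\ref{lem:trivial_containment} to get fiber type; then read off $\VPS(F,r)$ from the classification. Your verification that $h_{r,\PP^2}(a+b)=r$ via the identity $\binom{a+b+2}{2}-(a+1)(b+1)=\binom{a+1}{2}+\binom{b+1}{2}$ is a nice explicit check. For the last clause in (iii) the paper simply cites Proposition~\ref{vspmonomials}, while your direct argument (generic $W$ gives a reduced scheme, so $\VSP^0$ is dense in $\VPS$) is an acceptable alternative.

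There is one genuine slip. In justifying the isomorphisms $\VPS(F,r)\cong\PP^1$ and $\cong\PP^2$, you invoke Zariski's Main Theorem using ``normality of the source.'' That is the wrong direction: a bijective morphism from a normal variety need not be an isomorphism (think of the normalization of a cuspidal cubic). What you need is normality of the \emph{target}. The fix is to run the map the other way: from the Hilbert scheme there is a morphism to the Grassmannian $\mathbb{G}(2,S_{a+1})$ sending $Z\mapsto (I_Z)_{a+1}$ (well-defined on $\VPS(F,r)$ since every such $I_Z$ has $(I_Z)_{a+1}$ two-dimensional), landing in $\PP^1$ or $\mathbb{G}(2,3)\cong\PP^2$; this inverse is a bijective morphism onto a normal target, and now Zariski's Main Theorem applies. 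This is exactly the maneuver used in the proof of Theorem~\ref{theo:min border rank three}. (The paper itself leaves these isomorphisms implicit in the proof of the present theorem, so you are in fact being more careful here---just with the hypothesis misplaced.)
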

\begin{proof}
    We have $\phi_{r,\PP^2}^{-1}(\VPS(F,r)) \subseteq \VSPb(F,r)$ by Lemma~\ref{lem:trivial_containment}. Observe that the ideals $(y_0^{a+1}, y_1^{b+1})$, $(y_0^{a+1}, sy_1^{b+1}+ty_2^{b+1})$ and $(W)$ considered in Lemma~\ref{lem:apolar_ideals} are all generated in degree at most $a+b$, their quotient algebras have Hilbert function equal to $r$ in all degrees at least $a+b$ and the ideals are saturated. Indeed, the first property is clear and by taking an initial ideal and possibly renaming the variables it is sufficient to observe that the last two properties hold for the ideal $(y_0^{a+1}, y_1^{b+1})$.
    Therefore, if $I\in \VSPb(F,r)$, by Lemma~\ref{lem:apolar_ideals}, we have $I_{a+b} = J_{a+b}$ for a saturated ideal $J$ such that $\HF(S/J,e) = r$ for all $e\geq a+b$ and $(J_{\leq a+b})=J$. 
    It follows that  $I_{\geq a+b} = J_{\geq a+b}$ since the first ideal contains the latter and they have equal Hilbert functions. In particular, we have $\overline{I}=J$ and thus $\overline{I}$ is contained in $\Ann(F)$. This shows that $\VSPb(F,r)$ is of fiber type. 

    Let $\mathrm{Proj}(S/J)\in \VPS(F,r)$ for some saturated ideal $J$. Since $\VSPb(F,r)$ is of fiber type, there exists $I\in\VSPb(F, r)$ such that 
$\overline{I} = J$. Hence $J_{a+b} = (\overline{I})_{a+b}$ is of the form given in Lemma~\ref{lem:apolar_ideals}. It follows that $J$ is as claimed in the statement since $J = \overline{(J_{a+b})}$. The very last statement in (iii) follows from Proposition \ref{vspmonomials}. 
\end{proof}

The next example shows that when $F$ is a monomial with $r=\rk(F) = \brk(F)$ (i.e. $F = x_0^kx_1^kx_2^k$ for some $k\geq 1$), then $\VSP(F,r)$ and $\VSPb(F,r)$ might have even different dimensions.

\begin{example}\label{ex:case_2_2_2}
Let $F=x_0^2x_1^2x_2^2$. Then $\brk{(F)} = \rk{(F)} = 9$. By Theorem \ref{prop:vps_vspb_>=-2}, $\VPS(F,r)\cong \PP^2$ and, by Proposition \ref{vspmonomials}, one has $\VSP(F, 9)\cong \PP^2$ . Again by Theorem \ref{prop:vps_vspb_>=-2} we have $\VSPb(F,r) = \phi_{r,\PP^2}^{-1}(\VPS(F,r))$. We claim that the latter is smooth irreducible and $3$-dimensional. Indeed, for every $K\in \VPS(F,r)$ we have $K = (C_1,C_2)_{\geq 4}$ for some $2$-dimensional subspace $\langle C_1, C_2 \rangle$ of $\Ann(F)_3$.  Therefore, the fiber over $K$ consists of all ideals of the form
$(aC_1+bC_2) + K$ for some $[a,b]\in \PP^1$. Hence $\VSPb(F,r)$ is irreducible and $3$-dimensional. Since $\VSPb(F,r)$ (which is always taken with its reduced structure) is a fibration on a smooth basis with smooth equidimensional fibers, it is irreducible and smooth.  
\end{example}

\subsubsection{Border rank of monomials: an improvement by one}

Given a monomial $F = x_0^{a_0}\cdots x_n^{a_n}$, with $0<a_0\leq a_1\leq \cdots \leq a_n$, its border rank is at the time of this writing unknown. We do know that $\brk(F)\leq \prod_{i=0}^{n-1}(a_i+1)$. This value is what one conjecturally expects for the border rank to be.

This is known to be true in some cases. For instance, whenever $a_n\geq \sum_{i=0}^{n-1} a_j-1$, 
then $\brk(F) = \prod_{i=0}^{n-1}(a_i+1)$ \cite[Example 6.8]{bb19}. We improve this lower bound on $a_n$ by one.

\begin{theorem}\label{borderrankmonimprove}
Let $F = x_0^{a_0}\cdots x_n^{a_n}$
with $0<a_0\leq a_1\leq \cdots \leq a_n$ and $a_n\geq \left(\sum_{i=0}^{n-1} a_i\right)-2$. Then 
\[
\brk(F) = \prod_{i=0}^{n-1}(a_i+1). 
\]
\begin{proof}
As mentioned, by \cite[Example 6.8]{bb19}, it is enough to assume that $a_n = \left(\sum_{i=0}^{n-1}a_i\right)-2$. We have that $\Ann(F) = \left(y_0^{a_0+1},\ldots, y_n^{\left(\sum_{i=0}^{n-1} a_i\right)-1}\right)$. Let $r =\prod_{i=0}^{n-1}(a_i+1)$.  We first claim that $\mathrm{HF}(S/\Ann(F), a_n+1) = r-2$. 
Let $y_0^{b_0}\cdots y_n^{b_n}$ be a monomial of degree $a_n+1$ and not belonging to $\Ann(F)$. Then 
\begin{equation}\label{ineqonbs}
0\leq b_j\leq a_j \mbox{ for } 0\leq j\leq n-1, \ \ \mbox{ and } 0\leq b_n < \left(\sum_{i=0}^{n-1}a_i\right)-1.
\end{equation}
Note that $\left(\sum_{i=0}^{n-1} b_i\right) + b_n = a_n+1$, hence $b_n = \left(\sum_{i=0}^{n-1}a_i - b_i\right)-1$. Thus for any choice of $0\leq b_j\leq a_j$ with $0\leq j\leq n-1$, we have such a monomial, except when 
$b_j = 0$ or $b_j = a_j$ for each $0\leq j\leq n-1$. Indeed, in these two cases, the inequalities on $b_n$ are not verified. Therefore there are $r-2$ such monomials, which shows the claim.  

Now, suppose by contradiction that $\brk(F)\leq r-1$. By \cite[Corollary~6.3]{bb19} there exists a monomial ideal $I \subseteq \Ann(F)$ such that $S/I$ has Hilbert function $h_{r-1, \PP^n}$.
In particular, $I_{a_n+1}$ is of codimension $1$ in $\Ann(F)_{a_n+1} = J_{a_n+1} \oplus \langle y_n^{a_n+1}\rangle$, where $J = (y_0^{a_0+1}, \ldots, y_{n-1}^{a_{n-1}+1})$. 

We consider two cases. If $I_{a_n+1} = J_{a_n+1}$, then $I$ has one minimal generator in degree $a_n+2$ that is divisible by $y_n^{a_n +1}$ since $\HF(S/J,a_n+2) = r$.
If $y_iy_n^{a_n+1}\in I$, then $y_i \in \overline{I}$. It follows that the Hilbert polynomial of $S/I$ is at most $\prod_{i=1}^{n-1} (a_i+1) < r-1$.
The other possibility is that $y_n^{a_n+1} \in I_{a_n+1}$ and $J_{a_n+1}\cap I_{a_n+1}$ is of codimension $1$ in $I_{a_n+1}$. We argue similarly showing that, for some $i\in \{0,1,\ldots, n\}$, we have $y_i\in \overline{I}$. This proves again that the Hilbert polynomial of $S/I$ is at most $\prod_{i=1}^{n-1} (a_i+1) < r-1$.
\end{proof}
\end{theorem}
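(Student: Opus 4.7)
The plan is to prove the matching lower bound $\brk(F) \geq r := \prod_{i=0}^{n-1}(a_i+1)$, since the upper bound is standard. By \cite[Example~6.8]{bb19} the case $a_n \geq \sum a_j - 1$ is already settled, so I would reduce to the single new case $a_n = \sum_{i=0}^{n-1}a_i - 2$. Here $\Ann(F) = J + (y_n^{a_n+1})$ with $J = (y_0^{a_0+1},\ldots,y_{n-1}^{a_{n-1}+1})$, and the strategy is to argue by contradiction using monomial border apolarity.

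First I would invoke \cite[Corollary~6.3]{bb19}: if $\brk(F) \leq r-1$, there exists a \emph{monomial} ideal $I \subseteq \Ann(F)$ realizing the generic Hilbert function $h_{r-1,\PP^n}$. I would then concentrate on the critical degree $a_n+1 = \sum a_j - 1$ and compute $\HF(S/\Ann(F), a_n+1)$ by counting: a monomial $y_0^{b_0}\cdots y_n^{b_n}$ of this degree avoids $\Ann(F)$ iff $0\leq b_i \leq a_i$ for all $i$, which, writing $b_n = a_n+1 - \sum_{i<n} b_i$, translates into $(b_0,\ldots,b_{n-1})$ with $b_i \leq a_i$ and $1\leq \sum b_i \leq \sum a_i - 1$. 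Out of the $r$ tuples with $b_i\leq a_i$, exactly the all-zero and all-maximal tuples are excluded, giving $\HF(S/\Ann(F), a_n+1)=r-2$. Consequently $I_{a_n+1}$ has codimension one in $\Ann(F)_{a_n+1}=J_{a_n+1}\oplus\langle y_n^{a_n+1}\rangle$.

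Since $I$ is monomial, this leaves two configurations: (a) $I_{a_n+1} = J_{a_n+1}$ with $y_n^{a_n+1}\notin I$, or (b) $y_n^{a_n+1}\in I_{a_n+1}$ with exactly one monomial of $J_{a_n+1}$ absent. In case (a) I would use $\HF(S/J, a_n+2)=r$ against the requirement $\HF(S/I, a_n+2)=r-1$ to force a new minimal monomial generator of $I$ in degree $a_n+2$. Since $a_n+2 > a_i+1$ for every $i<n$, one has $J_{a_n+2} = S_1\cdot J_{a_n+1}$, so any such new generator lies in $\Ann(F)_{a_n+2}\setminus J_{a_n+2}\subseteq S_1\cdot y_n^{a_n+1}$, hence it is $y_i y_n^{a_n+1}$ for some $i$. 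Saturating yields $y_i \in \overline{I}$, whence $\overline{I}\supseteq J + (y_i)$ whose Hilbert polynomial is at most $r/(a_i+1) \leq r/(a_0+1) \leq r/2 < r-1$ (with the case $i=n$ even forcing the polynomial to vanish), contradicting $\HF(S/I)=h_{r-1,\PP^n}$.

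Case (b) should run along parallel lines: track the single missing monomial $m\in J_{a_n+1}$ into degree $a_n+2$, where the codimension of $I_{a_n+2}$ inside $\Ann(F)_{a_n+2}$ is now forced to equal $r-2$ (since $\HF(S/\Ann(F),a_n+2)=1$ while $\HF(S/I,a_n+2)=r-1$), producing enough minimal monomial generators that at least one is of the form $y_i y_n^{a_n+1}$, and then the same saturation argument applies. The main technical obstacle I anticipate is the bookkeeping in case (b): verifying that among the many forced minimal generators at least one is divisible by $y_n^{a_n+1}$ (rather than all arising from $S_1 \cdot m$ with subsequent cancellations), so that the saturation step still puts a variable into $\overline{I}$. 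Once this is organized cleanly, the contradiction is identical to case (a).
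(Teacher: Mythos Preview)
Your reduction, the Hilbert-function count in degree $a_n+1$, and the treatment of case~(a) are correct and match the paper's argument essentially line by line.

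Case~(b), however, contains a concrete error that breaks your proposed mechanism. You claim $\HF(S/\Ann(F),\,a_n+2)=1$, apparently identifying $a_n+2$ with the socle degree $\sum_{i=0}^n a_i$; but under the hypothesis $a_n=\sum_{i<n}a_i-2$ these differ by $a_n$. A direct count (as in degree $a_n+1$) gives $\HF(S/\Ann(F),\,a_n+2)=r-n-1$, so the codimension of $I_{a_n+2}$ inside $\Ann(F)_{a_n+2}$ is $n$, not $r-2$. This already invalidates the ``many forced generators'' picture. Worse, in case~(b) one has $y_n^{a_n+1}\in I_{a_n+1}$, so every $y_iy_n^{a_n+1}$ lies in $S_1\cdot I_{a_n+1}\subseteq I_{a_n+2}$ automatically and can never appear as a \emph{minimal} generator. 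Thus the very generators you hope to locate are the ones that are guaranteed not to be new.

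What actually happens in case~(b) is that the monomials of $\Ann(F)_{a_n+2}$ missing from $S_1\cdot I_{a_n+1}$ are all of the form $y_jm$ (at most $n+1$ of them), and comparing codimensions shows that at least $n$ of them must be absent; any new minimal generator of $I$ in degree $a_n+2$ is therefore some $y_im$, not $y_iy_n^{a_n+1}$. The ``similar'' saturation step then uses that $\overline{I}$ already contains $y_k^{a_k+1}$ for every $k$ with $y_k^{a_k+1}\nmid m$ (including $k=n$), which pins down the support of $V(\overline{I})$ enough to cancel the $m$-factor and force some $y_i\in\overline{I}$. Your instinct that the bookkeeping here is the delicate point is right, but the bookkeeping you set up is for the wrong family of generators.
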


\subsection{Ternary forms of low even degree}\label{ssec: ternary of low even deg}
Let $X = \PP^2, S = \CC[y_0,y_1,y_2]$ and $T = \CC[x_0,x_1,x_2]$. 
In this section, by a {\it nondegenerate} ternary form of degree $d=2p-2$, we mean a ternary form $F\in T_d$ such that 
$\Ann(F)_{\leq p-1}=0$, following the terminology of \cite[\S 1]{rs00}.  There exists a dense set of such forms, so this is a generality-type assumption.  

\begin{proposition} \label{plane curve codim}
Let $F$ be a nondegenerate ternary form of degree $d = 2p-2$ with $p\geq 2$, then for every choice of $p+1$ linearly independent forms of degree $p$ in $\Ann(F)$ there is no linear form that divides all of them.
\end{proposition}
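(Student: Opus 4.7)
The plan is to argue by contradiction. Suppose there exist linearly independent forms $g_1,\dots,g_{p+1}\in \Ann(F)_p$ and a linear form $\ell\in S_1$ with $\ell\mid g_i$ for every $i$. Write $g_i=\ell h_i$ with $h_i\in S_{p-1}$; because multiplication by $\ell$ is injective on $S$, the elements $h_1,\dots,h_{p+1}$ are linearly independent. Set
\[
G:=\ell\circ F\in T_{2p-3}.
\]
Note $G\neq 0$: otherwise $\ell\in \Ann(F)_1$, contradicting the nondegeneracy assumption $\Ann(F)_{\leq p-1}=0$. For each $i$ the apolarity action gives $h_i\circ G=(\ell h_i)\circ F=g_i\circ F=0$, so $h_i\in \Ann(G)_{p-1}$. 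Consequently $\dim_\CC\Ann(G)_{p-1}\geq p+1$. The rest of the argument is to show $\dim_\CC\Ann(G)_{p-1}=p$, which yields the desired contradiction.

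To compute $\dim_\CC\Ann(G)_{p-1}$, I would first compute $\mathrm{HF}(S/\Ann(G),p-2)$. By the very definition of $G$ we have
\[
S_{p-2}\circ G=(\ell S_{p-2})\circ F,
\]
viewed as a subspace of $T_{p-1}$. Multiplication by $\ell$ gives a linear injection $S_{p-2}\hookrightarrow S_{p-1}$ with image $\ell S_{p-2}$ of dimension $\binom{p}{2}$; composing with the apolarity map $S_{p-1}\to T_{p-1}$, $s\mapsto s\circ F$, whose kernel is $\Ann(F)_{p-1}=0$ by nondegeneracy, yields an injection $\ell S_{p-2}\hookrightarrow T_{p-1}$. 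Therefore
\[
\mathrm{HF}(S/\Ann(G),p-2)=\dim_\CC S_{p-2}\circ G=\dim_\CC\ell S_{p-2}=\binom{p}{2}.
\]

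Next I invoke the Gorenstein symmetry of the Artinian Gorenstein quotient $S/\Ann(G)$, whose socle sits in degree $2p-3$: $\mathrm{HF}(S/\Ann(G),k)=\mathrm{HF}(S/\Ann(G),2p-3-k)$ for all $k$. Applying this with $k=p-2$ (and noting $(2p-3)-(p-2)=p-1$),
\[
\mathrm{HF}(S/\Ann(G),p-1)=\mathrm{HF}(S/\Ann(G),p-2)=\binom{p}{2}.
\]
Hence
\[
\dim_\CC\Ann(G)_{p-1}=\dim_\CC S_{p-1}-\binom{p}{2}=\binom{p+1}{2}-\binom{p}{2}=p,
\]
contradicting $\dim_\CC\Ann(G)_{p-1}\geq p+1$. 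This contradiction finishes the proof.

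The main obstacle is the combination of nondegeneracy (used to turn multiplication-by-$\ell$ composed with the apolarity action into an injection on $\ell S_{p-2}$) and Gorenstein symmetry of $\Ann(G)$ (used to transfer the Hilbert-function value from degree $p-2$ to degree $p-1$). Both are standard, but getting the degrees aligned -- in particular that $(2p-3)-(p-1)=p-2$ is the mirror degree -- is what makes the bound sharp.
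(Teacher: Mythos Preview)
Your proof is correct and follows essentially the same approach as the paper: both argue by contradiction, pass to the derived form $G=\ell\circ F$ of degree $2p-3$, and exploit the Gorenstein symmetry of $S/\Ann(G)$ between degrees $p-2$ and $p-1$ together with the vanishing $\Ann(F)_{p-1}=0$. The only cosmetic difference is the direction in which the symmetry is applied: you compute $\HF(S/\Ann(G),p-2)=\binom{p}{2}$ exactly and transfer it to degree $p-1$ to force $\dim\Ann(G)_{p-1}=p$, while the paper bounds $\HF(S/\Ann(G),p-1)<\binom{p}{2}$, transfers it to degree $p-2$ to produce a nonzero $g\in\Ann(G)_{p-2}$, and then notes $\ell g\in\Ann(F)_{p-1}=0$.
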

\begin{proof}
Suppose by contradiction this is not the case. Then we have $\langle g_0, \ldots, g_p \rangle \subset \Ann(F')$ where $g_i$ are linearly independent forms of degree  $p-1$ and $F' = l\circ F$ is a form of degree $2p-3$. The Hilbert function of the quotient $S / \Ann(F')$ must satisfy:
\[
\HF(S / \Ann(F'),p-2) = \HF(S / \Ann(F'),p-1) \leq \binom{p+1}{2} - (p+1) < \binom{p}{2}.
\]
Hence we have a form $g$ of degree $p-2$ in $\Ann(F')$. This implies that the degree $(p-1)$ form $l\cdot g$ is in $\Ann(F)$, which is impossible for a nondegenerate ternary form of degree $2p-2$.
\end{proof}

\begin{theorem} \label{plane low even degree}
Let $F$ be a nondegenerate ternary form of degree $d = 2p-2$ with $2\leq p \leq 5$, we have an isomorphism between $\VPS(F, r_p)=\VSP(F,r_p)$ and $\VSPb(F,r_p)$ given by $\phi_{r_p,\PP^2}$ with $r_p:=\binom{p+1}{2} = \brk(F)$. 
\end{theorem}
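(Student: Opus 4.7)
The plan is to verify the hypotheses of Lemma \ref{isobetweenVSP-VPSbar} to obtain the isomorphism $\VSPb(F,r_p)\cong \VPS(F,r_p)$ induced by $\phi_{r_p,\PP^2}$, and then to establish the set-theoretic equality $\VPS(F,r_p)=\VSP(F,r_p)$ as a separate density step. First, $\brk(F)=r_p$: nondegeneracy makes the middle catalecticant $S_{p-1}\to T_{p-1}$ injective with image of dimension $\binom{p+1}{2}$, which gives $\brk(F)\ge r_p$; for $2\le p\le 5$ a direct parameter count shows that $\sigma_{r_p}(\nu_{2p-2}(\PP^2))$ fills $\PP(T_d)$ (and a check against the Alexander--Hirschowitz list shows it is nondefective), yielding $\brk(F)\le r_p$.

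For the first hypothesis of Lemma \ref{isobetweenVSP-VPSbar}, take a length-$r_p$ apolar subscheme $Z\subset\PP^2$ with saturated ideal $I_Z\subseteq \Ann(F)$. Smoothability is automatic by Fogarty's irreducibility of $\mathrm{Hilb}^{r_p}(\PP^2)$. Nondegeneracy forces $(I_Z)_{p-1}\subseteq \Ann(F)_{p-1}=0$, so $\HF(S/I_Z,p-1)=\binom{p+1}{2}=r_p$; the nondecreasing property of Hilbert functions of saturated ideals in $\PP^n$ (Proposition \ref{prop:properties_of_MHF}(i)) together with the upper bound $r_p$ forces $\HF(S/I_Z,k)=r_p$ for all $k\ge p-1$, while for $k<p-1$ the sandwich $\binom{k+2}{2}=\HF(S/\Ann(F),k)\leq \HF(S/I_Z,k)\leq \binom{k+2}{2}$ forces equality to the generic Hilbert function $h_{r_p,\PP^2}(k)$. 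For the second hypothesis, let $J\subseteq \Ann(F)$ have generic Hilbert function. Its saturation $\overline{J}\subseteq \Ann(F)$ also has Hilbert polynomial $r_p$; hence, by the just-verified condition, it has generic Hilbert function equal to that of $J$, forcing $J=\overline{J}$. Lemma \ref{irreducibility of saturablelocus} together with Fogarty's irreducibility then places $J$ in $\mathrm{Slip}_{r_p,\PP^2}$. This delivers the isomorphism $\VSPb(F,r_p)\cong \VPS(F,r_p)$.

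The main obstacle will be the equality $\VPS(F,r_p)=\VSP(F,r_p)$: the inclusion $\VSP\subseteq \VPS$ is clear, and the reverse amounts to showing that $\VSP^0(F,r_p)$ is dense in $\VPS(F,r_p)$, or equivalently, via the isomorphism above, that radical ideals are dense in $\VSPb(F,r_p)$. My plan here is to exploit that for $2\le p\le 5$ every nondegenerate $F$ has actual Waring rank equal to $r_p$: for $p=2$ this is the statement that full-rank conics are sums of three squares; for $p=3$, $\rk(F)>6$ would force a smooth conic in $\Ann(F)_2$, which nondegeneracy forbids; for $p=4,5$ analogous known characterizations of higher-rank plane forms exclude such $F$ under nondegeneracy. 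Once $\VSP^0(F,r_p)\ne\emptyset$ is established, Theorem \ref{birmap2} and Proposition \ref{birfromVSPtoVPS} applied to each irreducible component of $\VSPb(F,r_p)\cong \VPS(F,r_p)$ give the required density and complete the proof.
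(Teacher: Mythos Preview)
Your verification of the first hypothesis of Lemma~\ref{isobetweenVSP-VPSbar} is correct, but the second hypothesis contains a genuine gap: you assert that $\overline{J}\subseteq\Ann(F)$, and this does not follow from $J\subseteq\Ann(F)$. Saturation need not preserve containment in $\Ann(F)$; indeed $\Ann(F)$ itself is never saturated, since $S_{\ge d+1}\subseteq\Ann(F)$. Worse, granting the first hypothesis, the chain ``$\overline{J}\subseteq\Ann(F)\Rightarrow\overline{J}$ has generic Hilbert function $\Rightarrow J=\overline{J}$'' shows that your assertion is \emph{equivalent} to the conclusion you want, so the argument is circular. This is precisely where the substance of the theorem lies and where the restriction $p\le 5$ enters. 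The paper argues by contradiction: if $J\ne\overline{J}=:K$, then by the first step $K\not\subseteq\Ann(F)$, hence $\HF(S/K,2p-2)<r_p$. A finite enumeration (via Macaulay's bound and \cite[Theorem~7.3.4]{Schenck}) of the possible Hilbert functions of a length-$r_p$ saturated ideal in $\PP^2$ with $\HF(2p-2)<r_p$ shows, for $2\le p\le 5$, that some linear form $\ell$ divides every element of $K_{2p-2}$. Then for any $f\in J_p$ and any $\ell'\in S_1$ independent of $\ell$ one has $(\ell')^{p-2}f\in J_{2p-2}\subseteq K_{2p-2}$, forcing $\ell\mid f$; thus $\ell$ divides all of the $(p+1)$-dimensional space $J_p\subseteq\Ann(F)_p$, contradicting Proposition~\ref{plane curve codim}.

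Your argument for $\VPS(F,r_p)=\VSP(F,r_p)$ is also incomplete: establishing $\VSP^0(F,r_p)\ne\emptyset$ together with Proposition~\ref{birfromVSPtoVPS} only shows that \emph{some} component of $\VPS$ lies in $\VSP$, not that every component contains a smooth scheme. The paper imports this equality directly from \cite[Theorem~1.7~and~Lemma~1.8]{rs00} rather than reproving it.
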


\begin{proof}
The equality $\VPS(F, r_p)=\VSP(F,r_p)$ follows from \cite[Theorem~1.7~and~Lemma~1.8]{rs00}. By the assumption on $F$, the ideal of every zero-dimensional scheme in the variety of apolar schemes $\mathrm{VPS}(F,r_p)=\mathrm{VSP}(F,r_p)$ must have the generic Hilbert function. Then, by definition, the preimage under $\phi_{r_p,\PP^2}$ of each of these is a single ideal in $\VSPb(F,r_p)$. Now, we will show that every ideal $J \subset \mathrm{Ann}(F)$ with generic Hilbert function is saturated and so lies in $\phi_{r_p,\PP^2}^{-1}(\mathrm{VPS}(F,r_p))$. By Lemma~\ref{isobetweenVSP-VPSbar} this proves the statement.

Suppose that $J$ is not saturated and let $K$ be its saturation. By the first part of the proof, $K$ is not contained in $\Ann(F)$. Therefore, $\HF(S/K, 2p-2) < r_p$,  otherwise we would have $K_{2p-2} = J_{2p-2}\subset \Ann(F)$ and hence $K\subset \Ann(F)$, by Proposition \ref{inclusion in onedeg}, a contradiction.

We claim that there is a linear form $\ell$ dividing every form in $K_{2p-2}$. To prove the claim, one employs Macaulay's bound to list all the possible Hilbert functions of $K$. Then for each of them we check utilizing \cite[Theorem~7.3.4]{Schenck} that a linear form $\ell$ as above exists. 
Let $\ell'$ be a linear form such that $\dim_\CC \langle \ell, \ell'\rangle = 2$. Suppose that $F\in J_p$, then $(\ell')^{p-2}F \in J_{2p-2}\subset K_{2p-2}$. Hence $\ell$ divides $(\ell')^{p-2}F$ and thus it divides $F$. This contradicts Proposition~\ref{plane curve codim}. 
\end{proof}

\begin{remark}\label{vspevenlowdegplane}
Keep the notation from above. The varieties $\VSP(F,r_p)$ are very interesting. 
Mukai \cite{Mukai1,Mukai2} and Ranestad-Schreyer \cite[Theorem 1.7]{rs00} proved that 
\begin{enumerate}
\item[(i)] for $p=2$, $\VSPb(F,3)\cong \VSP(F,3)$ is a Fano threefold of index $2$ and degree $5$ in $\PP^6$;

\item[(ii)] for $p=3$, $\VSPb(F,6)\cong \VSP(F,6)$ is a Fano 
threefold of index $1$ and degree $22$ in $\PP^{13}$;

\item[(iii)] for $p=4$, $\VSPb(F,10)\cong \VSP(F,10)$ is a $K3$ surface of genus $20$ of degree $38$ canonically embedded in $\PP^{20}$; 

\item[(iv)] for $p=5$, $\VSPb(F,15) \cong\VSP(F,15)$ consists of $16$ reduced points.
\end{enumerate}

\end{remark}

\subsection{A reducible $\VSPb$ of a ternary form}\label{ssec: reducible vspbar}

The $\VSPb$ might be reducible; see for instance \cite[Theorem 7.9]{HMV} for examples of reducible $\VSPb$ of wild forms. We exhibit an example of a reducible $\VSPb(F,\brk(F))$ when $F$ is a ternary form. This addresses and solves the question whether $\VSPb(F,\brk(F))$ can be reducible even in $\PP^2$. Note that this is not the case when $X=\PP^1$. Let $X = \PP^2$ and $F=x_0^{11}+x_1^5x_2^6\in T_{11}$. It is easy to check that $\brk(F)=7$. Let $J=(y_0y_1, y_0y_2, y_1^6)\subset \Ann(F)$.

\begin{proposition}
We have $\VSPb(F,7) = \phi_{7,\PP^2}^{-1}(\mathrm{Proj}(S/J))$ and it is reducible.
\begin{proof}

We have $\mathrm{Ann}(F)_6 = J_6$ and $\mathrm{HF}(S/J, 6) = 7$. Therefore, the saturation of every $[I]\in \VSPb(F,7)$ is $J$.
From Lemma~\ref{lem:trivial_containment} we obtain $\VSPb(F,7) = \phi_{7,\PP^2}^{-1}(\mathrm{Proj}(S/J))$.

    If $I\in \VSPb(F, 7)$ then $(y_0^2y_1, y_0^2y_2)_4\subseteq I_4$ by \cite[Example~4.2]{JM}. Since $\mathrm{HF}(S/J,4) = 6$ and $I$ has generic Hilbert function, one has $I_4^\perp = J_4^\perp + \CC \omega$ where $\omega = ax_0x_1^3+bx_0x_1^2x_2+cx_0x_1x_2^2+dx_0x_2^3$ for some $a,b,c,d\in \CC$ not all zero.
    Note that $\partial\omega/\partial x_0 \in J_3^\perp$. 
    
    For every $[\omega]\in \PP\langle x_0x_1^3,x_0x_1^2x_2,x_0x_1x_2^2,x_0x_2^3\rangle$ there is an element $I\in \mathrm{Hilb}_{S}^{h_{7, \PP^2}}$ with $I_4^\perp = J_4^\perp + \CC\omega$. Indeed, it is enough to choose a $7$-dimensional subspace of $T_3$ that contains $J_3^\perp + \langle \partial\omega/\partial x_1, \partial\omega/\partial x_2\rangle$.
The ideal $I$ constructed in this way is uniquely determined by $I_{\geq 4}$ if and only if 
$\dim_\CC J_3^\perp + \dim_\CC \langle \partial\omega/\partial x_1, \partial \omega/\partial x_2\rangle = 7$, which is equivalent to the condition that $\langle \partial\omega/\partial x_1, \partial \omega/\partial x_2\rangle$ is 2-dimensional. 
Therefore, if it is nonempty, the locus in $\VSPb(F,7)$ of ideals corresponding to such $\omega$ is open and irreducible and so its closure $V_J$ is an irreducible component of $\VSPb(F,7)$. Observe that by construction, every element $I$ in $V_J$ satisfies $(y_0^2y_1, y_0^2y_2) \subseteq I$.

Let $\prec$ be the lexicographic monomial order with $y_0\prec y_1 \prec y_2$. Consider the ideal $K=(y_0y_2^2, y_0^2y_2, y_0^2y_1, y_0y_1^3, y_1^6)$ corresponding to $\omega = x_0x_1^2x_2$. We claim that it belongs to $V_J$. To justify this, we need to show that $K$ is in $\mathrm{Slip}_{r, \PP^2}$. This follows from the fact that it is the initial ideal of the saturated ideal $L = (y_0y_2^2+y_1^3, y_0^2y_2, y_0^2y_1)$.

Consider the ideal $K'=(y_0y_2^2, y_0y_1y_2, y_0^2y_2, y_0^2y_1^2, y_0^3y_1, y_0y_1^4, y_1^6)$. It is not in $V_J$ but it is apolar to $F$. It is sufficient to show that it is in $\mathrm{Slip}_{7,\PP^2}$ to conclude that $\VSPb(F, 7)$ is reducible.
This follows from the fact that it is the initial ideal of the saturated ideal $L'=(y_0^2y_2+y_0y_1^2,y_0y_1y_2+y_1^3,y_0y_2^2+y_1^2y_2+y_0^2y_1)$.
\end{proof}
\end{proposition}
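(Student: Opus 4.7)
\medskip
\noindent\textbf{Proof proposal.} The plan is to attack the two assertions separately, beginning with the identification of $\VSPb(F,7)$ with a fiber of $\phi_{7,\PP^2}$ and then constructing two distinct irreducible components. First I would compute $\Ann(F)_6$ directly and verify that it coincides with $J_6$, which is a routine monomial check since $F=x_0^{11}+x_1^5x_2^6$ makes $\Ann(F)$ a monomial ideal. A dimension count then gives $\HF(S/J,6)=7$. Consequently for any $I\in \VSPb(F,7)$ one has $I_6\subseteq \Ann(F)_6=J_6$ with equal dimension, so $I_6=J_6$; since $J$ is generated in degrees $\le 6$, this forces $(J_6)\subseteq I$ and, matching Hilbert functions, $\overline{I}=J$. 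The reverse inclusion $\phi_{7,\PP^2}^{-1}(\mathrm{Proj}(S/J))\subseteq\VSPb(F,7)$ is immediate from Lemma~\ref{lem:trivial_containment}.

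For reducibility, I would first cut down the ambient family. Using the Ext criterion of Jelisiejew--Ma\'ndziuk (via \cite[Example~4.2]{JM}) together with the explicit form of $\overline{I}=J$, any $I\in\VSPb(F,7)$ satisfies $(y_0^2y_1,y_0^2y_2)_4\subseteq I_4$. Taking orthogonals in $T_4$, $I_4^\perp$ is a one-dimensional extension of $J_4^\perp$ by some $\omega=ax_0x_1^3+bx_0x_1^2x_2+cx_0x_1x_2^2+dx_0x_2^3$. A short computation shows $\partial\omega/\partial x_0\in J_3^\perp$, so the remaining constraints on $I_3^\perp\supseteq J_3^\perp+\langle\partial\omega/\partial x_1,\partial\omega/\partial x_2\rangle$ define an irreducible family $V_J$ parameterized by an open subset of $\PP^3=\PP\langle x_0x_1^3,x_0x_1^2x_2,x_0x_1x_2^2,x_0x_2^3\rangle$ (those $[\omega]$ with $\langle\partial\omega/\partial x_1,\partial\omega/\partial x_2\rangle$ two-dimensional, which is generic). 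Every ideal in this family automatically contains $(y_0^2y_1,y_0^2y_2)$. To see $V_J\subseteq\mathrm{Slip}_{7,\PP^2}$, I would pick a convenient $\omega$, say $\omega=x_0x_1^2x_2$, and verify that the corresponding ideal $K=(y_0y_2^2,y_0^2y_2,y_0^2y_1,y_0y_1^3,y_1^6)$ is the initial ideal (under lex with $y_0\prec y_1\prec y_2$) of the saturated ideal $L=(y_0y_2^2+y_1^3,y_0^2y_2,y_0^2y_1)$, so $K\in\mathrm{Slip}_{7,\PP^2}$ by Gr\"obner degeneration; irreducibility of $V_J$ then forces $V_J$ to be an irreducible component.

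To exhibit a second component, I would produce an ideal $K'\in\VSPb(F,7)$ with $(y_0^2y_1,y_0^2y_2)\not\subseteq K'$, so that $K'\notin V_J$. The candidate is $K'=(y_0y_2^2,y_0y_1y_2,y_0^2y_2,y_0^2y_1^2,y_0^3y_1,y_0y_1^4,y_1^6)$; a monomial-by-monomial check confirms $K'\subseteq\Ann(F)$ and that $S/K'$ has Hilbert function $h_{7,\PP^2}$. The only nontrivial step is membership of $K'$ in $\mathrm{Slip}_{7,\PP^2}$, which I would again establish by Gr\"obner degeneration: exhibit $L'=(y_0^2y_2+y_0y_1^2,y_0y_1y_2+y_1^3,y_0y_2^2+y_1^2y_2+y_0^2y_1)$ as a saturated lift whose lex initial ideal is $K'$.

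The main obstacle I expect is the Gr\"obner verification that $K$ and $K'$ are genuinely initial ideals of \emph{saturated} ideals of length $7$ schemes, since one needs both the correct leading terms and the guarantee that the deformed ideals have generic Hilbert function along the flat family. This is a concrete but delicate calculation (best confirmed in \texttt{Macaulay2}); once it is in hand, the identification of $V_J$ and $\{K'\}\cup(\cdots)$ as lying in distinct irreducible components is automatic from the containment $(y_0^2y_1,y_0^2y_2)\subseteq I$ being closed in $\mathrm{Hilb}_S^{h_{7,\PP^2}}$ and holding on $V_J$ but failing at $K'$.
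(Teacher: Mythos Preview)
Your proposal is essentially identical to the paper's proof: same reduction via $\Ann(F)_6=J_6$, same invocation of the Ext criterion from \cite[Example~4.2]{JM} to constrain $I_4^\perp$, same construction of the family $V_J$, and the very same witness ideals $K,K',L,L'$ with the same Gr\"obner degenerations.

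One small logical slip to tighten: exhibiting a single point $K\in V_J\cap\mathrm{Slip}_{7,\PP^2}$ does not by itself yield $V_J\subseteq\mathrm{Slip}_{7,\PP^2}$, so ``irreducibility of $V_J$ then forces $V_J$ to be an irreducible component'' is not justified as phrased. The paper sidesteps this by defining $V_J$ from the outset as the closure of the generic-$\omega$ locus \emph{inside} $\VSPb(F,7)$ (where membership in $\mathrm{Slip}$ is automatic); then the closure of a nonempty open irreducible subset of $\VSPb(F,7)$ is necessarily an irreducible component, and producing $K\in\mathrm{Slip}_{7,\PP^2}\cap\Ann(F)$ is exactly what is needed to witness nonemptiness. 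Reorder your argument this way and it goes through verbatim.
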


\subsection{A Schubert variety as $\VSPb$}\label{sec:Schubert variety}

Let $\mathbb{G}(k,n) = \mathbb{G}(\PP^{k-1},\PP(V))$ be the Grassmannian
of $k$-dimensional subspaces of an $n$-dimensional vector space $V$. Fix a complete flag $\mathcal V: 0=V_0\subset V_1\subset \ldots \subset V_n= V$,
where $\dim(V_i)=i$, and a sequence $\bb{a}=(a_1,\ldots, a_k)$ of integers 
with $n-k\geq a_1\geq \cdots \geq a_k\geq 0$. For such a sequence $\bb{a}$, 
the {\it Schubert variety} $\Sigma_{\bb{a}}(\mathcal V)\subset \mathbb{G}(k,n)$ is the closed subset 
\[
\Sigma_{\bb{a}}(\mathcal V) = \lbrace \Lambda\in \mathbb{G}(k,n) \ | \ \dim(\Lambda \cap V_{n-k+i-a_i})\geq i \mbox{ for all } i\rbrace. 
\]
The definition of $\Sigma_{\bb{a}}(\mathcal V)$ depends on the specific flag. However, its class in the cohomology ring of $\mathbb{G}(k,n)$ does not. Hence, whenever the flag is fixed, the Schubert variety is denoted $\Sigma_{\bb{a}}$. 

The Schubert varieties $\Sigma_{\bb{a}}$ are irreducible, rational and of codimension $\sum_{i=1}^k a_i$ inside $\mathbb G(k,n)$ \cite[Theorem 4.1]{eh16}. In the next result, we show an instance of $\VSPb$ that is a Schubert variety. When $\bb{a}=(a)$, then we write $\Sigma_{a} = \Sigma_{\bb{a}}$.

\begin{proposition}\label{prop: schubert}
Let $X = \PP^2$ and let $F=x_0^{11}+x_0x_1^6x_2^4+x_2^{11}\in T_{11}$. Then $\brk(F)=12$ and $\VSPb(F, 12)$ is isomorphic to the  Schubert variety $\Sigma_1 \subset \mathbb{G}(3, 5)$.
\end{proposition}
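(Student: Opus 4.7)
The strategy is to first compute $\brk(F)=12$, then identify a unique length-12 apolar scheme $Z$, embed $\VSPb(F,12)$ into a Grassmannian via $I\mapsto I_4$, and finally match the image with a Schubert variety.

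First I would compute the border rank. Applying Theorem \ref{borderrankmonimprove} to the monomial $x_0x_1^6x_2^4$, whose sorted exponents $(1,4,6)$ satisfy $6\ge 1+4-2$, gives $\brk(x_0x_1^6x_2^4)=2\cdot 5=10$, and subadditivity yields $\brk(F)\le 1+10+1=12$. For the lower bound I'd describe $\Ann(F)$ via apolarity: a monomial $y_0^ay_1^by_2^c$ annihilates $F$ iff $b+c\ge 1$, $a+b\ge 1$, and ($a\ge 2$ or $b\ge 7$ or $c\ge 5$). In particular $\Ann(F)_3=\langle y_0^2y_1,y_0^2y_2\rangle$ and $\Ann(F)_4=y_0^2\cdot V$ where $V=(y_1,y_2)_2\subset S_2$ is $5$-dimensional; an elementary count gives the Hilbert function $1,3,6,8,10,12,12,10,8,6,3,1$. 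In particular $\HF(S/\Ann(F),5)=\HF(S/\Ann(F),6)=12$, whence $\brk(F)\ge 12$ by the catalecticant bound.

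Next I would identify the distinguished length-12 apolar scheme $Z=Z_{\mathrm{fat}}\cup\{[1{:}0{:}0],[0{:}0{:}1]\}$, where $Z_{\mathrm{fat}}$ is the length-10 subscheme at $[0{:}1{:}0]$ cut out by $(y_0^2,y_2^5)$. The $12$-dimensional space of degree-$11$ forms apolar to $I_Z$ is spanned by $x_0^{11}$, $x_2^{11}$ and the ten monomials $x_0^ax_1^bx_2^c$ with $a\le 1$ and $c\le 4$, which visibly contains $F$; hence $I_Z\subset \Ann(F)$. Every quartic in $\Ann(F)_4=y_0^2V$ vanishes at $[1{:}0{:}0]$ and $[0{:}0{:}1]$, forcing any apolar length-12 scheme to contain both points, and its residual length-10 subscheme is apolar to $x_0x_1^6x_2^4$; by Proposition \ref{prop:case c>= a+b} (case $c\ge a+b$) this residual must be $Z_{\mathrm{fat}}$, so $\VPS(F,12)=\{Z\}$. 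For any $I\in\VSPb(F,12)$, the equalities $\HF(S/I,k)=\HF(S/\Ann(F),k)=12$ for $k=5,6$ combined with $I\subset\Ann(F)$ force $I_k=\Ann(F)_k=I_{Z,k}$ in these degrees, and a dimension-count induction extends this to $I_k=I_{Z,k}$ for all $k\ge 5$. Hence $\overline{I}=I_Z$, and $I$ is uniquely determined by $W:=I_4\in\mathbb{G}(3,\Ann(F)_4)\cong\mathbb{G}(3,5)$, giving a closed embedding $\iota\colon \VSPb(F,12)\hookrightarrow \mathbb{G}(3,5)$.

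The main obstacle is identifying the image of $\iota$ as the Schubert subvariety $\Sigma_1$ with respect to the natural $2$-dimensional flag subspace $V_2=y_0\cdot\Ann(F)_3=\langle y_0^3y_1,y_0^3y_2\rangle\subset\Ann(F)_4$. I would argue both directions. For $W\cap V_2\ne 0$ I would produce a $1$-parameter flat family of twelve distinct points whose ideals degenerate to the ideal $I(W):=W+I_{Z,\ge 5}$, using the $5$-dimensional family of explicit border decompositions of $x_0x_1^6x_2^4$ (parametrised by the choice of initial directions in the $2\times 5$ grid of limit points at $[0{:}1{:}0]$) together with the fixed rank-one summands at $[1{:}0{:}0]$ and $[0{:}0{:}1]$. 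For the converse, I would apply the Jelisiejew--Ma\'ndziuk Ext criterion \cite[Theorem 3.4]{JM} to the graded module $I_Z/I(W)$ (concentrated in degrees $3$ and $4$, of total dimension $4$) and verify that the relevant $\mathrm{Ext}^1$-group has the dimension required for Slip-membership precisely when $W\cap V_2\ne 0$, excluding the transverse $W$ from $\mathrm{Slip}_{12,\PP^2}$. Since $\Sigma_1$ is normal, irreducible and of the expected dimension $5$, Zariski's Main Theorem combined with the bijectivity of $\iota$ onto $\Sigma_1$ would yield the claimed isomorphism $\VSPb(F,12)\cong \Sigma_1$.
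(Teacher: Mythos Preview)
Your overall strategy aligns with the paper's: establish $\brk(F)=12$, show $\VSPb(F,12)$ is the fiber of $\phi_{12,\PP^2}$ over the unique apolar scheme $Z$, identify that fiber with $\mathbb{G}(3,5)$ via $I\mapsto I_4$, and use the Ext criterion of \cite{JM} to exclude the open Schubert cell $\{W:W\cap V_2=0\}$ from $\mathrm{Slip}_{12,\PP^2}$. Your subadditivity upper bound is a valid alternative to the paper's direct exhibition of $J$. One minor issue: the argument that any apolar length-$12$ scheme $Z'$ contains $[1{:}0{:}0]$ and $[0{:}0{:}1]$ because $I_{Z',4}\subset\Ann(F)_4$ vanishes there is incomplete---vanishing of the degree-$4$ piece does not by itself force scheme-theoretic containment. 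You do not need this: once $I_{Z',6}=\Ann(F)_6=J_6$ (forced by the Hilbert function), the equality $\overline{I_{Z'}}=J$ follows directly since $J$ is generated in degrees $\le 6$, and no residual-scheme argument is required.

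The substantive gap is the inclusion $\Sigma_1\subseteq\VSPb(F,12)$. Your proposal to exhibit, for each $W\in\Sigma_1$, an explicit one-parameter family of twelve points whose limit in $\mathrm{Hilb}_S^{h_{12,\PP^2}}$ is exactly $I(W)=W+J_{\ge 5}$ is not workable as stated: the limit ideal depends very delicately on the chosen family (not just on the limiting scheme $Z$), the phrase ``$5$-dimensional family of explicit border decompositions of $x_0x_1^6x_2^4$'' is never given a precise meaning, and no mechanism is offered for why such a family would hit every $W\in\Sigma_1$. The paper bypasses this entirely. It finds a \emph{single} monomial ideal $\widehat{K}\in\Sigma_1\cap\mathrm{Slip}_{12,\PP^2}$ (verified by exhibiting it as an initial ideal of an explicit ideal known to lie in $\mathrm{Slip}$ by \cite[Theorem~1.3]{Man}), checks via one $\mathrm{Hom}$ computation that $\dim_\CC T_{\widehat{K}}\mathrm{Hilb}_S^{h_{12,\PP^2}}=25=\dim\mathrm{Slip}_{12,\PP^2}+1$, and then invokes \cite[Lemma~2.6]{Man}: an irreducible codimension-one closed subset of the fiber containing such a point must coincide with the intersection of the fiber with $\mathrm{Slip}_{12,\PP^2}$. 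Two finite computations thus replace your family-by-family construction.
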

    \begin{proof}
      We have $\HF(5, S/\Ann(F)) = \HF(6, S/\Ann(F)) = 12$, therefore $\brk(F)\geq 12$. On the other hand, the ideal
      $J=(y_0^2y_1, y_0^2y_2, y_1y_2^5, y_0y_2^5)$ is a saturated ideal contained in $\Ann(F)$ that defines a length $12$ subscheme of $\PP^2$. So $\mathrm{crk}(F)\leq 12$. Since $X=\PP^2$, $\mathrm{crk}(F) = \mathrm{srk}(F)\geq \brk(F) = 12$. Hence $\brk(F)=12$. 

      Assume that $I\in \VSPb(F, 12)$. Then $I_{6} = \Ann(I)_6$ which is equal to $J_6$. Since $J$ is generated in degrees at most $6$ and its quotient algebra has Hilbert function $12$ in all degrees larger than $4$ we get $I_{\geq 6} = J_{\geq 6}$. In particular, $I$ belongs to the fiber of $\phi_{12,\PP^2}\colon \mathrm{Slip}_{12,\PP^2} \to \mathrm{Hilb}^{12}(\PP^2)$ over $\mathrm{Proj}(S/J)$. It follows from Lemma~\ref{lem:trivial_containment} that $\VSPb(F, 12)$ is in fact equal to this fiber.

Recall we have the morphism $\psi_{12,\PP^2}: \mathrm{Hilb}_S^{h_{12,\PP^2}}\to \mathrm{Hilb}^{12}(\PP^2)$ (whose restriction on $\mathrm{Slip}_{12,\PP^2}$ is $\phi_{12,\PP^2}$). Let $\mathcal F$ be its fiber over $\mathrm{Proj}(S/J)$. We have proven that $\VSPb(F,12)$ is the intersection of 
 $\mathrm{Slip}_{12,\PP^2}$ with $\mathcal{F}$.
 
 Since $h_{12,\PP^2}(3) = \dim_\CC S_3$ and $h_{12,\PP^2}(d) = 12 = \HF(S/J,d)$ for every $d>4$, the closed points of $\mathcal{F}$ are parameterized by the choices of subspaces of $J_4$ of suitable dimension. We have $\dim_\CC S_4 = 15$ and $\HF(S/J,4) = 10$, so these may be identified with the closed points of $\mathbb{G}(\PP^2, \PP(J_4))=\mathbb{G}(3,5)$. To simplify notation, in what follows we identity $\mathcal F$ with $\mathbb{G}(3,5)$. 

    Consider the full flag $0 = V_0 \subset V_1 \subset V_2 \subset \cdots \subset V_5 = J_4$ where $V_i$ is spanned by $i$ smallest monomials in $J_4$ in the graded reverse lexicographic order with $y_0 \prec y_1 \prec y_2$. There are 10 monomial ideals in $\mathcal{F}$ corresponding to the choices of 3 out of 5 monomials in $J_4$. Since we chose an ordering of monomials in $J_4$, the elements of $\mathcal F$ correspond to full-rank $3\times 5$-matrices. More precisely, given $I\in \mathcal F$, the $(i,j)$th entry of the matrix corresponding to $I$ is the coefficient in front of the $j$th largest monomial in $J_4$ of the $i$th minimal generator of $I$ of degree $4$. 
    
For a sequence of integers $1\leq i_1 < i_2 < i_3 \leq 5$, let $\mathcal F_{(i_1,i_2,i_3)}$ denote the locally closed subset of  $\mathcal F$, whose corresponding matrices can be expressed in the reduced row echelon form with coefficients $1$ at entries $(s, i_s)$ for $s=1,2,3$ and zeroes for every entry $(s,t)$ with $s\in \{1,2,3\}$ and $1\leq t< i_s$. These sets cover $\mathcal F$ and furthermore, we claim that the matrices whose row spans define an element in the Schubert variety $\Sigma_1$ are precisely the matrices that belong to one of the sets $\mathcal F_{(i_1,i_2,i_3)}$ with $(i_1, i_2, i_3) \neq (1,2,3)$. Indeed, by the choice of our flag and the definition of Schubert varieties we have
\[
\Sigma_1 = \lbrace I_4 \in \mathbb{G}(3, 5) \mid I_4 \cap \langle y_0^3y_1, y_0^3y_2 \rangle \neq \{0\} \rbrace.
\]
This is the union of those $\mathcal{F}_{(i_1,i_2, i_3)}$ for which $\{4,5\} \cap \{i_1,i_2, i_3\} \neq \emptyset$. 

    To show that $\mathcal F\cap \mathrm{Slip}_{12,\PP^2}$ coincides with $\Sigma_1$, we proceed in two steps. First, we prove that $\mathcal F_{(1,2,3)}$ is disjoint from $\mathrm{Slip}_{12,\PP^2}$.
    Let $I$ be any element in $\mathcal F_{(1,2,3)}$. By our choice of the monomial order, we have $K=\mathrm{in}_{\prec}(I) = (y_0^2y_1^2, y_0^2y_1y_2, y_0^2y_2^2)+J_{\geq 5}$ and it is enough to show that $K$ is not in $\mathrm{Slip}_{12,\PP^2}$. By construction, $\overline{K}=J$. We compute that $\mathrm{Ext}_S^1(J/K, S/J)_0 = 0$. By \cite[Theorem~3.4]{JM}, $K$ is not in $\mathrm{Slip}_{12,\PP^2}$. Therefore $\VSPb(F,12)=\mathcal F\cap \mathrm{Slip}_{12,\PP^2} \subseteq \Sigma_1$. 
    
    $\Sigma_1$ is closed, irreducible and of codimension one in $\mathcal F$. Therefore, by \cite[Lemma~2.6]{Man} to derive the desired equality $\VSPb(F, 12) = \Sigma_1$, it is sufficient to find a point in $\mathcal F\cap \mathrm{Slip}_{12,\PP^2}$ such that the dimension of its tangent space to $\mathrm{Hilb}_S^{h_{12,\PP^2}}$ is $25 = \dim \mathrm{Slip}_{12,\PP^2}+1$. Let $\widehat{K} = (y_0^2y_1^2, y_0^2y_2^2, y_0^3y_2) + J_{\geq 5}$. We compute that $\dim_\CC \mathrm{Hom}_S(\widehat{K}, S/\widehat{K})_0 =25$ so we are left with showing that $\widehat{K}$ is in $\mathrm{Slip}_{12,\PP^2}$. Let $\widehat{I} = (y_0^2y_1^2+y_0y_2^3, y_0^3y_2, y_0^2y_2^2, y_0^4y_1, y_1y_2^5, y_0y_2^5)$. Its initial ideal with respect to graded reverse lexicographic order with $y_0\succ y_1\succ y_2$ is $\widehat{K}$ so it is sufficient to observe that  $\widehat{I}$ is in $\mathrm{Slip}_{12,\PP^2}$. This follows from \cite[Theorem~1.3]{Man} since the saturation of $\widehat{I}$ differs from $\widehat{I}$ only in degree $4$.
  \end{proof}

The Ext criterion seems to have a tendency to describe Schubert loci. One would be tempted 
to speculate that in some regimes {\it all} the $\VSPb$'s are Schubert loci of some Grassmannians. 
However, we refrain from formulating a precise conjecture at this stage. 

\section{Outlook and open problems}\label{sec:Outlook}

Here we explicitly pose some natural follow-up questions and directions from this work. We collect
them in one place for the convenience of the reader. For unexplained notation, see \S\ref{sec:prelim}.

\begin{question}\label{qus1}
Does there exist $F\in T[X]$, for some smooth projective toric variety $X$ and some very ample embedding of $X$, such that $\VSPb(F, \brk_X(F))$ has {\it both} a component whose general point is a saturated ideal and a component whose general point is nonsaturated? 
(Perhaps a starting point here would be to look for border varieties of sums $F+G$, where $F$ is wild and $G$ is not.) 
\end{question}

\begin{question}
Are there examples of forms $F$ whose $\VSPb(F, \brk(F))$ has the property stated in Question \ref{qus1}?
\end{question}

\begin{question}
Are there examples of the behaviour described in \ref{qus1} for a general element of some $\sigma_r(X)\subset \PP(T_{\bb{v}})$?    
\end{question}

\begin{question}\label{qus4}
Let $X = \PP^{m-1}\times \PP^{m-1}\times \PP^{m-1}$ and let $T$ be the dual to its Cox ring. 
Let $F\in T_{\bb{1}}\cong \CC^m\times \CC^m \times \CC^m$ be a nonwild minimal border rank $3$-tensor. Is its $N$-th Kronecker power $F^{\boxtimes N}\in \CC^{m^N}\otimes \CC^{m^N}\otimes \CC^{m^N}$ nonwild either? If it so, 
is it possible to express the unique ideal in $\VSPb(F^{\boxtimes N},m^N)$ in terms of the unique ideal in $\VSPb(F,m)$? This question tries to indicate some investigation 
between border apolarity and asymptotic ranks, which seems missing at the moment. 
\end{question}

\begin{question}
Keep the notation from Question \ref{qus4}. 
Let $F\in T_{\bb{1}}$ be a minimal border rank tensor. 
Is it true that $F$ is wild if and only if $\VSPb(F, \brk(F))$ is positive-dimensional? 
Can one draw similar conclusions as those of Corollary \ref{cor:minimalBRWild} when $X$ is an arbitrary smooth projective toric variety with a given embedding? 
\end{question}

\begin{question}
Keep the notation from Question \ref{qus4}. 
Suppose $F\in T_{\bb{1}}$ is a degeneration of another tensor $F'$ with $r=\brk(F)\leq r'=\brk(F')$. Is it true that there exists a surjective morphism from $\VSPb(F',r')$ to $\VSPb(F,r)$? Perhaps the open-ended problem behind this specific question is to what extent we can relate $\VSPb$'s under degenerations of the $F$'s. 
\end{question}

\begin{question}
Let $r=\brk(F)$ and suppose that $\VSPb(F,r)$ is of fiber type. What are the general properties of the morphism (say, over $\CC$)
\[
\phi_{r,X}: \VSPb(F,r)\longrightarrow \VPS(F,r)\cap \mathrm{Hilb}_{sm}^r(X)?
\]
Can this have disconnected fibers on a general point? Is the map smooth?
If not, are there easy necessary conditions for smoothness? 
\end{question}

\begin{question}
Let $X_{Seg} = \PP^{m-1}\times \cdots \times \PP^{m-1}$ ($d$ factors) and let $T$ be the dual to its Cox ring. 
Let $F\in T_{\bb{1}}\cong (\CC^m)^{\otimes d}$ be a symmetric tensor. There is another natural 
variety related to $F$, i.e. $X_{Ver} = \nu_d(\PP^{m-1})$. Let $r\geq \max\lbrace \brk_{X_{Ver}}(F), \brk_{X_{Seg}}(F)\rbrace$.
\begin{enumerate}
\item[(i)] What is the relation (if any) between $\VSPb_{X_{Ver}}(F,r)$ and $\VSPb_{X_{Ser}}(F,r)$? 
\item[(ii)] Is there a criterion on the level of border varieties to guarantee that $\brk_{X_{Ver}}(F)=\brk_{X_{Seg}}(F)$? 
\end{enumerate}
\end{question}

\begin{opquestion} 
Develop criteria for membership in $\mathrm{Slip}_{r,X}$. A most tantalizing one
is to prove or disprove the {\it flag condition}. 
Let $X$ be a smooth projective toric variety with Cox ring $S=S[X]$, and let $r$ be a positive integer. Assume
that $I\in \mathrm{Slip}_{r,X}$. Is there a flag of ideals 
$I=I_r\subseteq I_{r-1}\subseteq \cdots \subseteq I_0=S$ such that, for all $k$, $I_k\in 
\mathrm{Slip}_{k,X}$? The point is that this condition should imitate
what happens for secant varieties. See \cite[Problem 1.10]{Man22}. 
\end{opquestion}

For more questions and problems, see \cite[\S 1]{Man22}. As perhaps clear from these (at times very easily stated) questions, border apolarity has plenty of natural open interesting directions,
whose answers may have an important impact on our understanding of border rank.

\bibliographystyle{amsplain}

\begin{thebibliography}{99}

\bibitem{Ben12}
O. Benoist, {\it Espaces de modules d’intersections compl\`etes lisses}, PhD thesis available at \url{https://www.math.ens.psl.eu/~benoist/articles/These.pdf}, 2012. 

\bibitem{Bin80}
D. Bini, {\it Relations between exact and approximate bilinear algorithms. Applications}, Calcolo {\bf{17}}(1):87--97, 1980. 

\bibitem{bb19}  W.  Buczy\'nska and J. Buczy\'nski, {\it Apolarity, border rank and multigraded Hilbert scheme}, Duke Math. J. {\bf 170}(16):3659--3702, 2021.

\bibitem{bb21} W. Buczy\'{n}ska and J. Buczy\'{n}ski, {\it A note on families of saturated ideals}, available at \url{https://www.mimuw.edu.pl/~jabu/CV/publications/saturation_open.pdf}, 2021. 

\bibitem{bb15} W.  Buczy\'nska and J. Buczy\'nski, {\it On differences between the border rank and the smoothable rank of a polynomial}, Glasg. Math. J., {\bf 57}(2):401--413, 2015.

\bibitem{bb13} W. Buczy\'{n}ska and J. Buczy\'{n}ski, {\it Secant varieties to high degree Veronese reembeddings, catalecticant matrices and smoothable {G}orenstein schemes}, J. Algebraic Geom., {\bf 23}(1):63--90, 2014. 

\bibitem{bbt13} W. Buczy\'nska, J. Buczy\'nski and Z Teitler,
{\it Waring decompositions of monomials}, J. Algebra, {\bf 378}:45--57, 2013.

\bibitem{BL14}
J. Buczy\'nski and J.M. Landsberg, {\it On the third secant variety}. J. Algebr. Comb. {\bf 40}:475--502, 2014. 

\bibitem{BCS97}
P. B\"{u}rgisser, M. Clausen, and M.A. Shokrollahi, {\it Algebraic complexity theory}, Grundlehren der Mathematischen Wissenschaften [Fundamental Principles of Mathematical Sciences], vol. 315, Springer-Verlag, Berlin, 1997.  

\bibitem{CCV16} E. Carlini, C. Guo, and E. Ventura, {\it Real and complex Waring rank of reducible cubic forms}, J. Pure Appl. Algebra {\bf{220}}(11):3692--3701, 2016.

\bibitem{CEVV09} D.~A. Cartwright, D.~Erman, M.~Velasco, and B.~Viray, {\it  Hilbert schemes of 8 points}, Algebra Number Theory, {\bf 3}(7):763--795, 2009.

\bibitem{chl19} A. Conner, A. Harper, and J. M. Landsberg, {\it New lower bounds for matrix multiplication and $\det_3$}, Forum Math. Pi, vol. {\bf 11}, \url{https://doi.org/10.1017/fmp.2023.14}, 2023. 

\bibitem{CHL22} A. Conner, H. Huang and J.M. Landsberg, {\it Bad and Good News for Strassen’s Laser Method: Border Rank of $\mathrm{Perm_3}$ and Strict Submultiplicativity}. Found. Comput. Math.,
\url{https://doi.org/10.1007/s10208-022-09579-3}, 2022. 


\bibitem{Cox95} D. A. Cox, {\it The homogeneous coordinate ring of a toric variety}, J. Algebraic Geom. {\bf{4}}(1):17--50, 1995. 

\bibitem{CLS11} D. A. Cox, J. B. Little, H. K. Schenck, {\it Toric varieties}. Grad. Stud. Math., 124
American Mathematical Society, Providence, RI, 2011. 

\bibitem{DG} T. Dias and R. Gondim, {\it Waring problems and the Lefschetz properties}, Selecta Math. New Ser. {\bf{29}}(32), \url{https://doi.org/10.1007/s00029-023-00840-3}, 2023. 

\bibitem{EGOW18} K. Efremenko, A. Garg, R. Oliveira, and A. Wigderson, 
{\it Barriers for rank methods in arithmetic complexity}. LIPIcs. Leibniz Int. Proc. Inform., 94
Schloss Dagstuhl. Leibniz-Zentrum f\"{u}r Informatik, Wadern, 2018.

\bibitem{eh16} D. Eisenbud and J. Harris, {\it 
3264 and all that—a second course in algebraic geometry}.
Cambridge University Press, Cambridge, 2016.

\bibitem{Fan05} B. Fantechi, L. G\"{o}ttsche, L. Illusie, S. L. Kleiman, N. Nitsure and A. Vistoli, {\it Fundamental Algebraic Geometry: Grothendieck’s FGA Explained}. Mathematical Surveys and Monographs Vol. {\bf 123}, AMS, 2005.


\bibitem{Fulton} W. Fulton, {\it Introduction to toric varieties}.
Ann. of Math. Stud., {\bf 131}
William Roever Lectures Geom.
Princeton University Press, Princeton, NJ, 1993. 

\bibitem{Ga23} M. Ga\l{}\k{a}zka, {\it Secant varieties, Waring rank and generalizations from algebraic geometry viewpoint}, PhD thesis available at \url{https://www.mimuw.edu.pl/~jabu/teaching/Theses/PhD_thesis_Galazka.pdf}, 2023.  

\bibitem{Ga17} M. Ga\l{}\k{a}zka, {\it Vector bundles give equations of cactus varieties}, Linear Algebra Appl., {\bf 521}:254--262, 2017.

\bibitem{GW} U. Görtz and T. Wedhorn, {\it  Algebraic Geometry I: Schemes With Examples and Exercises}, Springer, Wiesbaden, 2020.

\bibitem{M2} D. Grayson and M. Stillman, \texttt{Macaulay2}, {\it a software system for research in algebraic geometry}, available at \url{https://macaulay2.com}.

\bibitem{Gro} A. Grothendieck, {\it \'El\'ements de g\'eom\'etrie alg\'ebrique : IV. \'Etude locale des sch\'emas et des
morphismes de sch\'emas, Troisi\`eme partie.},  Publications Math\'ematiques de l’IH\'ES, {\bf 28}:5–255, 1966.

\bibitem{gvt} E. Guardo and A. Van Tuyl {\it Powers of complete intersections: graded Betti numbers and applications}, Illinois J. Math. {\bf 49}(1):265--279, 2005. 

\bibitem{hs} M. Haiman and B. Sturmfels, {\it Multigraded Hilbert schemes}, J. Algebraic Geom., {\bf 13}(4):725--769, 2004.

\bibitem{h10} R. Hartshorne, {\it Deformation Theory}, Springer, New York, 2010.

\bibitem{h} R. Hartshorne,  {\it Algebraic Geometry}. Springer, Berlin, 1977. 

\bibitem{HMV} H. Huang, M. Micha\l{}ek and E. Ventura, {\it Vanishing Hessian, wild forms and their border VSP}, Math. Ann. {\bf 378}:1505--1532, 2020.

\bibitem{ik} A. Iarrobino and V. Kanev, {\em Power sums, Gorenstein algebras, and determinantal loci}. Lecture notes in Mathematics, vol. {\bf 1721}, Springer-Verlag, Berlin, 1999. 

\bibitem{JLP} J. Jelisiejew, J.M. Landsberg and A. Pal, {\it Concise tensors of minimal border rank}, Math. Ann., \url{https://doi.org/10.1007/s00208-023-02569-y}, 2023. 

\bibitem{JM} J. Jelisiejew and T. Ma\'ndziuk, {\it Limits of saturated ideals}, preprint \texttt{arXiv:2210.13579}, 2022.

\bibitem{JRS} J. Jelisiejew, K. Ranestad and F.-O. Schreyer, {\it The variety of polar simplices II}, preprint \texttt{arXiv:2304.00533}, 2023.

\bibitem{Landsberg17} J. M.~Landsberg, {\it Geometry and complexity theory}, Cambridge Studies in Advanced Mathematics, 2017.

\bibitem{Lands11} J.M.~Landsberg, {\em Tensors: Geometry and Applications}. Graduate Studies in Mathematics, vol. {\bf 128}, 2011. 

\bibitem{LT10} J.M.~Landsberg and Z. Teitler, {\it On the ranks and border ranks of symmetric tensors}, Found. Comput. Math. {\bf{10}}(3):339--366, 2010.

\bibitem{MS05} D.~Maclagan and G.~G. Smith, {\it Uniform bounds on multigraded regularity}, J. Algebraic Geom. {\bf 14}(1):137--164, 2005.

\bibitem{MS} D.~Maclagan and G.~G. Smith, {\it Multigraded Castelnuovo-Mumford regularity}, J. Reine Angew. Math., {\bf{571}}:179--212, 2004. 

\bibitem{MW} T. Maeno and J. Watanabe, {\it Lefschetz elements of artinian Gorenstein algebras and Hessians of homogeneous polynomials}, Illinois J. Math. {\bf{53}}:593--603, 2009. 

\bibitem{Man} T. Ma\'ndziuk, {\it Identifying limits of ideals of points in the case of projective space}, Linear Algebra Appl., {\bf{634}}:149--178, 2022.

\bibitem{Man22}
T. Ma\'ndziuk, {\it Limits of saturated ideals of points with applications to secant varieties}, PhD thesis available at \url{https://depotuw.ceon.pl/bitstream/handle/item/4203/0000-DR-211860-praca.pdf?sequence=1}, 2022. 

\bibitem{mm} A. Massarenti and M. Mella, {\it Birational aspects of the geometry of Varieties of Sums of Powers}, Adv. Math. {\bf 243}:187--202, 2013.

\bibitem{Mukai1} S. Mukai, {\it Fano 3-folds}, Complex Projective Geometry, London Math. Soc. L. N. S. {\bf{179}}:255--263, Cambridge University Press, 1992.

\bibitem{Mukai2} S. Mukai, {\it Polarized K3 surfaces of genus 18 and 20}, Complex Projective Geometry, London Math. Soc. L. N. S. {\bf{179}}:264--276, Cambridge University Press, 1992.

\bibitem{Peeva} I. Peeva, {\it Graded syzygies}. Algebr. Appl. {\bf{14}}, 
Springer-Verlag London, Ltd., London, 2011. 

\bibitem{Ranestad22} K. Ranestad, {\it Bad limits in power sum varieties}, report for the Agates semester Fall 2022,\\ \url{https://agates.mimuw.edu.pl/images/notes_and_reports/Ranestad_bad_limits.pdf}, 2022. 

\bibitem{rs13} K. Ranestad and F.-O. Schreyer, {\it The variety of polar simplices}, Doc. Math. {\bf{18}}:469--505, 2013. 

\bibitem{rs00} K. Ranestad and F.-O. Schreyer, {\em Varieties of sums of powers}, J. Reine Angew. Math., {\bf 525}:147--181, 2000.

\bibitem{russo} F. Russo, {\it On the geometry of some special projective varieties}. Lecture Notes of the Unione Matematica Italiana vol. {\bf{18}}, Springer Cham, 2016.


\bibitem{Schenck} H. Schenck, {\it Computational Algebraic Geometry}. London Mathematical Society Student Texts, Cambridge University Press, Cambridge, 2003.

\bibitem{SVT06} J. Sidman and A. Van Tuyl, {\it Multigraded regularity: syzygies and fat points}, Beitr. Algebra Geom. {\bf{47}}(1):67--87, 2006. 

\bibitem{Str87} V. Strassen, {\it Relative bilinear complexity and matrix multiplication}.
J. Reine Angew. Math., {\bf 375/376}:406--443, 1987.

\bibitem{Str69}
V. Strassen, {\it Gaussian elimination is not optimal}, Numer. Math. {\bf{13}}:354--356, 1969.


\bibitem{W19} A. Wigderson, {\it Mathematics and computation. A theory revolutionizing technology and science}. Princeton University Press, Princeton, NJ, 2019. 

\end{thebibliography}

\begin{small}

\end{small}

\end{document}